\documentclass[reqno,a4paper]{amsart}

\usepackage{array}
\usepackage[T1]{fontenc}
\usepackage{enumerate, amsmath, amsfonts, amssymb, amsthm, mathrsfs}
\usepackage{bbm}

\usepackage[lmargin=3cm,rmargin=3cm,top=2.9cm,bottom=2.9cm]{geometry}

\usepackage[all]{xy}
\usepackage{tikz}
\usetikzlibrary{arrows, decorations, shapes}
\usepackage{tikz-cd}
\usepackage{colonequals}
\usetikzlibrary{arrows}
\usetikzlibrary{decorations.pathmorphing}
\usepackage[colorlinks, citecolor=blue]{hyperref}
\usepackage[noabbrev,capitalise]{cleveref}
\usepackage{thmtools}
\usepackage{multirow}
\usepackage[normalem]{ulem}
\usepackage{enumitem}
\setlist[enumerate]{format=\normalfont}
\usepackage{caption}
\usepackage{cancel}
\usepackage{setspace}
\newcommand{\marginparstretch}{0.6}
\let\oldmarginpar\marginpar
\renewcommand\marginpar[1]{\-\oldmarginpar[\framebox{\setstretch{\marginparstretch}\begin{minipage}{\marginparwidth}{\raggedleft\tiny #1}\end{minipage}}]{\framebox{\setstretch{\marginparstretch}\begin{minipage}{\marginparwidth}{\raggedright\tiny #1}\end{minipage}}}}

\DeclareRobustCommand{\SkipTocEntry}[5]{}

 \newcommand{\arrow}[2][20]
 {
  \hspace{-5pt}
  \begin{tikzpicture}
   \node (A) at (0,0) {};
   \node (B) at (#1pt,0) {};
   \draw [#2] (A) -- (B);
  \end{tikzpicture}
  \hspace{-5pt}
 }
\renewcommand{\leadsto}[1][20]{\arrow[#1]{->, decorate, decoration={snake, segment length=4pt, amplitude=1pt}}}

\newcommand{\theoremlistfix}{\phantomsection\leavevmode}

\theoremstyle{plain} % italic
\declaretheorem[
  numberwithin=section,
  name=Theorem,
  refname={Theorem,Theorems},
  Refname={Theorem,Theorems}
]{theorem}

\declaretheorem[
  sibling=theorem,
  name=Corollary,
  refname={Corollary,Corollaries},
  Refname={Corollary,Corollaries}
]{corollary}

\declaretheorem[
  sibling=theorem,
  name=Proposition,
  refname={Proposition,Propositions},
  Refname={Proposition,Propositions}
]{proposition}

\declaretheorem[
  sibling=theorem,
  name=Lemma,
  refname={Lemma,Lemmas},
  Refname={Lemma,Lemmas}
]{lemma}

\theoremstyle{definition} % upright
\declaretheorem[
  sibling=theorem,
  name=Definition,
  refname={Definition,Definitions},
  Refname={Definition,Definitions}
]{definition}

\declaretheorem[
  sibling=theorem,
  name=Example,
  refname={Example,Examples},
  Refname={Example,Examples}
]{example}

\declaretheorem[
  sibling=theorem,
  name=Remark,
  refname={Remark,Remarks},
  Refname={Remark,Remarks}
]{remark}

\declaretheorem[
  sibling=theorem,
  name=Setup,
  refname={Setup,Setups},
  Refname={Setup,Setups}
]{setup}

\newcommand{\tc}{\mathcal{T}}

\newcommand{\ac}{\mathcal{A}}

\newcommand{\mc}{\mathcal{M}}
\newcommand{\uc}{\mathcal{U}}

\newcommand{\ec}{\mathcal{E}}
\newcommand{\fc}{\mathcal{F}}
\newcommand{\pc}{\mathcal{P}}
\newcommand{\xc}{\mathcal{X}}
\newcommand{\vc}{\mathcal{V}}

\newcommand{\infl}{\rightarrowtail}
\newcommand{\defl}{\twoheadrightarrow}

\newcommand{\modA}{\operatorname{mod}A}
\renewcommand{\mod}{\operatorname{mod}}
\newcommand{\modB}{\operatorname{mod}B}
\newcommand{\fac}{\operatorname{Fac}}

\newcommand{\Ext}{\operatorname{Ext}}
\newcommand{\End}{\operatorname{End}}

\newcommand{\Ker}{\operatorname{Ker}}
\newcommand{\Coker}{\operatorname{Coker}}
\newcommand{\Hom}{\operatorname{Hom}}
\newcommand{\Rad}{\operatorname{Rad}}
\newcommand{\Soc}{\operatorname{Soc}}
\newcommand{\add}{\operatorname{add}}
\newcommand{\Ann}{\operatorname{Ann}}
\newcommand{\proj}{\operatorname{proj}}

\newcommand{\Thick}{\operatorname{Thick}}
\newcommand{\kk}{\mathbbm{k}}
\newcommand{\os}{\operatorname{os}}

\newcommand\msmall[1]{\mbox{\small\ensuremath{#1}}}%make a special "math mode" version of small to avoid error messages
\newcommand{\Rep}[1]{%
  {%Make sure some parameters are set to the default
    \renewcommand{\arraystretch}{1}
    \setlength{\extrarowheight}{0pt}
    \msmall{%
    \begin{matrix}%
      #1%
    \end{matrix}%
  }}%
}
\newcommand\mtiny[1]{\mbox{\tiny\ensuremath{#1}}}%make a special "math mode" version of tiny to avoid error messages
\newcommand{\rep}[1]{%
  {%Make sure some parameters are set to the default
    \renewcommand{\arraystretch}{1}
    \setlength{\extrarowheight}{0pt}
    \mtiny{%
    \begin{matrix}%
      #1%
    \end{matrix}}%
  }%
}

\title{Higher torsion classes, $\tau_d$-tilting theory and silting complexes}

\author[August]{Jenny August}
\address{University of Glasgow\\ University Place\\ Glasgow \\ G12 8QQ \\ UK}
        \email{jenny.august@glasgow.ac.uk}
\author[Haugland]{Johanne Haugland}
\address{Department of Mathematical Sciences\\ 
        NTNU\\ 
        NO-7491 Trondheim\\ 
        Norway}
\email{johanne.haugland@ntnu.no}
\author[Jacobsen]{Karin M.\ Jacobsen}
\address{Department of Mathematics\\ Aarhus Universitet\\ Ny Munkegade 118\\ DK-8000 Aarhus C\\ Denmark}
\email{karin.m.jacobsen@gmail.com}
\author[Kvamme]{Sondre Kvamme}
\address{Department of Mathematical Sciences\\ 
        NTNU\\ 
        NO-7491 Trondheim\\ 
        Norway}
\email{sondre.kvamme@ntnu.no}
\author[Palu]{Yann Palu}
\address
{LAMFA, Universit\'e de Picardie Jules Verne, 33, rue Saint-Leu 80039 Amiens, France}
\email{yann.palu@u-picardie.fr}

\author[Treffinger]{Hipolito Treffinger}
\address{Universidad de Buenos Aires. Facultad de Ciencias Exactas y Naturales. Departamento de Matemática. Buenos Aires, Argentina.
CONICET - Universidad de Buenos Aires. Instituto de Investigaciones Matemáticas “Luis A. Santaló”  (IMAS). Buenos Aires, Argentina.
}
\email{htreffinger@dm.uba.ar}

\usepackage{url}
\begin{document}

\begin{abstract}
Initiated in work by Adachi, Iyama and Reiten, the area known as $\tau$-tilting theory plays a fundamental role in contemporary representation theory. In this paper we explore a higher-dimensional analogue of this theory, formulated with respect to the higher Auslander--Reiten translation $\tau_d$. In particular, we associate to any functorially finite $d$-torsion class a maximal $\tau_d$-rigid pair and a $(d+1)$-term silting complex. In the case $d=1$, the notions of maximal $\tau_d$-rigid and support $\tau$-tilting pairs coincide, and our theory recovers the classical bijections. However, the proof strategies for $d>1$ differ significantly. As an intermediate step, we prove that a \mbox{$d$-cluster} tilting subcategory of a module category induces a \mbox{$d$-cluster} tilting subcategory of the category of $(d+1)$-term complexes, producing novel examples of $d$-exact categories. We introduce the notion of a $d$-torsion class in the exact setup, and use this to obtain the aforementioned $(d+1)$-term silting complex. We moreover apply our theory to study $d$-APR tilting modules and slices. To illustrate our results, we provide explicit combinatorial descriptions of maximal $\tau_d$-rigid pairs and $(d+1)$-term silting complexes for higher Auslander and higher Nakayama algebras.
\end{abstract}

\dedicatory{We dedicate this paper to the memory of Idun Reiten, \\ whose mathematical and non-mathematical contributions continue to inspire us.}

\keywords{
Higher homological algebra, higher Auslander--Reiten theory, $d$-cluster tilting subcategory, $d$-torsion class, $\tau_d$-tilting theory, maximal $\tau_d$-rigid pair, $(d+1)$-term silting complex, 
higher Auslander algebra, higher Nakayama algebra}
\subjclass[2020]{16S90, 18E40, 16G70, 16G20, 18G25, 18G99.}

\maketitle
\setcounter{tocdepth}{1}
\tableofcontents

\section{Introduction}\label{sec:intro}
Cluster algebras were introduced by Fomin and Zelevinsky in \cite{FominZelevinskyI} as a combinatorial tool to study total positivity and Lusztig's dual canonical bases. Soon after, a deep connection between cluster algebras and the  representation theory of finite-dimensional algebras was established via categorification \cite{Amiot,BuanMarshReinekeReitenTodorov,CalderoChapotonSchiffler,GeissLeclercSchroer4,KellerYang,Plamondon1}. Ever since then, the two fields have achieved great success through mutual interaction. On the one hand, representation theoretic tools led to proofs of several conjectures in cluster theory; see e.g.\ \cite{CalderoReineke,Keller,Plamondon2,Qin}. On the other hand, cluster algebras, and the key concept of mutation in particular, motivated the development of two areas that are central in contemporary representation theory: higher Auslander--Reiten theory and $\tau$-tilting theory. This paper lies at the intersection of these two topics. 

\addtocontents{toc}{\SkipTocEntry}
\subsection*{Motivation and background}
Higher Auslander--Reiten theory \cite{Iyama2007a, Iyama2007b,Iyama2011} studies so-called \mbox{$d$-cluster} tilting subcategories, which arise naturally when classifying algebras with certain homological properties. These $d$-cluster tilting subcategories are often subcategories of module categories of finite-dimensional algebras, but instead of being abelian like the ambient category, they turn out to be $d$-abelian \cite{Jasso}. This means that their homological structure is controlled by exact sequences with $d$ middle terms for some positive integer $d$. When $d=1$, one recovers abelian categories and short exact sequences, while the case $d>1$ opens up a whole new avenue of research studying higher analogues of important objects in representation theory \cite{GrantIyama,HerschendIyamaMinamotoOppermann,HerschendIyamaOppermann,HerschendJorgensenVaso,HauglandSandoy,IyamaOppermann,Higher Nakayama}. 
This perspective, often called higher homological algebra, has further been expanded to higher analogues \cite{GeissKellerOppermann,HerschendLiuNakaoka} of triangulated and extriangulated categories \cite{NakaokaPalu}.

On the other hand, $\tau$-tilting theory \cite{AdachiIyamaReiten} (cf.\ \cite{DerksenFei}) was developed to mirror the properties of mutation seen in cluster algebras and give a generalisation of classical tilting modules. Remarkably, broadening the definition of tilting modules to allow for mutation also led to bijections with other important objects in classical representation theory, such as functorially finite torsion classes and $2$-term silting complexes. 

The importance of higher Auslander--Reiten theory and $\tau$-tilting theory is not limited to representation theory of finite-dimensional algebras. For example, $\tau$-tilting theory allowed for a better understanding of stability conditions \cite{Asai, AugustWemyss, BrustleSmithTreffinger}, while higher Auslander--Reiten theory has connections to lattices of order ideals \cite{Gottesman}, Fukaya categories \cite{DiDedda1, DiDedda2, DyckerhoffJassoLekili}, and provided a key ingredient in the first proof of the Donovan--Wemyss conjecture in algebraic geometry \cite{JassoKellerMuro}.

Given the significance of these two theories, it is natural to investigate the higher homological analogue of $\tau$-tilting theory. We refer to this area as $\tau_d$-tilting theory. It was initiated in \cite{JJ}, where the notion of a (maximal) $\tau_d$-rigid pair is defined, and further studied in \cite{ZZ,AHSV}. However, as the significance of classical $\tau$-tilting theory stems from how it serves to provide a link between otherwise unrelated concepts, exploring how $\tau_d$-rigid pairs relate to other notions is of crucial importance. Although this has been investigated in the special case of linear Nakayama algebras \cite{RundsveenVaso}, little is known in general. We aim at filling this gap by establishing fundamental connections between $\tau_d$-tilting theory and other notions in higher homological algebra.

One such higher notion is that of $d$-torsion classes, as introduced in \cite{Jorgensen} and further studied in \cite{AJST, AHJKPT1}. It is well known that torsion classes in a finite-length abelian category are precisely those subcategories that are closed under extensions and quotients \cite{Dickson}. Inspired by this, it is shown in \cite{AHJKPT1} that $d$-torsion classes in $d$-cluster tilting subcategories are characterised by closure under $d$-extensions and $d$-quotients (see \cref{sec:higher world} for definitions). This characterisation further opened up the study of the lattice of $d$-torsion classes; see \cite[\mbox{Section 4}]{AHJKPT1} and \cite{Segovia}.

The final higher notion we consider is that of $(d+1)$-term silting complexes; see \cite{ALLT,Gupta,MM,Zhou}. Classical silting theory plays a major role in contemporary homological algebra, as it simultaneously provides a connection with t-structures \cite{KellerVos, KoenigYang} and resolves the deficiencies in the mutation of tilting complexes \cite{AiharaIyama}. The $2$-term silting complexes are closely linked with \mbox{$\tau$-tilting} theory, as well as $g$-vector fans, stability conditions and braid group actions \cite{AiharaMizuno, Asai, BrustleSmithTreffinger, JY}.  The higher version of these objects, namely $(d+1)$-term silting complexes, play a crucial role in this paper. 
\addtocontents{toc}{\SkipTocEntry}
\subsection*{Main results}

Let $A$ be a finite-dimensional algebra and consider a $d$-cluster tilting subcategory $\mc \subseteq \mod A$ for some $d \geq 1$ (see \cref{sec:background} for precise definitions). Inside $\mc$ we can study both the maximal $\tau_d$-rigid pairs and the functorially finite $d$-torsion classes. Note that for $d=1$, a maximal $\tau_d$-rigid pair is usually known as a support $\tau$-tilting pair; see \cref{rem: terminology support}. In the classical case, there is a bijection between these two sets, where the support $\tau$-tilting pair associated to a functorially finite torsion class $\tc$ may be constructed using a so-called $\Ext$-projective generator of $\tc$ \cite{AdachiIyamaReiten}. We obtain the following higher analogue, relating $d$-torsion classes to the maximal $\tau_d$-rigid pairs introduced in \cite{JJ}.

\begin{theorem}[\cref{thm:maximaltaunrigid}, \cref{prop: injective,cor: maximality}] \label{MainResultIntro}
There is an \mbox{injective map} 
\begin{align*}
\renewcommand{\arraystretch}{1.8}
\begin{array}{ccc}
\renewcommand{\arraystretch}{1.1} \phi_d \colon
\begin{Bmatrix}
    \text{functorially finite} \\
    \text{$d$-torsion classes in $\mc$}
\end{Bmatrix}
&
    \xrightarrow{}
    &
    \renewcommand{\arraystretch}{1.2}
\begin{Bmatrix}
\text{basic maximal $\tau_d$-rigid pairs} \\
\text{in $\mc$ with $|A|$ summands}
\end{Bmatrix} \\ 
\uc &\mapsto  &(M_\uc, P_\uc)
\end{array} 
\renewcommand{\arraystretch}{1}
\end{align*}
where:
\theoremlistfix
\begin{enumerate}
    \item $M_\uc$ is a basic $\Ext^d$-projective generator of $\uc$.
    \item $P_\uc$ is the maximal basic projective module satisfying $\Hom_A(P_\uc,-)|_\uc = 0$. 
\end{enumerate}
\end{theorem}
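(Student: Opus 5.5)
The argument runs in four stages: existence of $M_\uc$, $\tau_d$-rigidity of the pair, injectivity, and maximality with the count $|A|$.

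\textbf{Existence of $M_\uc$.} I would use that a functorially finite $d$-torsion class $\uc \subseteq \mc$ has a left $\uc$-approximation of $A$, say $A \to U^0$. Using the characterisation of $d$-torsion classes by closure under $d$-quotients and $d$-extensions (from \cite{AHJKPT1}), I would extend this approximation to a $d$-exact sequence
\[
A \to U^0 \to U^1 \to \cdots \to U^d \to 0
\]
with all $U^i \in \uc$. This is the higher analogue of Bongartz completion. I then set $M_\uc$ to be the basic object with $\add M_\uc = \add(U^0 \oplus \cdots \oplus U^d)$, possibly after discarding summands that are not $\Ext^d$-projective.

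Next I would show that $M_\uc$ is an $\Ext^d$-projective generator of $\uc$. That it generates $\uc$ (every $X \in \uc$ is a quotient of an object of $\add M_\uc$) follows from the approximation property together with closure of $\uc$ under $d$-quotients. That $\Ext^d_A(M_\uc,\uc)=0$ follows by dimension shifting along the sequence, using $\Ext^i(\mc,\mc)=0$ for $0<i<d$ and closure of $\uc$ under $d$-extensions.

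\textbf{$\tau_d$-rigidity.} I would invoke the higher Auslander--Reiten formula $\Hom(X,\tau_d Y)$ versus $\overline{\Ext}^d(Y,X)$ in the $d$-cluster tilting setting, as in \cite{JJ}. Since $M_\uc$ generates $\uc$, any map $M_\uc \to \tau_d M_\uc$ that does not factor suitably would yield a nonzero $\Ext^d(M_\uc, \cdot)$ into $\uc$, contradicting $\Ext^d$-projectivity. This gives $\Hom(M_\uc,\tau_d M_\uc)=0$. The pair condition $\Hom(P_\uc,M_\uc)=0$ holds by definition of $P_\uc$, because $M_\uc \in \uc$.

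\textbf{Injectivity.} I would recover $\uc$ from $M_\uc$ as the smallest $d$-torsion class containing it, or more concretely as the objects of $\mc$ admitting an epimorphism from $\add M_\uc$ compatible with the $d$-torsion structure. Since $M_\uc$ is a generator lying in $\uc$, this determines $\uc$, so $\phi_d$ is injective.

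\textbf{Maximality and the count $|A|$ (main obstacle).} For $d=1$ this is classical, but for $d>1$ there is no mutation available. My first attempt would be to pass to silting: build from $(M_\uc,P_\uc)$ a $(d+1)$-term complex, namely a projective resolution of $M_\uc$ truncated via $\mc$, together with $P_\uc[d]$. I would then show this complex is silting in $K^b(\proj A)$, perhaps via the $d$-cluster tilting structure on $(d+1)$-term complexes mentioned in the abstract. Silting complexes have exactly $|A|$ indecomposable summands, and this count transfers back to the pair. Maximality then follows because any $\tau_d$-rigid pair gives a presilting complex, and a presilting complex with $|A|$ summands is maximal.
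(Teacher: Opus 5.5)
Your overall route (coresolve $A$ by $\uc$, collect $\Ext^d$-projectives, pass to a $(d+1)$-term complex) is the paper's. However, the step everything hinges on is missing. The sentence ``I would then show this complex is silting'' is the whole difficulty. Presilting of $P_\bullet^{M_\uc}\oplus P_\uc[d]$ is easy (\cref{prop:presilting}). Showing $A\in\Thick(P_\bullet^{M_\uc}\oplus P_\uc[d])$ needs a real mechanism, and since you also derive $|M_\uc|+|P_\uc|=|A|$ from silting, both the count and maximality rest on it. The paper's mechanism is:
\begin{enumerate}
\item $\pc_d(\mc)$ is $d$-cluster tilting in the exact category $C^{[-d,0]}(\proj A)$ (\cref{theorem:dCT exact}).
\item $\pc_d(\uc)$ is a faithful covariantly finite $d$-torsion class there (\cref{thm: dtorsion to faithfuldtorsion}; closure under $d$-extensions uses \cref{Reformulation:d-Tors}).
\item The stalk complex $A$ is projective in this exact category, so the terms of its minimal $\pc_d(\uc)$-coresolution $0\to A\to P^0_\bullet\to\cdots\to P^d_\bullet\to 0$ are $\Ext^d$-projective (\cref{Ext^d-projectiveExactCat}). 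Hence they lie in $\add(P_\bullet^{M_\uc}\oplus P_\uc[d])$ up to contractibles (\cref{Proposition:Ext^dProjGendComplex}).
\item The conflations then realise $A$ as an iterated cocone.
\end{enumerate}
The paper gets the count independently, from $U^A$ being $d$-tilting over $A/\Ann\uc$.

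Your last deduction also fails. The definition of a maximal $\tau_d$-rigid pair does not assume $\Hom_A(N,\tau_dN)=0$, so $(M_\uc\oplus N,P_\uc)$ need not be $\tau_d$-rigid. Thus ``a presilting complex with $|A|$ summands is maximal'' cannot be applied to it; you also do not justify that principle, and \cref{rmk:howmanyisenough} recalls that such complexes need not even be silting. What is needed is \cref{lem:maximal}: for silting $S$, any $C$ with $\Hom(S,C[i])=0=\Hom(C,S[i])$ for all $i>0$ lies in $\add S$. Apply it to $C=P_\bullet^N$ via \cref{lem:tau-rigidKbProj}; the conditions $\Ext^i_A(N,M_\uc)=0=\Ext^i_A(M_\uc,N)$ for $0<i<d$ are automatic for $N\in\mc$. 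Apply it to $C=Q[d]$ for the projective part.

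The earlier stages have gaps too.
\begin{itemize}
\item \emph{Generator.} In the paper, an $\Ext^d$-projective generator is an object whose $\add$ contains \emph{every} $\Ext^d$-projective of $\uc$, not one that generates $\uc$ by quotients. You never prove this. The paper deduces it from \cref{thm: d-tilting}: if $X$ is $\Ext^d$-projective, then $U^A\oplus X$ is also $d$-tilting over $A/\Ann\uc$, so counting summands forces $X\in\add U^A$.
\item \emph{$\Ext^d$-projectivity.} ``Dimension shifting'' cannot give $\Ext^d_A(U^i,\uc)=0$. The map $A\to U^0$ need not be injective, and the long exact sequences along $0\to C_{i-1}\to U^i\to C_i\to 0$ involve $\Ext^d$ and $\Ext^{d+1}$ of cokernels $C_i\notin\mc$ that you do not control. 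The paper instead lifts $f_0$ along the last map $E_d\to U_0$ of a $d$-extension with middle terms in $\uc$ (using projectivity of $A$). Left minimality of $f_0$ then splits the extension, and a homotopy argument handles the induction (\cref{lem:casei=0}, \cref{thm:U_i Ext^d-projective}). Minimality is essential, and ``discarding'' summands would break the exact sequence the thick-closure argument needs.
\item \emph{$\tau_d$-rigidity.} Higher Auslander--Reiten duality only identifies $\Ext^d_A(M,U)$ with $D\overline{\Hom}_A(U,\tau_dM)$. Also, images of maps $M_\uc\to\tau_dM_\uc$ need not lie in $\mc$, so your sketch does not close. The paper proves $\tau_dU\in\fc$ for $\Ext^d$-projective $U$ (\cref{thm: equivalence of Extd projectivity}). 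It uses a torsion class $\tc$ inducing $\uc$, so that $t\tau_dU\in\uc$, together with the almost split sequence ending in $\Omega^{d-1}U$.
\end{itemize}
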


In the result above, we say that a $\tau_d$-rigid pair $(M,P)$ has $|A|$ summands if $|M|+|P|=|A|$, where $|X|$ denotes the number of pairwise non-isomorphic indecomposable direct summands of a module $X$. Note that while the map $\phi_d$  looks very similar to the classical bijection in \cite{AdachiIyamaReiten}, and in fact recovers that map for $d=1$, \cref{Example:Phi_dNotSurj} demonstrates that $\phi_d$ is typically not surjective for $d>1$. 

Let $\uc \subseteq \mc$ be a functorially finite $d$-torsion class. The construction of the pair $(M_\uc, P_\uc)$ in \cref{MainResultIntro} 
follows a similar approach as in the classical result. However, proving that the pair is maximal $\tau_d$-rigid requires a very different strategy that is sketched below. Along the way, we lift several key intermediate results to the higher setting, such as \cref{thm:intro d-tilting} below, which generalises parts of \cite{Smalo}. Here, a tilting module of projective dimension less than or equal to $d$ is called $d$-tilting. 

\begin{theorem}[\cref{thm: d-tilting}]
\label{thm:intro d-tilting}
The module $M_\uc$ is $d$-tilting over $A/\Ann \uc$.
\end{theorem}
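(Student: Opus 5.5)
Write $B \colonequals A/\Ann\uc$ and $M \colonequals M_\uc$. By \cite[Proposition]{AHJKPT1}, $\uc$ is closed under $d$-quotients and $d$-extensions in $\mc$. The module $M$ is an $\Ext^d$-projective generator of $\uc$. So every $U\in\uc$ admits an epimorphism $M_0\defl U$ with $M_0\in\add M$, and $\Ext^d_A(M,U)=0$ for all $U\in\uc$. We must verify three properties of $M$ as a $B$-module: (i) $\operatorname{pd}_B M\le d$; (ii) $\Ext^i_B(M,M)=0$ for $1\le i\le d$; (iii) there is a coresolution $0\to B\to M^0\to\cdots\to M^d\to 0$ with $M^j\in\add M$.

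\emph{Step 1: $B\in\operatorname{Fac}M$, and more.} Since $\uc$ is functorially finite, there is a left $\uc$-approximation of $A$, and standard arguments show $\Ann\uc=\Ann M$. Hence $B$ embeds into $M^{\oplus n}$. I expect $\uc\subseteq\mod B$, and I would identify $\uc$ with a $d$-torsion class in a $d$-cluster tilting-like subcategory of $\mod B$. At minimum, $\uc$ should be the collection of $B$-modules in $\mc$ admitting $d$-presentations by $\add M$.

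\emph{Step 2: rigidity.} $\mc$ is $d$-cluster tilting, so $\Ext^i_A(M,M)=0$ for $0<i<d$. Also $\Ext^d_A(M,M)=0$ by $\Ext^d$-projectivity. To pass to $B$, I would compare $\Ext_B$ with $\Ext_A$ for modules annihilated by $\Ann\uc$. I expect this to work via a projective resolution of $M$ over $B$ constructed from $\uc$. Concretely, take a minimal right $\add M$-approximation $M_0\to U$ and iterate using $d$-kernels in $\mc$ and $d$-quotient closedness. This yields a $d$-exact sequence
\[0\to U_{d}\to M_{d-1}\to\cdots\to M_0\to U\to 0\]
inside $\uc$. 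Using $B\hookrightarrow M^n$ and the $B$-projectives, I aim to produce a $B$-projective resolution of $M$ of length $d$, which gives (i). Applying $\Hom_B(-,M)$ then yields (ii) by comparing with the vanishing over $A$.

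\emph{Step 3: coresolution of $B$.} Start with a minimal left $\add M$-approximation $B\to M^0$; it is injective by Step 1. Take its cokernel, then its approximation, and iterate. I would show that the successive cokernels lie in $\uc$, which requires closure under $d$-quotients. I would also show that after $d$ steps the sequence terminates. The expected tool is the $\Ext^d$-projectivity of $M$ combined with $\Ext^{i}_A(M,-)=0$ for $0<i<d$ on $\mc$. Together these force $\Ext^{d}$ of the last cokernel against $\uc$ to vanish, so that cokernel is in $\add M$.

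\emph{Main obstacle.} The hardest step is Step 3, together with the projective dimension bound. The intermediate cokernels generally leave $\mc$, so the $d$-cluster tilting vanishing is not directly available. I would first try dimension shifting. Realise each piece as a term of a $d$-exact sequence in $\mc$ obtained from $d$-cokernels; this works because $d$-cokernels exist in $\mc$ and $\uc$ is closed under them. Then compute $\Ext^{d}_A(\text{last term},\uc)$ by shifting back to $\Ext^{d}_A(M,\uc)=0$ and $\Ext^{\ge 1}$ of the projective $B$. If this fails, an alternative is a counting argument. One would use $|M|=|B|$ (the number of indecomposable summands), which follows from the summand count in \cref{MainResultIntro}, combined with Step 2 and a Bongartz-type completion. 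However, for $d>1$ summand count does not generally imply tilting, so the direct construction seems necessary.
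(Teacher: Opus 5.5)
There is a genuine gap: the two steps you flag as hardest are where the whole proof lies, and the mechanisms you sketch for them do not work.

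\textbf{Step 2.} The sequence $0\to U_d\to M_{d-1}\to\cdots\to M_0\to U\to 0$ ``inside $\uc$'' is not available. A $d$-torsion class is closed under $d$-quotients and $d$-extensions, but not under $d$-kernels. Even if the sequence existed, it would be an $\add M$-resolution of $U$, not a $B$-projective resolution of $M$.

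Likewise, ``comparing with the vanishing over $A$'' is the crux, not a formality: for the quotient $B=A/\Ann\uc$, $\Ext_A$ and $\Ext_B$ differ in general. The missing idea runs as follows.
\begin{enumerate}
\item Fix a torsion class $\tc$ inducing $\uc$.
\item Identify the injective $B$-modules as $\add(tDA)$. This uses that a monomorphism from $tI$, with $I$ injective, into an object of $\tc$ splits \cite{AS}.
\item Since $tI\in\mc$ and $tI$ is $\Ext^d$-injective in $\uc$, an injective $B$-coresolution by such modules is $\Hom_A(U,-)$-acyclic in degrees $1,\dots,d$. This gives $\Ext^j_A(U,N)\cong\Ext^j_B(U,N)$ for $U\in\uc$, $N\in\modB$ and $1\le j\le d$.
\item Hence $\Ext^i_B(M,M)=0$ for $1\le i\le d$. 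Through the criterion $\Ext^d(M,\fac M)=0$, $M$ is also $\tau_d$-rigid over $B$.
\item As $M$ is faithful over $B$, the bound $\operatorname{pd}_B M\le d$ follows from \cite[Lemma 5.1]{MM}. No explicit resolution is needed.
\end{enumerate}

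\textbf{Step 3.} As written, it also fails. The successive cokernels lie in $\tc$ but typically not in $\mc$, so they are not in $\uc$, contrary to your first claim. Consequently:
\begin{itemize}
\item nothing in your argument shows that their left $\add M$-approximations are monomorphisms;
\item your dimension shift needs $\Hom_A(M^j,U)\to\Hom_A(K_j,U)$ to be surjective for every $U\in\uc$, i.e.\ left $\uc$-approximations, which an $\add M$-approximation does not give;
\item vanishing of $\Ext^i_A(-,\uc)$ does not place the last cokernel in $\mc$, so $\Ext^d$-projectivity cannot be used to put it in $\add M$.
\end{itemize}

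The paper instead takes the minimal $\uc$-coresolution of $A$. Because $\tc$ induces $\uc$, left $\uc$-approximations of objects of $\tc$ are monic minimal left $\mc$-approximations. So after the first map this is a minimal $\mc$-coresolution, and it stops after $d$ steps by $d$-cluster tilting. The kernel of the first map is $\Ann\uc$.

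That every term of this coresolution is $\Ext^d$-projective, and hence lies in $\add M$, is a separate inductive lifting-and-homotopy argument. It uses left minimality essentially. Dimension shifting can at best reach the final term, and only once the preceding terms are already known to be $\Ext^d$-projective.

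\textbf{Fallback.} Your counting argument via $|M|=|B|$ is circular. The paper obtains that count from this very theorem, and also the existence of $M_\uc$ as a generator. It does so by showing that, with $U^A$ the sum of the terms of the above coresolution, $U^A\oplus X$ is $d$-tilting over $B$ for every $\Ext^d$-projective $X\in\uc$.
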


An application of \Cref{thm:intro d-tilting} is given in \Cref{sec:slices}, where we generalise the notion of a slice from \cite{IyamaOppermann} by removing the assumption that the algebra has global dimension $d$. In \Cref{Corollary:SliceIsTilting},
we see that these slices give rise to $d$-tilting modules, and hence generate new derived equivalences of algebras; see \Cref{Example:Slice1} and \Cref{Example:Slice2}.

Since tilting modules have a fixed number of summands, it follows from the above result that $|M_\uc| + |P_\uc| =|A|$; see \cref{thm:maximaltaunrigid}. Unlike in the classical case, it is not known whether this is enough to conclude that the $\tau_d$-rigid pair $(M_\uc,P_\uc)$ is maximal. To circumvent this problem, we turn to the realm of silting complexes. This translation is possible due to the following theorem.

\begin{theorem}[\cref{thm:silting}]\label{Theorem:Silting} 
Let $P_\bullet^{M_\uc}$ be the complex formed by the first $d+1$ terms in the minimal projective resolution of $M_\uc$. Then $P_\bullet^{M_\uc}\oplus P_\uc[d]$ is a silting complex in $K^b(\proj A)$.
\end{theorem}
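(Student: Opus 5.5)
The plan is to verify the two defining properties of a silting complex for $T \colonequals P_\bullet^{M_\uc}\oplus P_\uc[d]$ in $K^b(\proj A)$. These are presilting, meaning $\Hom_{K^b(\proj A)}(T,T[i])=0$ for all $i>0$, and generation, meaning $\Thick(T)=K^b(\proj A)$. Write $P_\bullet^{M_\uc}$ as $P_d\to\cdots\to P_1\to P_0$ in degrees $-d,\dots,0$. By construction its cohomology is $M_\uc$ in degree $0$ and $\Omega^{d+1}M_\uc$ in degree $-d$ (the latter embedded in $P_d$ as the kernel); all other cohomology vanishes.

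\emph{Presilting.} The Hom-spaces split into four blocks.

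For $\Hom(P_\bullet^{M_\uc},P_\uc[d][i])$ with $i>0$, the shift sits in degree $-d-i<-d$, so this vanishes for degree reasons.

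For $\Hom(P_\uc[d],P_\bullet^{M_\uc}[i])$ with $i>0$, a chain map is a morphism $P_\uc\to P_{d-i}$ modulo homotopy. For $1\le i\le d$, such a map into a middle term modulo homotopy is controlled by cohomology, and the relevant cohomology vanishes. The only surviving case is $i=d$. Here we need $\Hom_{K}(P_\uc, P_\bullet^{M_\uc}[d])\cong \Hom_A(P_\uc,M_\uc)$, which is zero because $\Hom_A(P_\uc,-)|_\uc=0$ and $M_\uc\in\uc$. The standard fact making this precise is that for a projective $P$ in degree $0$ one has $\Hom_K(P,X[j])=\Hom_A(P,H^j(X))$.

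For $\Hom(P_\bullet^{M_\uc},P_\bullet^{M_\uc}[i])$ with $0<i\le d$, I would use the standard identification for truncated projective resolutions. For $0<i<d$ one gets $\Hom_K(P_\bullet^{M},P_\bullet^{M}[i])\cong\Ext^i_A(M,M)$, and for $i=d$ one gets the quotient of $\Ext^d_A(M,M)$ by maps factoring through the truncation, as in the classical $2$-term computation of Adachi--Iyama--Reiten. Since $M_\uc\in\mc$ and $\mc$ is $d$-cluster tilting, $\Ext^i_A(M_\uc,M_\uc)=0$ for $0<i<d$. For $i=d$ the term is bounded by $\Ext^d_A(M_\uc,M_\uc)$, which vanishes because $M_\uc$ is $\Ext^d$-projective in $\uc$. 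If instead the identification at $i=d$ comes out as $\Hom_A(M_\uc,\tau_d M_\uc)$, I will invoke the $\tau_d$-rigidity of $(M_\uc,P_\uc)$ from \cref{thm:maximaltaunrigid}, which is already available.

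\emph{Generation.} This is the main obstacle. I would try to generate each $A$ in $\Thick(T)$ via \cref{thm:intro d-tilting}. Set $B=A/\Ann\uc$. Since $M_\uc$ is $d$-tilting over $B$, there is a coresolution $0\to B\to M^0\to\cdots\to M^d\to0$ with $M^j\in\add M_\uc$. One also has $A\to B$ with kernel $\Ann\uc$, and the projective cover of $\Ann\uc$, or rather the part of $A$ not seen by $\uc$, should be accounted for by $\add P_\uc$ together with $\add M_\uc$-approximations. A cleaner route is the higher Bongartz/Jasso-style argument. Take a minimal left $\add(P_\bullet^{M_\uc})$-approximation of $A$ in $K^b(\proj A)$, iterate $d$ times, and show that the final cone lies in $\add P_\uc[d]$. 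Alternatively, once presilting is established, count summands: $|M_\uc|+|P_\uc|=|A|$ by \cref{thm:maximaltaunrigid}. For $(d+1)$-term presilting complexes, however, a summand count alone does not imply silting when $d>1$, so I expect to need the explicit construction. I would therefore construct, for each indecomposable projective $Q$, a sequence of triangles starting from $Q$. The construction uses $\uc$-approximations: the $\Ext^d$-projective generator gives $\uc=\fac_d$-type closure of $M_\uc$, so the torsion part of $Q$ admits an $\add M_\uc$-resolution of length $d$ inside $\uc$. Lifting this to complexes would express $Q$ via cones of $P_\bullet^{M_\uc}$ and $P_\uc[d]$. The delicate point is controlling the leftover term at stage $d$ and showing that it lies in $\add P_\uc[d]$, which is where the maximality of $P_\uc$ should enter.
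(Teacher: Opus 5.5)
Your presilting half reaches the right conclusion, but one step is wrong as stated. $\Hom_{K^b(\proj A)}(P^{M}_\bullet,P^{M}_\bullet[d])$ is not bounded by $\Ext^d_A(M,M)$. Every map $P_d\to P_0$ is a chain map $P^M_\bullet\to P^M_\bullet[d]$. Modulo homotopy one gets $\Hom_A(P_d,M)/\{\varphi\circ\partial\mid\varphi\in\Hom_A(P_{d-1},M)\}$, where $\partial\colon P_d\to P_{d-1}$, and $\Ext^d_A(M,M)$ sits \emph{inside} this as the classes vanishing on $\Omega^{d+1}M$. So $\Ext^d$-projectivity is not enough; already for $d=1$, rigid does not imply $\tau$-rigid. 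Your fallback is the correct argument: $\tau_d$-rigidity of $(M_\uc,P_\uc)$ together with \cref{lem:tau-rigidKbProj}, i.e.\ \cref{prop:presilting}. This is exactly what the paper uses.

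The generation step is a genuine gap. None of your routes contains a mechanism producing iterated triangles with all terms in $\add(P^{M_\uc}_\bullet\oplus P_\uc[d])$.
\begin{itemize}
\item The $d$-tilting coresolution of $B$ lives in $\mod B$ and starts at $B$ rather than $A$. It says nothing directly about thick closures in $K^b(\proj A)$.
\item Lifting a module-level sequence such as the $\uc$-coresolution $Q\to U^Q_0\to\cdots\to U^Q_d\to 0$ to complexes is precisely the hard part. Its first map need not be injective. To get conflations of truncated presentations (e.g.\ by horseshoe arguments) you are forced to use non-minimal presentations. These differ from minimal ones by contractibles and by stalks $P'[d]$ with $P'$ projective. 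Nothing in your sketch shows $\Hom_A(P',\uc)=0$ for these $P'$. Maximality of $P_\uc$ only helps once that vanishing is known, and such terms can appear at every stage, not just as a leftover at stage $d$.
\item For the iterated $\add(P^{M_\uc}_\bullet)$-approximation route, you simply assert that the final cone lies in $\add P_\uc[d]$. That is essentially the statement to be proved.
\end{itemize}

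The missing idea is to run the coresolution argument of \cref{thm:U_i Ext^d-projective} inside the exact category $C^{[-d,0]}(\proj A)$, whose conflations are degreewise split. There, exact sequences automatically give iterated triangles in $K^b(\proj A)$. The paper proceeds in four steps.
\begin{enumerate}
\item $\pc_d(\mc)$ is $d$-cluster tilting in $C^{[-d,0]}(\proj A)$ (\cref{theorem:dCT exact}).
\item $\pc_d(\uc)=\{X_\bullet\in\pc_d(\mc)\mid H_0(X_\bullet)\in\uc\}$ is a covariantly finite faithful $d$-torsion class there (\cref{thm: dtorsion to faithfuldtorsion}). Closure under $d$-extensions needs \cref{Reformulation:d-Tors}, since $H_0$ only turns a $d$-extension into a right $d$-exact sequence.
\item The stalk complex $A$ is projective in this exact category. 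Hence its minimal $\pc_d(\uc)$-coresolution $0\to A\to P^0_\bullet\to\cdots\to P^d_\bullet\to 0$ exists, is exact, and has every term $\Ext^d$-projective in $\pc_d(\uc)$ (\cref{AdmittingUCores,Ext^d-projectiveExactCat}).
\item Up to contractible summands, these $\Ext^d$-projectives are exactly $\add(P^{M_\uc}_\bullet\oplus P_\uc[d])$ (\cref{Proposition:Ext^dProjGendComplex}). For stalks, $\Ext^d$ from $P[d]$ to $Q_\bullet$ in $C^{[-d,0]}(\proj A)$ is $\Hom_A(P,H_0(Q_\bullet))$; this is precisely how $P_\uc$ enters. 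For $P^U_\bullet$, the condition reduces to $\Hom_A(\uc,\tau_dU)=0$ via \cref{thm: equivalence of Extd projectivity}.
\end{enumerate}
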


\Cref{Theorem:Silting} allows us to take advantage of the strong maximality property satisfied by silting complexes (see \cref{lem:maximal}) in order to show that the $\tau_d$-rigid pair $(M_\uc, P_\uc)$ is maximal. This provides a crucial step in the proof of \cref{MainResultIntro}. 

It is worth noting that \Cref{Theorem:Silting} also provides a way to use a $d$-cluster tilting subcategory as a tool to study the ambient algebra itself, by generating silting complexes and hence $t$-structures and derived equivalences from $d$-torsion classes. Furthermore, the notion of $(d+1)$-term silting complexes for $d\geq 2$ has recently attracted significant attention \cite{ALLT,Gupta,GuptaZhou,MM,Zhou}, and \Cref{Theorem:Silting} can be seen as a part of this study. One big advantage of our approach is that we get explicit constructions of silting complexes by just working in the $d$-cluster tilting subcategory of the algebra. This is a much simpler task than working in its derived category, where silting complexes normally live. In particular, we provide explicit combinatorial descriptions of the silting complexes constructed in \Cref{Theorem:Silting} for higher Auslander and higher Nakayama algebras of \mbox{type $\mathbb{A}$}. These silting complexes give information on the associated derived categories, which in the case of higher Auslander algebras are equivalent to certain Fukaya categories \cite{DiDedda1, DiDedda2, DyckerhoffJassoLekili} and to the derived categories of a family of fractionally Calabi-Yau lattices \cite{Gottesman}. Our combinatorial descriptions, see \Cref{cor: higher nakayama}, are implemented in a Python code, allowing the computation of a plethora of explicit examples; see \cref{rem: code}. 

We can summarise the relationship between (pre)silting complexes, $d$-torsion classes and $\tau_d$-rigid pairs as in the following diagram:

\begin{equation}\label{diag:silting intro}
\begin{tikzpicture}[align=center, xscale = 3.6, yscale = 2.5, baseline=(current  bounding  box.center)]
\node (ff-torsion-M) at (1,1) 
	{$\begin{Bmatrix}
	\text{basic inner acyclic $(d+1)$-term} \\
	\text{silting complexes $S$ in $K^{b}(\proj A)$} \\
    \text{satisfying $H_0(S)\in\mc$}
	\end{Bmatrix} $};
\node (ff-torsion-modA) at (-1,1)  {$\begin{Bmatrix}
	\text{functorially finite} \\
	\text{$d$-torsion classes in $\mc$}\end{Bmatrix}$};
\node (maximal) at (1, 0)  {$\begin{Bmatrix}
	\text{basic inner acyclic $(d+1)$-term} \\
	\text{presilting complexes $S$ in $K^{b}(\proj A)$} \\
    \text{satisfying $H_0(S)\in\mc$ and $|S|=|A|$}
	\end{Bmatrix}$};
\node (support) at (-1, 0)  {$\begin{Bmatrix} 
	\text{basic maximal $\tau_d$-rigid pairs} \\
	\text{ in $\mc$ with $|A|$ summands}
	\end{Bmatrix}$};
\draw[right hook->] (ff-torsion-M) -- node[right] {} (maximal);
\draw[right hook->] (ff-torsion-modA) -- node[left] {$\phi_d$} (support);
\draw[right hook->] (support) -- node[left] {} (maximal);
\draw[right hook->] (ff-torsion-modA) -- node[above] {$\psi_d$} (ff-torsion-M);
\end{tikzpicture}
\end{equation}

\noindent The bottom map in the diagram is given by sending a maximal $\tau_d$-rigid pair $(M,P)$ to $P_\bullet^M \oplus P[d]$, where $P_\bullet^M$ is the complex formed by the first $d+1$ terms in the minimal projective resolution \mbox{of $M$}. This yields a presilting complex in $K^{b}(\proj A)$; see \cref{prop:presilting}. The map $\psi_d$ is defined by sending a functorially finite $d$-torsion class to the associated silting complex in $K^{b}(\proj A)$ as in \cref{Theorem:Silting}. It follows directly from the definitions of the maps that the diagram commutes. The map $\psi_d$ is injective, but in general not surjective; see \cref{prop: injective silting} and \cref{ex:running7}.

For $d=1$, \cref{Theorem:Silting} is known via the bijection between support $\tau$-tilting pairs and $2$-term silting complexes from \cite{AdachiIyamaReiten}. However, the proof of this result does not generalise well to the higher setting as certain key properties of $2$-term silting complexes remain unknown for silting complexes of higher length, as discussed in \cref{rmk:howmanyisenough}. We therefore need a different strategy to prove \cref{Theorem:Silting} generally. The central ingredient is given as \Cref{Thm:dCT intro} below. In particular, this result exhibits novel examples of $d$-cluster tilting subcategories, which are interesting in their own right. Note that here we think about the subcategory $C^{[-d,0]}(\proj A)$ of $C^b(\proj A)$, consisting of complexes concentrated in degrees $-d,\ldots,0$, as an exact category; see \cref{ssec:exact and d-tilting}.

\begin{theorem}[\cref{theorem:dCT exact}]\label{Thm:dCT intro} 
The category 
\begin{align*}
\pc_d(\mc) & = \{P_\bullet \in C^{[-d,0]}(\proj A) \mid H_0(P_\bullet) \in \mc \text{ and } H_i(P_\bullet)=0 \text{ for } i=1,\dots,d-1 \} 
\end{align*}
is a $d$-cluster tilting subcategory of $C^{[-d,0]}(\proj A)$.
\end{theorem}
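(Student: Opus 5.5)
The plan is to verify the three defining properties of a $d$-cluster tilting subcategory directly: $\pc_d(\mc)$ is generating–cogenerating and functorially finite, and it equals both of the categories
\[
\{X \mid \Ext^i(X,\pc_d(\mc))=0,\ 1\le i\le d-1\}
\quad\text{and}\quad
\{Y \mid \Ext^i(\pc_d(\mc),Y)=0,\ 1\le i\le d-1\}.
\]
Throughout, $C^{[-d,0]}(\proj A)$ carries the degreewise split exact structure of \cref{ssec:exact and d-tilting}.

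\textbf{Step 1: computing $\Ext$.} The basic computational tool will be the identification
\[
\Ext^i(X,Y)\cong \Hom_{K^b(\proj A)}(X,Y[i]) \quad\text{for } 1\le i\le d-1 \text{ and } X,Y\in C^{[-d,0]}(\proj A).
\]
I would prove it by building explicit $i$-fold conflations from cones and cocones. The point is that for $i<d$ the shifts stay inside the window $[-d,0]$ as far as is needed. I expect to check this only in the range $i\le d-1$, which is all that is required.

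\textbf{Step 2: structure of objects and rigidity.} Every $X\in\pc_d(\mc)$ decomposes as
\[
X\cong P^{M}_\bullet\oplus Q[d]\oplus C, \qquad M=H_0(X)\in\mc,
\]
where $P^{M}_\bullet$ is the truncated minimal projective resolution of $M$, $Q$ is projective and $C$ is contractible. This holds because a complex of projectives with $H_i=0$ for $1\le i\le d-1$ is, after splitting off contractible summands, a truncated resolution of $H_0$, possibly with an extra summand in degree $-d$. For such $X=P^{M}_\bullet$ and $Y=P^{N}_\bullet$, a standard comparison of chain maps with projective resolutions gives
\[
\Hom_K(P^M_\bullet,P^N_\bullet[i])\cong\Ext^i_A(M,N) \quad\text{for } 1\le i\le d-1.
\]
The degree $-d$ term of $P^N_\bullet$ does not interfere in this range. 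Maps into or out of $Q[d]$ vanish for degree reasons. Since $\mc$ is $d$-rigid, $\pc_d(\mc)$ is rigid. Functorial finiteness will follow from $\mc$ being functorially finite with an additive generator-type argument via minimal approximations of $H_0$, together with finitely many $Q[d]$ and contractible summands. Projectives and injectives of the exact category (contractibles, and stalks $P[0]$, $P[d]$ where appropriate) already lie in $\pc_d(\mc)$, giving generating–cogenerating.

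\textbf{Step 3: maximality.} Suppose $\Ext^i(\pc_d(\mc),Y)=0$ for $1\le i\le d-1$. I would test against the stalk complexes $A[j]\in\pc_d(\mc)$ for $0\le j\le d$, noting that $A[j]$ for $0<j<d$ is not in $\pc_d(\mc)$. The better test objects are therefore $P^{M}_\bullet$ for $M\in\mc$ together with $A=P^A_\bullet$. Since $\Hom_K(A,Y[i])=H_{-i}$-type terms, one first gets the vanishing of the intermediate homology $H_i(Y)$, $1\le i\le d-1$, via $\Hom_K(P^{M}_\bullet,Y[i])$ and a dimension-shifting argument. This reduces to $Y\cong P^{N}_\bullet\oplus Q[d]\oplus C$ with $N=H_0(Y)$. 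Step 2 then gives $\Ext^i_A(\mc,N)=0$ for $1\le i\le d-1$, so $N\in\mc$ because $\mc$ is $d$-cluster tilting. The other side is dual. There I would use the Nakayama functor $\nu$ and the adjunction
\[
\Hom_K(Y,\nu P[j])\cong D\Hom_K(P,Y[-j]),
\]
together with test objects $P^{I}_\bullet$ for $I$ injective in $\mc$ (these are $d$-cluster tilting objects' injectives), to read off the homology of $Y$ and then $\Ext^i_A(H_0(Y),\mc)=0$.

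\textbf{Main obstacle.} The hardest step will be the homology-vanishing part of Step 3. The problem is controlling $H_i(Y)$ for $1\le i\le d-1$ when one only has $\Hom_K$-vanishing against truncated complexes, whose degree $-d$ terms create boundary effects. My first approach is an induction on $i$, starting from $i=d-1$. At each stage I would use a $\mc$-approximation of the relevant cycle or boundary module and the test complex $P^{M}_\bullet$ to produce a nonzero map in $K$ whenever $H_i(Y)\neq0$.
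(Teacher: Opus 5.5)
Your overall architecture matches the paper's: identify $\Ext^i$ in $C^{[-d,0]}(\proj A)$ with $\Hom_{K^b(\proj A)}(X,Y[i])$, reduce to $\Ext^i_A$ between the $H_0$'s for inner acyclic complexes, then prove the two orthogonality equalities. However, the step you yourself call the main obstacle is left open, and the sketch you give for it rests on a wrong computation.

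\textbf{The homology-vanishing step.} With the paper's shift convention ($P[d]$ sits in degree $-d$) we have $\Hom_{K^b(\proj A)}(A,Y[i])\cong H_i(Y)$. This is $0$ for $i>0$, because $Y$ lives in degrees $-d,\dots,0$. So testing against $A=P^A_\bullet$ gives no information, and the proposed induction via $\mc$-approximations of cycles and boundaries is not yet an argument. The missing idea is that the stalk complex $A[d]$ is exactly the right test object. You correctly noted that it lies in $\pc_d(\mc)$, but then discarded it. For $1\le i\le d-1$,
\[
\Ext^i(A[d],Y)\cong \Hom_{K^b(\proj A)}(A[d],Y[i])\cong \Hom_{K^b(\proj A)}(A,Y[i-d])\cong H_{i-d}(Y).
\]
Hence $\Ext^i(\pc_d(\mc),Y)=0$ immediately forces $H_{-d+1}(Y)=\cdots=H_{-1}(Y)=0$. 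Testing against $P^M_\bullet$ for $M\in\mc$ then gives $\Ext^i_A(\mc,H_0(Y))=0$, so $H_0(Y)\in\mc$. This is exactly the paper's argument. Your dual side, via Nakayama duality against a projective $d$-presentation of $DA$ together with the triangle $H_{-d}(P_\bullet)[d]\to P_\bullet\to DA\to H_{-d}(P_\bullet)[d+1]$, is also what the paper does.

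\textbf{Functorial finiteness.} This is not established. Lifting a left $\mc$-approximation of $H_0(X)$ to a morphism $X\to P^M_\bullet$ does give a left approximation once you add the summand $X\to X_{-d}[d]$; that summand absorbs the ambiguity of the lift, which is only unique up to maps factoring through $\add(A[d])$. But the argument does not dualise. Morphisms $P^M_\bullet\to X$ into a complex $X$ that is not inner acyclic are not controlled by $H_0$, and you may not assume $\mc$ has an additive generator.

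The paper obtains contravariant finiteness for free. For a generating--cogenerating subcategory satisfying both orthogonality equalities, covariant finiteness implies contravariant finiteness. The proof resolves $X$ by $d-1$ deflations from $\mc$, coresolves the last syzygy by left $\mc$-approximations (which end in $\mc$ after $d-1$ steps), and splices the two via the comparison map. Rigidity then makes the resulting deflation onto $X$ a right approximation. You need this or an equivalent argument.

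\textbf{Step 1.} Your sketch of $\Ext^i\cong\Hom_{K^b(\proj A)}(-,-[i])$ by building conflations from cones is also too vague to check. The paper instead dimension-shifts along projective objects of $C^{[-d,0]}(\proj A)$. These are homotopy equivalent to objects of $\add(A)$, so $\Hom_{K^b(\proj A)}(P,Y[k])=0$ for $k>0$.
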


We moreover show that the essential image of $\pc_d(\mc)$ in the homotopy category $K^{[-d,0]}(\proj A)$, with its extriangulated structure, is a $d$-cluster tilting subcategory in the sense of \cite{HerschendLiuNakaokaII}; see \cref{theorem: d-CT extriangulated}.

Note that it follows from \cref{Thm:dCT intro} that $\pc_d(\mc)$ is a $d$-exact category in the sense of \cite{Jasso}. We introduce the definition of a $d$-torsion class in the exact setup, see \cref{ssec:d-torsion in exact setup}, generalising the notion from \cite{Jorgensen}. We believe this is interesting in its own right, in particular since $d$-exact categories occur much more frequently than their $d$-abelian counterparts; see \cite[Corollary 3.19]{AHJKPT1} as well as e.g.\ \mbox{\cite{Hanihara,HerschendIyamaMinamotoOppermann,HerschendIyamaOppermann,Iyama2011,IyamaWemyss,IyamaYoshino,HauglandJacobsenSchroll,SandoyThibault}}. Our key motivation, however, is to show that $d$-torsion classes in $\mc$ correspond bijectively to faithful $d$-torsion classes in $\pc_d(\mc)$, see \cref{thm: dtorsion to faithfuldtorsion}, as this bijection provides a key step in the proof of \cref{Theorem:Silting}.

As a final remark, we illustrate how the techniques used for the construction in \cref{MainResultIntro} build a bridge between classical $\tau$-tilting theory and its higher-dimensional analogue. This connection is of a similar nature as the link between torsion classes and their higher counterparts established in \cite{AJST}. We restrict our attention to the torsion classes $\tc$ in $\mod A$ for which $\tc \cap \mc$ is a $d$-torsion class and where torsion and $d$-torsion subobjects coincide. In this case, we say that $\tc$ induces a $d$-torsion class in $\mc$; see \cref{def: induces}. This property is characterised homologically in \cref{Proposition:CharacterizationInducingdTorsion}, where we show that a torsion class $\tc$ induces a $d$-torsion class if and only if $\Ext^{d-1}_A(t M,f M')=0$ and $t M \in \mc$ for all $M,M' \in \mc$. The connection between $\tau$-tilting and $\tau_d$-tilting theory is illustrated in the following diagram, where the map $\alpha$ takes a support $\tau$-tilting pair $(T,P)$ to the sum of the non-isomorphic indecomposable summands of the object $\bigoplus_{i=0}^{d-1}U_i^T$ obtained from the minimal $\mc$-coresolution 
$
0\to T\to U_0^T\to \cdots \to U_{d-1}^T\to 0
$
of $T$: 

\begin{equation}\label{diagram:introduction}
\begin{tikzpicture}[align=center, xscale = 4, yscale = 2, baseline=(current  bounding  box.center)]
\node (ff-torsion-ext) at (0,3) 
	{$\begin{Bmatrix}
	\text{functorially finite torsion} \\
	\text{classes in $\modA$ inducing}\\
	\text{$d$-torsion classes in $\mc$}
	\end{Bmatrix} $};
\node (ff-torsion-M) at (1,2) 
	{$\begin{Bmatrix}
	\text{functorially finite $d$-torsion} \\
	\text{classes in $\mc$}
	\end{Bmatrix} $};
\node (ff-torsion-modA) at (-1,2)  {$\begin{Bmatrix}
	\text{functorially finite torsion} \\
	\text{classes in $\modA$}\end{Bmatrix}$};
\node (maximal) at (1, 1)  {$\begin{Bmatrix}
	\text{basic maximal $\tau_d$-rigid pairs} \\
	\text{ in $\mc$ with $|A|$ summands}
	\end{Bmatrix}$};
\node (support) at (-1, 1)  {$\begin{Bmatrix} 
	\text{basic support $\tau$-tilting} \\
	\text{ pairs in $\modA$ } 
	\end{Bmatrix}$};
\node (M) at (0, .25)  {$\begin{Bmatrix} \text{objects in } \mc  \end{Bmatrix}$};
\node (Mlt) at (-0.35,.22) {};
\node (Mrt) at (.35,.22) {};
\draw[->] (ff-torsion-ext) -- node[above right, pos = .75]{$-\cap\mc$} (ff-torsion-M);
\draw[->] (ff-torsion-ext) -- (ff-torsion-modA);
\draw[->] (ff-torsion-M) -- node[right] {$\phi_d$} (maximal);
\draw[->] (ff-torsion-modA) -- node[left] {$\phi_1$} (support);
\draw[->] (maximal) -- node[below right] {$\beta$} (Mrt);
\draw[->] (support) -- node[below left] {$\alpha$} (Mlt);
\end{tikzpicture}
\end{equation}

\noindent The map $\beta$ above takes a $\tau_d$-rigid pair $(M,P)$ to $M$. If a torsion class $\tc$ is functorially finite in $\mod A$, then $\tc \cap \mc$ is functorially finite in $\mc$ by \cref{prop:T ff implies U ff}. Hence, all the maps above are well-defined, and the diagram commutes by \cref{cor:sameeitherway}. We furthermore show that when $\tc$ is given by an APR tilting module, then we end up with the associated $d$-APR tilting module introduced in \cite{IyamaOppermann} at the bottom of the diagram; see \Cref{subsec: d-APR}. 

\addtocontents{toc}{\SkipTocEntry}
\subsection*{Structure of the paper}
We start by giving an overview of necessary background and terminology needed in the rest of the paper in \cref{sec:background}. In \cref{sec: char. results} we characterise when a classical torsion class induces a higher torsion class; see \cref{Proposition:CharacterizationInducingdTorsion}. We observe that any split torsion class gives rise to a split $d$-torsion class, as introduced in \cref{def:split d-torsion}. Furthermore, we present a new characterisation of $d$-torsion classes in terms of right $d$-exact sequences; see \cref{Reformulation:d-Tors}. In \cref{sec: ff-d-torsion} we restrict our attention to functorially finite $d$-torsion classes, for which we use approximations to construct $\Ext^d$-projective objects. These are again used in \cref{sec:d-tors to tau_d,sec: tau-tilting to tau_d-tilting} to construct $\tau_d$-rigid pairs associated to $d$-torsion classes and support $\tau$-tilting modules, respectively. In \cref{sec: tau-tilting to tau_d-tilting} we moreover apply the theory of functorially finite $d$-torsion classes to study $d$-APR tilting modules and slices. In \cref{subsec:dCT} we give the proof of \cref{Thm:dCT intro}, preparing the ground for proving \cref{Theorem:Silting} in \cref{sec:silting}. As a consequence, this allows us to conclude that the $\tau_d$-rigid pair induced by a functorially finite $d$-torsion class is indeed maximal, finishing the proof of \cref{MainResultIntro}. Finally, in \cref{subsec: higher nakayama} we employ our theory to give explicit combinatorial descriptions of maximal $\tau_d$-rigid pairs and silting complexes associated to functorially finite $d$-torsion classes in the setup of higher Auslander and higher Nakayama algebras of type $\mathbb{A}$. We furthermore show that there is an injective, but in general not surjective, map from the functorially finite $d$-torsion classes associated to a higher Auslander algebra to Oppermann--Thomas cluster tilting objects in the $d$-dimensional cluster category.

\addtocontents{toc}{\SkipTocEntry}
\subsection*{Conventions and notation}

Throughout this paper, we fix a positive integer $d$ and we let $A$ be a finite-dimensional algebra over a field $\kk$. We write $\modA$ for the category of finitely presented right $A$-modules and $\proj A \subseteq \mod A$ for the subcategory consisting of projective modules. We let $\operatorname{pd}_A M$ denote the projective dimension of a module $M \in \modA$ and write $DM \colonequals \Hom_{\kk}(M,\kk)$ for the $\kk$-dual of $M$. We use lower indices to denote a complex $C_\bullet=(\cdots \to C_{-1}\to C_0 \to C_1\to\cdots)$. The notation $C^b(\proj A)$ is used for the category of bounded complexes in $\proj A$, while we write $K^b(\proj A)$ for the homotopy category of bounded complexes in $\proj A$. The number of pairwise non-isomorphic indecomposable summands of an object $X$ in $\mod A$ or $K^b(\proj A)$ is denoted \mbox{by $|X|$}, and we write $\add(X)$ for the smallest subcategory which contains $X$ and is closed under finite direct sums and direct summands. 

We compose arrows in a quiver from left to right, i.e.\ we write $ab$ for the path starting at the source of $a$ and ending in the target of $b$.  All subcategories are assumed to be full and closed under isomorphisms, finite direct sums and direct summands. 

\section{Background and preliminaries} \label{sec:background}
In this section we give an overview of key concepts and results that will be used in the rest of the paper. In particular, we introduce the setup for higher homological algebra and consider both the classical versions and higher analogues of objects we are interested in. We start with some basic terminology.

\subsection{Basic terminology} \label{ssec: basic terminology}

Let $\xc$ be a subcategory of an additive category $\ac$. 
Define 
\[
\fac \xc \colonequals \{Y \in \ac \mid \text{there exists an epimorphism } X\to Y \text{ with } X\in\xc\}. 
\]

A morphism $f \colon Y \rightarrow X$ with $X\in\xc$ is called a \textit{left $\xc$-approximation} of $Y \in \ac$ if any morphism $Y \rightarrow X'$ with $X'\in\xc$ factors through $f$. The subcategory $\xc$ is called \textit{covariantly finite} if every object in $\ac$ admits a left $\xc$-approximation. \textit{Right $\xc$-approximations} and \textit{contravariantly finite} subcategories are defined dually, and $\xc$ is called \textit{functorially finite} if it is both covariantly and contravariantly finite.

A morphism $f \colon X \rightarrow Y$ in $\ac$ is called \textit{left minimal} if any endomorphism $g$ of $Y$ satisfying $g \circ f = f$ is an isomorphism. This leads to the notion of a \textit{minimal left $\xc$-approximation}, meaning a left $\xc$-approximation which is also left minimal. \textit{Right minimal morphisms} and \textit{minimal right $\xc$-approximations} are defined dually. If $\ac$ is a Krull--Schmidt category, an object has a left (resp.\ right) $\xc$-approximation if and only if it has a minimal left (resp.\ right) $\xc$-approximation; see e.g.\ \cite[Example 2.1.25]{Krause22}.

The \textit{Jacobson radical} of $\ac$ is given by
\begin{align*}
    \Rad_\ac(X,Y)=\{ f \in \Hom_\ac(X,Y) \mid 1_X - g \circ f \ \text{is invertible for all $g \in \Hom_\ac(Y,X)$}\}
\end{align*}
for $X,Y \in \ac$. If $\ac=\modA$, we use the notation $\Rad_A(X,Y)= \Rad_\ac(X,Y)$. For more details and basic properties, see \cite[Section 2]{Krause}. The following observation connecting the radical and minimality of morphisms will be repeatedly used.

\begin{lemma}\label{lem: minimal iff radical}
Let $\ac$ be a Krull--Schmidt additive category and consider two composable morphisms $X\xrightarrow{f}Y\xrightarrow{g}Z$ in $\ac$. The following statements hold:
\theoremlistfix
\begin{enumerate}
    \item If the induced sequence
\[
\Hom_{\ac}(Y,X)\to \Hom_{\ac}(Y,Y)\to\Hom_{\ac}(Y,Z)
\] 
is exact, then $f\in \Rad_{\ac}(X,Y)$ if and only if $g$ is right minimal.
\item If the induced sequence 
\[
\Hom_{\ac}(Z,Y)\to \Hom_{\ac}(Y,Y)\to\Hom_{\ac}(X,Y)
\]
is exact, then $g\in\Rad_{\ac}(Y,Z)$ if and only if $f$ is left minimal.
\end{enumerate}
\end{lemma}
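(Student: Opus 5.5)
The plan is to prove (1) directly and obtain (2) by duality, passing to the opposite category $\ac^{\mathrm{op}}$. This is still Krull--Schmidt, has the same radical, and exchanges left and right minimality. Throughout I use the standard fact that in a Krull--Schmidt category $\Rad_\ac(Y,Y)$ is the Jacobson radical of $\End_\ac(Y)$, a semiperfect ring. In particular, an endomorphism $h$ of $Y$ is an isomorphism if and only if $h \notin \Rad_\ac(Y,Y)+$ (non-units), and $1_Y - r$ is invertible for every $r \in \Rad_\ac(Y,Y)$.

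For (1), assume exactness of $\Hom_\ac(Y,X)\to\Hom_\ac(Y,Y)\to\Hom_\ac(Y,Z)$. Concretely, any $h\colon Y\to Y$ with $g\circ h=0$ factors as $h=f\circ u$ for some $u\colon Y\to X$.

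First suppose $f\in\Rad_\ac(X,Y)$, and let $h\in\End(Y)$ with $g\circ h=g$. Then $g\circ(1_Y-h)=0$, so by exactness $1_Y-h=f\circ u$. Since the radical is an ideal, $f\circ u\in\Rad_\ac(Y,Y)$. Hence $h=1_Y-f\circ u$ is invertible, and $g$ is right minimal.

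Conversely suppose $g$ is right minimal. To show $f\in\Rad_\ac(X,Y)$, it suffices, by the definition of the radical (using the equivalent one-sided description), to show that $f\circ u \in \Rad_\ac(Y,Y)$ for every $u\colon Y\to X$. Equivalently, $1_Y - f\circ u\circ v$-type expressions must be invertible; the cleanest formulation is that $1_Y - f\circ u$ is invertible for all $u$. Indeed, $\Rad(X,Y)$ consists of the $f$ with $1_Y - f u$ invertible for all $u\colon Y\to X$, which is equivalent to the stated definition via $1-ab$ invertible $\iff$ $1-ba$ invertible.

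Given $u$, set $h=1_Y-f\circ u$. Then $g\circ h=g-g\circ f\circ u$. Here I need $g\circ f=0$, which follows from exactness applied to $1_Y$ composed appropriately. More precisely, exactness says the composite $\Hom(Y,X)\to\Hom(Y,Z)$ is zero. Taking any $u$ shows $g f u=0$ for all $u\colon Y\to X$, which is exactly what is needed. So $g\circ h=g$, and right minimality gives that $h$ is an isomorphism. This proves $f\in\Rad_\ac(X,Y)$.

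The only subtle point is the equivalence between the given two-sided-looking definition of $\Rad$ (with $1_X - g\circ f$) and the version with $1_Y - f\circ u$. This follows from the standard ring-theoretic identity that $1-ab$ is invertible iff $1-ba$ is, applied in the additive category (via the matrix ring on $X\oplus Y$ if one wants to be formal). Notably, the Krull--Schmidt hypothesis is not really needed in this argument beyond the well-behavedness of the radical.

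Part (2) is the dual statement: apply (1) in $\ac^{\mathrm{op}}$ to $Z\to Y\to X$, where the hypothesis becomes exactly the exactness of $\Hom(Z,Y)\to\Hom(Y,Y)\to\Hom(X,Y)$.
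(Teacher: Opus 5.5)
Your proof is correct, but it does not follow the paper's route: the paper gives no argument of its own and says only that the claim ``follows by the proof of'' Lemma~1.1 in the cited higher Auslander--Reiten survey.

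You instead give a direct two-line computation with $h=1_Y-fu$. It uses only that $\Rad_\ac$ is an ideal and the identity that $1_X-uf$ is invertible if and only if $1_Y-fu$ is; explicitly, $(1_Y-fu)^{-1}=1_Y+f(1_X-uf)^{-1}u$ in any additive category, so no matrix ring is needed. This gives a self-contained proof in which the Krull--Schmidt hypothesis is unnecessary. It also shows which half of the exactness hypothesis each direction uses:
\begin{itemize}
\item ``radical $\Rightarrow$ right minimal'' uses that the kernel is contained in the image;
\item ``right minimal $\Rightarrow$ radical'' uses only that $gfu=0$ for all $u\colon Y\to X$.
\end{itemize}

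The duality step for (2) is fine. Note, though, that identifying $\Rad_{\ac^{\mathrm{op}}}(Z,Y)$ with $\Rad_\ac(Y,Z)$ again relies on the $1-ab$/$1-ba$ identity, because the paper's definition of the radical is one-sided.

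Two cosmetic corrections:
\begin{itemize}
\item Your opening claim that an endomorphism of $Y$ is an isomorphism if and only if it lies outside $\Rad_\ac(Y,Y)$ is false for decomposable $Y$; a nontrivial idempotent is a counterexample. You never use it, so delete it.
\item The sentence about ``$1_Y-f\circ u\circ v$-type expressions'' is muddled. The criterion you actually use, that $1_Y-fu$ is invertible for all $u\colon Y\to X$, is the right one, and you can state it directly.
\end{itemize}
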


\begin{proof}
    This follows by the proof of \cite[Lemma 1.1]{higher survey}.
\end{proof}

In the case where $\ac = \modA$, the \textit{annihilator} of a subcategory $\xc \subseteq \ac$ is defined to be $$\Ann \xc=\{a\in A\mid X\cdot a =(0) \text{ for all }X\in \xc\}.$$ The subcategory $\xc$ is \textit{faithful} if $\Ann\xc=(0)$. Note that this is equivalent to the existence of a monomorphism $A\to X$ with $X\in \xc$. A module $M\in \modA$ is called faithful if $\operatorname{add}(M)$ is a faithful subcategory of $\modA$.

A pair $(\tc, \fc)$ of subcategories of $\modA$ is called a \textit{torsion pair} if $\Hom_{A}(X,Y)=0$ for all $X \in \tc$ and $Y \in \fc$, and if for every $X \in \modA$, there exists a short exact sequence 
\[
0\to tX \rightarrow X \rightarrow fX \to 0
\]
with $tX\in\mathcal{T}$ and $fX\in\mathcal{F}$. This short exact sequence, which is unique up to isomorphism, is referred to as the \textit{canonical short exact sequence} of $X$ with respect to $(\tc,\fc)$. The object $tX$ is known as the \textit{torsion subobject} of $X$. Given a torsion pair $(\tc, \fc)$, the subcategories $\tc$ and $\fc$ are called a \textit{torsion class} and \textit{torsion free class}, respectively. It follows from the definition that 
\begin{align*}\tc&=\{X\in \modA\mid \Hom_A(X,Y)=0 \text{ for all } Y\in \fc\} \\
\fc&=\{Y\in \modA\mid \Hom_A(X,Y)=0 \text{ for all } X\in \tc\}.
\end{align*} 
Hence, a torsion pair is completely determined by its torsion class or torsion free class.

An $A$-module $T$ is called \textit{tilting} if it has finite projective dimension, $\Ext^i_A(T,T)=0$ for all $i>0$, and there exists an exact sequence 
\[
0 \to A \to T_0 \to T_1 \to \cdots \to T_d \to 0
\]
with $T_i \in \add (T)$ for $i=0,\dots,d$. It is furthermore called $d$\textit{-tilting} if $\operatorname{pd}_A T \leq d$. If $T \in \mod A$ is a tilting module, then $|T|=|A|$ \cite[Section 2.1]{Bongartz}.

\subsection{Higher homological algebra}\label{sec:higher world}
Throughout the rest of this section, we work in the module category $\mod A$. Note that higher homological algebra in an exact setup will be discussed in \cref{subsec:dCT,sec:silting}.

We first recall the definition of a $d$-cluster tilting subcategory of $\mod A$. This notion was introduced in \cite{Iyama2007a,Iyama2007b} and forms the basic setting for higher homological algebra.

\begin{definition}\label{def:d-CT abelian}
A functorially finite subcategory $\mc$ of $\modA$ is \textit{$d$-cluster tilting} if
\begin{align*}
\mc  &= \{X \in \modA \mid \Ext_{A}^i(X,M)=0 \text{ for all $M \in \mc$ and all $i = 1, \dots, d-1$}\}\\
&= \{Y \in \modA \mid \Ext_{A}^i(M,Y)=0 \text{ for all $M \in \mc$ and all $i = 1, \dots, d-1$}\}.
\end{align*}
\end{definition}

\begin{remark}
    The extension vanishing property in \cref{def:d-CT abelian} leads to a situation where the shortest non-trivial exact sequences in a $d$-cluster tilting subcategory $\mc$ have $d$ middle terms, and such sequences play a key role in the study of $\mc$. These ideas were made precise in \cite{Jasso} (see also \cite{GeissKellerOppermann}), through the notion of \textit{$d$-abelian} categories. Note in particular that any $d$-cluster tilting subcategory $\mc \subseteq \modA$ is $d$-abelian \cite[Theorem 3.16]{Jasso}. It is furthermore shown in \cite[Theorem 4.7]{Ebrahimi} and \cite[Theorem 7.3]{Kvamme} that a converse of this result also holds, namely that any $d$-abelian category $\mc$ embeds into an ambient abelian category as a $d$-cluster tilting subcategory. Moreover, this abelian category is unique up to equivalence by \cite[Proposition 4.8]{Kvamme}.
\end{remark}

If $\mc \subseteq \modA$ is a $d$-cluster tilting subcategory, then any $M\in \modA$ admits exact sequences
\[
0\to X_{-d}\to \cdots \to X_{-1}\to M\to 0 \quad \text{and} \quad 0\to M\to Y_1\to \cdots \to Y_d\to 0\quad 
\]
with $X_{-d},\dots, X_{-1},Y_1,\dots,Y_d\in \mc$; see \cite[Proposition 3.17]{Jasso}. We call these sequences an \mbox{$\mc$\textit{-resolution}} and an $\mc$\textit{-coresolution} of $M$, respectively.

A \textit{$d$-exact sequence} or \textit{$d$-extension} in a $d$-cluster tilting subcategory $\mc \subseteq \mod A$ is a complex 
\begin{equation}\label{eq: d ext}
0 \to X_0\xrightarrow{}X_1 \xrightarrow{} X_2 \xrightarrow{} \cdots \xrightarrow{} X_{d} \xrightarrow{} X_{d+1} \to 0
\end{equation}
with $X_i \in \mc$ for $i=0,\dots,d+1$ which is exact in the ambient category $\mod A$. Note that this notion of a $d$-exact sequence in $\mc$ agrees with \cite[Definition 2.4]{Jasso}; see \cite[Proposition 2.9]{HerschendJorgensen}.

Similarly, an exact complex
\[
X_0\xrightarrow{f_0}X_1 \xrightarrow{f_1} X_2 \xrightarrow{f_2} \cdots \xrightarrow{f_{d-1}} X_{d} \xrightarrow{f_d} X_{d+1} \to 0
\]
in $\mc$ is called a \textit{right $d$-exact sequence}. In this case, we say that the sequence
\[
X_1 \xrightarrow{f_1} X_2 \xrightarrow{f_2} \cdots \xrightarrow{f_{d-1}} X_{d} \xrightarrow{f_d} X_{d+1} \to 0
\]
is a \textit{$d$-cokernel} of $f_0$ in $\mc$. Such a $d$-cokernel is sometimes simply denoted by the $d$-tuple $(f_1, \dots, f_{d})$. The notions of \textit{left $d$-exact sequences} and \textit{$d$-kernels} are defined dually. A \textit{$d$-quotient} of $X_1 \in \mc$ is a $d$-cokernel $(f_1, \dots, f_{d})$ of some morphism $f_0\colon X_0\to X_1$ in $\mc$.

Note that if an exact sequence in $\mod A$ with $d$ middle terms has end terms in $\mc$, then the sequence is Yoneda equivalent to a $d$-exact sequence with all terms in $\mc$; see \cite[Proposition 2.9]{HerschendJorgensen} and \cite[Proposition A.1]{Iyama2007a} for details. Furthermore,  a $d$-exact sequence in $\mc$ of the form \eqref{eq: d ext} is Yoneda equivalent to a $d$-exact sequence 
\[
0 \to X_0 \rightarrow X'_1 \rightarrow \cdots \rightarrow X'_{d} \rightarrow X_{d+1} \to 0
\]
in $\mc$ if and only if there exists a commutative diagram
\[
	\begin{tikzcd}[column sep=22, row sep=30]
	0 \arrow[r] & X_0 \arrow[r] \arrow[d, equal] & X_1 \arrow[r] \arrow[d] & \cdots \arrow[r] & X_d \arrow[r] \arrow[d] & X_{d+1} \arrow[d, equal]  \arrow[r]  & 0\phantom{.}\\
	0 \arrow[r] & X_0 \arrow[r] & X'_1 \arrow[r] & \cdots \arrow[r] & X'_{d} \arrow[r] & X_{d+1} \arrow[r]  & 0
	\end{tikzcd}
\]
by \cite[Proposition 2.9]{HerschendJorgensen}. In this case, we simply say that the sequences are \textit{equivalent}. 

Since $d$-kernels and $d$-cokernels are unique only up to homotopy, we will make use of the following notion of minimality.

\begin{definition}\cite[Definition 2.5]{HerschendJorgensen}\label{def: minimal}
    Let $\mc \subseteq \mod A$ be a $d$-cluster tilting subcategory.
    \theoremlistfix
    \begin{enumerate}
        \item A $d$-cokernel
        \(
        X_1 \xrightarrow{f_1} X_2 \xrightarrow{f_2} \dots \xrightarrow{f_{d-1}} X_{d} \xrightarrow{f_d} X_{d+1} \to 0
        \)
        of a morphism $f_0 \colon X_0 \to X_1$ in $\mc$ is \textit{minimal} if we have $f_i \in \Rad_{A}(X_i,X_{i+1})$ for \mbox{$i=2,\ldots,  d$}.
        \item A $d$-kernel
        \(
        0 \to X_0 \xrightarrow{f_0} X_1 \xrightarrow{f_1} \dots \xrightarrow{f_{d-2}} X_{d-1} \xrightarrow{f_{d-1}} X_{d}
        \)
        of a morphism $f_d \colon X_d \to X_{d+1}$ in $\mc$ is \textit{minimal} if we have $f_i \in \Rad_{A}(X_i,X_{i+1})$ for $i=0,\ldots,  d-2$.
        \item A $d$-extension
        \(
        0 \to X_0 \xrightarrow{f_0} X_1 \xrightarrow{f_1} \dots \xrightarrow{f_{d-1}} X_{d} \xrightarrow{f_d} X_{d+1}\to 0
        \)
        in $\mc$ is \textit{minimal} if we have $f_i \in \Rad_{A}(X_i,X_{i+1})$ for $i=1,\ldots, d-1$.
    \end{enumerate}
\end{definition}

It should be noted that the definition above could equivalently have been stated in terms of minimality of morphisms; see \cref{lem: minimal iff radical}. For instance, the $d$-cokernel in part (1) is minimal if and only if the morphisms $f_1,\dots,f_{d-1}$ are left minimal. 

Each equivalence class of \mbox{$d$-extensions} in $\mc$ contains a unique \textit{minimal} representative, which is a direct summand of any other equivalent $d$-exact sequence. Similar statements hold for $d$-kernels and $d$-cokernels, and we refer the reader to \cref{prop:exact analogue of HJ}, as well as \cite[Proposition 2.4]{HerschendJorgensen} and \cite[Section 2.2]{AHJKPT1}, for more details. A \textit{minimal \mbox{$d$-quotient}} of $X_1\in \mc$ is defined to be a minimal $d$-cokernel of a morphism $X_0\to X_1$ in $\mc$. We will always choose to work with minimal $d$-exact sequences, $d$-kernels, \mbox{$d$-cokernels} and $d$-quotients.

A central notion in this paper is the following generalisation of torsion classes.

\begin{definition}\cite[Definition 1.1]{Jorgensen}\label{def:ntorsionclass} 
Let $\mc$ be a $d$-cluster tilting subcategory of $\modA$. A subcategory $\uc$ of $\mc$ is called a \textit{$d$-torsion class} if for every $M$ in $\mc$, there exists a $d$-exact sequence
\begin{equation*}
  0 \rightarrow U_M \rightarrow M \rightarrow V_1 \rightarrow \cdots \rightarrow V_d \rightarrow 0
\end{equation*}
in $\mc$ such that the following hold:
\theoremlistfix
\begin{enumerate}
    \item The object $U_M$ is in $\uc$.
    \phantomsection
    \label{def:dtors:approx}
    \item The sequence $0 \xrightarrow{} \Hom_A(U,V_1) \xrightarrow{  } \cdots \xrightarrow{  } \Hom_A(U,V_d) \xrightarrow{} 0$ is exact for every $U$ \mbox{in $\uc$}.
    \phantomsection
    \label{def:dtors:sequence}
\end{enumerate}
The object $U_M$ above is unique up to isomorphism, and is called the \textit{$d$-torsion subobject} of $M$ with respect to $\uc$. 
\end{definition}
Note that for $d=1$, \cref{def:ntorsionclass} is equivalent to the classical definition of a torsion class. However, for $d>1$, there is no obvious higher analogue of the corresponding torsion free class. 

In \cite{AHJKPT1} we showed that $d$-torsion classes can be characterised using $d$-extensions and $d$-quotients. To state this result, we need the following notions.
Let $\mc \subseteq \modA$ be a $d$-cluster tilting subcategory and consider a subcategory $\uc \subseteq \mc$. The subcategory $\uc$ is said to be \textit{closed under $d$-extensions} if for every pair of objects $X,Z\in \uc$ and every minimal $d$-extension
\[
0 \xrightarrow{} X \xrightarrow{} Y_1 \xrightarrow{} \cdots \xrightarrow{} Y_d \xrightarrow{} Z \xrightarrow{} 0,
\]
we have $Y_i \in \uc$ for $i=1,\dots,d$. Similarly, we say that $\uc$ is \textit{closed under $d$-quotients} if for every minimal $d$-quotient 
\[
X \xrightarrow{} Y_1 \xrightarrow{} \cdots \xrightarrow{} Y_{d} \to 0
\]
of an object $X \in \uc$, we have $Y_i \in \uc$ for $i=1,\dots,d$. Note that these definitions are equivalent to the ones given in \cite[Section 3.2]{AHJKPT1} by \cite[Lemma 3.8]{AHJKPT1}. The following result is a generalisation of \cite[Theorem 2.3]{Dickson} to higher torsion classes, and will be used frequently.

\begin{theorem} \cite[Theorem 3.14]{AHJKPT1}\label{theorem: extension closed}
Let $\mc$ be a $d$-cluster tilting subcategory of $\modA$. A subcategory of $\mc$ is a $d$-torsion class if and only if it is closed under $d$-extensions and $d$-quotients.
\end{theorem}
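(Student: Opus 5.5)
The plan is to prove the two implications separately, working throughout with minimal $d$-exact sequences, minimal $d$-quotients and minimal $d$-kernels, so that the direct summand and uniqueness statements of \cite[Proposition 2.4]{HerschendJorgensen} are available.

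\textbf{($\Rightarrow$)} Let $\uc$ be a $d$-torsion class.

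\emph{Closure under $d$-quotients.} Take $X\in\uc$ and a minimal $d$-quotient $X\to Y_1\to\cdots\to Y_d\to 0$. For each $Y_i$, take its torsion sequence $0\to U_{Y_i}\to Y_i\to V^{i}_1\to\cdots\to V^{i}_d\to 0$ from \cref{def:ntorsionclass}.
\begin{itemize}
\item Exactness of $\Hom_A(X,-)$ on the $V^{i}_\bullet$ gives two things: every morphism from $X$ into $Y_i$ factors through $U_{Y_i}$, and every morphism from $X$ into $V^i_1$ that becomes zero in $V^i_2$ is zero.
\item Using this, I would lift the $d$-quotient sequence, term by term, to a comparison map of right $d$-exact sequences. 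Its terms are $U_{Y_i}\to Y_i$, and the lifting is possible by the homotopy-uniqueness of $d$-cokernels.
\item Minimality of the $d$-quotient should then force each inclusion $U_{Y_i}\to Y_i$ to be split epi. Hence $Y_i\cong U_{Y_i}\in\uc$.
\end{itemize}

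\emph{Closure under $d$-extensions.} Take $X,Z\in\uc$ and a minimal $d$-extension $0\to X\to Y_1\to\cdots\to Y_d\to Z\to 0$. The argument for the $Y_i$ goes the same way, comparing with their torsion sequences. The extra input is that a $d$-extension with end terms in $\uc$ has a Hom-exactness property against the $V$-parts, because $\Hom_A(U,-)$ applied to the $V$'s is exact.

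\textbf{($\Leftarrow$)} Suppose $\uc$ is closed under $d$-extensions and $d$-quotients, and fix $M\in\mc$. I will construct the $d$-torsion subobject $U_M$ as follows.
\begin{itemize}
\item Since $\mod A$ has finite length, choose $U\in\uc$ and a morphism $g\colon U\to M$ whose image is maximal among images of morphisms from objects of $\uc$.
\item Closure under direct sums shows this image is the sum of all such images. So every morphism from $\uc$ to $M$ factors through $\operatorname{Im} g$.
\item Take the minimal $d$-kernel of the $d$-cokernel of $g$. Closure under $d$-quotients is then used to show that the first term $U_M$ of this $d$-kernel lies in $\uc$ and maps monomorphically onto $\operatorname{Im} g$.
\item This yields a $d$-exact sequence $0\to U_M\to M\to V_1\to\cdots\to V_d\to 0$ in which $U_M\to M$ is a right $\uc$-approximation.
\end{itemize}
It remains to verify condition (2) of \cref{def:ntorsionclass}, namely exactness of $0\to\Hom_A(U,V_1)\to\cdots\to\Hom_A(U,V_d)\to0$ for every $U\in\uc$.
\begin{itemize}
\item Exactness at the early terms follows from the approximation property, by chasing morphisms back to $M$.
\item Surjectivity onto $\Hom_A(U,V_d)$, and exactness near the right end, is where closure under $d$-extensions enters. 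Given $U\to V_d$, form the pullback $d$-extension along it. Closure under $d$-extensions and $d$-quotients produces an object of $\uc$ mapping to $M$ with strictly larger image, unless the map lifts. This contradicts the maximality in the choice of $g$.
\end{itemize}

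\textbf{Main obstacle.} I expect the hardest step to be the last one: making the pullback/``larger image'' argument precise in the $d$-abelian setting, where pullbacks of $d$-extensions exist only up to Yoneda equivalence, and checking exactness at every intermediate position $\Hom_A(U,V_i)$. As a first attempt, I would do an induction on the position $i$. At each position, the obstruction to exactness would be realised as a $d$-extension of an object of $\uc$ by $U_M$, which closure under $d$-extensions would place in $\uc$, contradicting maximality.
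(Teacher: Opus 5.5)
\textbf{The gap is in the converse direction.} It sits at the step where you claim that the ``first term'' $U_M$ of the minimal $d$-kernel of the $d$-cokernel of $g$ lies in $\uc$ and maps monomorphically onto $\operatorname{Im} g$.

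As stated, this step does not work. A $d$-kernel $0\to X_0\to\cdots\to X_{d-1}\to M$ of the first map $M\to V_1$ of that $d$-cokernel has only its last term mapping to $M$, with image $\Ker(M\to V_1)=\operatorname{Im} g$. That map is injective, so that the $d$-kernel collapses to $0\to\operatorname{Im} g\to M$, exactly when $\operatorname{Im} g\in\mc$. Images of morphisms in $\mc$ are in general not in $\mc$, so this is precisely what has to be proved. Closure under $d$-quotients cannot deliver it from your construction: the $d$-cokernel of $g$ is a $d$-quotient of $M$, which is not in $\uc$ in general, and the hypothesis only controls $d$-quotients of objects of $\uc$.

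This is the heart of the theorem. The paper does not reprove it; it cites the source. In its remark on $d$-torsion classes in the exact setup, it notes that the proof rests on a step using images of arbitrary morphisms.

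One way to fill the gap is as follows.
\begin{itemize}
\item Let $T=\operatorname{Im} g$. By your maximal choice of $g$, $T$ is the trace of $\uc$ in $M$.
\item Choose an epimorphism $P\to\Ker g$ with $P$ projective, so $P\in\mc$. Take the minimal $d$-cokernel $U\to W_1\to\cdots\to W_d\to 0$ of the composite $P\to U$. This is a $d$-quotient of $U\in\uc$, so closure under $d$-quotients gives $W_i\in\uc$.
\item The map $U\to W_1$ factors as $U\twoheadrightarrow T\hookrightarrow W_1$. The map $T\hookrightarrow W_1$ is a left $\mc$-approximation, because $\Hom_A(W_1,N)\to\Hom_A(U,N)\to\Hom_A(P,N)$ is exact for $N\in\mc$.
\item Extend the inclusion $T\hookrightarrow M$ to a map $W_1\to M$. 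Since $W_1\in\uc$, this map lands in the trace $T$, which exhibits $T$ as a direct summand of $W_1$. Hence $T\in\uc$.
\end{itemize}

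\textbf{Consequences once $T\in\uc$ is known.} What you flag as the main obstacle largely disappears, and no induction on positions is needed. For $U\in\uc$, applying $\Hom_A(U,-)$ to the $d$-exact sequence $0\to T\to M\to V_1\to\cdots\to V_d\to 0$ gives an exact sequence $0\to\Hom_A(U,T)\to\Hom_A(U,M)\to\Hom_A(U,V_1)\to\cdots\to\Hom_A(U,V_d)\to\Ext^d_A(U,T)\to\Ext^d_A(U,M)$. Its first map is bijective by the trace property, so exactness at $\Hom_A(U,V_1),\dots,\Hom_A(U,V_{d-1})$ is automatic. Surjectivity at $\Hom_A(U,V_d)$ is equivalent to injectivity of $\Ext^d_A(U,T)\to\Ext^d_A(U,M)$. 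Your pullback/larger-image argument does give this, but it applies closure under $d$-extensions to extensions with left end term $T$, so it too depends on $T\in\uc$.

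\textbf{The forward direction for $d$-extensions needs two precisions.}
\begin{itemize}
\item The first lift $Y_1\to U_{Y_1}$ exists because condition (2) applied to $Z$ makes $\Ext^d_A(Z,U_{Y_1})\to\Ext^d_A(Z,Y_1)$ injective. This is your ``Hom-exactness against the $V$-parts'' and should be stated.
\item The homotopy/radical argument only controls $Y_1,\dots,Y_{d-1}$, because $Y_d\to Z$ need not be radical. So $Y_d$ must be handled separately. Once $Y_{d-1}\in\uc$, the map $Y_d\to V_1$ factors through $Z$, and it then vanishes by condition (2).
\end{itemize}
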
 

\subsection{Higher $\tau$-tilting theory}\label{sec: tilting background}

To mirror mutation properties seen for cluster algebras, a generalisation of tilting modules, known as $\tau$-tilting modules, was introduced in \cite{AdachiIyamaReiten}. We recall relevant notions from \mbox{$\tau$-tilting} theory, together with their higher analogues.

The $d$-\textit{Auslander--Reiten translation} of $M \in \modA$, denoted by $\tau_d M$, is defined by taking a minimal projective resolution
\[
\cdots \xrightarrow{}P_{-d}\xrightarrow{f_{-d}}P_{-(d-1)}\xrightarrow{f_{-(d-1)}}\cdots \xrightarrow{f_{-1}}P_0\to M
\]
and setting $\tau_d M \colonequals \Ker \nu(f_{-d})$, where $\nu=-\otimes_ADA$ denotes the Nakayama functor. This induces a functor
\[
\tau_d\colon \underline{\operatorname{mod}}\, A\to \overline{\operatorname{mod}}\, A,
\]
where $\underline{\operatorname{mod}}\, A$ and $\overline{\operatorname{mod}}\, A$ denote the stable category modulo projectives and injectives, respectively. As a functor, we have $\tau_d\cong \tau\circ \Omega^{d-1}$, where $\Omega\colon \underline{\operatorname{mod}}\, A\to \underline{\operatorname{mod}}\, A$ denotes Heller's syzygy functor and $\tau\colon \underline{\operatorname{mod}}\, A\to \overline{\operatorname{mod}}\, A$ is the classical Auslander--Reiten translation. We use the notation $\overline{\Hom}_A(-,-)$ for morphism spaces in $\overline{\operatorname{mod}}\, A$. Note that we have an isomorphism
\begin{equation*}
    D\overline{\Hom}_A(Y, \tau_d X)\cong \Ext^d_A(X,Y)
\end{equation*} 
given by $D\overline{\Hom}_A(Y, \tau_d X) \cong D\overline{\Hom}_A(Y, \tau\Omega^{d-1} X)\cong \Ext^1_A(\Omega^{d-1}X,Y) \cong\Ext^d_A(X,Y)$.
This isomorphism is called the \textit{higher Auslander--Reiten duality.}

Using the higher Auslander--Reiten translation, we may define higher analogues of notions from $\tau$-tilting theory. Let us first recall what it means for a module to be $\tau_d$-rigid. 

\begin{definition}\label{def:tau_d-rigid}
    A module $M \in \mod A$ is called \textit{$\tau_d$-rigid} if $\Hom_A(M,\tau_dM)=0$.
\end{definition}

Note that the notion of $\tau_d$-rigidity in \cite{MM} is stronger than that given in \cref{def:tau_d-rigid}, as they additionally require $\Ext_A^i(M,M)=0$ for $i=1,\dots,d-1$; see \cite[Definition 1.1]{MM}. This property is automatic whenever $M$ is contained in a $d$-cluster tilting subcategory.

We have the following simple generalisation of a classical result in $\tau$-tilting theory that characterises $\tau_d$-rigid modules via vanishing extensions.

\begin{proposition}\label{lem:char tau_d-rigid modules}
    Let $M$ and $M'$ be modules in $\modA$. 
    Then $\Hom_A(M, \tau_d M')=0$ if and only if $\Ext^d_A(M', N)=0$ for all $N \in \fac M$. In particular,  $M$ is $\tau_d$-rigid if and only if $\Ext^d_A(M,N)=0$ for all $N\in \fac M$.
\end{proposition}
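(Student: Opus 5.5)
The plan is to mimic the classical argument of Auslander--Smalø for $d=1$, using the higher Auslander--Reiten duality $D\overline{\Hom}_A(Y,\tau_d X)\cong \Ext^d_A(X,Y)$ together with the fact that $\Ext^d_A(M',-)$ is right exact on $\mod A$.

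For the forward direction, assume $\Hom_A(M,\tau_d M')=0$ and let $N\in\fac M$, with an epimorphism $\pi\colon M^n\to N$. Then $\Hom_A(M^n,\tau_d M')=0$. By duality,
\[
\Ext^d_A(M',M^n)\cong D\overline{\Hom}_A(M^n,\tau_d M')=0 .
\]
Since $\Ext^{d+1}$ exists, the long exact sequence shows that $\Ext^d_A(M',-)$ is right exact. Hence $\pi$ induces a surjection $\Ext^d_A(M',M^n)\to\Ext^d_A(M',N)$, and therefore $\Ext^d_A(M',N)=0$.

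For the converse, assume $\Ext^d_A(M',N)=0$ for all $N\in\fac M$, and let $f\colon M\to\tau_d M'$. Set $N=\operatorname{Im} f\in\fac M$, and factor $f$ as $M\xrightarrow{p} N\xrightarrow{\iota}\tau_d M'$. By duality, $\overline{\Hom}_A(N,\tau_d M')\cong D\Ext^d_A(M',N)=0$, so $\iota$ factors through an injective module. This step is the main obstacle: we must upgrade "$\iota$ is zero in the stable category" to "$\iota=0$". The route is to show that $\tau_d M'$ has no nonzero morphisms from $N$ that factor through injectives, exactly as in the classical proof for $\tau$.

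To do this, write $\tau_d M'=\Ker\nu(f_{-d})$ with $\nu(f_{-d})\colon \nu P_{-d}\to\nu P_{-(d-1)}$ a map of injectives. A map $N\to I\to\tau_d M'$ with $I$ injective then gives a map $N\to\nu P_{-d}$ that is killed by $\nu(f_{-d})$. We then use the standard argument (cf.\ the proof of \cite[Proposition 5.8]{AdachiIyamaReiten}, or Auslander--Smalø) that $\Hom_A(X,\tau_d M')$ equals $\overline{\Hom}_A(X,\tau_d M')$ for relevant $X$.

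Alternatively, and more robustly, we can work directly with the minimal projective presentation $P_{-d}\to P_{-(d-1)}$ of $\Omega^{d-1}M'$. Since $\tau_d\cong\tau\Omega^{d-1}$, we reduce to the $d=1$ statement for the module $X=\Omega^{d-1}M'$, using $\Ext^1_A(X,N)\cong\Ext^d_A(M',N)$ by dimension shifting. The classical result \cite[Proposition 5.8]{AdachiIyamaReiten} says that $\Hom_A(M,\tau X)=0$ if and only if $\Ext^1_A(X,\fac M)=0$. Here $\tau X=\tau_d M'$ holds as actual modules, computed via the same minimal presentation $P_{-d}\to P_{-(d-1)}$, and that presentation is minimal for $X$ whenever $d\ge2$ or $M'$ has no projective summands issue. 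This reduction gives both directions at once.

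The "in particular" statement follows by taking $M'=M$.
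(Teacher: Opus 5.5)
Your last paragraph is exactly the paper's proof, and it suffices on its own for both directions. It applies the classical criterion of Auslander and Smal{\o} \cite[Proposition 5.8]{AS} (the citation should be to \cite{AS}, not \cite{AdachiIyamaReiten}) to $X=\Omega^{d-1}M'$, together with $\tau_d M'\cong\tau\Omega^{d-1}M'$ and $\Ext^1_A(\Omega^{d-1}M',-)\cong\Ext^d_A(M',-)$. The hedge about minimality is unnecessary: for $d=1$ there is nothing to reduce, and for $d\ge 2$ the truncated minimal resolution is a minimal presentation of $\Omega^{d-1}M'$.

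The direct argument you give first contains errors and should be dropped or repaired.

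\emph{Forward direction.} $\Ext^d_A(M',-)$ is not right exact in general. The cokernel of $\Ext^d_A(M',M^n)\to\Ext^d_A(M',N)$ embeds in $\Ext^{d+1}_A(M',K)$ with $K=\Ker\pi$, and this vanishes only under extra hypotheses such as $\operatorname{pd}_A M'\le d$. The correct move is to dualise at $N$ itself: $\Ext^d_A(M',N)\cong D\overline{\Hom}_A(N,\tau_d M')$, and $\Hom_A(N,\tau_dM')\hookrightarrow\Hom_A(M^n,\tau_dM')=0$ by left exactness of $\Hom_A(-,\tau_dM')$.

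\emph{Converse.} The principle ``$\Hom_A(X,\tau_dM')=\overline{\Hom}_A(X,\tau_dM')$ for relevant $X$'' is false in general, and the observation about maps into $\nu P_{-d}$ leads nowhere. What actually closes the argument is that $\iota$ is a monomorphism:
\begin{enumerate}
\item If $\iota$ factors through an injective, it factors through the injective envelope $e\colon N\to I(N)$, say $\iota=ge$.
\item Since $e$ is essential and $ge$ is a monomorphism, $g$ is a monomorphism.
\item Hence $g$ splits, so $I(N)$ is a direct summand of $\tau_dM'$.
\item But $\tau_dM'$ has no nonzero injective direct summands, so $N=0$ and $f=0$.
\end{enumerate}
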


\begin{proof}
    By definition, we have $\Hom_A(M, \tau_d M')= \Hom_A(M, \tau\Omega^{d-1} M')$. 
    It follows from \cite[Proposition 5.8]{AS} that $\Hom_A(M, \tau\Omega^{d-1} M')=0$ if and only if $\Ext^1_A(\Omega^{d-1}M', N) = 0$ for all $N \in \fac M$. 
    The claim therefore follows since the functor $\Ext^1_A(\Omega^{d-1}M', - )$ is isomorphic to $\Ext^d_A(M', - )$.
\end{proof}

In particular, the result above shows that any $\tau_d$-rigid module $M$ satisfies \mbox{$\Ext_A^d(M,M)=0$}. Next we state a result that will help us relate $\tau_d$-rigidity back to $d$-tilting modules. 

\begin{lemma}\cite[Lemma 5.1]{MM}\label{Martinez-Mendoza}
Let $M \in \modA$ be a faithful $\tau_d$-rigid module, which additionally satisfies $\Ext^{i}_A(M,M)=0$ for $i=1,\dots,d-1$. Then $\operatorname{pd}_A M \leq d$.
\end{lemma}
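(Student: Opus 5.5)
The plan is to reduce to the classical case $d=1$ via the syzygy, where faithful $\tau$-rigid modules are known to have projective dimension at most $1$. The idea is to show $\Ext^{d+1}_A(M,-)=0$ by testing against a suitable module.

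Since $M$ is faithful, there is a monomorphism $A\to M^n$ for some $n$, giving a short exact sequence
\[
0\to A\to M^n\to C\to 0
\]
with $C\in\fac M$. Applying $\Hom_A(M,-)$ (more precisely $\Ext^i_A(M,-)$) gives the exact segment
\[
\Ext^d_A(M,C)\to\Ext^{d+1}_A(M,A)\to\Ext^{d+1}_A(M,M^n).
\]
By \cref{lem:char tau_d-rigid modules}, $\tau_d$-rigidity gives $\Ext^d_A(M,C)=0$. Hence $\Ext^{d+1}_A(M,A)$ embeds into $\Ext^{d+1}_A(M,M)^n$. This alone does not finish the argument, so the rigidity must be propagated upward.

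The cleaner route is to set $X=\Omega^{d-1}M$. Then $\tau X=\tau_d M$ stably, and $\Ext^1_A(X,-)\cong\Ext^d_A(M,-)$. Thus $\Hom_A(M,\tau X)=0$, so $\Ext^1_A(X,N)=0$ for all $N\in\fac M$. Using the same faithful sequence, $\operatorname{pd} M\le d$ is equivalent to $\operatorname{pd} X\le 1$. That in turn is equivalent to $\Hom_A(DA,\tau X)=0$, i.e.\ $\Ext^1_A(X,-)$ vanishing on... Here is where the hypothesis $\Ext^i_A(M,M)=0$ for $1\le i\le d-1$ must enter.

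The intended argument is as follows. Take an injective cogenerator, or dually use that $DA\in\fac$ of something built from $M$. Faithfulness of $M$ gives $DA\in\operatorname{Sub}(DM)$, equivalently $A\in\operatorname{Sub}M$. The classical AS argument for $d=1$ shows $\operatorname{pd}\le1$ from $\Hom(DA,\tau X)=0$. So the key step is to show $\Hom_A(DA,\tau_d M)=0$. This follows if every map $DA\to\tau_d M$ factors through $\add M$ modulo injectives, or dually $\Hom_A(\tau_d^{-}\ldots)$.

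Alternatively, one can use $\operatorname{pd} X\le1 \iff \Ext^2_A(X,-)=0 \iff \Ext^{d+1}_A(M,A)=0$ and show the latter directly. By the embedding above, it suffices that the map $\Ext^{d+1}_A(M,A)\to\Ext^{d+1}_A(M,M^n)$ is zero. One route is to resolve $C$ again by $M$-modules: $C$ is faithful-quotient, so iterate the sequence $d$ times using $\Ext^{i}_A(M,M)=0$ for $i<d$ and $\Ext^d_A(M,\fac M)=0$. This is a dimension-shifting argument: $\Ext^{d+1}_A(M,A)\cong\Ext^{d}_A(M,C)$ provided $\Ext^{d}_A(M,M^n)\to\Ext^d_A(M,C)$ and the neighbouring terms behave, and the middle vanishing uses $\Ext^d_A(M,M)=0$ from \cref{lem:char tau_d-rigid modules}.

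The main obstacle is therefore controlling $\Ext^{d+1}_A(M,M)$. I would try to show the connecting map lands in zero by comparing the projective resolution truncated at $\Omega^{d-1}M$ with the Auslander–Smalø argument. Failing that, I would follow the original proof in \cite{MM}.
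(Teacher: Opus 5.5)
You have the right reduction in your second route: $\tau_dM=\tau\Omega^{d-1}M$, and $\operatorname{pd}_AM\le d\iff\operatorname{pd}_A\Omega^{d-1}M\le 1\iff\Hom_A(DA,\tau_dM)=0$. But you never prove this last vanishing, and the proposal ends by deferring to \cite{MM}, so as written it is not a proof.

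The missing step is one line. It was obscured by a mis-dualisation: ``$DA\in\operatorname{Sub}(DM)$'' conflates two dualities. What faithfulness actually gives is $DA\in\fac M$. Indeed $\Ann(DM)=\Ann(M)=0$, so ${}_AA\hookrightarrow(DM)^n$ as left modules, and applying $D$ gives an epimorphism $M^n\twoheadrightarrow DA$. The left exact functor $\Hom_A(-,\tau_dM)$ then gives $\Hom_A(DA,\tau_dM)\hookrightarrow\Hom_A(M,\tau_dM)^n=0$, and you are done. If you want the criterion itself made explicit, apply $\nu^{-1}=\Hom_A(DA,-)$ to $0\to\tau_dM\to\nu P_{-d}\to\nu P_{-(d-1)}$. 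This gives $\Hom_A(DA,\tau_dM)\cong\Ker(P_{-d}\to P_{-(d-1)})=\Omega^{d+1}M$, whose vanishing is exactly $\operatorname{pd}_AM\le d$.

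The other routes should be discarded:
\begin{itemize}
\item The equivalence ``$\Ext^2_A(X,-)=0\iff\Ext^{d+1}_A(M,A)=0$'' is false in general. Over a self-injective algebra, $\Ext^{i}_A(M,A)=0$ for all $i>0$ and all $M$, yet every non-projective module has infinite projective dimension.
\item The injection $\Ext^{d+1}_A(M,A)\hookrightarrow\Ext^{d+1}_A(M,M)^n$ leads nowhere. You have no independent handle on $\Ext^{d+1}_A(M,M)$, and even $\Ext^{d+1}_A(M,A)=0$ would not suffice.
\item You looked for where $\Ext^i_A(M,M)=0$ for $1\le i\le d-1$ ``must enter''. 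It does not enter at all: the argument above uses only faithfulness and $\Hom_A(M,\tau_dM)=0$. The extra vanishing appears in the statement only because \cite{MM} builds it into their definition of $\tau_d$-rigidity, as remarked after \cref{def:tau_d-rigid}.
\end{itemize}
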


If $\mc$ is a $d$-cluster tilting subcategory of $\modA$, then $\tau_d$ restricts to an equivalence $\tau_d\colon\underline{\mc}\to \overline{\mc}$, where $\underline{\mc}$ and $\overline{\mc}$ are the subcategories of $\underline{\operatorname{mod}}\, A$ and $\overline{\operatorname{mod}}\, A$ consisting of the objects in $\mc$ \cite[Theorem 2.3]{Iyama2007a}. In particular, if $M\in \mc$ is indecomposable, then $\tau_dM\in \mc$ is indecomposable. One can consider $\tau_d$-rigidity inside $d$-cluster tilting subcategories. This was done in \cite{JJ}, where they introduce a higher homological analogue of (maximal) $\tau$-rigid pairs; see also \cite{ZZ}.

\begin{definition}\label{def: tau-d-maximality}
    Let $\mc$ be a $d$-cluster tilting subcategory of $\modA$ and consider a pair $(M,P)$ with $M \in \mc$ and $P \in \proj A$. 
    \theoremlistfix
    \begin{enumerate}
        \item $(M,P)$ is called a \textit{$\tau_d$-rigid pair in $\mc$} if $M$ is $\tau_d$-rigid and $\Hom_A(P,M)=0$.
        \item $(M,P)$ is called a \textit{maximal $\tau_d$-rigid pair in $\mc$} if it satisfies:
        \theoremlistfix
        \begin{enumerate}[label=(\roman*)]
            \item If $N$ is in $\mc$, then
            \[
            N \in \add(M) \iff \begin{cases}
            \Hom_A(M,\tau_d N) = 0, \\
            \Hom_A(N,\tau_d M) = 0, \\
            \Hom_A(P,N) = 0.
\end{cases}
            \]
            \item If $Q$ is in $\proj A$, then
            \[
            Q \in \add(P) \iff \Hom_A(Q,M) = 0.
            \]
        \end{enumerate}
    \end{enumerate}
\end{definition}
Notice that if $(M,P)$ is a maximal $\tau_d$-rigid pair in $\mc$, then it is in particular a $\tau_d$-rigid pair and $M$ is a $\tau_d$-rigid module. We say that a maximal $\tau_d$-rigid pair $(M,P)$ is \textit{basic} if $M$ and $P$ are basic.

\begin{remark}
    Being a maximal $\tau_d$-rigid pair in $\mc$ is slightly stronger than just being maximal with respect to the property of being a $\tau_d$-rigid pair, since we do not need to check the condition $\Hom_A(N,\tau_dN)=0$ in (2)(i).
\end{remark}

\begin{remark}\label{rem: terminology support}
In the classical case, a $\tau$-rigid pair $(M,P)$ is maximal in the sense of the above definition if and only if $|M|+|P| = |A|$; see \cite[Corollary 2.13]{AdachiIyamaReiten}. The pair $(M,P)$ is then often called a \textit{support $\tau$-tilting pair}, in which case $M$ is known as a \textit{support $\tau$-tilting module}. Note however that the characterisation by the number of indecomposable summands does not hold for $d>1$, as there are maximal $\tau_d$-rigid pairs which do not have the required number of summands. For an example, see \cite[Section 4]{JJ} (in particular Figure 4). It is furthermore unknown whether having $|A|$ summands is sufficient to imply maximality.
\end{remark}

\subsection{Silting theory}\label{ssec:silting}

In this subsection we give an overview of some basic results for silting and presilting complexes. In particular, we describe a connection between $\tau_d$-rigid pairs and presilting complexes. 

Recall that $S \in K^b(\proj A)$ is \textit{presilting} if $$\Hom_{K^b(\proj A)}(S, S[i])=0$$ for all $i>0$, and is \textit{silting} if in addition the smallest thick subcategory of $K^b(\proj A)$ containing $S$ is $K^b(\proj A)$ itself. For this, it is sufficient to check that $A$ is contained in this thick subcategory. If $S \in K^b(\proj A)$ is a silting complex, then $|S|=|A|$ by \cite[Corollary 2.28]{AiharaIyama}.

\begin{remark}\label{rmk:howmanyisenough}
    There are recent examples of presilting objects $S$ with $|S|<|A|$ that cannot be completed to silting objects; see \cite{LZ,JSW}. There even exist presilting objects $S$ satisfying $|S|=|A|$ that are not silting \cite{Kalck}. Consequently, a converse to \cite[Corollary 2.28]{AiharaIyama} does not hold in general. However, it holds for \mbox{$2$-term} complexes, i.e.\ any \mbox{$2$-term} presilting complex $S \in K^b(\proj A)$ with $|S|=|A|$ is silting \cite[Proposition 3.3]{AdachiIyamaReiten}. We do not know if the analogous statement is true for inner acyclic $(d+1)$-term presilting complexes (see \cref{def:(d+1)-term silting}).
\end{remark}

The following lemma is well-known to experts, and should be considered as a maximality property for silting complexes similar to that in the definition of maximal \mbox{$\tau_d$-rigid} pairs. We include a proof for the convenience of the reader. Note that given two subcategories $\mathcal{X}, \mathcal{Y} \subseteq K^b(\proj A)$, we use the notation $\mathcal{X} \ast \mathcal{Y}$ for the subcategory
\begin{align*}
    \{ Z \in K^b(\proj A) \mid  \text{there exists a triangle} \ X \to Z \to Y \to X[1] \ \text{with} \ X \in \mathcal{X}, Y \in \mathcal{Y} \}.
\end{align*}

\begin{lemma}\label{lem:maximal}
Consider a silting complex $S \in K^b(\proj A)$ and let $C \in K^b(\proj A)$ be such that $\Hom_{K^b(\proj A)}(S, C[i])= 0$ and $\Hom_{K^b(\proj A)}(C, S[i])= 0$ for all $i > 0$. 
Then $C \in \add (S)$.
\end{lemma}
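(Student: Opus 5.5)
The plan is to use the standard approximation argument from silting theory, as in Aihara--Iyama. Write $\mathcal{K}=K^b(\proj A)$ and set $\mathcal{S}=\add(S)$. Since $S$ is silting, every object of $\mathcal{K}$ admits a finite filtration by shifts of objects of $\mathcal{S}$. More precisely, I will use the known fact (Aihara--Iyama, Proposition 2.17) that
\[
\mathcal{K}=\bigcup_{n\geq 0} \mathcal{S}[-n]\ast\mathcal{S}[-n+1]\ast\cdots\ast\mathcal{S}[n].
\]
Combined with the vanishing $\Hom_{\mathcal{K}}(S,C[i])=0$ for $i>0$, this should place $C$ in the "co-aisle" part
\[
C\in \mathcal{S}\ast\mathcal{S}[1]\ast\cdots\ast\mathcal{S}[n]
\]
for some $n\geq 0$. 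The idea is to peel off the negative shifts using the Hom-vanishing. Concretely, one can run an inductive argument. Take a minimal right $\mathcal{S}[-m]$-approximation for the most negative shift appearing. The hypothesis $\Hom(S[-m],C)=\Hom(S,C[m])=0$ for $m>0$ then shows that this piece splits off trivially.

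Having $C\in\mathcal{S}\ast\mathcal{S}[1]\ast\cdots\ast\mathcal{S}[n]$, I will argue by induction on $n$ that $C\in\mathcal{S}$. Write the triangle
\[
S_0\to C\to C'\to S_0[1]
\]
with $S_0\in\mathcal{S}$ and $C'\in\mathcal{S}[1]\ast\cdots\ast\mathcal{S}[n]$. Then $C'[-1]\in\mathcal{S}\ast\cdots\ast\mathcal{S}[n-1]$. The connecting morphism $C'\to S_0[1]$ is a morphism $C'[-1]\to S_0$. Alternatively, and more cleanly, I will take the triangle in the form
\[
X\to C\to S'[n]\to X[1],\qquad X\in\mathcal{S}\ast\cdots\ast\mathcal{S}[n-1],\ S'\in\mathcal{S}, \ n\geq 1.
\]
The map $C\to S'[n]$ vanishes, because $\Hom(C,S[n])=0$ for $n>0$. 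Hence the triangle splits, giving $X\cong C\oplus S'[n-1]$. Since the extension category $\mathcal{S}\ast\cdots\ast\mathcal{S}[n-1]$ is closed under direct summands (Aihara--Iyama), we get $C\in\mathcal{S}\ast\cdots\ast\mathcal{S}[n-1]$. Induction on $n$ then yields $C\in\mathcal{S}=\add(S)$.

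The main obstacle is the first step: producing the bounded filtration and cutting it down to nonnegative shifts. There is also the closure-under-summands property, which is needed to continue the induction after splitting. I will cite Aihara--Iyama for both rather than reprove them. If a self-contained route is preferred, I will instead use the t-structure/co-t-structure associated with $S$. The co-t-structure has co-heart $\add(S)$, and the two Hom-vanishing conditions say exactly that $C$ lies in both halves of that co-t-structure. Their intersection is the co-heart, so $C\in\add(S)$.
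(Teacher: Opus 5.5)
Your proof is correct and is essentially the paper's argument: both use the Aihara--Iyama filtration of $K^b(\proj A)$ by shifts of $\add(S)$ and the closure of the iterated extension categories under direct summands. Both then use $\Hom(S,C[i])=0$ to strip off the negative shifts and $\Hom(C,S[i])=0$ to strip off the positive ones. The only difference is bookkeeping. The paper takes the single triangle $X\to C\to Y\to X[1]$ with $X\in\mathcal{S}^{\geq 0}$ and $Y\in\mathcal{S}^{<0}$ and discards all positive shifts at once, whereas you peel off one shift at a time from both ends of the two-sided filtration.
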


\begin{proof}
Set $\mathcal{S} \colonequals \add(S)$ and consider the subcategories
\begin{align*}
    \mathcal{S}^{<0} \colonequals \bigcup\limits_{\ell>0} \mathcal{S}[1] \ast \cdots \ast \mathcal{S}[\ell] \quad \text{and} \quad    \mathcal{S}^{\geq 0} \colonequals\bigcup\limits_{\ell\geq 0} \mathcal{S}[-\ell] \ast \cdots \ast \mathcal{S}[-1] \ast \mathcal{S}.
\end{align*}
Since $S$ (and thus also $S[1]$) is silting, it follows from \cite[Proposition 2.23]{AiharaIyama} that there exists a distinguished triangle
\[
X \xrightarrow[]{f} C\xrightarrow[]{g} Y \xrightarrow[]{}X[1]
\]
in $K^b(\proj A)$ with $X \in \mathcal{S}^{\geq 0}$ and $Y \in \mathcal{S}^{<0}$. The assumption that $\Hom_{K^b(\proj A)}(C, S[i])=0$ for all $i>0$ implies that $g=0$. Hence, $f \colon X \to C$ is a split epimorphism, giving $C \in \mathcal{S}^{\geq 0}$. Since $\Hom_{K^b(\proj A)}(S[-i], C) = 0$ for all positive $i$, we obtain $C \in \add(S)$ as required.
\end{proof}

We finish this subsection by giving some results relating silting theory and $\tau_d$-rigidity. Note that given a projective resolution 
\[
\cdots \xrightarrow{} P_{-i} \xrightarrow{} \cdots \xrightarrow{} P_{-2} \xrightarrow{} P_{-1} \xrightarrow{} P_0 \xrightarrow{} M \xrightarrow{} 0
\]
of an $A$-module $M$, we say that the object 
\[
\cdots \xrightarrow{} 0 \xrightarrow \quad P_{-d} \xrightarrow{} \cdots \xrightarrow{} P_{-2} \xrightarrow{} P_{-1} \xrightarrow{} P_0 \xrightarrow{} 0 \xrightarrow{} \cdots
\]
in $K^b(\proj A)$, where $P_0$ is in degree $0$, is a \textit{projective $d$-presentation} of $M$. In the case where the projective resolution is minimal, we denote this object by $P_\bullet^M$ and call it the \textit{minimal projective $d$-presentation} of $M$. Moreover, if $(M,P)$ is a $\tau_d$-rigid pair in a $d$-cluster tilting subcategory $\mc \subseteq \mod A$, then we denote by $P_\bullet^{(M,P)}$ the object
$$P_\bullet^{(M,P)}\colonequals P_\bullet^{M} \oplus P [d]$$ in $K^b(\proj A)$, where $P$ is considered as a stalk complex concentrated in degree $0$. An important observation is that the complex $P_\bullet^{(M,P)}$ is concentrated in degrees \mbox{$-d,\dots,0$} and
has homology only in degrees $-d$ and $0$, i.e.\ that $H_i(P_\bullet^{(M,P)})=0$ for $i \notin \{-d,0\}$. This motivates the following.

\begin{definition}\label{def:(d+1)-term silting}
\theoremlistfix
    \begin{enumerate}
        \item An object $X \in K^b(\proj A)$ is called a \textit{$(d+1)$-term complex} if it is concentrated in degrees $-d,\dots,0$. 
        \item A $(d+1)$-term complex $X \in K^b(\proj A)$ is called \textit{inner acyclic} if $H_i(X)=0$ for $i \notin \{-d,0\}$.
        \item A $(d+1)$-term complex $S \in K^b(\proj A)$ is called \textit{$(d+1)$-term (pre)silting} if it is a (pre)silting complex in $K^b(\proj A)$.
    \end{enumerate}
\end{definition}

Note that a $2$-term complex is automatically inner acyclic.

The result below was proved by Martinez and Mendoza in \cite{MM}. 

\begin{lemma}\cite[Theorem 3.4]{MM}\label{lem:tau-rigidKbProj}
For $M,N$ in $\mod A$, the following are equivalent:
\theoremlistfix
\begin{enumerate}
    \item $\Hom_A(M, \tau_d N)=0$ and $\Ext^i_A(N,M)=0$ for $i=1,\dots,d-1$;
    \item $\Hom_{K^b(\proj A)}(P_\bullet^N, P_\bullet^M[i])=0$ for all $i > 0$.
\end{enumerate}
\end{lemma}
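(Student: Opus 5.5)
The idea is to compute $\Hom_{K^b(\proj A)}(P_\bullet^N, P_\bullet^M[i])$ degree by degree, and to match the contributions with $\Hom_A(M,\tau_d N)$ and $\Ext^j_A(N,M)$. Write $P_\bullet^N = (Q_{-d}\to\cdots\to Q_0)$ and $P_\bullet^M=(P_{-d}\to \cdots\to P_0)$, both minimal and concentrated in degrees $-d,\dots,0$.

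\emph{Range $1\le i\le d-1$.} For $i\geq d+1$ the Hom-space vanishes for degree reasons, so we may assume $1\le i\le d$. A chain map $P_\bullet^N\to P_\bullet^M[i]$ sends $Q_{-k}$ to $P_{-k-i}$. Since $P_\bullet^M$ agrees with the projective resolution of $M$ in degrees $\geq -d$, the complex $P_\bullet^M[i]$ agrees in degrees $\ge -d+i$ with the shifted full resolution. The terms of $P_\bullet^N$ that can map nontrivially are those in degrees $-d+i,\dots,0$.

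\begin{itemize}
\item For $i<d$, a standard comparison shows that $\Hom_{K^b}(P_\bullet^N,P_\bullet^M[i])$ equals $\Hom_{K^b}(P_\bullet^N, \mathbf{P}^M[i])$, where $\mathbf{P}^M$ is the full projective resolution.
\item Indeed, the truncation differs only in degrees $< -d$, hence in degrees $<-d+i$ after the shift. Chain maps and homotopies out of degrees $\ge -d$ of $P_\bullet^N$ only see target degrees $\ge -d+i-1$.
\item One needs care at the boundary degree $-d$ of $P_\bullet^N$, where the homotopy lands in degree $-d-1+i\geq -d$. So for $i\le d-1$ the target is unaffected.
\item Moreover $\Hom_{K}(P_\bullet^N,\mathbf{P}^M[i])$ computes $\Ext^i_A(N,M)$ for $i<d$, because $P_\bullet^N$ is the brutal truncation of a resolution of $N$. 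The missing tail contributes only in degrees $\le -d-1$, which map into degrees $\le -d-1+i<0$.
\end{itemize}

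The cleanest way to make this precise is the triangle $\sigma_{<-d}\mathbf{P}^N\to \mathbf{P}^N\to P_\bullet^N\to$ together with $\Hom(\sigma_{<-d}\mathbf{P}^N[1],\mathbf{P}^M[i])=0$ for $i\le d$, which holds because the source lives in degrees $\le -d-2$ and the target in degrees $\ge -i\ge -d$. Hence $\Hom(P_\bullet^N,P_\bullet^M[i])\cong \Ext^i_A(N,M)$ for $1\le i\le d-1$.

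\emph{Top degree $i=d$.} This is the genuinely new case and the main obstacle. A chain map $P_\bullet^N\to P_\bullet^M[d]$ is just a map $f\colon Q_{-d}\to P_0$ (only degree $-d$ of $N$ meets degree $0$ of $M$). It is null-homotopic if and only if $f=h\circ q_{-d}+p_{-1}\circ h'$, where $q_{-d}\colon Q_{-d}\to Q_{-d+1}$ is the differential of $P_\bullet^N$, $p_{-1}\colon P_{-1}\to P_0$ is the differential of $P_\bullet^M$, and $h\colon Q_{-d+1}\to P_0$, $h'\colon Q_{-d}\to P_{-1}$. Therefore
\[
\Hom(P_\bullet^N,P_\bullet^M[d])\cong \Coker\bigl(\Hom_A(Q_{-d+1},P_0)\oplus\Hom_A(Q_{-d},P_{-1})\to \Hom_A(Q_{-d},P_0)\bigr).
\]
Quotienting by the image of $p_{-1}$ first replaces $P_0$ by $M$, giving $\Coker(\Hom_A(Q_{-d+1},M)\to\Hom_A(Q_{-d},M))$.

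To identify this with $D\Hom_A(M,\tau_dN)$, I would use the standard duality $\Hom_A(Q,M)\cong D\Hom_A(M,\nu Q)$ for $Q$ projective. By exactness of $D$, the cokernel becomes $D\Ker(\Hom_A(M,\nu Q_{-d})\to \Hom_A(M,\nu Q_{-d+1}))$. By left exactness of $\Hom_A(M,-)$, this kernel is $\Hom_A(M,\Ker \nu(q_{-d}))=\Hom_A(M,\tau_dN)$, exactly as in the classical argument of Adachi--Iyama--Reiten for $d=1$. So $\Hom(P_\bullet^N,P_\bullet^M[d])\cong D\Hom_A(M,\tau_dN)$.

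Combining the two ranges, condition (2) is equivalent to the vanishing of $\Ext^i_A(N,M)$ for $1\le i\le d-1$ together with $\Hom_A(M,\tau_dN)=0$, which is (1). The step needing most care is the truncation comparison at the boundary degree for $i\le d-1$, ensuring that no extra homotopies or cycles are created by cutting the resolutions.
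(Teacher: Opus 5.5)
The paper does not prove this lemma; it quotes it from Martinez--Mendoza. So your argument has to stand on its own. The outer parts do.

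\emph{What works.} Vanishing for $i\ge d+1$ is a degree count. Your $i=d$ computation is correct: a chain map is a single $f\colon Q_{-d}\to P_0$. Quotienting by $p_{-1}$ replaces $P_0$ by $M$, because the $Q_j$ are projective. Then $\Hom_A(Q,M)\cong D\Hom_A(M,\nu Q)$, exactness of $D$ and left exactness of $\Hom_A(M,-)$ give $D\Hom_A(M,\tau_dN)$. This is the Adachi--Iyama--Reiten computation for the presentation $Q_{-d}\to Q_{-d+1}$ of $\Omega^{d-1}N$. It is also how the paper treats this Hom-space when showing that $P_\bullet^{(M_\uc,P_\uc)}$ is homotopy equivalent to an $\Ext^d$-projective generator of $\pc_d(\uc)$.

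\emph{The gap.} The range $1\le i\le d-1$ is not established by what you wrote.
\begin{itemize}
\item Your indexing mixes conventions. With differentials raising degree, as in your own $i=d$ paragraph, a chain map $P_\bullet^N\to P_\bullet^M[i]$ has components $Q_{-k}\to P_{-k+i}$, not $P_{-k-i}$.
\item The triangle in your ``cleanest'' argument is reversed. $P_\bullet^N$ is a subcomplex of $\mathbf P^N$, not a quotient; the projection $\mathbf P^N\to P_\bullet^N$ is not a chain map. The correct triangle is $P_\bullet^N\to\mathbf P^N\to\sigma_{<-d}\mathbf P^N\to P_\bullet^N[1]$.
\item Your degree-vanishing is unjustified. $\mathbf P^M[i]$ is unbounded below, not concentrated in degrees $\ge -i$.
\end{itemize}
This is an error, not just sloppiness, because you assert the vanishing for all $i\le d$. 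The same reasoning would then give $\Hom_{K^b(\proj A)}(P_\bullet^N,P_\bullet^M[d])\cong\Ext^d_A(N,M)\cong D\overline{\Hom}_A(M,\tau_dN)$. That contradicts your correct answer $D\Hom_A(M,\tau_dN)$, which differs in general.

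\emph{The fix.} Truncating $M$ is harmless for every $i\ge1$; truncating $N$ matters only at $i=d$.
\begin{itemize}
\item The inclusion $P_\bullet^M\hookrightarrow\mathbf P^M$ has cokernel concentrated in degrees $\le -d-1$. Hence, in $K^-(\proj A)$ and for all $i\ge1$,
$\Hom(P_\bullet^N,P_\bullet^M[i])\cong\Hom(P_\bullet^N,\mathbf P^M[i])\cong\Hom(P_\bullet^N,M[i])$.
Alternatively, use the triangle $H_{-d}(P_\bullet^M)[d]\to P_\bullet^M\to M\to$ in $D^b(A)$.
\item $\Hom(P_\bullet^N,M[i])$ is the cohomology of $\Hom_A(Q_{-i+1},M)\to\Hom_A(Q_{-i},M)\to\Hom_A(Q_{-i-1},M)$, where the last term is $0$ for $i=d$.
\item For $i\le d-1$ all three terms come from the full resolution, so this is $\Ext^i_A(N,M)$.
\item For $i=d$ it is exactly the cokernel your $i=d$ paragraph arrives at.
\end{itemize}
For the middle range you can also simply invoke \cref{lemma: inner extensions} with $X_\bullet=P_\bullet^N$ and $Y_\bullet=P_\bullet^M$. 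With this replacement your proof is complete.
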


The lemma above leads to the following connection between $\tau_d$-rigidity and inner acyclic $(d+1)$-term presilting complexes.

\begin{proposition}\label{prop:presilting}
Let $\mc$ be a $d$-cluster tilting subcategory of $\modA$ and consider a \mbox{$\tau_d$-rigid} pair $(M,P)$ in $\mc$. Then $P_\bullet^{(M,P)}$ is an inner acyclic $(d+1)$-term presilting complex.
\end{proposition}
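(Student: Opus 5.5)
The plan is to split $P_\bullet^{(M,P)} = P_\bullet^M \oplus P[d]$ and check the vanishing of $\Hom_{K^b(\proj A)}(-,-[i])$ for $i>0$ on each of the four pairs of summands. The structural claims come first. By construction $P_\bullet^{(M,P)}$ is concentrated in degrees $-d,\dots,0$. The complex $P_\bullet^M$ is a truncated minimal projective resolution of $M$, so its only homology sits in degree $0$, where it is $M$, and in degree $-d$, where it is the kernel of $P_{-d}\to P_{-(d-1)}$. The stalk complex $P[d]$ has homology only in degree $-d$. Hence $P_\bullet^{(M,P)}$ is an inner acyclic $(d+1)$-term complex, as already observed before \cref{def:(d+1)-term silting}.

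\emph{Case $\Hom(P_\bullet^M, P_\bullet^M[i])$.} Since $M\in\mc$, \cref{def:d-CT abelian} gives $\Ext^j_A(M,M)=0$ for $j=1,\dots,d-1$. Since $M$ is $\tau_d$-rigid, $\Hom_A(M,\tau_d M)=0$. Applying \cref{lem:tau-rigidKbProj} with $N=M$ then yields $\Hom_{K^b(\proj A)}(P_\bullet^M,P_\bullet^M[i])=0$ for all $i>0$.

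\emph{Case $\Hom(P[d], P[d][i])$.} This equals $\Hom(P,P[i])$, which vanishes for $i>0$ because $P$ is a stalk complex of a projective module.

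\emph{Case $\Hom(P_\bullet^M, P[d][i])$.} The complex $P[d+i]$ is concentrated in degree $-d-i<-d$, while $P_\bullet^M$ lives in degrees $-d,\dots,0$. So any chain map between them is zero, and this Hom vanishes for $i>0$.

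\emph{Case $\Hom(P[d], P_\bullet^M[i])$.} This is the key case. The shifted complex $P[d]$ sits in degree $-d$, and $P_\bullet^M[i]$ occupies degrees $-d-i,\dots,-i$.
\begin{itemize}
\item If $i>0$ and $i\neq d$, a chain map $P\to P_\bullet^M[i]$ lands in degree $-d$ of $P_\bullet^M[i]$, which is $P_{-d+i}$ when $0<i<d$ and is $0$ when $i>d$. Because $P$ is projective, morphisms $P[d]\to X$ in $K^b(\proj A)$ compute $\Hom_A(P, H_{-d}(X))$. For $0<i<d$ we have $H_{-d}(P_\bullet^M[i]) = H_{-d+i}(P_\bullet^M) = 0$ by inner acyclicity, and for $i>d$ the degree is out of range. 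So this Hom vanishes.
\item If $i=d$, the term in degree $-d$ of $P_\bullet^M[d]$ is $P_0$, with no outgoing differential in degree $-d$. Homotopies are zero since $P[d]$ has only one term, so the Hom is identified with $\Hom_A(P,P_0)$ modulo maps factoring through $P_{-1}\to P_0$. Equivalently it is $\Hom_A(P, H_0(P_\bullet^M)) = \Hom_A(P,M)$, which vanishes by the assumption $\Hom_A(P,M)=0$.
\end{itemize}

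The only genuine input is \cref{lem:tau-rigidKbProj}, together with the identification $\Hom_{K^b}(P[k], X)\cong \Hom_A(P, H_{-k}(X))$ for a projective $P$. I expect the last case to need the most care, especially the bookkeeping of degrees and signs of shifts.
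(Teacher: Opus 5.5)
Your proposal is correct and is the same argument as the paper's proof: the paper also splits off $P[d]$, treats $P_\bullet^M$ via \cref{lem:tau-rigidKbProj}, and handles the mixed term through $\Hom_{K^b(\proj A)}(P,P_\bullet^M)\cong\Hom_A(P,M)$, with the remaining cases vanishing for degree reasons. One small slip: in the case $i=d$, homotopies $P\to P_{-1}$ need not vanish (only the part involving the differential of $P[d]$ does), but you then correctly quotient by maps factoring through $P_{-1}\to P_0$, so the conclusion that this Hom is $\Hom_A(P,M)=0$ stands.
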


\begin{proof}
Observe that $P_\bullet^{(M,P)}$ is an inner acyclic $(d+1)$-term complex by construction. The statement then follows from the isomorphism $\Hom_{K^b(\proj A)}(P,P_\bullet^M)\cong \Hom_{A}(P,M)$ and \cref{lem:tau-rigidKbProj}. 
\end{proof}

\begin{remark}
    In the case $d=1$, it is shown in \cite{AdachiIyamaReiten} that sending $(M,P)$ to $P_\bullet^{(M,P)}$ gives a bijection between maximal $\tau$-rigid pairs and $2$-term silting complexes. When $d>1$, we will prove in \cref{sec:silting} that if the maximal $\tau_d$-rigid pair $(M,P)$ comes from a $d$-torsion class, then $P_\bullet^{(M,P)}$ is a $(d+1)$-term silting complex.
\end{remark}

\section{Characterisation results}\label{sec: char. results}

 Throughout this section, we fix a $d$-cluster tilting subcategory $\mc$ of $\modA$. 

\subsection{Inducing $d$-torsion classes}
In this subsection we describe when classical torsion classes give rise to higher torsion classes, and observe that any split torsion class satisfies this property. In fact, they give rise to split \mbox{$d$-torsion} classes; see \cref{def:split d-torsion}. 

\begin{definition}\label{def: induces}
    Let $\tc$ be a torsion class in $\modA$. We say that $\tc$ \textit{induces} a $d$-torsion class in $\mc$ if the following hold:
    \theoremlistfix
    \begin{enumerate}
        \item The subcategory $\uc\colonequals\tc\cap \mc$ is a $d$-torsion class in $\mc$.
        \item The torsion and $d$-torsion subobject of an object in $\mc$ coincide, i.e.\ we have \mbox{$tM = U_M$} for any $M\in\mc$.
    \end{enumerate}
\end{definition}

If a torsion class $\tc$ in $\mod A$ satisfies the conditions in \cref{def: induces}, we say that $\tc$ induces the $d$-torsion class $\uc = \tc \cap \mc$. The following result shows that any $d$-torsion class is induced by a torsion class.

\begin{theorem}\label{thm:torsion-to-dtorsion}
Let $\uc$ be a $d$-torsion class in $\mc$. Then $\uc$ is induced by the minimal torsion class in $\modA$ containing it.
\end{theorem}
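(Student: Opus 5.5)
Let $\tc$ be the minimal torsion class in $\modA$ containing $\uc$. Since $\uc\subseteq\mc$ and we work in a finite-length category, $\tc = \operatorname{Filt}(\fac\uc)$: the modules admitting a finite filtration whose subquotients are quotients of objects of $\uc$. The corresponding torsion-free class is $\fc=\{Y\mid \Hom_A(U,Y)=0 \text{ for all } U\in\uc\}$. The whole argument rests on one claim: for every $M\in\mc$ with $d$-torsion sequence
\[
0\to U_M\xrightarrow{\iota} M\to V_1\to\cdots\to V_d\to 0,
\]
the cokernel $C\colonequals\Coker\iota$ lies in $\fc$. Granting this, the sequence $0\to U_M\to M\to C\to 0$ has $U_M\in\uc\subseteq\tc$ and $C\in\fc$, so it is the canonical sequence of $M$. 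Hence $tM=U_M$, which is condition (2) of \cref{def: induces}. For condition (1), note $\uc\subseteq\tc\cap\mc$. Conversely, if $M\in\tc\cap\mc$ then $fM=0$, so $M=tM=U_M\in\uc$. Thus $\tc\cap\mc=\uc$ is a $d$-torsion class.

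To prove the claim, take $U\in\uc$ and $h\colon U\to C$; we want $h=0$. Since $C\hookrightarrow V_1$, it suffices to show that the composite $U\to C\to V_1$ vanishes. That composite lies in the kernel of $\Hom_A(U,V_1)\to\Hom_A(U,V_2)$, because $C\to V_1\to V_2$ is zero by exactness of the $d$-exact sequence. Condition (2) of \cref{def:ntorsionclass} says this kernel is $0$. This holds for $d\ge 2$, and also for $d=1$, where the sequence reads $0\to\Hom(U,V_1)\to 0$ and $C=V_1$. So $\Hom_A(U,C)=0$ for all $U\in\uc$, i.e.\ $C\in\fc$.

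One subtlety needs checking: that $\fc$, defined via $\uc$, really is the torsion-free class of $\tc$. This holds because $\Hom_A(-,Y)=0$ on $\uc$ propagates to quotients and to extensions, hence to all of $\operatorname{Filt}(\fac\uc)$.

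I expect no serious obstacle. The main points needing care are this identification of $\fc$ with the orthogonal of $\uc$, and verifying that $U_M\to M$ is a monomorphism, which holds since the $d$-exact sequence is exact in $\modA$.
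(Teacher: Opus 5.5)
Your argument is correct. It also takes a genuinely different route from the paper, which gives no argument of its own and simply deduces the statement from \cite[Theorem 1.1]{AJST}. That result is proved in the more general setting of $d$-abelian categories embedded as $d$-cluster tilting subcategories of abelian categories. There it is part of a broader comparison of $d$-torsion classes with torsion classes, used for instance to obtain Harder--Narasimhan filtrations.

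You instead give a short self-contained proof. The identity $\tc^{\perp}=\uc^{\perp}$ for the smallest torsion class $\tc$ containing $\uc$, combined with injectivity of $\Hom_A(U,V_1)\to\Hom_A(U,V_2)$, shows that $\Coker(U_M\to M)$ lies in $\fc$; here you use that this cokernel embeds in $V_1$. Uniqueness of the canonical sequence then gives $tM=U_M$. Finally, $\tc\cap\mc=\uc$ follows because an object of $\tc\cap\mc$ has zero torsion-free part.

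Your route shows that only the first exactness condition in \cref{def:ntorsionclass}~(2) is needed. It also avoids the characterisation via $d$-extensions and $d$-quotients in \cref{theorem: extension closed} entirely. The citation keeps the paper short and places the statement within the general theory of \cite{AJST}.
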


\begin{proof}
    This follows from \cite[Theorem 1.1]{AJST}.
\end{proof}

Note that there can be several torsion classes inducing the same $d$-torsion class; see \cite[Example 2.16]{AHJKPT1}. Furthermore, not every torsion class induces a $d$-torsion class; see \cite[Example 3.7]{AJST} and \cite[Example 2.16]{AHJKPT1}. The result below gives necessary and sufficient criteria for when this holds. Given a torsion class $\tc$ in $\mod A$, we use the notation $t\mc = \{tM \mid M \in \mc\}$ and $f\mc = \{fM \mid M \in \mc\}$.

\begin{theorem}\label{Proposition:CharacterizationInducingdTorsion}
    Let $\tc$ be a torsion class in $\modA$. Then $\tc$ induces a $d$-torsion class if and only if the following hold:
    \theoremlistfix
  \begin{enumerate}
      \item The equality $\Ext^{d-1}_A(t\mc,f\mc)=0$ holds.
      \phantomsection
\label{Proposition:CharacterizationInducingdTorsion:1}
    \item We have $t\mc\subseteq \mc$.
    \phantomsection
\label{Proposition:CharacterizationInducingdTorsion:2}
  \end{enumerate}  
\end{theorem}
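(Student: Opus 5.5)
Both directions run through the canonical sequence $0\to tM\to M\to fM\to 0$ for $M\in\mc$, which we want to compare with a $d$-torsion sequence.

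\emph{Forward direction.} Assume $\tc$ induces $\uc=\tc\cap\mc$. By \cref{def: induces}(2) we have $tM=U_M\in\uc\subseteq\mc$, so (2) holds. Take the $d$-torsion sequence
\[
0\to tM\to M\to V_1\to\cdots\to V_d\to 0 .
\]
Its cokernel at $M$ is $fM$, so $0\to fM\to V_1\to\cdots\to V_d\to 0$ is exact, with $V_i\in\mc$.

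For $M'\in\mc$, put $U=tM'\in\uc$. Apply $\Hom_A(U,-)$ to this coresolution of $fM$. Since $\Ext^i_A(U,V_j)=0$ for $1\le i\le d-1$ (both lie in $\mc$), dimension shifting gives
\[
\Ext^{d-1}_A(U,fM)\cong \Coker\bigl(\Hom_A(U,V_{d-1})\to\Hom_A(U,V_d)\bigr),
\]
up to indexing; in the degenerate case $d=1$ the claim is vacuous, since $\Ext^0(tM',fM)=\Hom=0$. Exactness of condition (2) in \cref{def:ntorsionclass} at the last term makes this cokernel vanish, so $\Ext^{d-1}_A(tM',fM)=0$.

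I need to check the indices carefully. The intermediate $\Ext^i_A(U,fM)$ are computed via the cosyzygies of the coresolution, and the last nonzero position is where surjectivity is used.

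\emph{Converse.} Assume (1) and (2). For $M\in\mc$ we have $tM\in\mc$, hence $tM\in\uc$. Take an $\mc$-coresolution $0\to fM\to V_1\to\cdots\to V_d\to 0$; such a coresolution exists by \cite[Proposition 3.17]{Jasso}, and since $fM$ need not lie in $\mc$, one of length $d$ is needed. Splice it with the canonical sequence to get a $d$-exact sequence $0\to tM\to M\to V_1\to\cdots\to V_d\to 0$ in $\mc$.

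For $U\in\uc$, the sequence $0\to\Hom(U,V_1)\to\cdots\to\Hom(U,V_d)\to0$ has homology computed by dimension shifting:
\begin{itemize}
\item at $V_1$ it is $\Hom(U,fM)$, which is $0$ because $U\in\tc$;
\item at intermediate positions it is $\Ext^{i}_A(U,fM)$ for $1\le i\le d-2$;
\item at the end it is $\Ext^{d-1}_A(U,fM)$.
\end{itemize}

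The end term is handled directly: $U\in\uc$ means $U=tU\in t\mc$, so (1) gives $\Ext^{d-1}_A(U,fM)=0$.

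The main obstacle is the middle terms, which require $\Ext^i_A(U,fM)=0$ for $1\le i\le d-2$. I would use the canonical sequence. Applying $\Hom_A(U,-)$, the groups $\Ext^i_A(U,M)$ vanish for $1\le i\le d-1$ since $U,M\in\mc$, and likewise $\Ext^{i}_A(U,tM)=0$ for $1\le i\le d-1$ since $tM\in\mc$. The long exact sequence then gives $\Ext^i_A(U,fM)\cong\Ext^{i+1}_A(U,tM)=0$ for $1\le i\le d-2$.

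So condition (2) of \cref{def:ntorsionclass} holds for every $M$, and $\uc$ is a $d$-torsion class. The $d$-torsion subobject is unique up to isomorphism, so $U_M=tM$, as \cref{def: induces}(2) requires.
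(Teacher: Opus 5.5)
Your proof is correct and follows essentially the same route as the paper's: in both directions you identify the homology of the complex $\Hom_A(U,V_\bullet)$ with $\Hom_A(U,fM)$ at $V_1$ and with $\Ext^{i-1}_A(U,fM)$ at $V_i$ for $i>1$, by dimension shifting along the $\mc$-coresolution of $fM$. In the converse you kill $\Ext^i_A(U,fM)$ for $1\le i\le d-2$ using the long exact sequence of $0\to tM\to M\to fM\to 0$, exactly as the paper does. The indexing you flagged works out. The identification of the cokernel of $U_M\to M$ with $fM$ is also justified: the image of $U_M\to M$ lies in $\tc$, hence in $tM$, and it has the same length as $tM$.
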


\begin{proof}
We need the following observation: Let $M\in \mc$ and consider an exact sequence
  \begin{equation}\label{MCoresOffM}
   0\to fM\to V_1\to \cdots \to V_d\to 0   
  \end{equation}
  where $V_i\in \mc$ for $i=1,\dots,d$. Since $\mc$ is rigid in degrees $1,\dots ,d-1$, a dimension shifting argument implies that for $N\in \mc$, the  homology of the complex
  \begin{equation}\label{HomologyExtComplex}
      \cdots\to 0 \xrightarrow{} \Hom_A(N,V_1) \xrightarrow{  } \cdots \xrightarrow{  } \Hom_A(N,V_d) \xrightarrow{} 0\to \cdots
  \end{equation} 
  at $\Hom_A(N,V_{i})$ is isomorphic to the abelian group $\Ext^{i-1}_A(N,fM)$ for $i>1$, and to $\Hom_A(N,fM)$ for $i=1$. In particular, \eqref{HomologyExtComplex} is exact for all $N\in \tc\cap \mc$ if and only if $\Ext^i_A(\tc\cap\mc,fM)=0$ for $i=1,\dots,d-1$. 
  
 Now assume that $\tc$ induces a $d$-torsion class.  Since the torsion and $d$-torsion subobject of an object in $\mc$ coincide, we must have $t\mc\subseteq \mc$. This implies that $\tc\cap \mc=t\mc$. For any $M\in \mc$, we furthermore have an exact sequence 
 \begin{equation*}
  0 \rightarrow U_M \rightarrow M \rightarrow V_1 \rightarrow \cdots \rightarrow V_d \rightarrow 0
\end{equation*}
satisfying the conditions in \cref{def:ntorsionclass}. This gives an exact sequence as in \eqref{MCoresOffM}. 
It now follows from \cref{def:ntorsionclass} and the observation above that $\Ext^{d-1}_A(t\mc,f\mc)=0$.

Assume next that $\tc$ satisfies conditions \eqref{Proposition:CharacterizationInducingdTorsion:1} and \eqref{Proposition:CharacterizationInducingdTorsion:2} in the theorem, and set $\uc=\tc\cap \mc$. Note that $\uc=t\mc$ since $t\mc\subseteq \mc$. For an arbitrary $M\in \mc$, choose an $\mc$-coresolution 
\[
0\to fM\to V_1\to \cdots \to V_d\to 0
\]
of length $d$. We claim that the induced exact sequence
\[
0\to tM\to M\to V_1\to \cdots \to V_d\to 0
\]
satisfies the conditions \eqref{def:dtors:approx} and \eqref{def:dtors:sequence} in \cref{def:ntorsionclass}. Indeed, we have $tM\in t\mc=\uc$, which shows \eqref{def:dtors:approx}. This also implies that the sequence
\[
0\to tM\to M\to fM\to 0
\]
gives an $\mc$-resolution of $fM$. Applying $\Hom_A(M',-)$ with $M'\in \mc$ and considering the associated long exact sequence, we get that $\Ext^i_A(M',fM)=0$ for $1 \leq i<d-1$. Combining this with condition \eqref{Proposition:CharacterizationInducingdTorsion:1} from the theorem yields $\Ext^{i}_A(t\mc,f\mc)=0$ for all $1 \leq i \leq d-1$. For $N\in \uc$, the complex \eqref{HomologyExtComplex} is thus acyclic, which proves the claim.
\end{proof}

We next consider split $d$-torsion classes, which is a higher analogue of split torsion classes. This notion was first introduced under the name \textit{splitting torsion classes} in \cite{Jorgensen}. Split \mbox{$d$-torsion} classes play a key role in \cref{sec: tau-tilting to tau_d-tilting}.

\begin{definition}\label{def:split d-torsion}
A subcategory $\uc$ of $\mc$ is called a \textit{split $d$-torsion class} if for every \mbox{$M$ in $\mc$}, there exists a split exact sequence
\begin{equation*}
  0 \rightarrow U_M \rightarrow M \to V_M\to 0
\end{equation*} 
such that the following hold:
\theoremlistfix
\begin{enumerate}
    \item The object $U_M$ is in $\uc$.
    \item We have $\Hom_A(U,V_M)=0$ for every $U$ in $\uc$. 
\end{enumerate}
\end{definition}

\begin{lemma}
If a subcategory $\uc \subseteq \mc$ is a split $d$-torsion class, then $\uc$ is a $d$-torsion class.
\end{lemma}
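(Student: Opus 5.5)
Given $M\in\mc$, the split exact sequence $0\to U_M\to M\to V_M\to 0$ from \cref{def:split d-torsion} is what I will use. I will show it is already, after padding with zeros, a $d$-exact sequence satisfying \cref{def:ntorsionclass}. Since the sequence splits, $M\cong U_M\oplus V_M$. Because $\mc$ is closed under direct summands (all subcategories are assumed so), both $U_M$ and $V_M$ lie in $\mc$.

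Next I form the complex
\[
0\to U_M\to M\to V_M\to 0\to\cdots\to 0\to 0,
\]
taking $V_1=V_M$ and $V_2=\cdots=V_d=0$. This is exact in $\modA$ and all its terms lie in $\mc$, so it is a $d$-exact sequence in $\mc$ in the sense used in the paper. Condition (1) of \cref{def:ntorsionclass} holds because $U_M\in\uc$.

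For condition (2), I need the complex $0\to\Hom_A(U,V_M)\to 0\to\cdots\to 0$ to be exact for every $U\in\uc$. This is exactly the vanishing $\Hom_A(U,V_M)=0$ in \cref{def:split d-torsion}(2).

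When $d=1$ the sequence is just $0\to U_M\to M\to V_M\to 0$ and the same argument applies verbatim. I expect no real obstacle; the only point needing care is that the zero-padded sequence counts as a $d$-exact sequence, which holds because exactness in $\modA$ with terms in $\mc$ is the definition.
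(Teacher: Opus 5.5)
Your proposal is correct and follows essentially the same route as the paper: you note that $V_M$ is a direct summand of $M$ and so lies in $\mc$, then take $V_1=V_M$ and $V_2=\cdots=V_d=0$ in \cref{def:ntorsionclass}. You spell out the exactness and Hom-vanishing checks that the paper leaves implicit.
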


\begin{proof}
   Note that $V_M$ is a direct summand of $M\in \mc$ and hence is contained in $\mc$ itself. Setting $V_1=V_M$ and $V_2=\cdots =V_d=0$, we get that $\uc$ satisfies \cref{def:ntorsionclass}. 
\end{proof}

\begin{remark}
In contrast to general $d$-torsion classes, a split $d$-torsion class $\uc$ has an associated \textit{split $d$-torsion free class} $\vc\subseteq \mc$. It is given by $$\vc\colonequals \{V_M\mid M\in \mc \} = \{M\in \mc\mid \Hom_A(U,M)=0 \text{ for all }U\in \uc\}.$$
\end{remark}

We next observe that a split torsion class always induces a split $d$-torsion class. We note that the argument in the proof below already appears in the proof of \cite[Theorem 9.2]{Jorgensen}.

\begin{proposition}\label{Prop:SplitTorsionGivesdTorsion}
    Let $\tc$ be a split torsion class in $\modA$. Then $\tc$ induces a split $d$-torsion class in $\mc$.
\end{proposition}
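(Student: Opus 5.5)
Since $\tc$ is split, every $X\in\modA$ decomposes as $X\cong tX\oplus fX$, because the canonical sequence $0\to tX\to X\to fX\to 0$ splits. The plan is to show directly that $\uc\colonequals\tc\cap\mc$ is a split $d$-torsion class in the sense of \cref{def:split d-torsion}, with the split sequence for each $M\in\mc$ taken to be the canonical sequence of $M$. This yields both conditions of \cref{def: induces} at once.

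First, fix $M\in\mc$. Since $M\cong tM\oplus fM$ and $\mc$ is closed under direct summands (by the standing convention on subcategories), both $tM$ and $fM$ lie in $\mc$. In particular $tM\in\tc\cap\mc=\uc$. Now set $U_M\colonequals tM$ and $V_M\colonequals fM$. For every $U\in\uc\subseteq\tc$ we have $\Hom_A(U,fM)=0$, since $fM\in\fc$. Hence the split sequence $0\to tM\to M\to fM\to 0$ satisfies conditions (1) and (2) of \cref{def:split d-torsion}, so $\uc$ is a split $d$-torsion class. By the preceding lemma, $\uc$ is therefore a $d$-torsion class, which is condition (1) of \cref{def: induces}.

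For condition (2) of \cref{def: induces}, the $d$-exact sequence used in the lemma is $0\to tM\to M\to fM\to 0\to\cdots\to 0$, whose first term is $tM$. The $d$-torsion subobject is unique up to isomorphism by \cref{def:ntorsionclass}, so $U_M\cong tM$ for every $M\in\mc$. Thus the torsion and $d$-torsion subobjects coincide.

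Alternatively, one could check the two hypotheses of \cref{Proposition:CharacterizationInducingdTorsion}. Condition (2) there, $t\mc\subseteq\mc$, is the summand argument above. For condition (1), the splitting gives that $\Ext^{d-1}_A(tM,fM')$ is a direct summand of $\Ext^{d-1}_A(M,M')$, which vanishes for $d\geq2$ since $\mc$ is $d$-cluster tilting. For $d=1$ the group is $\Hom_A(tM,fM')=0$. The direct argument is cleaner, however, and additionally gives splitness. The only point needing care is that $\mc$ is closed under summands, and the convention on subcategories guarantees this, so I expect no real obstacle.
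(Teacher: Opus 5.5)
Your proposal is correct and follows essentially the same argument as the paper: take the split canonical sequence $0\to tM\to M\to fM\to 0$, use closure of $\mc$ under direct summands to get $tM\in\uc$, and use $fM\in\fc$ for the Hom-vanishing. Your explicit check, via uniqueness of the $d$-torsion subobject, that $tM$ is the $d$-torsion subobject of $M$ is a detail the paper leaves implicit, and your alternative route through the characterisation of inducing torsion classes is also valid.
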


\begin{proof}
    Set $U_M=tM$ and $V_M=fM$ for each $M\in \mc$. Since $\tc$ is a split torsion class, we have a split exact sequence
    \[
    0\to U_M\to M\to V_M\to 0
    \]
    for each $M\in \mc$. As $U_M$ is a direct summand of $M$, it is contained in $\mc$, and hence in $\uc$. Moreover, since $V_M$ is in the torsion free part, we have $\Hom_A(U,V_M)=0$ for every $U\in \uc$. This proves the claim.
\end{proof}

\subsection{Characterisation by right $d$-exact sequences}

Recall that a right $d$-exact sequence in $\mc$ is an exact sequence 
\begin{equation*}
X_0\xrightarrow{}X_1 \xrightarrow{} X_2 \xrightarrow{} \cdots \xrightarrow{} X_{d} \xrightarrow{} X_{d+1} \to 0
\end{equation*}
with $X_i \in \mc$ for $i=0,\dots,d+1$. In this section we give a new characterisation of $d$-torsion classes in terms of closure under right $d$-exact sequences. The proof uses the characterisation of $d$-torsion classes given in \cref{theorem: extension closed}. The result itself will be used in \cref{sec:silting}.

\begin{theorem}\label{Reformulation:d-Tors}
A subcategory $\uc$ of $\mc$ is a $d$-torsion class if and only if for any right $d$-exact sequence
    \begin{equation}\label{eq: right d-exact}
    X\xrightarrow{} Y_1\to Y_2\to \cdots \to Y_d \to Z\to 0
    \end{equation}
    where $Y_i\to Y_{i+1}$ are radical morphisms for $i = 1,\dots,d-1$ and $X,Z\in \uc$, we have $Y_1,\dots, Y_d\in \uc$.
\end{theorem}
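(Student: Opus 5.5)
The plan is to reduce both directions to \cref{theorem: extension closed}, so that $\uc$ is a $d$-torsion class if and only if it is closed under $d$-extensions and $d$-quotients.

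\emph{The implication ``$\Leftarrow$''.} Assume $\uc$ has the stated closure property. First let
\[
0\to X\to Y_1\to\cdots\to Y_d\to Z\to 0
\]
be a minimal $d$-extension with $X,Z\in\uc$. By \cref{def: minimal} the maps $Y_i\to Y_{i+1}$ are radical for $i=1,\dots,d-1$. In particular the sequence is a right $d$-exact sequence of the required form, so $Y_1,\dots,Y_d\in\uc$.

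Next let $X\to Y_1\to\cdots\to Y_d\to 0$ be a minimal $d$-quotient of some $X\in\uc$, that is, a minimal $d$-cokernel of some $X_0\to X$. Reindexing \cref{def: minimal}, the maps $Y_i\to Y_{i+1}$ for $i=1,\dots,d-1$ are exactly the $f_j$ with $j=2,\dots,d$, so they are radical. Exactness of $X_0\to X\to Y_1\to\cdots$ gives exactness at $Y_1,\dots,Y_d$. Hence, taking $Z=0\in\uc$, we get a right $d$-exact sequence $X\to Y_1\to\cdots\to Y_d\to 0\to 0$ of the required form, and so $Y_i\in\uc$. Therefore $\uc$ is closed under $d$-extensions and $d$-quotients, and \cref{theorem: extension closed} finishes this direction.

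\emph{The implication ``$\Rightarrow$''.} Let $\uc$ be a $d$-torsion class and take a sequence as in \eqref{eq: right d-exact}.
\begin{enumerate}
\item Choose a $d$-kernel of $g\colon X\to Y_1$ ending at $X$, and let $X\to Q_1\to\cdots\to Q_d\to 0$ be a minimal $d$-cokernel of its last map $X'\to X$. This is a minimal $d$-quotient of $X$, so $Q_i\in\uc$ by \cref{theorem: extension closed}.
\item Since $g$ vanishes on the image of $X'\to X$, the universal property of $d$-cokernels (a Hom-exactness/comparison argument in $\mc$, using vanishing of $\Ext^{1},\dots,\Ext^{d-1}$) gives a chain map $Q_\bullet\to Y_\bullet$ extending the identity on $X$.
\item Form the mapping cone of this chain map, with the term $X$ removed. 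I expect it to be an exact complex
\[
0\to Q_1\to Q_2\oplus Y_1\to\cdots\to Q_d\oplus Y_{d-1}\to Y_d\to Z\to 0
\]
with all terms in $\mc$. Exactness should follow because both rows resolve the same image of $X$ in $Y_1$, respectively in $Q_1$, in the appropriate range. This is a $d$-extension with end terms $Q_1,Z\in\uc$.
\item This $d$-extension is equivalent to a minimal one, which is a direct summand of it, with the complement a split (contractible) complex. By closure under $d$-extensions, the terms of the minimal one lie in $\uc$.
\end{enumerate}

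The main obstacle is step 4. We must show that every indecomposable summand of each $Y_i$ already appears in the minimal representative. That is, no summand of $Y_i$ outside $\uc$ can be cancelled against a contractible piece $W\xrightarrow{1}W$.

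This is exactly where the radical hypothesis enters. A contractible summand sitting inside $Y_i\to Y_{i+1}$ would force a component of that map to be an isomorphism, contradicting radicality. The remaining possibility is a summand cancelling between $Q_{i+1}$ and $Y_i$ (or $Y_{i}$ and $Q_{i+1}$). There I would use \cref{lem: minimal iff radical} together with the minimality of $Q_\bullet$ to show that any such cancelled summand is isomorphic to a summand of some $Q_j$, and hence lies in $\uc$ anyway.

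If this bookkeeping becomes unwieldy, my fallback is a more direct route. Use the torsion class $\tc$ inducing $\uc$ (\cref{thm:torsion-to-dtorsion}) and show $fY_i=0$ for all $i$. For this I would use the vanishing $\Hom_A(\uc,fM)=0=\Ext^{j}_A(\uc,fM)$ for $1\le j\le d-1$ extracted in the proof of \cref{Proposition:CharacterizationInducingdTorsion}, and run a dimension-shift along the right $d$-exact sequence.
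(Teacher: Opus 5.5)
Your proposal is correct and follows essentially the paper's proof. The converse direction is identical, and for the forward direction the paper likewise maps an auxiliary minimal $d$-quotient with terms in $\uc$ into the given sequence, takes the cone (discarding the contractible $X\xrightarrow{1}X$), applies closure under $d$-extensions, and uses radicality of $Y_i\to Y_{i+1}$ to show that any summand of $Y_i$ absorbed into a contractible piece is a summand of one of the auxiliary terms in $\uc$. The only difference is the choice of auxiliary object: the paper factors $X\to Y_1$ through the $d$-torsion subobject $U_{Y_1}$ and takes the minimal $d$-cokernel of $X\to U_{Y_1}$, while you use the minimal $d$-cokernel of the $d$-kernel of $g$ (in effect an $\mc$-coresolution of $\operatorname{Im} g$), which works just as well. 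One correction to your step 4: $Q_{i+1}$ and $Y_i$ sit in the same degree of your cone, so they cannot cancel; the only component of the differential that can be non-radical is the chain-map component $Q_{i+1}\to Y_{i+1}$, so a cancelled summand of $Y_{i+1}$ is a summand of $Q_{i+1}\in\uc$.
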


\begin{proof}
Note that a minimal $d$-extension with end terms in $\uc$ is a right $d$-exact sequence as in \eqref{eq: right d-exact} where $X\to Y_1$ is additionally a monomorphism. Moreover, a minimal $d$-quotient of an object $X \in \uc$ is a right $d$-exact sequence of the form \eqref{eq: right d-exact} with $Z=0$. Hence, if $\uc$ satisfies the closure condition in the proposition, then it is closed under $d$-extensions and $d$-quotients, and is therefore a $d$-torsion class by \cref{theorem: extension closed}.

Assume next that $\uc$ is a $d$-torsion class, and consider a right $d$-exact sequence \eqref{eq: right d-exact} as in the statement. Since $X\in \uc$, the morphism $X\to Y_1$ factors through the inclusion $U_{Y_1}\to Y_1$, where $U_{Y_1} \in \uc$ is the $d$-torsion subobject of $Y_1$ with respect to $\uc$. Let 
\[
U_{Y_1}\to V_2\to \cdots \to V_{d+1}\to 0
\]
be a minimal $d$-cokernel of $X\to U_{Y_1}$, and note that $V_2,\dots,V_{d+1}\in \uc$ since $\uc$ is closed under $d$-quotients. Consider the commutative diagram
\begin{equation}\label{Eq:DiagforBij}
	\begin{tikzcd}
	 X \arrow[r] \arrow[d, equal] & U_{Y_1} \arrow[r] \arrow[d] & V_2 \arrow[r] \arrow[d, dashed]  & \cdots \arrow[r] & V_d \arrow[r] \arrow[d, dashed] & V_{d+1} \arrow[d, dashed]  \arrow[r]  & 0\\
	 X\arrow[r] & Y_1 \arrow[r] & Y_2 \arrow[r] & \cdots \arrow[r] & Y_{d} \arrow[r] & Z\arrow[r]  & 0
	\end{tikzcd}
\end{equation}
where the dashed morphisms are obtained by using the commutativity of the leftmost square. Taking the cone of the morphism of complexes represented by the vertical maps, gives the complex
\[
X\to X\oplus U_{Y_1}\to Y_1\oplus V_2\to Y_2\oplus V_3\to \cdots \to Y_{d}\oplus V_{d+1}\to Z\to 0,
\]
which is again homotopy equivalent to 
\begin{equation}\label{Eq:d-ExactforBij}
  0\to U_{Y_1}\to Y_1\oplus V_2\to Y_2\oplus V_3\to \cdots \to Y_{d}\oplus V_{d+1}\to Z\to 0.  
\end{equation}
Since the rows of the diagram \eqref{Eq:DiagforBij} are exact everywhere except at $X$, the cone is exact at $Y_i\oplus V_{i+1}$ for $i=1,\dots,d$ and at $Z$. Combining this with the fact that $U_{Y_1}\to Y_1$ is a monomorphism, it follows that the complex \eqref{Eq:d-ExactforBij} is exact. Hence, it is a $d$-exact sequence, and we can write it as a finite direct sum of a minimal $d$-extension
\begin{equation*}
  0\to U_{Y_1}\to U_1\to U_2\to \cdots \to U_d\to Z\to 0 
\end{equation*}
and complexes of the form $N \xrightarrow[]{1_N} N$. Since the end terms $U_{Y_1}$ and $Z$ in the sequence above are in $\uc$ and $\uc$ is closed under $d$-extensions, it follows that $U_i\in \uc$ for $i=1,\dots,d$. We use this to show that each $Y_i$ is in $\uc$. 

Indeed, let $Y_i'$ be an indecomposable direct summand of $Y_i$. If $Y_i'$ is a direct summand of $U_i$, then $Y_i'$ must be in $\uc$. Otherwise, $Y'_i$ is a direct summand of an object $N$ where $N \xrightarrow[]{1_N} N$ is a summand of \eqref{Eq:d-ExactforBij}. Hence, either $Y_i'\to Y_i\to Y_i\oplus V_{i+1}\to Y_{i+1}\oplus V_{i+2}$ is a split monomorphism or $Y_{i-1}\oplus V_i\to Y_i\oplus V_{i+1}\to Y_i\to Y_i'$ is a split epimorphism. Since $Y_{i-1}\to Y_i$ and $Y_i\to Y_{i+1}$ are radical morphisms by assumption, it follows that $Y_i'\to Y_i\to Y_i\oplus V_{i+1} \to V_{i+2}$ is a split monomorphism or $V_i\to Y_i\oplus V_{i+1}\to Y_i\to Y_i'$ is a split epimorphism. This yields that $Y_i'$ is a direct summand of $V_{i}$ or $V_{i+2}$. In either case, we get that $Y_i'$ is in $\uc$. Since $Y_i'$ was an arbitrary indecomposable direct summand of $Y_i$, it follows that $Y_i\in \uc$ for $i=1,\dots,d$.
\end{proof}

\begin{remark}
In the classical case,  \Cref{Reformulation:d-Tors} is just the well-known statement that a subcategory $\tc$ is a torsion class if and only if for any right exact sequence $X\to Y\to Z\to 0$ with $X,Z\in \tc$, we have $Y\in \tc$.
\end{remark}

\section{Approximations and \texorpdfstring{$\Ext^d$}{$\Ext^d$}-projectives}\label{sec: ff-d-torsion}

Functorially finite torsion classes play a key role in classical $\tau$-tilting theory, and a reason for this is that they can be used to take left approximations and to form coresolutions. In particular, if $\tc$ is a functorially finite torsion class in $\modA$, then for any $A$-module $M$, there is an exact sequence
\begin{align*}
    M \to T_0 \to T_1 \to 0
\end{align*}
with $T_0, T_1 \in \tc$. Such a sequence is used in the construction of the \textit{$\Ext$-projectives} in $\tc$, i.e.\ modules $T\in \tc$ satisfying $\Ext^1_{A}(T,N)=0$ for all $N\in \tc$. These in turn play a key role in the bijection between functorially finite torsion classes and support $\tau$-tilting pairs. 

The goal of this section is to construct higher analogues of $\Ext$-projective objects in a functorially finite $d$-torsion class $\uc$, using $\uc$-approximations and $\uc$-coresolutions. We work under the following setup throughout this section.

\begin{setup}\label{torsion setup}
Let $\mc \subseteq \mod A$ be a $d$-cluster tilting subcategory. Suppose that $\uc$ is a functorially finite $d$-torsion class in $\mc$, and let $\tc$ be a torsion class in $\modA$ which \mbox{induces $\uc$}.
\end{setup}

It follows from \cref{thm:torsion-to-dtorsion} that such a torsion class $\tc$ exists for any $d$-torsion \mbox{class $\uc$.} We do not know whether $\uc$ being functorially finite implies that $\tc$ is functorially finite in $\mod A$. However, if a functorially finite torsion class in $\mod A$ induces a $d$-torsion class \mbox{in $\mc$,} then this $d$-torsion class is functorially finite; see \cref{prop:T ff implies U ff}. Note that as $\mc$ is functorially finite in $\mod A$, we have that $\uc$ is functorially finite in $\mc$ if and only if it is functorially finite in $\mod A$.

\begin{lemma}\cite[Corollary 3.4]{AHJKPT1}\label{cor: minimal M-appr. in U}
If $X \in\tc$, then the minimal left \mbox{$\mc$-approximation} of $X$ is also the minimal left $\uc$-approximation of $X$. Moreover, this approximation is a monomorphism.
\end{lemma}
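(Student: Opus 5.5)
Let $X\in\tc$ and let $f\colon X\to M_X$ be the minimal left $\mc$-approximation of $X$, which exists since $\mc$ is functorially finite in $\modA$. The plan has three steps: show $f$ is a monomorphism, show $M_X\in\uc$, and then conclude $f$ is also the minimal left $\uc$-approximation.

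\textbf{Monomorphism.} Since $\mc$ is $d$-cluster tilting, it contains $DA$, and in particular the injective envelope $X\to I(X)$. This map factors through $f$, so $f$ is a monomorphism.

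\textbf{$M_X\in\uc$.} This is the main step. Since $M_X\in\mc$ and $\uc=\tc\cap\mc$, it suffices to show $M_X\in\tc$. Consider the canonical sequence
\[
0\to tM_X\to M_X\to fM_X\to 0
\]
with respect to $(\tc,\fc)$. As $X\in\tc$, the map $f$ factors through the inclusion $tM_X\to M_X$. By \cref{def: induces}, $tM_X=U_{M_X}$ is the $d$-torsion subobject, so it lies in $\uc\subseteq\mc$ (this is where the hypothesis that $\tc$ induces $\uc$ is used). We therefore get $g\colon X\to tM_X$ with $\iota g=f$, where $\iota\colon tM_X\to M_X$ is the inclusion. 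Since $tM_X\in\mc$ and $f$ is a left $\mc$-approximation, there is $h\colon M_X\to tM_X$ with $hf=g$. Then $(\iota h)f=f$, so by left minimality $\iota h$ is an automorphism of $M_X$. Hence $\iota$ is a split epimorphism, and being injective it is an isomorphism. Thus $M_X=tM_X\in\tc$, and so $M_X\in\uc$.

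\textbf{Minimal left $\uc$-approximation.} Any map $X\to U$ with $U\in\uc\subseteq\mc$ factors through $f$, so $f$ is a left $\uc$-approximation. Left minimality is a property of $f$ alone and does not depend on the subcategory, so $f$ is the minimal left $\uc$-approximation.
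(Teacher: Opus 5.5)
Your proof is correct. The injective envelope factoring through $f$ gives the monomorphism, and factoring $f$ through $tM_X\in\mc$ together with left minimality forces $tM_X\to M_X$ to be an isomorphism, so $M_X\in\tc\cap\mc=\uc$.

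The paper gives no argument of its own here and simply cites \cite[Corollary 3.4]{AHJKPT1}, so your self-contained proof fills that in. Note that, of the hypothesis that $\tc$ induces $\uc$, you only use $t\mc\subseteq\mc$. Also, you should rename the approximation, since $f$ clashes with the torsion-free functor in $fM_X$.
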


It is well-known (see \cite{Smalo}) that any functorially finite torsion class in $\modA$ can be written as $\fac T$ for some $T \in \modA$. The following result generalises this by using left $\uc$-approximations to produce a generator of the functorially finite $d$-torsion class $\uc$.

\begin{lemma}\label{lem: generation}
Let $M \in \modA$ be such that $\uc \subseteq \fac M$, and suppose that $M \to U_0^M$ is a left $\uc$-approximation of $M$. Then $\fac U_0^M \cap \mc =\uc.$ 
\end{lemma}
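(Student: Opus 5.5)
We must prove the two inclusions $\uc \subseteq \fac U_0^M \cap \mc$ and $\fac U_0^M \cap \mc \subseteq \uc$.

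\emph{First inclusion.} Let $U \in \uc$. Since $\uc \subseteq \fac M$, there is an epimorphism $p\colon M^n \to U$ for some $n$. The direct sum $M^n \to (U_0^M)^n$ of copies of the given approximation is again a left $\uc$-approximation. Hence $p$ factors through $(U_0^M)^n$, and so the map $(U_0^M)^n \to U$ is surjective. This gives $U \in \fac U_0^M$, and clearly $U \in \mc$.

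\emph{Second inclusion.} Let $N \in \mc$ with an epimorphism $(U_0^M)^n \to N$. Put $X \colonequals (U_0^M)^n$, which lies in $\uc$ because subcategories are closed under finite direct sums. Since $X \in \uc$ and $\uc$ is a $d$-torsion class, I would show $N \in \uc$ using closure under $d$-quotients (\cref{theorem: extension closed}), or more directly via \cref{Reformulation:d-Tors}. Let $K$ be the kernel of $X \to N$, and take a right $\mc$-approximation $X_0 \to K$. Composing with the inclusion $K \to X$ gives a morphism $f_0\colon X_0 \to X$ in $\mc$ whose cokernel is $N$.

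Now take a minimal $d$-cokernel $X \to Y_1 \to \cdots \to Y_d \to 0$ of $f_0$ in $\mc$; it exists because $\mc$ is $d$-abelian. Its first term $Y_1$ should be related to $N$. Closure under $d$-quotients gives $Y_1,\dots,Y_d \in \uc$, but we need $N$ itself, so I would instead use a minimal $d$-cokernel whose shape is controlled. Concretely, because $N \in \mc$, I expect the sequence
\[
X_0 \xrightarrow{f_0} X \to N \to 0 \to \cdots \to 0
\]
to already be a $d$-cokernel of $f_0$ in $\mc$: it is exact and all its terms lie in $\mc$. Up to summands of the form $N' \xrightarrow{1} N'$, this sequence is therefore the minimal $d$-cokernel. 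Since the maps after $N$ are zero and hence radical, this padded sequence is minimal. Closure under $d$-quotients then gives $N \in \uc$.

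Alternatively, one can apply \cref{Reformulation:d-Tors} to the right $d$-exact sequence $X_0' \to X \to N \to 0 \to \cdots \to 0 \to 0$ with end term $Z = 0$, after adjusting indices so that the source is $X$ and the terms lie in degrees $Y_1 = N$, $Y_2=\cdots=Y_d=0$. To fit that formulation with $X\in\uc$ as the source term, it is cleaner to use closure under $d$-quotients directly.

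The main obstacle is checking that this sequence really is a \emph{minimal} $d$-quotient of $X$ in the sense of the definition. It is a $d$-cokernel: exactness holds in $\mod A$ and all terms lie in $\mc$. Minimality asks for radical maps $f_i$ for $i \ge 2$, and these are all zero maps between $N$ and $0$ or between zeros, so they are radical. Hence $N \in \uc$, completing the proof. Note that for this direction the hypothesis that $M\to U_0^M$ is an approximation is only needed to ensure $U_0^M \in \uc$.
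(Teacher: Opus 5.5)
Your proof is correct. The first inclusion is exactly the paper's argument, but for the reverse inclusion you take a genuinely different route.

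The paper uses the torsion class $\tc$ from \cref{torsion setup} that induces $\uc$. Since $U_0^M \in \uc \subseteq \tc$ and $\tc$ is closed under quotients, $\fac U_0^M \subseteq \tc$, so $\fac U_0^M \cap \mc \subseteq \tc \cap \mc = \uc$. This is a one-line argument, but it relies on the existence of an inducing torsion class (\cref{thm:torsion-to-dtorsion}).

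You instead stay inside $\mc$ and use closure of $\uc$ under $d$-quotients from \cref{theorem: extension closed}. An epimorphism $(U_0^M)^n \to N$ with $N \in \mc$, padded by zeros, is a $d$-cokernel whose maps $f_2,\dots,f_d$ are zero. It is therefore a minimal $d$-quotient of an object of $\uc$, which forces $N \in \uc$. Your argument shows intrinsically that $\fac U \cap \mc \subseteq \uc$ for every $U \in \uc$, with no reference to $\tc$. The price is that it invokes the characterisation of $d$-torsion classes by closure under $d$-extensions and $d$-quotients instead.

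Two small points:
\begin{enumerate}
\item For $\Coker f_0 = N$ you need $X_0 \to K$ to be surjective. This holds because $A \in \mc$; alternatively, take $X_0$ to be a projective cover of $K$.
\item The alternative you abandoned works without any $X_0$. A right $d$-exact sequence need not be exact at its first term, so \cref{Reformulation:d-Tors} applies directly to $(U_0^M)^n \to N \to 0 \to \cdots \to 0 \to 0$ with $Z = 0$, giving $N \in \uc$ at once.
\end{enumerate}
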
 

\begin{proof}
Given any $U\in\uc$, there is an epimorphism $M^t \to U$ for some $t \in \mathbb{N}$ as \mbox{$\uc \subseteq \fac M$}. This epimorphism must factor through the left $\uc$-approximation $M^t\to (U_0^M)^t$ with the corresponding morphism $(U_0^M)^t\to U$ necessarily being an epimorphism. This shows that $\uc \subseteq \fac U_0^M \cap\mc$. Since a torsion class is closed under quotients and $U_0^M \in \tc$, it follows that $\fac U_0^M \subseteq \tc$, and hence $\fac U_0^M\cap\mc\subseteq \tc \cap \mc =\uc$. We can thus conclude that $\fac U_0^M \cap \mc =\uc.$
\end{proof}

Note that since $\fac A = \modA$, we may always choose $M=A$ in \cref{lem: generation}, so any functorially finite $d$-torsion class in $\mc$ may be written as $\fac U \cap \mc$ for some $U \in \mc$.

Next we introduce higher analogues of $\Ext$-projective objects. 

\begin{definition}\label{defn:ExtdProj}
An object $X \in \uc$ is called \textit{$\Ext^d$-projective} in $\uc$ if $\Ext^d_{A}(X, U)=0$ for every $U \in \uc$. It is furthermore called an \textit{$\Ext^d$-projective generator} of $\uc$ if any other $\Ext^d$-projective in $\uc$ is contained in $\add(X)$.
\end{definition}

We now show that left $\uc$-approximations can be used to construct $\Ext^d$-projective objects \mbox{in $\uc$}. 

\begin{lemma}\label{lem:casei=0}
Assume $M \in \modA$ satisfies $\Ext^1_{A}(M,T)=0$ for every $T \in \tc$, and let $f_0 \colon M \to U_0^M$ be the minimal left $\uc$-approximation of $M$. Then $U_0^M$ is $\Ext^d$-projective \mbox{in $\uc$}.
\end{lemma}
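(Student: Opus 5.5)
The plan is to show $\Ext^d_A(U_0^M,U)=0$ for every $U\in\uc$ by representing an arbitrary class as a $d$-extension in $\uc$ and proving that this $d$-extension is trivial. The two ingredients are closure of $\uc$ under $d$-extensions (\cref{theorem: extension closed}) and the hypothesis $\Ext^1_A(M,\tc)=0$. Note first that $U_0^M\in\uc$.

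\emph{Representing the class.} Fix $U\in\uc$ and a class $\eta\in\Ext^d_A(U_0^M,U)$. Represent $\eta$ by an exact sequence in $\mod A$ with $d$ middle terms. Since both end terms lie in $\mc$, this sequence is Yoneda equivalent to a $d$-exact sequence in $\mc$ (\cite[Proposition 2.9]{HerschendJorgensen}, \cite[Proposition A.1]{Iyama2007a}). Passing to the minimal representative, we obtain
\[
0\to U\to Y_1\to\cdots\to Y_d\xrightarrow{\pi} U_0^M\to 0 .
\]
Its end terms $U$ and $U_0^M$ lie in $\uc$, and $\uc$ is closed under $d$-extensions, so $Y_1,\dots,Y_d\in\uc$.

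\emph{Lifting $f_0$.} Let $L=\Ker\pi$. If $d\ge 2$, then $L$ is the image of $Y_{d-1}\to Y_d$, so it is a quotient of $Y_{d-1}\in\uc\subseteq\tc$. Since torsion classes are closed under quotients, $L\in\tc$. If $d=1$, then $L=U\in\tc$ directly. Apply $\Hom_A(M,-)$ to $0\to L\to Y_d\to U_0^M\to 0$. The obstruction to lifting $f_0$ along $\pi$ lies in $\Ext^1_A(M,L)$, which vanishes by hypothesis. Hence there is $g\colon M\to Y_d$ with $\pi g=f_0$.

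\emph{Using the approximation and minimality.} Since $Y_d\in\uc$ and $f_0$ is a left $\uc$-approximation, $g$ factors as $g=hf_0$ for some $h\colon U_0^M\to Y_d$. Then $(\pi h)f_0=\pi g=f_0$. Left minimality of $f_0$ forces $\pi h$ to be an automorphism of $U_0^M$, so $\pi$ is a split epimorphism.

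\emph{Conclusion.} It remains to see that a $d$-extension whose last map is a split epimorphism represents zero in $\Ext^d_A(U_0^M,U)$. The class $\eta$ is the image of $1_{U_0^M}$ under the iterated connecting maps obtained by splicing the sequence into short exact sequences. The first connecting map $\Hom_A(U_0^M,U_0^M)\to\Ext^1_A(U_0^M,L)$ kills $1_{U_0^M}$, because $1_{U_0^M}$ lifts through $\pi$ via $h(\pi h)^{-1}$. Hence $\eta=0$, and $U_0^M$ is $\Ext^d$-projective in $\uc$.

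The main point needing care is the first step: that every class in $\Ext^d_A$ between objects of $\uc$ is realised by a minimal $d$-extension with middle terms in $\uc$. This is where the Yoneda-equivalence results for $d$-cluster tilting subcategories and $d$-extension closure enter. The lifting step is where the hypothesis $\Ext^1_A(M,\tc)=0$ is used, and it needs $L\in\tc$; that membership is immediate because $\uc\subseteq\tc$ and $\tc$ is closed under quotients.
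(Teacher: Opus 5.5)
Your proposal is correct and follows essentially the same argument as the paper. Both represent the class by a minimal $d$-extension with middle terms in $\uc$, lift $f_0$ through the last map using $\Ext^1_A(M,\tc)=0$ (the kernel being a quotient of an object of $\uc$, hence in $\tc$), factor through the left $\uc$-approximation, and use left minimality to split the last map. Your separate treatment of $d=1$ and the explicit connecting-map argument for why a split last map gives the zero class fill in details the paper leaves implicit.
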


\begin{proof}
Let $U\in \uc$, and consider a $d$-extension
\[
0 \to U \to E_1 \to \cdots \to E_{d-1} \to E_d \xrightarrow{g} U_0^M \to 0.
\]
We can assume this extension to be minimal, allowing us to conclude that $E_i \in \uc$ for $i=1,\dots,d$ by \cref{theorem: extension closed}. It follows that the image $K$ of the morphism $E_{d-1}\to E_d$ is in $\tc$. Now consider the short exact sequence
\[
0\to K\to E_d\xrightarrow{g} U_0^M \to 0.
\]
Since $\Ext^1_{A}(M,\tc)=0$, there exists a morphism $h\colon M\to E_d$ satisfying $gh=f_0$. Moreover, as $f_0$ is a left $\uc$-approximation, there exists a morphism $k\colon U_0^M\to E_d$ such that $k f_0=h$. It follows that $f_0=g k f_0$, and since $f_0$ is left minimal, the composition $g k$ is an isomorphism. Hence, the short exact sequence splits, and so does the $d$-extension. This shows that $U_0^M$ is \mbox{$\Ext^d$-projective in $\uc$.}
\end{proof}

We now turn our attention to full $\uc$-coresolutions. A \textit{$\uc$-coresolution} of $M \in \modA$ is given by an exact sequence
\begin{equation}\label{eq:U-cores modules}
 M\xrightarrow{f_0} U^M_0\xrightarrow{f_1} U^M_1\xrightarrow{f_2} \cdots \xrightarrow{f_d} U^M_d\to 0
\end{equation}
with $U^M_0,\dots,U^M_d \in \uc$, and where the morphism $f_0 \colon M\xrightarrow{}U^M_0$ is a left $\uc$-approximation. A $\uc$-coresolution \eqref{eq:U-cores modules} is called \textit{minimal} if $f_0,\dots,f_{d-1}$ are left minimal.

A minimal $\uc$-coresolution of \mbox{$M \in \modA$} can be constructed
by first taking the minimal left \mbox{$\uc$-approximation} $f_0 \colon M \to U_0^M$ of $M$, and then iteratively taking the minimal left $\uc$-approximation of the cokernel of the previous morphism. This is illustrated in the following diagram
	\[
	\begin{tikzcd}[column sep=0.4cm, row sep=0.5cm]
	 M \arrow{rr}{f_0}  && U_0^M \arrow{rr}{f_1} \arrow[two heads]{rd} && U_1^M \arrow{rr}{f_2} \arrow[two heads]{rd} && U_2^M \arrow{rr}{f_3} \arrow[two heads]{rd} && U_3^M \arrow{rr}{f_4} \arrow[two heads]{rd} && \cdots,  \\
	&&& C_0 \arrow[hook]{ru}{g_1} &&  C_1 \arrow[hook]{ru}{g_2} && C_2 \arrow[hook]{ru}{g_3} && C_4\arrow[hook, "g_4"]{ru} & 
	\end{tikzcd}
	\]
where $C_i$ is the cokernel of $f_i$ and $g_i$ is the minimal left $\uc$-approximation of $C_{i-1}$. Since $\uc=\tc \cap \mc$ and $\tc$ is closed under quotients, each cokernel $C_i$ is contained \mbox{in $\tc$}. By \cref{cor: minimal M-appr. in U}, each \mbox{$\uc$-approximation} $g_i$ is thus a monomorphism and is equivalently obtained as a minimal left \mbox{$\mc$-approximation.} Constructing a minimal \mbox{$\uc$-coresolution} of $M$ is hence the same as first considering the minimal left $\uc$-approximation of $M$, and then taking the minimal \mbox{$\mc$-coresolution} of its cokernel. Note that since minimal $\mc$-coresolutions are known to have at most $d$ terms, the sequence constructed above has at most $d+1$ terms.

Building on \cref{lem:casei=0}, we now use $\uc$-coresolutions to construct \mbox{$\Ext^d$-projective} objects \mbox{in $\uc$}. Later in \cref{sec:d-tors to tau_d,sec: tau-tilting to tau_d-tilting} we consider two different choices of $M$ leading to two different constructions of an $\Ext^d$-projective generator of $\uc$. 

\begin{theorem}\label{thm:U_i Ext^d-projective}
Suppose $M \in \modA$ satisfies $\Ext^1_{A}(M,T)=0$ for every $T \in \tc$, and let
$$M \xrightarrow{f_0} U_0^M \xrightarrow{f_1} U_1^M \xrightarrow{f_2} \cdots \xrightarrow{f_d} U_d^M \to 0$$ 
be a minimal $\uc$-coresolution of $M$. Then $U_i^M$ is $\Ext^d$-projective in $\uc$ for $i=0,\dots,d$.
\end{theorem}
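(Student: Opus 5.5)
The plan is induction on $i$, with \cref{lem:casei=0} as the base case $i=0$. For $i\geq 1$, let $C_{i-1}=\Coker f_{i-1}$ (with $C_{-1}$ read as $M$ when $i=0$). The construction before the theorem shows that $U_i^M$ is the minimal left $\uc$-approximation of $C_{i-1}$ via the monomorphism $g_i$. So it suffices to show that $C_{i-1}$ satisfies the hypothesis of \cref{lem:casei=0}, namely $\Ext^1_A(C_{i-1},T)=0$ for all $T\in\tc$. Applying \cref{lem:casei=0} with $C_{i-1}$ in place of $M$ then gives that $U_i^M$ is $\Ext^d$-projective in $\uc$.

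The key step is therefore the claim that $\Ext^1_A(C_j,\tc)=0$ for every $j\geq 0$. To prove it, I would use the exact sequence $0\to C_{j-1}\xrightarrow{g_j} U_j^M\to C_j\to 0$. For $j=0$, the role of $C_{-1}\to U_0^M$ is played by the (possibly non-injective) map $M\to U_0^M$, whose image $I$ lies in $\tc$ (see below). Fix $T\in\tc$ and apply $\Hom_A(-,T)$. This gives
\[
\Hom_A(U_j^M,T)\to \Hom_A(C_{j-1},T)\to \Ext^1_A(C_j,T)\to \Ext^1_A(U_j^M,T).
\]
Two facts are then needed. First, $\Hom_A(U_j^M,T)\to\Hom_A(C_{j-1},T)$ is surjective for all $T\in\tc$, meaning $g_j$ is a left $\tc$-approximation and not merely a left $\uc$-approximation. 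Second, $\Ext^1_A(U_j^M,T)=0$ for all $T\in\tc$.

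For the first fact, I would embed $T$ into its minimal left $\mc$-approximation. By \cref{cor: minimal M-appr. in U} this is a monomorphism $T\to U_T$ with $U_T\in\uc$. Given $C_{j-1}\to T$, the composite $C_{j-1}\to U_T$ factors through $g_j$. The remaining task is to show that the resulting map $U_j^M\to U_T$ lands in $T$. This is not automatic, so instead I would argue via the torsion-class structure. Since $C_{j-1}\in\tc$ for $j\geq1$, any $C_{j-1}\to T$ has image in $\tc$, and I would like to reduce to showing that $g_j$ is a left $\tc$-approximation using a pushout. Concretely, form the pushout of $g_j$ along $h\colon C_{j-1}\to T$. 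This gives $0\to T\to E\to C_j\to 0$ with $E\in\tc$ since $\tc$ is extension-closed. Then $h$ factors through $g_j$ if and only if this sequence splits, which reduces the first fact back to $\Ext^1_A(C_j,T)=0$. So the two facts are really one statement, and the argument becomes circular unless handled directly.

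I therefore expect the main obstacle to be proving $\Ext^1_A(C_j,\tc)=0$. My fallback is to derive it from the $\Ext^1$-hypothesis on $M$ together with the fact that the relevant maps are $\mc$-approximations. I would compare with a left $\tc$-approximation of $C_{j-1}$, which exists in the weak form $C_{j-1}\to T'$ whenever $\tc$ is functorially finite. It may then be necessary to use that $\tc$ can be chosen so that $M$'s approximations behave well, possibly replacing $\tc$ by the minimal torsion class $\operatorname{T}(\uc)=\fac\uc$ from \cref{thm:torsion-to-dtorsion}. In that torsion class, every $T\in\fac\uc$ is a quotient of some $U\in\uc$, and $\Ext^1_A(C_j,T)=0$ can be checked using $\Ext^1_A(C_j,U)$ together with $\Ext^2$-vanishing obtained by dimension shifting along the $\mc$-coresolution. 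For $j=0$, the first step is to note that $\Ext^1_A(M,\tc)=0$ passes to $\Ext^1_A(C_0,\tc)$. Given $T\in\tc$ and $h\colon I\to T$ out of $I=\operatorname{Im} f_0\in\tc$, the composite $M\to I\to T$ extends along $f_0$ once it factors through $\uc$, and it does so via $T\hookrightarrow U_T$. Pinning down this extension step cleanly is where I would spend the effort.
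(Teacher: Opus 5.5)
\textbf{The proposal has a genuine gap.} Your reduction is valid on its own terms: if $\Ext^1_A(C_{i-1},\tc)=0$, then \cref{lem:casei=0} applied to $C_{i-1}$ would give the result. But that vanishing fails in general for $d\geq 2$. You correctly identify it as the crux, and as equivalent to $g_j$ being a left $\tc$-approximation, but it is false, so none of your fallbacks can establish it.

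Take \cref{ex:running1} with $d=2$, $M=A$ and $\uc=\add\{P_2\oplus P_1\oplus S_1\}$, where $P_1=\rep{1\\2}$ and $P_2=\rep{2\\3}$. The minimal $\uc$-coresolution of $A$ listed there gives:
\begin{itemize}
\item $C_0=\Coker f_0\cong S_2$;
\item $g_1\colon S_2\hookrightarrow P_1$;
\item $C_1\cong S_1$.
\end{itemize}
Every torsion class containing $\uc$ contains $S_2\in\fac\uc$, so this works for every admissible choice of $\tc$, including $\fac\uc$. Yet $0\to S_2\to P_1\to S_1\to 0$ is non-split, so $\Ext^1_A(C_1,S_2)\neq 0$. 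Equivalently, $1_{S_2}$ does not factor through $g_1$, since $\Hom_A(P_1,S_2)=0$. Your $\fac\uc$ fallback breaks at the same point. Here $\Ext^1_A(S_1,\uc)=0$, but dimension shifting along $0\to S_3\to P_2\to S_2\to 0$ runs into $\Ext^2_A(S_1,S_3)\neq 0$ with $S_3\notin\tc$. Note that $U_2^A=S_1$ is nevertheless $\Ext^2$-projective in $\uc$: the property is simply not inherited through the intermediate cokernels.

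The missing idea is to bypass the cokernels and let a $d$-extension ending at $U_i^M$ do the work. Fix $0\to V_0\to\cdots\to V_d\xrightarrow{v_d}U_i^M\to 0$ with all $V_j\in\uc$, which is possible by \cref{theorem: extension closed}. Build a chain map from $M\to U_0^M\to\cdots\to U_{i-1}^M\to U_i^M$ into it:
\begin{itemize}
\item the component $U_{i-1}^M\to V_d$ exists by the induction hypothesis, since $\Ext^d$-projectivity of $U_{i-1}^M$ makes $\Hom_A(U_{i-1}^M,V_d)\to\Hom_A(U_{i-1}^M,U_i^M)$ surjective;
\item the next components exist by $d$-exactness, since each $U_j^M\in\mc$;
\item the hypothesis $\Ext^1_A(M,\tc)=0$ is used only once, for $M$ itself, to lift into $V_{d-i}$, using $\Ker v_{d-i}\in\tc$.
\end{itemize}
The left $\uc$-approximation property of $f_0$ and of the factors $f_j'$ then produces maps $h_j\colon U_j^M\to V_{d-i+j}$ satisfying the homotopy relations, whence $f_i=v_dh_if_i$. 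Left minimality of $f_i'$ makes $v_dh_i$ an isomorphism, so the $d$-extension splits. In this argument the hypothesis on $M$ is carried along the coresolution by $d$-exactness and by $\Ext^d$-projectivity of the earlier terms, not by $\Ext^1$-orthogonality of the cokernels.
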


\begin{proof}
We prove the claim by induction on $i$, where the case $i=0$ is shown in \cref{lem:casei=0}.

Now suppose that $1 \leq i \leq d$ and that $U_0^M, \dots, U_{i-1}^M$ are $\Ext^d$-projective in $\uc$. Consider a $d$-exact sequence
\begin{align}\label{eq: dexact seq}
    0 \to V_0 \xrightarrow{v_0} V_1 \xrightarrow{v_1} \cdots \xrightarrow{v_{d-1}} V_d \xrightarrow{v_d} U_i^M \to 0 
\end{align}
for an arbitrary $V_0\in \uc$. By \cref{theorem: extension closed}, we can assume that $V_j \in \uc$ for all $1 \leq j \leq d$. 

We proceed by constructing dashed morphisms making the diagram 
\begin{center}
\begin{tikzcd}
	M \arrow[r,"f_0"]\arrow[d,dashed,"k_{i+1}"] & U_0^M \arrow[r,"f_1"] \arrow[d,dashed,"k_{i}"] & \cdots  \arrow[r,"f_{i-2}"] & U_{i-2}^M \arrow[r,"f_{i-1}"] \arrow[d,dashed,"k_{2}"] & U_{i-1}^M \arrow[r,,"f_i"] \arrow[d,dashed,"k_{1}"] & U_i^M \arrow[d, equal] & \\
	V_{d-i} \arrow[r,"v_{d-i}"] & V_{d-i+1} \arrow[r,"v_{d-i+1}"] & \cdots \arrow[r,"v_{d-2}"] & V_{d-1} \arrow[r,"v_{d-1}"] & V_d \arrow[r,"v_{d}"] & U_i^M \arrow[r] & 0
\end{tikzcd}
\end{center}
commute. The morphism $k_1 \colon U_{i-1}^M \to V_d$ exists since $\Hom_A(U_{i-1}^M,V_d) \to \Hom_A(U_{i-1}^M,U_i^M)$ is surjective as $U_{i-1}^M$ is $\Ext^d$-projective; see \cite[Proposition 2.2]{higher survey}. For $2\leq j \leq i$, we use that \eqref{eq: dexact seq} is $d$-exact and $U_{i-j}^M \in \uc \subseteq \mc$ to obtain the morphisms $k_j \colon U_{i-j}^M \to V_{d-j+1}$. We complete the diagram with a morphism $k_{i+1} \colon M \to V_{d-i}$, using that $\Ker(v_{d-i})\in \tc$ and $\Ext^1_A(M,T)=0$ for all $T$ in $\tc$. 

As $f_0$ is a left $\uc$-approximation of $M$ and $V_{d-i} \in \uc$, there is a morphism \mbox{$h_0 \colon U_0^M \to V_{d-i}$} such that $h_0 f_0 =k_{i+1}$. Since the sequence 
\begin{align*}
    M \xrightarrow{f_0} U_0^M \xrightarrow{g_0} K_0 \to 0 
\end{align*}
with $K_0 = \Coker(f_0)$ is exact, there is an exact sequence
\begin{align*}
    0 \to \Hom_A(K_0, V_{d-i+1}) \xrightarrow{- \circ g_0} \Hom_A(U_0^M,V_{d-i+1}) \xrightarrow{- \circ f_0} \Hom_A(M, V_{d-i+1}).
\end{align*}
By commutativity of the above diagram, we have $k_{i}-v_{d-i} h_0 \in \Ker (- \circ f_0)$,  
and thus there is a morphism $k'_{i} \colon K_0 \to V_{d-i+1}$ such that $k'_{i} g_0 = k_{i}-v_{d-i} h_0$. 

By construction, the morphism $f_1$ factors as $U_0^M \xrightarrow{g_0} K_0 \xrightarrow{f_1'} U_1^M$, where $f_1'$ is a minimal left \mbox{$\uc$-approximation}. Consequently, using that $V_{d-i+1} \in \uc$, there exists a morphism \mbox{$h_1 \colon U_1^M \to V_{d-i+1}$} such that $h_1 f_1' = k_{i}'$. In particular, this yields the equalities \(h_1  f_1 = h_1  f_1'  g_0 = k_{i}'  g_0 = k_{i}-v_{d-i}  h_0 \),
so $k_i=h_1f_1+v_{d-i}h_0$. Now since $k_{i-1}- v_{d-i+1} h_1 \in \Ker(- \circ f_1)$, we may repeat the argument, producing morphisms $h_j$ as indicated in the diagram
\begin{center}
\begin{tikzcd}
	M \arrow[r,"f_0"] \arrow[d,"k_{i+1}", swap] & U_0^M \arrow[r,"f_1"] \arrow[d,"k_{i}", swap] \arrow[ld,"h_0", swap] & U_1^M \arrow[r,"f_2"] \arrow[d,"k_{i-1}", swap] \arrow[ld,"h_1", swap]& \cdots  \arrow[r,"f_{i-2}"] & U_{i-2}^M \arrow[r,"f_{i-1}"] \arrow[d,"k_{2}", swap] & U_{i-1}^M \arrow[r,,"f_i"] \arrow[d,"k_{1}", swap] \arrow[ld,"h_{i-1}", swap] & U_i^M \arrow[d, equal] \arrow[ld,"h_i", swap] \\
	V_{d-i} \arrow[r,"v_{d-i}"] & V_{d-i+1} \arrow[r,"v_{d-i+1}"] & V_{d-i+2} \arrow[r,"v_{d-i+2}"] & \cdots \arrow[r,"v_{d-2}"] & V_{d-1} \arrow[r,"v_{d-1}"] & V_d \arrow[r,"v_{d}"] & U_i^M
\end{tikzcd}
\end{center}
satisfying the homotopy relations. In particular, the final homotopy relation is
\begin{align*}
   k_1 = h_i  f_i +v_{d-1} h_{i-1}.
\end{align*}
It follows that we have \( f_i = v_d  k_1 = v_d  h_i  f_i + v_d  v_{d-1}  h_{i-1} = v_d h_i f_i\). Now, recall that $f_i$ factors as $U_{i-1}^M \xrightarrow{g_{i-1}} K_{i-1} \xrightarrow{f_i'} U_i^M$, where $f_i'$ is the minimal left \mbox{$\uc$-approximation} of $K_{i-1} \colonequals \Coker(f_{i-1})$. Since $g_{i-1}$ is an epimorphism, it follows that $f_i'= v_d  h_i  f_i'$. As $f_i'$ is left minimal, this implies that $v_d  h_i$ is an isomorphism. Consequently, the sequence \eqref{eq: dexact seq} splits, proving that $\Ext^d(U_i^M,V_0)=0$ for all $V_0 \in \uc$ as required. 
\end{proof}

\section{From \texorpdfstring{$d$}{d}-torsion classes to \texorpdfstring{$\tau_d$}{tau-d}-tilting theory}\label{sec:d-tors to tau_d}

Throughout \cref{sec:d-tors to tau_d}, we continue to work under \cref{torsion setup}. The main goal is to \mbox{construct a map} 

\begin{align*}
\renewcommand{\arraystretch}{1.8}
\begin{array}{ccc}
\renewcommand{\arraystretch}{1.2}
\begin{Bmatrix}
    \text{functorially finite} \\
    \text{$d$-torsion classes in $\mc$}
\end{Bmatrix}
&
    \to 
    &
    \renewcommand{\arraystretch}{1.2}
\begin{Bmatrix}
\text{basic $\tau_d$-rigid pairs in $\mc$} \\
\text{with $|A|$ summands }
\end{Bmatrix}\\ 
\uc &\mapsto  &(M_\uc, P_\uc)
\end{array} 
\renewcommand{\arraystretch}{1}
\end{align*}
which generalises the known bijection from \cite{AdachiIyamaReiten} in the case $d=1$. This is achieved in \cref{thm:maximaltaunrigid}. As a key step towards this result, we prove \cref{thm:intro d-tilting} from the introduction; see \cref{thm: d-tilting}. We start by providing a link between the properties of $\Ext^d$-projectivity and $\tau_d$-rigidity.

\begin{theorem}\label{thm: equivalence of Extd projectivity}
Let $U \in \uc$. The following statements are equivalent:
\theoremlistfix
\begin{enumerate}
    \item $U$ is $\Ext^d$-projective in $\uc$;
    \phantomsection
    \label{thm:equivalence:projective}
    \item $\tau_d U\in \fc$, where $\fc$ is the torsion free class corresponding to $\tc$;
    \phantomsection
    \label{thm:equivalence:torsion-free}
    \item $\Hom_{A}(U',\tau_d U) = 0$ for all $U'$ in $\uc$. 
    \phantomsection
    \label{thm:equivalence:tau-rigid}
\end{enumerate}
In particular, if $U$ is $\Ext^d$-projective in $\uc$, then $U$ is $\tau_d$-rigid.
\end{theorem}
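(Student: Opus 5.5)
We will prove the cycle of implications $(1)\Rightarrow(3)\Rightarrow(2)\Rightarrow(1)$, working throughout in \cref{torsion setup}. Our main tools are the characterisation of $\tau_d$-rigidity in \cref{lem:char tau_d-rigid modules}, the higher Auslander--Reiten duality, and the fact that $\uc=\tc\cap\mc$ with $tM=U_M$ for $M\in\mc$.

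\emph{$(2)\Rightarrow(3)$.} Every $U'\in\uc$ lies in $\tc$, and $\tc$ and $\fc$ are Hom-orthogonal. So if $\tau_dU\in\fc$, then $\Hom_A(U',\tau_dU)=0$ immediately.

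\emph{$(3)\Rightarrow(2)$.} Here we use that $\tau_d U\in\mc$, since $\tau_d$ restricts to an equivalence $\underline{\mc}\to\overline{\mc}$. Because $\tc$ induces $\uc$, the torsion subobject $t(\tau_dU)$ equals the $d$-torsion subobject $U_{\tau_dU}$, and hence lies in $\uc$. Applying (3) with $U'=t(\tau_dU)$ shows that the inclusion $t(\tau_dU)\to\tau_dU$ is zero. Therefore $t(\tau_dU)=0$, that is, $\tau_dU\in\fc$.

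\emph{$(1)\Leftrightarrow(3)$.} By \cref{lem:char tau_d-rigid modules}, condition (3) holds if and only if $\Ext^d_A(U,N)=0$ for all $N\in\fac\,\uc$, using $\fac\,\uc=\bigcup_{U'}\fac U'$. Since $\uc\subseteq\fac\,\uc$, this already gives $(3)\Rightarrow(1)$.

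\emph{$(1)\Rightarrow(3)$.} This is the main obstacle: we must pass from vanishing of $\Ext^d_A(U,-)$ on $\uc$ to vanishing on $\fac\,\uc$, which is a subcategory of $\tc$ but not of $\mc$. We plan to use duality directly. Suppose $g\colon U'\to\tau_dU$ is a nonzero morphism with $U'\in\uc$, and let $N=\operatorname{Im} g\in\tc$. Take the minimal left $\mc$-approximation $N\to U_N$. By \cref{cor: minimal M-appr. in U}, this map is a monomorphism with $U_N\in\uc$. Since $\tau_dU\in\mc$, the inclusion $N\to\tau_dU$ factors through $N\to U_N$, giving a map $U_N\to\tau_dU$ whose restriction to $N$ is the inclusion.

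It therefore suffices to show that $\Hom_A(V,\tau_dU)=0$ for every $V\in\uc$. By the higher Auslander--Reiten duality $D\overline{\Hom}_A(V,\tau_dU)\cong\Ext^d_A(U,V)=0$, every map $V\to\tau_dU$ factors through an injective module. To conclude that the map itself is zero, we argue as follows: $\tau_dU$ has no nonzero injective summands, so a map from $V$ to $\tau_dU$ that factors through an injective factors through the injective envelope $V\to I(V)$. It then suffices to show that $\Hom_A(I,\tau_dU)$ is already controlled. If this direct route fails, we fall back on the following argument. The composite $U'\twoheadrightarrow N\hookrightarrow U_N\to\tau_dU$ equals $g\neq0$, and $U_N\in\uc$. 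Since $\uc$ is closed under $d$-quotients, we can replace $U_N$ by the image-closure inside $\uc$. We then use the equivalence $\tau_d\colon\underline{\mc}\to\overline{\mc}$ together with the formula $\Ext^d_A(U,V)\cong D\overline{\Hom}_A(V,\tau_dU)$, and show that a nonzero morphism $U'\to\tau_dU$ with image in $\tc$ yields a nonsplit $d$-extension of $U$ by an object of $\uc$. This uses the construction in \cref{lem:char tau_d-rigid modules}, namely \cite[Proposition 5.8]{AS}, applied to $\Omega^{d-1}U$, combined with the $\mc$-coresolution of $N$, which has all terms in $\uc$.

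Finally, taking $U'=U$ in (3) shows that an $\Ext^d$-projective $U$ is $\tau_d$-rigid.
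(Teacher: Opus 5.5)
Your arguments for $(2)\Rightarrow(3)$, $(3)\Rightarrow(2)$ and $(3)\Rightarrow(1)$ are correct. However, the only substantial implication, $(1)\Rightarrow(3)$ (equivalently $(1)\Rightarrow(2)$), is not proved.

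The detour through $N\hookrightarrow U_N$ ends with ``it suffices to show that $\Hom_A(V,\tau_dU)=0$ for every $V\in\uc$''. That is statement (3) itself, so the reduction is circular. Higher Auslander--Reiten duality only gives $\overline{\Hom}_A(V,\tau_dU)=0$, and passing from $\overline{\Hom}$ to $\Hom$ is exactly what has to be shown. Saying that $\Hom_A(I,\tau_dU)$ is ``already controlled'' is not an argument, since maps from injectives to $\tau_dU$ need not vanish.

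The fallback also omits the decisive step. From $\Hom_A(U',\tau_dU)\neq0$, \cref{lem:char tau_d-rigid modules} gives a nonzero class in $\Ext^d_A(U,N)$ for some $N\in\fac U'\subseteq\tc$. You must then explain why this class survives in $\Ext^d_A(U,V)$ for some $V\in\uc$, since a priori pushing out along $N\hookrightarrow U_N$ could kill it. This can be done: if $0\to N\to U_N\to U_1\to\cdots\to U_{d-1}\to 0$ is the minimal $\mc$-coresolution, dimension shifting with $\Ext^i_A(\mc,\mc)=0$ for $1\le i\le d-1$ gives $\Ext^{d-1}_A(U,\Coker(N\to U_N))=0$. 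Hence $\Ext^d_A(U,N)\hookrightarrow\Ext^d_A(U,U_N)=0$. None of this appears in your proposal, though.

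The paper proves $(1)\Rightarrow(2)$ by contradiction, using the almost split sequence $0\to\tau_dU\xrightarrow{i}E\to\Omega^{d-1}U\to0$ for indecomposable $U$ with $\tau_dU\neq0$. Suppose $t\tau_dU\neq0$. Then the projection $\pi\colon\tau_dU\to f\tau_dU$ is not a split monomorphism, so $\pi=gi$ for some $g$. Taking $E'=\Ker g$ gives an extension $0\to t\tau_dU\to E'\to\Omega^{d-1}U\to0$ whose pushout along $t\tau_dU\hookrightarrow\tau_dU$ is the almost split sequence. Since $t\tau_dU\in\uc$ (this uses that $\tc$ induces $\uc$), we have $\Ext^1_A(\Omega^{d-1}U,t\tau_dU)\cong\Ext^d_A(U,t\tau_dU)=0$. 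So the almost split sequence would split, a contradiction.

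Alternatively, your ``direct route'' can be completed if you apply it to the monomorphism $\iota\colon t\tau_dU\hookrightarrow\tau_dU$ rather than to an arbitrary map $V\to\tau_dU$. By duality $\iota$ factors through an injective, hence as $\iota=be$ with $e$ an injective envelope of $t\tau_dU$. Since $e$ is essential and $\iota$ is a monomorphism, $b$ is a monomorphism from an injective, so it splits. Thus the injective envelope of $t\tau_dU$ is a direct summand of $\tau_dU=\tau\Omega^{d-1}U$. As $\tau_dU$ has no nonzero injective summands, $t\tau_dU=0$, which is (2).
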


\begin{proof}
  (\ref{thm:equivalence:projective}) \textit{implies} (\ref{thm:equivalence:torsion-free}):
  Let $U$ be $\Ext^d$-projective in $\uc$. It suffices to show the statement under the assumption that $U$ is indecomposable. If $\tau_d U = \tau\Omega^{d-1} U$ is zero, the result is immediate. Assume hence that $\Omega^{d-1}U$ is non-projective. Since $\tau_d U$ is indecomposable, so is $\Omega^{d-1}U\cong\tau^-\tau_d U$. Consider the almost split sequence
  \begin{equation}\label{AR-seq}
  0 \to \tau_d U\overset{i}{\to} E \overset{p}{\to} \Omega^{d-1} U \to 0
  \end{equation}
  ending in $\Omega^{d-1} U$. 
  We show the result by contradiction, so assume that $\tau_d U$ is not an object in $\fc$. Consider the canonical short exact sequence 
  $$0 \to t\tau_d U\xrightarrow{\iota} \tau_d U\xrightarrow{\pi} f\tau_d U \to 0$$
  of $\tau_d U$ with respect to the torsion pair $(\tc, \fc)$. Thus, we have $t\tau_d U\in\tc\cap\mc = \uc$. Moreover, our assumption implies that $\pi \colon \tau_d U \to f\tau_d U$ is not a split monomorphism. Since $i$ is left almost split, there is a morphism $g \colon E\xrightarrow{} f\tau_d U$ such that $gi=\pi$. Because $\pi$ is an epimorphism, so is $g$, and we obtain a commutative diagram
  \[
  	\begin{tikzcd}[column sep=20, row sep=20]
  %	  & 0\arrow[d]  & 0\arrow[d] & \\
	 &t\tau_d U \arrow[r] \arrow[d, "\iota"] & E' \arrow[r] \arrow[d] & \Omega^{d-1}U \arrow[d, equal]  \\
	 &\tau_d U \arrow[r, "i"] \arrow[d, "\pi"] & E \arrow[r, "p"] \arrow[d, "g"] & \Omega^{d-1}U  \\
%	 & f\tau_d U \arrow[r, equal] \arrow[d] & f\tau_d U\arrow[d] & \\ 
& f\tau_d U \arrow[r, equal]  & f\tau_d U 
%	  & 0  & 0 &
	\end{tikzcd}
  \]
  whose rows and columns are short exact sequences and where $E' = \Ker(g)$. By assumption, the object $U$ is $\Ext^d$-projective in $\uc$, which implies that $\Ext^1_{A}(\Omega^{d-1}U, t\tau_d U)=0$ as $t\tau_d U \in \uc$. This implies that the top exact sequence of the diagram splits. From this, we can conclude that also \eqref{AR-seq} splits, which is a contradiction.
  
(\ref{thm:equivalence:torsion-free}) \textit{implies} (\ref{thm:equivalence:tau-rigid}):
  If $\tau_d U$ is in $\fc$, we have $\Hom_{A}(X, \tau_d U)=0$ for all $X$ in $\tc$. Then the result follows from the fact that $\uc = \tc \cap \mc \subseteq \tc$.
  
  (\ref{thm:equivalence:tau-rigid}) \textit{implies} (\ref{thm:equivalence:projective}):
  If $\Hom_{A}(U', \tau_d U)=0$ for all $U'$ in $\uc$, then $\Ext^d_{A}(U, U')=0$ for all $U'$ in $\uc$ by higher Auslander--Reiten duality.
\end{proof}

Combining \cref{thm:U_i Ext^d-projective} with \cref{thm: equivalence of Extd projectivity} gives the following corollary.

\begin{corollary}\label{cor: extd vs taud} 
Suppose $M \in \modA$ satisfies $\Ext^1_{A}(M,T)=0$ for every $T \in \tc$. If 
$$M \to U_0^M \to U_1^M \to \cdots \to U_d^M \to 0$$ 
is a minimal $\uc$-coresolution of $M$, then $\Hom_A(U_i^M, \tau_d U_j^M)=0$ for all $0 \leq i,j \leq d$.
\end{corollary}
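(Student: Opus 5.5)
This corollary follows directly from the two preceding results, so the proof is a two-step combination with essentially no obstacle.

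First, by \cref{thm:U_i Ext^d-projective}, applied to the given $M$ with $\Ext^1_A(M,\tc)=0$ and its minimal $\uc$-coresolution, every term $U_j^M$ for $0\le j\le d$ is $\Ext^d$-projective in $\uc$. In particular each $U_j^M$ lies in $\uc$. This holds by the definition of a $\uc$-coresolution, and it is exactly what is needed to apply \cref{thm: equivalence of Extd projectivity} to $U=U_j^M$.

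Second, we use the implication (\ref{thm:equivalence:projective}) $\Rightarrow$ (\ref{thm:equivalence:tau-rigid}) of \cref{thm: equivalence of Extd projectivity}. Since $U_j^M$ is $\Ext^d$-projective in $\uc$, it gives $\Hom_A(U',\tau_d U_j^M)=0$ for every $U'\in\uc$. Taking $U'=U_i^M$, which lies in $\uc$, yields $\Hom_A(U_i^M,\tau_d U_j^M)=0$ for all $0\le i,j\le d$.

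Alternatively, the second step can bypass \cref{thm: equivalence of Extd projectivity}: by higher Auslander--Reiten duality, $D\overline{\Hom}_A(U_i^M,\tau_d U_j^M)\cong \Ext^d_A(U_j^M,U_i^M)=0$. One would then still need to upgrade $\overline{\Hom}$ to $\Hom$, which is precisely what \cref{thm: equivalence of Extd projectivity} (via condition (\ref{thm:equivalence:torsion-free})) supplies. So citing that theorem directly is the cleaner route.
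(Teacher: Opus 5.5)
Your proposal is correct and is the paper's argument: the paper obtains this corollary by combining \cref{thm:U_i Ext^d-projective}, which makes every $U_j^M$ $\Ext^d$-projective in $\uc$, with the implication from $\Ext^d$-projectivity to $\Hom_A(\uc,\tau_d U_j^M)=0$ in \cref{thm: equivalence of Extd projectivity}. Your side remark is also right: higher Auslander--Reiten duality by itself only gives vanishing of $\overline{\Hom}_A(U_i^M,\tau_d U_j^M)$, so you still need \cref{thm: equivalence of Extd projectivity} to get vanishing of $\Hom_A$.
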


Putting $M=A$ in \cref{cor: extd vs taud}, the minimal $\uc$-coresolution
    \begin{align}\label{eq: cores of A}
    A \xrightarrow{f} U_0^A \to U_1^A \to \cdots \to U_{d-1}^A \to U_d^A \to 0 
    \end{align}
allows us to associate an $\Ext^d$-projective module $U^A \colonequals \bigoplus_{i=0}^d U_i^A$ to $\uc$. Our next step is to show that $U^A$ is an $\Ext^d$-projective generator of $\uc$ in the sense of \cref{defn:ExtdProj}, i.e.\ that any \mbox{$\Ext^d$-projective} in $\uc$ is contained in $\add(U^A)$.

To this end, we consider the algebra 
\[
B \colonequals A/\Ann \uc,
\]
where $\Ann \uc=\{a\in A\mid U\cdot a =(0) \text{ for all }U\in \uc\}$. The category $\modB$ can be identified with the subcategory of $\modA$ consisting of all modules $M$ satisfying $M\cdot a=(0)$ for all $a\in \Ann \uc$. Note that $\uc\subseteq \modB$ and $\Ker f= \Ann \uc$ since $f$ is a left $\uc$-approximation. It follows that the sequence
\begin{align}\label{eq: cores of B}
    0\to B \to U_0^A \to U_1^A \to \cdots \to U_{d-1}^A \to U_d^A \to 0
\end{align}
is exact in $\modB$.

Recall that the $\kk$-dual $DA$ of $A$ is an injective object in $\modA$. Moreover, every finitely generated injective $A$-module is in $\add (DA)$. It follows from the definition of a $d$-cluster tilting subcategory that $DA \in \mc$, and thus $tDA\in \uc \subseteq \modB$ by assumption (see \cref{torsion setup}).

\begin{lemma}\label{Injectives in mod B}
With the above notation, we have $\add (DB) = \add (tDA)$. Moreover, if $I$ is an indecomposable injective $A$-module, then $tI$ is either zero or indecomposable.
\end{lemma}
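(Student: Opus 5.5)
We work throughout with the identification of $\modB$ as the full subcategory of $\modA$ of modules annihilated by $\Ann\uc$. The basic observation is that for any $A$-module $X$, the largest submodule of $X$ lying in $\modB$ is $\{x\in X\mid x\cdot\Ann\uc=0\}$. We denote it $X_B$. For $X=DA$ one has $(DA)_B\cong D(A/\Ann\uc)=DB$, and this is an injective cogenerator of $\modB$. More generally, for an indecomposable injective $I=D(eA)$, the module $I_B$ is $D(eB)$, which is zero or an indecomposable injective $B$-module with simple socle. The plan is to compare $tI$ with $I_B$.

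\textbf{First inclusion.} We have $tI\in\uc\subseteq\modB$, since $tDA\in\uc$ as noted before the lemma and $tI$ is a summand of $tDA$. Hence $tI\subseteq I_B$.

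\textbf{Reverse inclusion.} We show $I_B\in\tc$, which forces $I_B\subseteq tI$. The first idea is to realise $DB$ as a quotient of a module in $\tc$, since $\tc$ is closed under quotients. Use the exact sequence \eqref{eq: cores of B}, where $B\hookrightarrow U_0^A$. Apply $D$ and work with left modules. This is awkward, since duality swaps sides and $\uc$ consists of right modules.

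Instead we argue directly with injective envelopes. Take the injective envelope of $DB$ in $\modA$, namely $DB\hookrightarrow E$ with $E\in\add(DA)$. Then $tE\in\uc\subseteq\modB$. Since $DB$ is injective in $\modB$, the inclusion $tE\cap DB\subseteq tE$ relates to the injective hull. It is cleaner, however, to show $\uc$ cogenerates $DB$. By \eqref{eq: cores of B}, $B$ embeds into $U_0^A\in\uc$, so $\uc$ is faithful over $B$. A faithful module $U$ over a finite-dimensional algebra satisfies $DB\in\fac U$: dualising $B\hookrightarrow U^n$ gives $D(U)^n\twoheadrightarrow DB$ on the left. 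On the right we want $U^n\twoheadrightarrow DB$. The standard fact is that $U$ faithful implies $B\hookrightarrow U^n$ and, dually, $DB\in\fac(U^m)$ whenever $DU$ is faithful as a left module. Since $U$ is faithful if and only if $DU$ is faithful, both hold. Hence $DB\in\fac U_0^A\subseteq\tc$.

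\textbf{Conclusion.} It follows that $DB\in\tc$, and $DB\subseteq DA$ as the largest $B$-submodule. Therefore $DB\subseteq tDA\subseteq (DA)_B=DB$, which gives $tDA=DB$. This holds summandwise: $tI=I_B=D(eB)$ for each indecomposable injective $I=D(eA)$, because $t$ commutes with direct sums and so does $(-)_B$. This gives both claims: $\add(tDA)=\add(DB)$, and each $tI$ is either $0$ or the indecomposable $D(eB)$.

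The main obstacle is the step $DB\in\fac U$ from faithfulness. We will prove it by taking $B\hookrightarrow U^n$ for the left module $DU$ (faithful), and dualising $B\hookrightarrow (DU)^n$ to get $U^n\twoheadrightarrow DB$.
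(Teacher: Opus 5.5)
Your proof is correct, but it takes a different route from the paper.

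\textbf{Your route.} You compute $tI$ explicitly. Write $X_B=\{x\in X\mid x\cdot\Ann\uc=0\}$ for the largest submodule of $X$ lying in $\modB$. Then $tI\subseteq I_B$, because $tI$ is a summand of $tDA\in\uc\subseteq\modB$. Conversely $I_B\subseteq tI$, because $I_B$ is a summand of $(DA)_B\cong DB$, and $DB$ lies in $\fac U_0^A\subseteq\tc$. This last step uses faithfulness of $U_0^A$ over $B$ together with duality: ${}_BB\hookrightarrow (DU_0^A)^n$ dualises to $(U_0^A)^n\twoheadrightarrow DB$. The outcome is the sharper conclusion $tDA\cong DB$, with $tI\cong D(B\bar e)$ for $I=D(Ae)$, and both claims of the lemma follow at once.

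\textbf{The paper's route.} The paper never identifies $tI$. It embeds $DB$ into some $tI$ by pushing $B^t\twoheadrightarrow DB$ out along $B^t\hookrightarrow (U_0^A)^t$, and splits this using injectivity of $DB$ in $\modB$. This is in effect the same fact $DB\in\tc$ that you use. It then shows each $tI$ is injective in $\modB$, via the Auslander--Smal\o{} result that monomorphisms from $tI$ into objects of $\tc$ split. Indecomposability is handled separately via $\Soc(tI)\subseteq\Soc(I)$.

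\textbf{Comparison.} Your version avoids the external splitting result and gives an explicit description of $tI$. The paper's version stays at the level of injectivity in $\modB$ and only yields equality of additive closures.

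\textbf{For the write-up.}
\begin{itemize}
\item Drop the abandoned injective-envelope paragraph and the detour through left modules.
\item Write the indecomposable injective right $A$-modules as $D(Ae)$ rather than $D(eA)$.
\item Add one line explaining why $D(B\bar e)$ is zero or indecomposable. For instance, $\bar eB\bar e$ is a quotient of the local ring $eAe$; alternatively, $I_B\subseteq I$ has simple socle when nonzero.
\end{itemize}
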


\begin{proof}
    We start by proving that $ DB \in \add (tDA)$, which yields $\add (DB) \subseteq \add (tDA)$. It suffices to show that there exists a monomorphism $DB\to tI$ for an injective $A$-module $I$, since $DB$ must then be a direct summand of $tI$. To this end, choose an epimorphism $p\colon B^t\to DB$. Taking the pushout of $p$ along the monomorphism $B^t\to (U_0^A)^t$ and using that $\tc$ is closed under quotients, we get a monomorphism $DB\to T$ with $T\in \tc$. Composing with the monomorphism $T\to tI$ obtained from an injective envelope $T\to I$, gives the desired conclusion. 
    
    For the reverse inclusion, consider the torsion subobject $tI$ of an injective $A$-module $I$. Let $tI\to M$ be an arbitrary monomorphism with $M\in \modB$. By a similar pushout argument as above, there exists a monomorphism $M\to T$ with $T\in \tc$. By \cite[Proposition 3.2]{AS}, any monomorphism from $tI$ to an object in $\tc$ must split since $I$ is injective. Hence, the composite $tI\to M\to T$ splits, and therefore $tI\to M$ splits. This means that $tI$ is an injective \mbox{$B$-module}. We can thus conclude that $tI \in \add (DB)$, which yields $tDA \in \add (DB)$.
    
    For the moreover part of the statement, note that since $tI$ is a submodule of $I$, the socle $\Soc(tI)$ of $tI$ is a submodule of the socle $\Soc(I)$ of $I$. By hypothesis, the injective module $I$ is indecomposable. This implies that $\Soc(I)$ is simple. It follows that $\Soc(tI)$ is either zero or simple. The former implies that $tI$ is zero, while the latter implies that $tI$ is indecomposable.
\end{proof}

The proposition below further shows that $tDA$ is $\Ext^d$-injective in $\uc$.

\begin{proposition}\label{Ext^d-injective in d-torsion class} 
If $I\in \modA$ is injective, then $\Ext^d_{A}(U,tI)=0$ for all $U\in \uc$. 
\end{proposition}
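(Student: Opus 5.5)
The natural approach is to show that any $d$-extension in $\mc$ starting at $tI$ and ending at some $U\in\uc$ splits, using the same kind of argument as in \cref{lem:casei=0}, with the injectivity of $I$ playing the role of the vanishing of $\Ext^1_A(M,\tc)$. Fix $U\in\uc$ and an injective $I$. By \cref{Injectives in mod B} we may assume $I$ is indecomposable. Then $tI$ is zero or indecomposable, and in either case $tI\in t\mc\subseteq\mc$ (since $I=DA$-summand lies in $\mc$ and $\tc$ induces $\uc$), so $tI\in\uc$. Consider a minimal $d$-extension
\[
0\to tI\xrightarrow{e_0} E_1\to\cdots\to E_d\to U\to 0 .
\]
Since both end terms lie in $\uc$, \cref{theorem: extension closed} gives $E_1,\dots,E_d\in\uc\subseteq\tc$.

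The key step is to split the leftmost short exact piece. Let $C=\Coker(e_0)$. Then $C$ is a quotient of $E_1\in\tc$, so $C\in\tc$. Because $E_1\in\tc$ and $tI\to E_1$ is a monomorphism from $tI$ into an object of $\tc$, \cite[Proposition 3.2]{AS} applies: any monomorphism from the torsion part of an injective into a torsion module splits. This is exactly the fact already used in the proof of \cref{Injectives in mod B}. It can also be argued directly. Extend the inclusion $tI\to I$ along $e_0$ to a map $E_1\to I$. Its image is a quotient of $E_1$, hence lies in $\tc$, hence lies in $tI$. This yields a retraction $E_1\to tI$ of $e_0$. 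So $e_0$ is a split monomorphism.

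It then follows that the whole $d$-extension represents the zero class in $\Ext^d_A(U,tI)$. The class is the Yoneda product of the class of $0\to tI\to E_1\to C\to 0$ with an element of $\Ext^{d-1}_A(U,C)$. The first factor vanishes, so the product vanishes. Finally, every element of $\Ext^d_A(U,tI)$ is represented by a $d$-extension in $\mc$: by \cite[Proposition 2.9]{HerschendJorgensen}, since the end terms lie in $\mc$, and we may then pass to its minimal representative. Hence $\Ext^d_A(U,tI)=0$.

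The main obstacle is making sure that every class in $\Ext^d_A(U,tI)$ is realised by a $d$-extension whose middle terms lie in $\uc$. This needs $tI\in\mc$, which holds because $\tc$ induces $\uc$, so $t\mc\subseteq\mc$. It also needs the closure result \cref{theorem: extension closed}. Once that is in place, the splitting of $tI\to E_1$ is the only substantive input, and it comes from injectivity of $I$ together with closure of $\tc$ under quotients. For $d=1$ the argument reduces to the classical statement that $tI$ is $\Ext$-injective in $\tc$.
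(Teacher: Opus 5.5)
Your proof is correct and is essentially the paper's argument: you represent the class by a minimal $d$-extension whose middle terms lie in $\uc$ (using $tI\in\uc$ together with \cref{theorem: extension closed}), then observe that $tI\to E_1$ splits by \cite[Proposition 3.2]{AS} because $I$ is injective and $E_1\in\tc$. Your direct check of that splitting (extend $tI\hookrightarrow I$ to a map $E_1\to I$, whose image lies in $\tc$ and hence in $tI$) is a nice self-contained substitute for the citation, while the reduction to indecomposable $I$ is harmless but unnecessary.
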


\begin{proof}
Let $U\in \uc$. An element in $\Ext^d_{A}(U,tI)$ can be represented by a $d$-exact sequence 
\[
0\to tI\to U_1\to \cdots \to U_d\to U\to 0,
\]
where we can assume $U_1,\dots, U_d\in \uc$ by \cref{theorem: extension closed}. Since $I$ is injective and $U_1\in\uc \subseteq \tc$, the morphism $tI\to U_1$ is split by \cite[Proposition 3.2]{AS}. This proves the claim.
\end{proof}

We now show that certain extension groups over the algebras $A$ and $B$ can be identified.

\begin{lemma}\label{Ext-iso in modA and modB}
Let $U\in \uc$ and $M\in \modB$. For any $j=1,\ldots,d$, there is a natural isomorphism 
\[
\Ext^j_A(U,M)\cong \Ext^j_B(U,M).
\]
\end{lemma}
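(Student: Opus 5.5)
We use the standard comparison between $\Ext$ over $A$ and over a quotient algebra. The restriction functor $\modB\to\modA$ is exact and fully faithful, and its image is closed under submodules and quotients. So Yoneda $j$-extensions in $\modB$ are in particular Yoneda $j$-extensions in $\modA$. This gives a natural map $\Ext^j_B(U,M)\to \Ext^j_A(U,M)$ for every $j$. For $j=1$ it is an isomorphism: the middle term $E$ of a short exact sequence $0\to M\to E\to U\to 0$ in $\modA$ is annihilated by $(\Ann\uc)^2$, but not automatically by $\Ann\uc$. So the case $j=1$ also needs an argument, and the plan must use the specific structure of $\uc$.

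The plan is to use the $\uc$-coresolution \eqref{eq: cores of B}, $0\to B\to U_0^A\to\cdots\to U_d^A\to 0$, together with the fact that the $U_i^A$ are $\Ext^d$-projective in $\uc$ (\cref{thm:U_i Ext^d-projective}). We argue by dimension shifting on the first variable. Take the projective cover $P\to U$ in $\modB$; then $P\in\add B$. The key inputs we want are the vanishings
\[
\Ext^j_A(B,M)=0 \qquad\text{and}\qquad \Ext^j_B(B,M)=0 \qquad\text{for } 1\le j\le d-1,
\]
and a matching of the kernels $\Omega_B U$ in a range of degrees. The second vanishing is trivial. For the first, we note that $U_i^A\in\mc$, and we would like $\Ext^j_A(U_i^A,M)=0$. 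That is not available for arbitrary $M\in\modB$, so the direct approach stalls.

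The cleaner route is to work in the Yoneda description directly, using closure properties. Given a $j$-extension $0\to M\to E_1\to\cdots\to E_j\to U\to 0$ in $\modA$ with $1\le j\le d$, we replace it by an equivalent one whose middle terms lie in $\modB$. First, by \cref{lem: generation} we have $U\in\fac U_0^A$, so there is an epimorphism $(U_0^A)^t\to U$. We pull the extension back so that the last middle term is replaced by a submodule of $E_j\oplus (U_0^A)^t$. Then we push to the torsion part, using that $\tc\subseteq\fac(\text{objects in }\modB)$ is not available. So instead we use the injective side, with \cref{Injectives in mod B} and \cref{Ext^d-injective in d-torsion class}. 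Embed $M$ into an injective $B$-module $J\in\add(tDA)$ and set $C=J/M\in\modB$. We then dimension shift in the second variable: $\Ext^j_A(U,M)\cong\Ext^{j-1}_A(U,C)$, provided $\Ext^{i}_A(U,J)=0$ for $1\le i\le d$, and similarly over $B$, where the corresponding vanishing is automatic since $J$ is injective over $B$.

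The main obstacle is therefore the vanishing $\Ext^i_A(U,tI)=0$ for $1\le i\le d$, $U\in\uc$ and $I$ injective. For $i=d$ this is exactly \cref{Ext^d-injective in d-torsion class}. For $1\le i\le d-1$ it follows since $U,tI\in\mc$, using $tDA\in\uc\subseteq\mc$ and the $d$-rigidity of $\mc$. With these vanishings in hand, we induct on $j$, applying the long exact sequences over $A$ and over $B$ to $0\to M\to J\to C\to0$. The naturality of the comparison map makes the connecting maps compatible, so the five lemma finishes the induction. The base case is the identification $\Hom_A(U,-)=\Hom_B(U,-)$ on $\modB$, together with the exactness of the comparison at the $\Hom$ level. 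The induction stops at $j=d$ because the vanishing $\Ext^{i}_A(U,J)=0$ is only available for $i\le d$.
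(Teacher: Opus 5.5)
Your final argument is correct and is essentially the paper's proof: you embed $M$ into an injective $B$-module $J\in\add(tDA)$, use $\Ext^i_A(U,J)=0$ for $1\le i\le d$ (from $d$-rigidity of $\mc$ and \cref{Ext^d-injective in d-torsion class}), and dimension shift in the second variable over both $A$ and $B$. The paper does the same in one step with an injective coresolution $0\to M\to tI_0\to tI_1\to\cdots$ in $\modB$ and the equality $\Hom_A(U,tI_j)=\Hom_B(U,tI_j)$, while your inductive version additionally shows that the canonical comparison map $\Ext^j_B(U,M)\to\Ext^j_A(U,M)$ is the isomorphism. Delete the abandoned detours in the first half, including the self-contradictory claim that the comparison map ``is an isomorphism'' for $j=1$.
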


\begin{proof}
Using \Cref{Injectives in mod B}, we can choose an injective coresolution of $M$ in $\modB$ of the form
\[
0\to M\to tI_0\to tI_1\to \cdots,
\]
where $I_j$ is injective in $\modA$ for all $j\geq 0$. By definition, we compute $\Ext^j_B(U,M)$ as the homology of the complex 
\[
\cdots \to \Hom_B(U,tI_{j-1})\to \Hom_B(U,tI_{j})\to \Hom_B(U,tI_{j+1})\to \cdots
\]
at $\Hom_B(U,tI_{j})$. 
Using that $U, tI_j \in \mc$ combined with \Cref{Ext^d-injective in d-torsion class}, we obtain $\Ext^i_A(U,tI_j)=0$ for all $1\leq i\leq d$ and $j\geq 0$. By dimension shifting, we therefore see that $\Ext^j_A(U,M)$ for $1\leq j\leq d$ is isomorphic to the homology of the complex 
\[
\cdots \to \Hom_A(U,tI_{j-1})\to \Hom_A(U,tI_{j})\to \Hom_A(U,tI_{j+1})\to \cdots
\]
at $\Hom_A(U,tI_{j})$. Since $\Hom_A(U,tI_{j})=\Hom_B(U,tI_{j})$ for all $j\geq 0$, this proves the claim.
\end{proof}

The lemma above allows us to relate $\tau_d$-rigidity in $\modA$ to $\tau_d$-rigidity in $\modB$. 

\begin{proposition}\label{tau_d rigid in mod A and mod B}
Let $U\in \uc$. Then $U$ is $\tau_d$-rigid in $\modA$ if and only if it is $\tau_d$-rigid in $\modB$.
\end{proposition}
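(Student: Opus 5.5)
We use \cref{lem:char tau_d-rigid modules}, which works over any finite-dimensional algebra and so applies to both $A$ and $B$. It gives: $U$ is $\tau_d$-rigid in $\modA$ if and only if $\Ext^d_A(U,N)=0$ for all $N\in\fac_A U$. Likewise, $U$ is $\tau_d$-rigid in $\modB$ if and only if $\Ext^d_B(U,N)=0$ for all $N\in\fac_B U$. The plan is to check that the two families of test modules coincide and that the two $\Ext^d$-groups agree on them.

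The first step is to compare the test modules. Since $U\in\uc\subseteq\modB$ and $\modB$ is closed under quotients inside $\modA$ (a quotient of a module annihilated by $\Ann\uc$ is still annihilated by it), every $A$-module quotient of some $U^t$ is a $B$-module. Epimorphisms in $\modB$ are exactly epimorphisms in $\modA$ between $B$-modules. Hence $\fac_A U=\fac_B U$, and this class lies in $\modB$.

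The second step is to compare the extension groups. For $N\in\fac U\subseteq\modB$, \cref{Ext-iso in modA and modB} with $j=d$ gives $\Ext^d_A(U,N)\cong\Ext^d_B(U,N)$. Combining this with the first step, the vanishing condition characterising $\tau_d$-rigidity over $A$ is identical to the one over $B$, which proves the equivalence.

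The only point requiring care is that \cref{lem:char tau_d-rigid modules} is used over $B$ with the $d$-Auslander--Reiten translation of $B$. That lemma only relies on $\tau_d\cong\tau\Omega^{d-1}$ and \cite[Proposition 5.8]{AS}, both of which hold for an arbitrary finite-dimensional algebra. No cluster tilting hypothesis over $B$ is needed, so I expect no real obstacle here; the substantive input is \cref{Ext-iso in modA and modB}, which is already established.
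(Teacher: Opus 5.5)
Your proposal is correct and follows the paper's proof: it uses $\fac U\subseteq\modB$ together with \cref{Ext-iso in modA and modB} for $j=d$, and then applies the characterisation in \cref{lem:char tau_d-rigid modules} over both $A$ and $B$. Your additional checks are both sound: that $\fac_A U=\fac_B U$, and that the characterisation holds over an arbitrary finite-dimensional algebra such as $B$.
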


\begin{proof}
Since $\fac U \subseteq \modB$, we have $\Ext^d_A(U,N)\cong \Ext^d_B(U,N)$ for all $N\in \fac U$ by \Cref{Ext-iso in modA and modB}. Hence, the claim follows from the characterisation of $\tau_d$-rigid modules in \Cref{lem:char tau_d-rigid modules}. 
\end{proof}

Recall that we use the minimal $\uc$-coresolution \eqref{eq: cores of A} to define $U^A= \bigoplus_{i=0}^{d} U_i^A$.

\begin{theorem}\label{thm: d-tilting}
The following statements hold:
\theoremlistfix
\begin{enumerate}
    \item The object $U^A$ is a $d$-tilting $B$-module.
    \phantomsection
    \label{thm: d-tilting:1}
    \item The object $U^A$ is an $\Ext^d$-projective generator of $\uc$.
    \phantomsection
    \label{thm: d-tilting:2}
\end{enumerate}
\end{theorem}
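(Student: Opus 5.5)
For part (1) we verify the three defining conditions of a $d$-tilting module over $B$ for $U^A$. The coresolution condition is already available: the sequence \eqref{eq: cores of B}
\[
0\to B \to U_0^A \to \cdots \to U_d^A \to 0
\]
is exact in $\modB$ with all terms in $\add(U^A)$.

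Next we check the vanishing of self-extensions over $B$.
\begin{itemize}
\item Each $U_i^A$ lies in $\mc$, so $\Ext^j_A(U^A,U^A)=0$ for $1\le j\le d-1$.
\item By \cref{thm:U_i Ext^d-projective} applied with $M=A$ (trivially $\Ext^1_A(A,\tc)=0$), each $U_i^A$ is $\Ext^d$-projective in $\uc$, so $\Ext^d_A(U^A,U^A)=0$.
\item By \cref{Ext-iso in modA and modB}, these vanishings transfer to $\Ext^j_B(U^A,U^A)=0$ for $1\le j\le d$.
\end{itemize}

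To bound the projective dimension we use \cref{Martinez-Mendoza} over $B$.
\begin{itemize}
\item $U^A$ is faithful over $B$, since $B\hookrightarrow U_0^A$ is a monomorphism.
\item $U^A$ is $\tau_d$-rigid over $A$ by \cref{thm: equivalence of Extd projectivity}, hence $\tau_d$-rigid over $B$ by \cref{tau_d rigid in mod A and mod B}.
\item Its $B$-self-extensions in degrees $1,\dots,d-1$ vanish by the previous step.
\end{itemize}
Therefore $\operatorname{pd}_B U^A\le d$. The remaining higher self-extensions $\Ext^j_B(U^A,U^A)$ for $j>d$ vanish automatically because the projective dimension is at most $d$. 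This proves (1).

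For part (2), we already know $U^A$ is $\Ext^d$-projective, so let $X\in\uc$ be any $\Ext^d$-projective. The key step is to show $\Ext^j_B(X,U^A)=0$ for all $j\ge 1$.
\begin{itemize}
\item For $1\le j\le d-1$ the vanishing comes from $X,U^A\in\mc$; for $j=d$ it comes from $\Ext^d$-projectivity of $X$. Both transfer to $B$ via \cref{Ext-iso in modA and modB}.
\item For $j>d$ we want $\operatorname{pd}_B X\le d$. The module $X\oplus U^A$ is faithful over $B$. It is $\tau_d$-rigid over $A$ by \cref{thm: equivalence of Extd projectivity}(3), because $\Hom_A(U',\tau_d Y)=0$ for all $U'\in\uc$ whenever $Y$ is $\Ext^d$-projective. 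It is also rigid in degrees $1,\dots,d-1$, since it lies in $\mc$.
\item Applying \cref{tau_d rigid in mod A and mod B} and \cref{Martinez-Mendoza} to $X\oplus U^A$ gives $\operatorname{pd}_B X\le d$. Combined with the vanishing of $\Ext_B^{j}(X\oplus U^A,X\oplus U^A)$ for $1\le j\le d$, this shows that $X\oplus U^A$ is a partial tilting $B$-module.
\end{itemize}

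We then apply the standard fact that if $T$ is tilting and $T\oplus X$ satisfies $\operatorname{pd}<\infty$ and $\Ext^{>0}(T\oplus X,T\oplus X)=0$, then $X\in\add T$. One way to see this is by counting summands: $|X\oplus U^A|\le|B|=|U^A|$ by Bongartz's bound for partial tilting modules (\cite{Bongartz}). Alternatively, one can argue directly: since $X\in T^{\perp}$ has finite projective dimension, it admits a finite $\add T$-resolution, and that resolution splits because $\Ext^{>0}(X,T)=0$. Either way $X\in\add(U^A)$, so $U^A$ is an $\Ext^d$-projective generator of $\uc$.

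The main obstacle is the step $\operatorname{pd}_B X\le d$ together with this identification $X\in\add U^A$. Everything there hinges on being able to apply \cref{Martinez-Mendoza} to the faithful module $X\oplus U^A$ over $B$, which in turn relies on transferring $\tau_d$-rigidity from $A$ to $B$ via \cref{tau_d rigid in mod A and mod B}.
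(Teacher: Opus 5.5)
Your proof is correct and follows essentially the same argument as the paper, which likewise applies \cref{Martinez-Mendoza} over $B$ to the faithful, $\tau_d$-rigid module $U^A\oplus X$ (for $X$ an arbitrary $\Ext^d$-projective in $\uc$), then takes $X=0$ for (1) and compares numbers of summands for (2). The only streamlining the paper makes is to note that $U^A\oplus X$ is itself $d$-tilting over $B$, because the coresolution \eqref{eq: cores of B} lies in $\add(U^A)$; the exact count $|U^A\oplus X|=|B|=|U^A|$ for tilting modules then gives $X\in\add(U^A)$ directly, with no appeal to a bound for partial tilting modules.
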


\begin{proof}
We first prove that given any $\Ext^d$-projective $X \in \uc$, the object $U=U^A\oplus X$ is a $d$-tilting $B$-module. Note that $U$ is a $B$-module as it is contained in $\uc$ and $\uc\subseteq \modB$.

First, observe that $\Ext_A^i(U,U)=0$ for $i = 1, \dots, d-1$ as $\mc$ is $d$-cluster tilting, as well as for $i=d$ as $U$ is $\Ext^d$-projective in $\uc$ by \cref{thm:U_i Ext^d-projective}. This implies that $\Ext_B^i(U,U) \cong 0$ for $i=1,\dots,d$ by \cref{Ext-iso in modA and modB}. 

Next, note that $U$ is $\tau_d$-rigid in $\modA$ by \cref{thm: equivalence of Extd projectivity}. This implies that $U$ is a $\tau_d$-rigid \mbox{$B$-module} by \cref{tau_d rigid in mod A and mod B}. Furthermore, $U$ is faithful since the morphism $B\to U_0^A$ from \eqref{eq: cores of B} is injective and $U_0^A\in \add (U)$. Hence, $\operatorname{pd}_B U^A\leq d$ by \cref{Martinez-Mendoza}. Since the sequence \eqref{eq: cores of B} gives the final property in the definition of a $d$-tilting $B$-module, the claim follows.

Now setting $X=0$ in the claim gives part \eqref{thm: d-tilting:1} of the theorem. For \eqref{thm: d-tilting:2}, let $X$ be an arbitrary $\Ext^d$-projective object in $\uc$. Since $U^A$ and $U^A\oplus X$ are both $d$-tilting by the claim, we must have $X\in \add (U^A)$. This proves the result.
\end{proof}

\begin{remark}
  The previous result is an analogue of \cite[Proposition 2.2 (c)]{AdachiIyamaReiten}, which states that maximal $\tau$-rigid modules in $\modA$ are always ($1$-)tilting modules in $\modB$.
  In our proof, the fact that the $\tau_d$-rigid object $U^A$ generates a $d$-torsion class is key in proving the vanishing of $\Ext^i_B(U^A,U^A)$ for every $i= 1, \dots, d-1$, a condition that is vacuous for $d=1$.
  Note moreover that every $\tau$-rigid object in $\modA$ generates a torsion class in $\mod A$ by \cite{AS}. 
  This is no longer true for $\tau_d$-rigid modules for $d>1$, as can be seen in \cref{ex:running1}.
\end{remark}

We now show that any functorially finite $d$-torsion class $\uc$ in $\mc$ gives rise to a $\tau_d$-rigid pair in $\mc$ with $|A|$ summands.

\begin{theorem}\label{thm:maximaltaunrigid}
Let $M_\uc$ be the basic $\Ext^d$-projective generator of $\uc$ and let $P_\uc$ be the maximal basic projective $A$-module with \mbox{$\Hom_A(P_\uc, U)=0$} for any $U \in \uc$. Then $(M_\uc, P_\uc)$ is a basic $\tau_d$-rigid pair in $\mc$ with $|M_\uc|+|P_\uc|= |A|$.
\end{theorem}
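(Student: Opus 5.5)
By \cref{thm: d-tilting}, $U^A$ is an $\Ext^d$-projective generator of $\uc$. Its basic part is therefore $M_\uc$, so $\add(M_\uc)=\add(U^A)$. Since $U^A\in\uc\subseteq\mc$, we have $M_\uc\in\mc$, and $M_\uc$ is $\tau_d$-rigid by the final statement of \cref{thm: equivalence of Extd projectivity}. It remains to check $\Hom_A(P_\uc,M_\uc)=0$. This holds by the definition of $P_\uc$, because $M_\uc\in\uc$. So $(M_\uc,P_\uc)$ is a basic $\tau_d$-rigid pair in $\mc$.

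For the count, I use that $U^A$, and hence $M_\uc$, is a $d$-tilting $B$-module with $B=A/\Ann\uc$ (\cref{thm: d-tilting}). By \cite[Section 2.1]{Bongartz}, this gives $|M_\uc|=|B|$. It therefore suffices to show $|P_\uc|=|A|-|B|$. Write $A=\bigoplus_k P_k$ for the indecomposable projectives and $e_k$ for the corresponding primitive idempotents. We have $|B|$ equal to the number of simple $B$-modules, which is the number of simple $A$-modules $S_k$ annihilated by $\Ann\uc$. I claim that $S_k$ is a $B$-module if and only if $\Hom_A(P_k,U)\neq 0$ for some $U\in\uc$, i.e.\ $Ue_k\neq 0$. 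Once this holds, the indecomposable summands of $P_\uc$ are exactly the $P_k$ with $S_k\notin\modB$, and the count follows.

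To prove the claim, suppose first that $Ue_k\neq 0$ for some $U\in\uc$. Then $e_k\notin\Ann\uc$, so the image of $e_k$ in $B$ is a nonzero idempotent. Hence $S_k$, which is the top of $P_k$, appears as the top of the $B$-module $e_kB$. Conversely, suppose $S_k\in\modB$. Since $B\hookrightarrow U_0^A$ by \eqref{eq: cores of B} and $U_0^A\in\uc$, we get $U_0^A e_k\supseteq Be_k$. If $Be_k\neq 0$ we are done. But $Be_k=0$ would mean $e_k\in\Ann\uc$, in which case $S_k e_k=S_k\neq 0$ contradicts $S_k\in\modB$.

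The main subtlety is this identification of the simple $B$-modules with the $P_k$ not killed by $\uc$. More precisely, one has to see that the idempotents of $B$ lift to those of $A$ and that $e_k\notin\Ann\uc$ is equivalent to $Ue_k\neq 0$ for some $U$. This is essentially immediate from the definitions, so I expect the argument to go through without real difficulty.
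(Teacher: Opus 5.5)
Your proof is correct. The first half matches the paper: $\add(M_\uc)=\add(U^A)$, $\tau_d$-rigidity via \cref{thm: equivalence of Extd projectivity}, $\Hom_A(P_\uc,M_\uc)=0$ because $M_\uc\in\uc$, and $|M_\uc|=|B|$ from the $d$-tilting property.

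Where you differ is in proving $|P_\uc|=|A|-|B|$. The paper works on the injective side. It uses \cref{Injectives in mod B}, namely $\add(DB)=\add(tDA)$ with $tI$ zero or indecomposable for each indecomposable injective $I$. This gives $|A|=|B|+|I_\uc|$, where $I_\uc$ is the maximal basic injective module with $\Hom_A(\uc,I_\uc)=0$. It then applies the correspondence $\Hom_A(X,I)=0\iff\Hom_A(P,X)=0$, for $I$ whose socle is the top of $P$, to get $|I_\uc|=|P_\uc|$.

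You instead count simples directly through the annihilator. $P_k$ is a summand of $P_\uc$ if and only if $e_k\in\Ann\uc$. The top $S_k$ is a $B$-module if and only if $e_k\notin\Ann\uc$. Your argument for the second equivalence holds for any two-sided ideal. This route is more elementary and independent of the torsion-theoretic setup: it gives $|A/\Ann\uc|+|P_\uc|=|A|$ for an arbitrary class $\uc$ of modules. The paper's route has the advantage of recycling a lemma it needs anyway for \cref{Ext-iso in modA and modB}.

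Three minor points:
\begin{itemize}
\item In your converse direction, the detour through $B\hookrightarrow U_0^A$ is superfluous. By definition of the annihilator, $e_k\notin\Ann\uc$ already means $Ue_k\neq 0$ for some $U\in\uc$.
\item You only need $S_ke_k\neq 0$, not $S_ke_k=S_k$. The latter can fail if $A$ is not basic.
\item Despite your closing remark, no idempotent lifting is used. The simple $B$-modules are just the simple $A$-modules annihilated by $\Ann\uc$.
\end{itemize}
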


\begin{proof}
Note that $M_\uc$ exists and $\add(U^A) =\add(M_\uc)$ by \cref{thm: d-tilting}. Furthermore, \cref{thm: equivalence of Extd projectivity} shows that $M_\uc$ is $\tau_d$-rigid, and thus $(M_\uc,P_\uc)$ is a basic $\tau_d$-rigid pair in $\mc$.

To count the summands of $(M_\uc,P_\uc)$, note first that  $|A|=|B|+|I_\uc|$ by \cref{Injectives in mod B}, where $B=A/\Ann\uc$ and $I_\uc$ is the maximal basic injective $A$-module such that $\Hom_A(U, I_\uc)=0$ for all $U \in \uc$. Let $P$ be an indecomposable projective $A$-module, and denote by $I$ the indecomposable injective $A$-module whose socle is isomorphic to the top of $P$. It is well-known that for any $X \in \modA$, we have $\Hom_A(X,I)=0$ if and only if \mbox{$\Hom_A(P,X)=0$}; see for instance \cite[Lemma III.2.11]{bluebook1}. This yields $|I_\uc|=|P_\uc|$. Moreover, it follows from \cref{thm: d-tilting} that $M_\uc$ is a basic $d$-tilting module in $\modB$. In particular, this implies that $|M_\uc|=|B|$, so $|M_\uc|+|P_\uc|=|A|$.
\end{proof}

As a consequence, we have a well defined map
\begin{align}\label{map: ffntorsion to taun rigid}
\begin{array}{ccc}
\begin{Bmatrix}
    \text{functorially finite} \\
    \text{$d$-torsion classes in $\mc$}
\end{Bmatrix}
&
    \to 
    &
\begin{Bmatrix}
\text{basic $\tau_d$-rigid pairs in $\mc$} \\
\text{with $|A|$ summands }
\end{Bmatrix}\\\\
\uc &\mapsto  &(M_\uc, P_\uc),
\end{array}
\end{align}
where $M_\uc$ and $P_\uc$ are as in \cref{thm:maximaltaunrigid}.

\begin{proposition}\label{prop: injective}
The map \eqref{map: ffntorsion to taun rigid} is injective, with partial inverse given by
\[
(M_\uc,P_\uc) \mapsto \fac M_\uc \cap\mc.
\]
\end{proposition}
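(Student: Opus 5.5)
It suffices to show that $\uc = \fac M_\uc \cap \mc$ for every functorially finite $d$-torsion class $\uc$ in $\mc$. Then $\uc$ is recovered from the first component of its image under \eqref{map: ffntorsion to taun rigid}, so the map is injective and the stated assignment is a partial inverse on its image.

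To prove the equality, we use the construction from this section. Take the minimal $\uc$-coresolution \eqref{eq: cores of A} of $A$, starting with $f\colon A\to U_0^A$. By \cref{thm: d-tilting}, $U^A=\bigoplus_{i=0}^d U_i^A$ is an $\Ext^d$-projective generator of $\uc$. Since $M_\uc$ is the basic $\Ext^d$-projective generator, we have $\add(M_\uc)=\add(U^A)$, as already noted in the proof of \cref{thm:maximaltaunrigid}. In particular $U_0^A\in\add(M_\uc)$ and $M_\uc\in\add(U^A)$, which gives the chain
\[
\fac U_0^A \subseteq \fac M_\uc = \fac U^A .
\]

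Now apply \cref{lem: generation} with $M=A$. The hypothesis $\uc\subseteq\fac A=\modA$ is automatic, and $f$ is a left $\uc$-approximation, so $\fac U_0^A\cap\mc=\uc$. This gives the inclusion $\uc\subseteq \fac M_\uc\cap \mc$.

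For the reverse inclusion, let $\tc$ be a torsion class inducing $\uc$, which exists by \cref{thm:torsion-to-dtorsion}. Since $M_\uc\in\uc\subseteq\tc$ and $\tc$ is closed under quotients, $\fac M_\uc\subseteq\tc$. Hence
\[
\fac M_\uc\cap\mc\subseteq\tc\cap\mc=\uc .
\]
Combining the two inclusions gives $\fac M_\uc\cap\mc=\uc$. Thus the class $\uc$ is determined by $M_\uc$, and therefore by the pair $(M_\uc,P_\uc)$, which completes the proof of injectivity.

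Every step is a direct application of earlier results. The only point needing care is that $U_0^A$, rather than the whole of $U^A$, is what \cref{lem: generation} requires; this is fine because $U_0^A\in\add(M_\uc)$.
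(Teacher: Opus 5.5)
Your proof is correct and takes essentially the same route as the paper: you use $\add(M_\uc)=\add(U^A)$ from \cref{thm: d-tilting}, then \cref{lem: generation} with $M=A$ to get $\uc=\fac U_0^A\cap\mc$, and conclude $\uc=\fac M_\uc\cap\mc$. The only difference is that you spell out the reverse inclusion via $\fac M_\uc\subseteq\tc$, a step the paper leaves implicit.
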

\begin{proof}
Recall that $\add(M_\uc)=\add(U^A)$ by \cref{thm: d-tilting}, and thus $\fac M_\uc = \fac U^A$. Moreover, \cref{lem: generation} shows that $\uc = \fac U_0^A \cap \mc$, which implies that
\(\uc = \fac U^A \cap \mc=\fac M_\uc \cap \mc\).
\end{proof}

Contrary to the classical case, when $d>1$, the map (\ref{map: ffntorsion to taun rigid}) is not always surjective. In particular, given an arbitrary basic $\tau_d$-rigid pair $(M,P)$ in $\mc$ with $|A|$ summands, the subcategory $\fac M \cap \mc$ need not be a $d$-torsion class in $\mc$, as demonstrated in the example below.

\begin{example}\label{ex:running1}
Consider the quiver $1 \xrightarrow{\alpha} 2\xrightarrow{\beta} 3,$
and let $A$ denote the path algebra of this quiver modulo the ideal generated by the relation $\alpha\beta$. The Auslander--Reiten quiver of $\modA$ is
\[
			\begin{tikzpicture}[line cap=round,line join=round ,x=2.0cm,y=1.8cm, scale = 1]
				\clip(-2.3,0.8) rectangle (2.1,2.5);
					\draw [->] (-1.8,1.2) -- (-1.2,1.8);
					\draw [->] (0.2,1.2) -- (0.8,1.8);
					\draw [<-, dashed] (-1.8,1.0) -- (-0.2,1.0);
					\draw [<-, dashed] (0.2,1.0) -- (1.8,1.0);
					\draw [->] (-0.8,1.8) -- (-0.2,1.2);
					\draw [->] (1.2,1.8) -- (1.8,1.2);
				
				\begin{scriptsize}
					\draw (-2,1) node[draw] {$\Rep{3}$};
					\draw (0,1) node {$\Rep{2}$};
					\draw (2,1) node[draw]  {$\Rep{1}$};
					\draw (-1,2) node[draw]  {$\Rep{2\\3}$};
					\draw (1,2) node[draw]  {$\Rep{1\\2}$};
				\end{scriptsize}
			\end{tikzpicture}
\]
where the generators of the $2$-cluster tilting subcategory
$$\mc = \add\left\{ \rep{3} \oplus \rep{2\\3} \oplus \rep{1\\2} \oplus \rep{1}	 \right\} $$
are marked with rectangles. Since $A$ is an algebra of finite representation type, every $2$-torsion class in $\mc$ is functorially finite. A complete list of all $2$-torsion classes $\uc$ in $\mc$, along with their corresponding minimal torsion classes $\tc$ in $\modA$ satisfying $\uc = \tc \cap \mc$ and associated \mbox{$\tau_2$-rigid} pairs $(M_\uc,P_\uc)$ \mbox{in $\mc$}, can be found in \cref{tab:intersection2}. For an explicit example of how the third column in the table is obtained, consider the $2$-torsion class $\uc = \add \left\{ \rep{2\\3} \oplus \rep{1\\2} \oplus \rep{1}\right\}$. The minimal $\uc$-coresolution of $A$ is given by
$$\Rep{1\\2} \oplus \Rep{2\\3} \oplus \Rep{3} \longrightarrow \Rep{1\\2} \oplus \Rep{2\\3} \oplus \Rep{2\\3} \longrightarrow \Rep{1\\2} \longrightarrow \Rep{1} \longrightarrow 0.$$
It follows from \cref{thm:maximaltaunrigid} that $(M_\uc,P_\uc) = \left(\rep{2\\3}\oplus\rep{1\\2}\oplus\rep{1}, 0 \right)$ is the associated $\tau_2$-rigid pair \mbox{in $\mc$}. Note that as $\Ann \uc = (0)$, it moreover follows from \cref{thm: d-tilting} that $M_{\uc}$ is a $2$-tilting $A$-module. To see that the map (\ref{map: ffntorsion to taun rigid}) is in general not surjective, note that $\left(\rep{3}\oplus \rep{2\\3}, \rep{1\\2} \right)$ is a basic $\tau_2$-rigid pair \mbox{in $\mc$} with $|A|$ indecomposable direct summands. However, it does not appear in the third column of the table and is hence not obtained from a $2$-torsion class. Moreover, we see that 
    \[
    \fac \left(\rep{3}\oplus\rep{2\\3}\right) \cap \mc = \add \left\{ \rep{3}\oplus\rep{2\\3}\right\}
    \]
is not a $2$-torsion class in $\mc$ as it is not closed under \mbox{$2$-quotients}.

\begin{table}[t]\label{tab:intersection2}
    \centering
    \setlength{\extrarowheight}{6pt}
    \begin{tabular}{|c|c|c|}
    \hline
        $\uc$ & $\tc$ & 
        $(M_\uc,P_\uc)$\\[6pt]
        \hline
        \hline
         $\mc$ & $\modA$ & $\left(\rep{3}\oplus \rep{2\\3} \oplus \rep{1\\2}, 0\right)$\\[6pt]
         \hline
        $\add\left\{\rep{2\\3}\oplus \rep{1\\2} \oplus \rep{1}\right\}$ & $\add\left\{\rep{2\\3}\oplus \rep{2}\oplus \rep{1\\2} \oplus \rep{1}\right\}$ & $\left( \rep{2\\3} \oplus \rep{1\\2}\oplus \rep{1}, 0\right)$
        \\[6pt]
        \hline
        $\add\left\{\rep{1\\2} \oplus \rep{1}\right\}$ & $\add\left\{ \rep{1\\2} \oplus \rep{1}\right\}$ & $\left( \rep{1\\2} \oplus \rep{1}, \rep{3}\right)$
        \\[6pt]
        \hline
        $\add\left\{\rep{1}\right\}$ & $\add\left\{\rep{1}\right\}$ & $\left(\rep{1},  \rep{3} \oplus \rep{2\\3}\right)$
        \\[6pt]
        \hline
        $\add\left\{\rep{3}\right\}$ & $\add\left\{\rep{3}\right\}$ & $\left(\rep{3}, \rep{2\\3} \oplus \rep{1\\2}\right)$\\[6pt]
        \hline
        $\left\{0\right\}$ & $\left\{0\right\}$ & $\left(0, \rep{3}\oplus \rep{2\\3} \oplus \rep{1\\2}\right)$\\[6pt]
        \hline
    \end{tabular}\\
    \caption{List of all $2$-torsion classes $\uc$ in $\mc$ and their 
    associated minimal torsion classes $\tc$ in $\mod A$ and \mbox{$\tau_2$-rigid} pairs $(M_\uc,P_\uc)$ in $\mc$.}
\end{table}
\end{example}

We finish this section by showing that the basic $\Ext^d$-projective generator $M_\uc$ of $\uc$ can also be characterised as the maximal basic $\tau_d$-rigid module in $\uc$ satisfying $\operatorname{Fac}M_\uc\cap \mc=\uc$.

\begin{proposition}
    Let $U\in \uc$ be a basic $\tau_d$-rigid module satisfying $\operatorname{Fac}U\cap \mc=\uc$, and assume that $U$ is maximal with respect to these properties. Then $U\cong M_\uc$. 
\end{proposition}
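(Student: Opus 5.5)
The plan is to show that any such $U$ lies in $\add(M_\uc)$, and that $M_\uc$ itself has the two properties. Maximality of $U$ then forces $\add(U)=\add(M_\uc)$, and since both modules are basic we get $U\cong M_\uc$.

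First, $M_\uc$ has the required properties. By \cref{thm: equivalence of Extd projectivity}, $M_\uc$ is $\tau_d$-rigid. By \cref{prop: injective}, $\fac M_\uc\cap\mc=\uc$.

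Second, we show $U\in\add(M_\uc)$. By \cref{thm: d-tilting}, it suffices to show that $U$ is $\Ext^d$-projective in $\uc$. Let $U'\in\uc$. Since $U'\in\fac U\cap \mc$, we have $U'\in\fac U$. The module $U$ is $\tau_d$-rigid, so \cref{lem:char tau_d-rigid modules} gives $\Ext^d_A(U,N)=0$ for all $N\in\fac U$. In particular $\Ext^d_A(U,U')=0$. Hence $U$ is $\Ext^d$-projective in $\uc$, so $U\in\add(U^A)=\add(M_\uc)$.

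Third, we conclude using maximality of $U$. Both $U$ and $M_\uc$ are basic, and $\add(U)\subseteq\add(M_\uc)$. Consider the module $U\oplus M_\uc$, reduced to its basic part; this basic part is isomorphic to $M_\uc$. It is $\tau_d$-rigid, since it lies in $\add(M_\uc)$ and $M_\uc$ is $\tau_d$-rigid. It also satisfies $\fac M_\uc\cap\mc=\uc$. Maximality of $U$ then gives $M_\uc\in\add(U)$. Therefore $\add(U)=\add(M_\uc)$, and as both are basic, $U\cong M_\uc$.

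The only potential subtlety is what "maximal" means here. I take it to mean maximal with respect to $\add$-inclusion among basic modules in $\uc$ with these properties. Under this reading the argument above is complete, and the key input is that $\tau_d$-rigidity together with $\uc\subseteq\fac U$ yields $\Ext^d$-projectivity, via \cref{lem:char tau_d-rigid modules}. If maximality is instead meant in terms of the number of summands, the same containment $U\in\add(M_\uc)$ still suffices.
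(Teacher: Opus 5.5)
Your proof is correct and follows the paper's argument. You show that $U$ is $\Ext^d$-projective in $\uc$, hence $U\in\add(M_\uc)$, and then combine the facts that $M_\uc$ is $\tau_d$-rigid with $\fac M_\uc\cap\mc=\uc$ and that $U$ is maximal. The only cosmetic difference is that you get $\Ext^d_A(U,U')=0$ directly from \cref{lem:char tau_d-rigid modules}, whereas the paper uses an epimorphism $U^n\to U'$ to obtain $\Hom_A(U',\tau_dU)=0$ and then invokes \cref{thm: equivalence of Extd projectivity}.
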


\begin{proof}
   Let $U'\in \uc$ and choose an epimorphism $U^n\to U'$. This yields a monomorphism
    \[
    0\to \Hom_A(U',\tau_dU)\to \Hom_A(U^n,\tau_dU).
    \]
    As $U$ is $\tau_d$-rigid, we have $\Hom_A(U^n,\tau_dU)=0$, and hence $\Hom_A(U',\tau_dU)=0$. It follows from \cref{thm: equivalence of Extd projectivity} that $U$ is $\Ext^d$-projective in $\uc$. Since $M_\uc$ is the maximal basic $\Ext^d$-projective in $\uc$, we get that $U \in \add (M_\uc)$. But since $M_\uc$ is $\tau_d$-rigid and satisfies \mbox{$\operatorname{Fac}M_\uc\cap \mc=\uc$} and $U$ is basic and maximal with respect to these properties, we must have $U\cong M_\uc$.
\end{proof}

\section{From \texorpdfstring{$\tau$}{tau}-tilting to \texorpdfstring{$\tau_d$}{tau-d}-tilting theory}\label{sec: tau-tilting to tau_d-tilting}

Given a functorially finite $d$-torsion class $\uc$ in a $d$-cluster tilting subcategory $\mc$ of $\mod A$, we know from \cref{sec:d-tors to tau_d} that we can use a minimal $\uc$-coresolution of $A$ to construct an $\Ext^d$-projective generator of $\uc$. In \cref{sec: tau-tilting to tau_d-tilting} we consider the case where $\uc = \tc \cap \mc$ for a functorially finite torsion class $\tc$ in $\mod A$. It is then well-known that $\tc = \fac T$ for some support $\tau$-tilting module $T$ \cite[Theorem 2.7]{AdachiIyamaReiten}. The aim in \cref{subsec: ff torsion and higher torsion} is to show that an $\Ext^d$-projective generator of $\uc$ can also be obtained by computing a minimal $\mc$-coresolution of $T$. In \cref{subsec: d-APR} we moreover explain how classical APR tilting in the sense of \cite{APRtilting} and $d$-APR tilting in the sense of \cite{IyamaOppermann} fit into this framework. Finally, in \cref{sec:slices} we apply the theory of $d$-torsion classes to show that slices in $\mc$ give rise to $d$-tilting modules, extending a result from \cite{IyamaOppermann} to a setup where no restriction on the global dimension of $A$ is required.

\subsection{From support \texorpdfstring{$\tau$}{tau}-tilting modules to \texorpdfstring{$\tau_d$}{tau-d}-rigid pairs}
\label{subsec: ff torsion and higher torsion}

Throughout this subsection, we work under the following setup.

\begin{setup}\label{ffmodA setup}
Let $\mc \subseteq \mod A$ be a $d$-cluster tilting subcategory. Suppose that $\tc \subseteq \mod A$ is a functorially finite torsion class, and let $T$ be the basic support $\tau$-tilting module satisfying \mbox{$\tc = \fac T$}. Assume that $\tc$ induces a $d$-torsion class $\uc = \tc \cap \mc$ in $\mc$.
\end{setup}

Recall that the support $\tau$-tilting module $T$ corresponds to a basic maximal $\tau$-rigid pair in $\mod A$ \cite[Proposition 2.3 (b)]{AdachiIyamaReiten}. The results in \cref{subsec: ff torsion and higher torsion} will allow us to use a minimal $\uc$-coresolution of $T$ to obtain a basic $\tau_d$-rigid pair in $\mc$ with $|A|$ summands; see \cref{cor:sameeitherway}.

We first show that since the torsion class $\tc$ is functorially finite, so is the $d$-torsion \mbox{class $\uc$}. 

\begin{proposition}\label{prop:T ff implies U ff}
 The $d$-torsion class $\uc$ is functorially finite in $\modA$.
\end{proposition}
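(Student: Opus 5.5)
Under \cref{ffmodA setup}, we have to produce, for every $X\in\modA$, both a left and a right $\uc$-approximation. Since $\mc$ is functorially finite in $\modA$, it is enough to do this for $X\in\mc$: given $X\in\modA$, compose a left $\mc$-approximation $X\to M$ with a left $\uc$-approximation of $M$, and do the dual for right approximations. The two sides use different structure, so I would treat them separately.

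\emph{Right approximations.} Let $M\in\mc$. Since $\tc$ induces $\uc$, the torsion subobject satisfies $tM=U_M\in\uc$, and the inclusion $tM\to M$ is a right $\tc$-approximation. Indeed, any morphism $U\to M$ with $U\in\uc\subseteq\tc$ has image in $\tc$, hence lands in $tM$. Therefore $tM\to M$ is already a right $\uc$-approximation, and this side needs only \cref{def: induces}. The functorial finiteness of $\tc$ is not used here.

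\emph{Left approximations.} This is the side where $\tc=\fac T$ being functorially finite enters. Let $M\in\mc$ and take a left $\tc$-approximation $g\colon M\to T'$, which exists because $\tc$ is functorially finite. By \cref{cor: minimal M-appr. in U}, the object $T'\in\tc$ has a minimal left $\mc$-approximation $h\colon T'\to U$, and this map is also a left $\uc$-approximation with $U\in\uc$. I would then check that $hg\colon M\to U$ is a left $\uc$-approximation. Given $M\to U'$ with $U'\in\uc\subseteq\tc$, it factors through $g$ via some $T'\to U'$. That map then factors through $h$, because $h$ is a left $\uc$-approximation (equivalently a left $\mc$-approximation with $U'\in\mc$).

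The only step I expect to need care is the appeal to \cref{cor: minimal M-appr. in U}, that is, confirming that its hypotheses are the present ones: $\tc$ induces $\uc$, and $T'\in\tc$. Once that is in place, the rest is a formal composition of approximations. Combining the two sides shows that $\uc$ is functorially finite in $\mc$, and hence in $\modA$, as noted after \cref{torsion setup}.
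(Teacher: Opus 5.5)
Your proposal is correct and matches the paper's proof: the left $\uc$-approximation is the composite of a left $\tc$-approximation with a minimal left $\mc$-approximation, justified by \cref{cor: minimal M-appr. in U}, while contravariant finiteness is automatic (the paper simply notes that every $d$-torsion class is contravariantly finite, which is exactly your $tM=U_M\to M$ argument). Your preliminary reduction to $X\in\mc$ is unnecessary, since the composite $X\to T'\to U$ is a left $\uc$-approximation for an arbitrary $X\in\modA$, which is how the paper phrases it.
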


\begin{proof}
As any $d$-torsion class in $\mc$ is contravariantly finite, we only need to check that $\uc$ is covariantly finite. Consider $X\in\modA$. Since $\tc$ is functorially finite, we can take a left \mbox{$\tc$-approximation} $X\to T_0$ of $X$. Let $T_0\to M_0$ be a minimal left $\mc$-approximation of $T_0$, which is also a left $\uc$-approximation by \cref{cor: minimal M-appr. in U}. It follows that the composition $X\to T_0\to M_0$ is a left \mbox{$\uc$-approximation} of $X$.
\end{proof}

Note that \cref{prop:T ff implies U ff} allows us to apply results from \cref{sec: ff-d-torsion,sec:d-tors to tau_d} whenever we work under \cref{ffmodA setup}. The next lemma shows that the minimal $\mc$-coresolution and the minimal \mbox{$\uc$-coresolution} of $T$ coincide.

\begin{lemma}\label{Lemma:Mcores=Ucores}
The minimal $\uc$-coresolution of $T$ is an exact sequence of the form
\begin{align}\label{eq: cores of T}
0 \xrightarrow{} T \xrightarrow{} U_0^T \xrightarrow{} U_1^T \xrightarrow{} \cdots \xrightarrow{} U_{d-1}^T \to 0,
\end{align} 
and it is also the minimal $\mc$-coresolution of $T$.
\end{lemma}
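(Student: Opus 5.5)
The plan is to compare the minimal $\uc$-coresolution of $T$ with its minimal $\mc$-coresolution term by term. In both cases the construction is: take a minimal left approximation, pass to its cokernel, and repeat.

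First, since $T\in\tc$, \cref{cor: minimal M-appr. in U} says that the minimal left $\mc$-approximation $T\to U_0^T$ is also the minimal left $\uc$-approximation, and that it is a monomorphism. This gives the left-hand $0$ and a first step common to both coresolutions.

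Next, let $C_0=\Coker(T\to U_0^T)$. Since $\tc$ is closed under quotients and $U_0^T\in\uc\subseteq\tc$, we get $C_0\in\tc$. So \cref{cor: minimal M-appr. in U} applies again: the minimal left $\mc$-approximation of $C_0$ is a monomorphism and coincides with the minimal left $\uc$-approximation. Iterating this, every cokernel $C_i$ stays in $\tc$, and at each stage the two approximations agree. Hence the two minimal coresolutions agree at every step. This is the argument already sketched after \eqref{eq:U-cores modules}, now applied with $M=T\in\tc$, where the first map is injective.

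It remains to show the sequence ends at $U_{d-1}^T$, i.e.\ $C_{d-1}=0$, or equivalently $C_{d-2}\in\mc$. By the construction, $0\to T\to U_0^T\to\cdots$ is the minimal $\mc$-coresolution of $T$. Dimension shifting along this coresolution, using that $\mc$ is rigid in degrees $1,\dots,d-1$, gives $\Ext^1_A(M,C_{d-2})\cong\Ext^{d}_A(M,T)$ for all $M\in\mc$, and more generally $\Ext^i_A(M,C_{d-2})\cong \Ext^{i+d-1}_A(M,T)$ for $i\geq 1$. A module $X$ lies in $\mc$ precisely when its minimal $\mc$-coresolution has at most one term, i.e.\ when $\Ext^{1,\dots,d-1}_A(\mc,X)=0$.

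This last step is the main obstacle, and I would attack it through $\tau$-rigidity of $T$ rather than through $\mc$ alone. Since $\tc=\fac T$ and $T$ is $\Ext^1$-projective in $\tc$, we have $\Ext^1_A(T,\tc)=0$. The alternative I would try first is to apply \cref{thm:U_i Ext^d-projective} with $M=T$. That gives a coresolution $T\to U_0^T\to\cdots\to U_d^T\to 0$ in which all $U_i^T$ are $\Ext^d$-projective in $\uc$. It then suffices to show that the last cokernel $C_{d-1}$, which lies in $\uc$, is zero. Splicing gives a $d$-exact sequence $0\to C'\to U_1^T\to\cdots\to U_{d-1}^T\to C_{d-1}\to 0$ in $\uc$ with $C'=C_0$ (up to shifting indices), representing a class in $\Ext^{d-1}$ of $C_{d-1}$ by $C_0$. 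Here I would instead count summands: $T$ is a tilting $B$-module, so $\operatorname{pd}$ considerations over $B=A/\Ann\tc$ bound the length. More concretely, I would use that $0\to T\to U_0^T\to C_0\to0$ with $\Ext^1(C_0,T)$ handled via $\Ext^1_A(T,\tc)=0$ and $\tau$-tilting. If this fails, I would fall back on showing $\Ext^{d-1}_A(\uc,C_{d-2})=0$ via \cref{Proposition:CharacterizationInducingdTorsion}(1), applied to the canonical sequences of the modules $U_i^T$.
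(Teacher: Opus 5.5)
Your first two steps are exactly the paper's argument. Since $T\in\tc$ and every cokernel $C_i$ stays in $\tc$, \cref{cor: minimal M-appr. in U} makes the minimal $\uc$-coresolution of $T$ coincide with its minimal $\mc$-coresolution, with $T\to U_0^T$ injective.

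The gap is the last step: you never establish that the sequence stops at $U_{d-1}^T$. Your dimension shift is in the wrong variable.
\begin{itemize}
\item Applying $\Hom_A(M,-)$ with $M\in\mc$ gives only an injection $\Ext^1_A(M,C_{d-2})\hookrightarrow\Ext^d_A(M,T)$. The last step runs through $\Ext^{d-1}_A(M,C_0)\to\Ext^d_A(M,T)\to\Ext^d_A(M,U_0^T)$, and $\Ext^d_A(M,U_0^T)$ need not vanish.
\item The isomorphisms $\Ext^i_A(M,C_{d-2})\cong\Ext^{i+d-1}_A(M,T)$ would need $\Ext^{k}_A(\mc,U_j^T)=0$ for $k\geq d$, which does not hold in general.
\item $\Ext^d_A(\mc,T)$ itself can be nonzero, for instance for $\tc=\modA$ and $T=A$ in \cref{ex:running1}.
\end{itemize}
More fundamentally, what forces the coresolution to terminate is the left-approximation property of the maps $C_{j-1}\to U_j^T$, and $\Hom_A(M,-)$ does not see it. Your fallback ideas (splicing into an $\Ext^{d-1}$-class, counting summands of the tilting $B$-module $T$, invoking \cref{Proposition:CharacterizationInducingdTorsion}) are only gestured at, and none is shown to give $C_{d-1}=0$.

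The missing point is that nothing about $T$ beyond $T\in\tc$ is needed. Every module has an $\mc$-coresolution with $d$ terms (\cite[Proposition 3.17]{Jasso}, recalled in \cref{sec:higher world} and again before \cref{thm:U_i Ext^d-projective}). So once the two coresolutions coincide, $U_d^T=0$ follows at once; this is how the paper concludes.

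To see it directly for $d\geq 2$, set $C_{-1}=T$ and fix $X\in\mc$.
\begin{itemize}
\item In each sequence $0\to C_{j-1}\to U_j^T\to C_j\to 0$, the first map is a left $\mc$-approximation. So $\Hom_A(U_j^T,X)\to\Hom_A(C_{j-1},X)$ is surjective, and since $\Ext^1_A(U_j^T,X)=0$ we get $\Ext^1_A(C_j,X)=0$.
\item Rigidity of $\mc$ gives $\Ext^i_A(C_j,X)\cong\Ext^{i-1}_A(C_{j-1},X)$ for $2\leq i\leq d-1$.
\item Hence $\Ext^i_A(C_{d-2},X)\cong\Ext^1_A(C_{d-1-i},X)=0$ for $i=1,\dots,d-1$, so $C_{d-2}\in\mc\cap\tc=\uc$.
\item The minimal left $\uc$-approximation of $C_{d-2}$ is then the identity. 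So $U_{d-1}^T\cong C_{d-2}$, $C_{d-1}=0$ and $U_d^T=0$.
\end{itemize}
Neither the $\tau$-tilting properties of $T$ nor \cref{thm:U_i Ext^d-projective} play any role in this lemma.
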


\begin{proof}
As $T \in \tc$, the minimal left $\uc$-approximation $T \to U_0^T$ is both a monomorphism and a minimal left $\mc$-approximation by \cref{cor: minimal M-appr. in U}. Hence, by the construction explained above \cref{thm:U_i Ext^d-projective}, the minimal $\uc$-coresolution of $T$ is an exact sequence of the form
$$0\to T \xrightarrow{} U_0^T \xrightarrow{} U_1^T \xrightarrow{} \cdots \xrightarrow{} U_d^T \to 0,$$ 
and it is also a minimal $\mc$-coresolution. Since minimal \mbox{$\mc$-coresolutions} have at most $d$-terms, it follows that $U_d^T=0$. This proves the claim.
\end{proof}

In what follows, we use the sequence \eqref{eq: cores of T} to define $U^T \colonequals \bigoplus_{i=0}^{d-1} U^T_i$. 

\begin{proposition}\label{prop:generators}
We have $\uc = \fac U^T_0 \cap \mc =\fac U^T \cap \mc$.
\end{proposition}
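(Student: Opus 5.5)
The natural route is to apply \cref{lem: generation} with $M=T$ and then sandwich $\fac U^T\cap\mc$ between two copies of $\uc$.

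\textbf{Hypotheses of \cref{lem: generation}.} We need $\uc\subseteq \fac T$ and that $T\to U_0^T$ is a left $\uc$-approximation. The first holds because $\uc=\tc\cap\mc\subseteq\tc=\fac T$. The second holds because, by \cref{Lemma:Mcores=Ucores}, the first map in \eqref{eq: cores of T} is the minimal left $\uc$-approximation of $T$. Moreover, \cref{prop:T ff implies U ff} shows that $\uc$ is functorially finite, so \cref{torsion setup} is in force. \Cref{lem: generation} then gives $\fac U_0^T\cap\mc=\uc$ directly.

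\textbf{Second equality.} Since $U_0^T$ is a direct summand of $U^T$, we have $\fac U_0^T\subseteq\fac U^T$, and hence
\[
\uc=\fac U_0^T\cap\mc\subseteq \fac U^T\cap\mc .
\]
For the reverse inclusion, each $U_i^T$ lies in $\uc\subseteq\tc$, so $U^T\in\tc$. Because $\tc$ is a torsion class, it is closed under quotients, so $\fac U^T\subseteq\tc$. Intersecting with $\mc$ gives $\fac U^T\cap\mc\subseteq\tc\cap\mc=\uc$, which completes the proof.

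There is no real obstacle here. The only point needing care is that the first map of the minimal $\mc$-coresolution of $T$ really is a left $\uc$-approximation, and this is exactly what \cref{Lemma:Mcores=Ucores}, via \cref{cor: minimal M-appr. in U}, provides.
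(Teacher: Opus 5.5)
Your proposal is correct and follows essentially the same argument as the paper: apply \cref{lem: generation} with $M=T$ to get $\uc=\fac U_0^T\cap\mc$, then obtain the second equality from $U_0^T$ being a summand of $U^T$ and $\fac U^T\subseteq\tc$. Your extra checks, that \cref{torsion setup} holds via \cref{prop:T ff implies U ff} and that $T\to U_0^T$ is a left $\uc$-approximation, are sound and are exactly what the paper leaves implicit.
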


\begin{proof}
Since $\uc = \fac T \cap \mc \subseteq \fac T$, the first equality follows directly from \cref{lem: generation} by taking \mbox{$M=T$}. The second is immediate, as $U^T \in \uc \subseteq \tc$ implies $\fac U^T \subseteq \tc$ and $U_0^T$ is a direct summand of $U^T$.  
\end{proof}

By our work in \cref{sec: ff-d-torsion}, we know that $U^T$ is $\Ext^d$-projective in $\uc$. We now prove an analogue of \cref{thm: d-tilting}. Recall that $B \colonequals A/\Ann \uc$. 

\begin{theorem}\label{thm:U_T-approx}

The following statements hold:
\theoremlistfix
\begin{enumerate}
    \item The object $U^T$ is a $d$-tilting $B$-module.
    \phantomsection
    \label{thm2: d-tilting:1}
    \item The object $U^T$ is an $\Ext^d$-projective generator of $\uc$.
    \phantomsection
    \label{thm2: d-tilting:2}
\end{enumerate}
\end{theorem}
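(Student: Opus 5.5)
The plan is to reduce to \cref{thm: d-tilting}: we show that $\add(U^T)=\add(U^A)$, or equivalently that $U^T$ is $\Ext^d$-projective and $d$-tilting over $B$. Then both claims follow at once, since $\Ext^d$-projective generators are unique up to $\add$.

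\emph{Step 1: $U^T$ is $\Ext^d$-projective in $\uc$.}
Since $T$ is support $\tau$-tilting with $\tc=\fac T$, the Auslander--Smal\o{} characterisation gives $\Ext^1_A(T,\fac T)=0$, that is, $\Ext^1_A(T,\tc)=0$. Hence \cref{thm:U_i Ext^d-projective} applies with $M=T$. Together with \cref{Lemma:Mcores=Ucores}, it shows that each $U_i^T$, and therefore $U^T$, is $\Ext^d$-projective in $\uc$. By \cref{thm: d-tilting}\eqref{thm: d-tilting:2} we then get $U^T\in\add(U^A)$.

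\emph{Step 2: $U^T$ is $d$-tilting over $B$.}
We follow the proof of \cref{thm: d-tilting}. As there, $\Ext^i_B(U^T,U^T)=0$ for $1\le i\le d$, and $U^T$ is $\tau_d$-rigid over $B$ by \cref{tau_d rigid in mod A and mod B}.

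For faithfulness over $B$, note that $\Ann\uc=\Ann\tc=\Ann T$. Indeed, $\tc\cap\mc=\uc$, every $X\in\tc$ embeds into its left $\mc$-approximation, which lies in $\uc$ by \cref{cor: minimal M-appr. in U}, and conversely $\uc\subseteq\tc$. Since $T$ embeds into $U_0^T$, we get $\Ann U^T\subseteq\Ann T=\Ann\uc$, so $U^T$ is a faithful $B$-module. \cref{Martinez-Mendoza} now gives $\operatorname{pd}_B U^T\le d$.

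It remains to produce a coresolution $0\to B\to T_0\to\cdots\to T_d\to 0$ with $T_i\in\add(U^T)$. This is the step I expect to be the main obstacle.

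The alternative that avoids this construction is to count summands. We have $U^T\in\add(U^A)$, and $U^A$ is a basic-up-to-multiplicity tilting $B$-module with $|U^A|=|B|$. A partial tilting module (of finite projective dimension, self-orthogonal) with $|B|$ summands is tilting; over an arbitrary algebra this completion statement is subtle. So it suffices to prove $|U^T|\ge|B|$, which forces $\add(U^T)=\add(U^A)$.

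To bound $|U^T|$ from below, use that $T$ is a $1$-tilting $A/\Ann T=B$-module by \cite[Proposition 2.2]{AdachiIyamaReiten}, so $|T|=|B|$. The exact sequence $0\to T\to U_0^T\to\cdots\to U_{d-1}^T\to 0$ is then a finite coresolution of $T$ by modules of $\add(U^T)$, all of which are $\Ext$-orthogonal in the relevant degrees. Consequently $T$ lies in the thick subcategory of $D^b(\mod B)$ generated by $U^T$. Since $T$ is tilting, this thick subcategory contains $B$. Finally, $U^T$ is presilting in $K^b(\proj B)$, because it has finite projective dimension and is self-orthogonal. Hence $U^T$ is a tilting complex over $B$, so $|U^T|=|B|$ by \cite[Corollary 2.28]{AiharaIyama} and it is a tilting module.

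With this established, both parts of the theorem follow: part \eqref{thm2: d-tilting:1} directly, and part \eqref{thm2: d-tilting:2} by comparing with \cref{thm: d-tilting}\eqref{thm: d-tilting:2}, which gives $\add(U^T)=\add(U^A)$.
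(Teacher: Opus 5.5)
Your proof is correct and follows essentially the same route as the paper. You establish $\Ext^d$-projectivity of $U^T$ via \cref{thm:U_i Ext^d-projective} to get $U^T\in\add(U^A)$, use the coresolution of the $B$-tilting module $T$ to place $T$, and hence $B$, in the thick closure of $U^T$, and finish with a summand count against $U^A$. The differences are cosmetic: the paper concludes $d$-tilting directly from the module-theoretic characterisation via $\Thick(U^T)$ in $\mod B$, whereas you pass through silting in $K^b(\proj B)$ and \cite[Corollary 2.28]{AiharaIyama} to get $|U^T|=|B|$, then deduce both parts from $\add(U^T)=\add(U^A)$; your check that $\Ann T=\Ann\uc$ makes explicit a point the paper leaves implicit.
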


\begin{proof}
Recall that a subcategory of $\modA$ is called \textit{thick} if it closed under direct summands and if for any exact sequence
\[
0\to M_1\to M_2\to M_3\to 0
\]
where two out of the modules $M_1,M_2$ and $M_3$ are in the subcategory, then all of the modules are in the subcategory. We let $\Thick(M)$ denote the smallest thick subcategory of $\mod A$ containing a module $M$. By the discussion after \cite[Proposition 7.2.10]{Krause22}, it follows that $M \in \mod A$ is a tilting module if it has finite projective dimension, satisfies $\Ext^i_A(M,M)=0$ for all $i>0$, and $\Thick(M)$ is equal to the subcategory of $\modA$ consisting of modules of finite projective dimension. We show that $U^T$ is a tilting $B$-module by verifying these three conditions.

Since it is well-known that $\Ext^1_A(T,X)=0$ for every $X \in \tc$, see e.g.\ \cite[Theorem 2.7]{AdachiIyamaReiten}, it follows from \cref{thm:U_i Ext^d-projective} that $U^T$ is $\Ext^d$-projective in $\uc$. This implies that $U^T \in \add (U^A)$, which again yields $\operatorname{pd}_B (U^T) \leq d$ and $\Ext_B^i(U^T,U^T)=0$ for $i > 0$ by \cref{thm: d-tilting}. Now since $T$ is a tilting $B$-module by \cite{Smalo}, the subcategory $\Thick(T)$ is equal to the subcategory of $B$-modules of finite projective dimension. From the exact sequence \eqref{eq: cores of T}, we see that $\Thick(T)\subseteq \Thick(U^T)$. Since $U^T$ has finite projective dimension, the subcategory $\Thick(U^T)$ must therefore also consist of the $B$-modules of finite projective dimension. This shows that $U^T$ is a $d$-tilting $B$-module.

For (\ref{thm2: d-tilting:2}), we again use that $U^T$ is contained in $\add (U^A)$, as $U^A$ is an $\Ext^d$-projective generator of $\uc$. But since both $U^T$ and $U^A$ are tilting $B$-modules, they have the same number of non-isomorphic indecomposable summands, so $U^A$ must also be contained in $\add (U^T)$. This shows that $U^T$ is an $\Ext^d$-projective generator of $\uc$.
\end{proof}

Recall that $U^A$ is defined by using the $\uc$-coresolution \eqref{eq: cores of A} from \cref{sec:d-tors to tau_d}, and that $M_\uc$ is the basic $\Ext^d$-projective generator of $\uc$. 

\begin{corollary}\label{cor:sameeitherway}
The diagram \eqref{diagram:introduction} from the introduction commutes. 
\end{corollary}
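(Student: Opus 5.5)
To show that diagram \eqref{diagram:introduction} commutes, I would start from a functorially finite torsion class $\tc$ in $\modA$ that induces the $d$-torsion class $\uc=\tc\cap\mc$, with $T$ the basic support $\tau$-tilting module satisfying $\tc=\fac T$. This places us in \cref{ffmodA setup}. By \cref{prop:T ff implies U ff}, $\uc$ is functorially finite, so $\phi_d(\uc)$ is defined and all the maps in the diagram make sense. Going down the left side, $\phi_1(\tc)$ is the support $\tau$-tilting pair $(T,P)$ of \cite{AdachiIyamaReiten}, and $\alpha(T,P)$ is the basic module with the same indecomposable summands as $\bigoplus_{i=0}^{d-1}U_i^T$, built from the minimal $\mc$-coresolution of $T$. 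Going down the right side, $\beta\phi_d(\uc)=M_\uc$, the basic $\Ext^d$-projective generator of $\uc$. So the claim reduces to showing $\add(U^T)=\add(M_\uc)$.

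This equality follows from results already proved. \cref{Lemma:Mcores=Ucores} says the minimal $\mc$-coresolution of $T$ used to define $\alpha$ coincides with its minimal $\uc$-coresolution, so $\alpha(T,P)$ is the basic version of the module $U^T$ considered above. \cref{thm:U_T-approx}(2) says $U^T$ is an $\Ext^d$-projective generator of $\uc$. Since $M_\uc$ is also an $\Ext^d$-projective generator of $\uc$, each is contained in the $\add$ of the other, so $\add(U^T)=\add(M_\uc)$. Both $\alpha(T,P)$ and $M_\uc$ are basic, hence they are isomorphic.

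The only step needing care is confirming that $\alpha$ indeed uses the coresolution of $T$ itself. If $\phi_1(\tc)=(T,P)$ with $T$ basic and $\tc=\fac T$, this is immediate. The remaining work is bookkeeping about basic modules and identification up to isomorphism, so I expect no real obstacle.
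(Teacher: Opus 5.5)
Your proposal is correct and follows the paper's own route: the paper observes that $U^T$ and $U^A$ are both $\Ext^d$-projective generators of $\uc$ (by \cref{thm: d-tilting} and \cref{thm:U_T-approx}), so $\add(U^T)=\add(U^A)=\add(M_\uc)$, and this is your comparison with $M_\uc$. Your extra remark that the minimal $\mc$-coresolution of $T$ defining $\alpha$ coincides with the minimal $\uc$-coresolution defining $U^T$ is a step the paper leaves implicit.
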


\begin{proof}
As both $U^A$ and $U^T$ are $\Ext^d$-projective generators of $\uc$ by \cref{thm: d-tilting} and \cref{thm:U_T-approx}, we have $\add (U^A) = \add (U^T)$, and the result follows.
\end{proof}

\cref{cor:sameeitherway} allows us to associate a $\tau_d$-rigid pair in $\mc$ to the support $\tau$-tilting \mbox{module $T$}, namely the pair $(U^T,P_{\uc})$, where $P_\uc$ is the maximal basic projective $A$-module satisfying that \mbox{$\Hom_A(P_\uc, U)=0$} for any $U \in \uc$ as in \cref{thm:maximaltaunrigid}. The basic part of this pair is the $\tau_d$-rigid pair $(M_{\uc},P_{\uc})$. It will follow from our results in \cref{sec:silting} that these $\tau_d$-rigid pairs are maximal; see \cref{cor: maximality}.

We illustrate the results of this subsection in the setup of \cref{ex:running1}.

\begin{example}\label{ex:running6}
Let $A$ and $\mc$ be as in \cref{ex:running1}, and consider the $2$-torsion class 
\[
\uc = \add \left\{ \rep{2\\3} \oplus \rep{1\\2} \oplus \rep{1}\right\} \subseteq \mc.
\]
Note that $\uc = \tc \cap \mc$ for $\tc = \add\left\{\rep{2\\3}\oplus \rep{2}\oplus \rep{1\\2} \oplus \rep{1}\right\}$ and that $\tc$ corresponds to the support $\tau$-tilting module $T = \rep{2\\3}\oplus\rep{2}\oplus\rep{1\\2}$. The minimal $\uc$-coresolution of $T$ is given by
\[
\Rep{2\\3} \oplus \Rep{2} \oplus \Rep{1\\2} \longrightarrow \Rep{2\\3} \oplus \Rep{1\\2} \oplus \Rep{1\\2} \longrightarrow \Rep{1} \longrightarrow 0.
\]
It hence follows from \cref{cor:sameeitherway} that $(U^T,P_{\uc}) = \left(\rep{2\\3}\oplus \rep{1\\2} \oplus\rep{1}, 0\right)$ is a $\tau_2$-rigid pair in $\mc$, as we have already seen in \cref{ex:running1}.
\end{example}

\subsection{APR tilting and higher torsion classes} \label{subsec: d-APR}

Throughout this subsection, let $\mc$ be a \mbox{$d$-cluster} tilting subcategory of $\mod A$. Our goal is to prove that the $d$-APR tilting modules introduced in \cite{IyamaOppermann} arise as $\Ext^d$-projective generators of functorially finite split \mbox{$d$-torsion} classes in $\mc$. This is done by showing that the minimal $\mc$-coresolutions of the classical APR tilting modules, introduced in \cite{APRtilting}, consist of sums of summands of the corresponding $d$-APR tilting modules. 

Assume that $A$ is basic and that $P$ is a simple projective but not injective $A$-module. We write $A = P \oplus Q$. The $A$-module
\[
T^P_1 = \tau^- P \oplus Q
\]
is called the \textit{APR tilting module associated to $P$}. It is well-known that $T_1^P$ is a tilting module in $\mod A$ and that $\fac T_1^P$ is a functorially finite split torsion class; see e.g.\ \cite[Example VI 2.8 (c)]{bluebook1}. This leads to the following result. Recall that a $d$-torsion class is faithful if its annihilator is zero.

\begin{proposition}\label{prop:dAPRtorsion}
    Let $T_1^P$ be an APR tilting module. Then $\fac T_1^P$ induces a functorially finite faithful split $d$-torsion class $\uc = \fac T_1^P \cap \mc$ in $\mc$.
\end{proposition}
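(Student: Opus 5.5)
The statement splits into three claims, and I would treat them in turn: that $\fac T_1^P$ induces a split $d$-torsion class in $\mc$, that this class is functorially finite, and that it is faithful.

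For the first claim, I rely on the facts recalled just before the proposition: $T_1^P$ is tilting and $\tc = \fac T_1^P$ is a functorially finite split torsion class in $\modA$. \Cref{Prop:SplitTorsionGivesdTorsion} then says directly that $\tc$ induces a split $d$-torsion class $\uc = \tc \cap \mc$ in $\mc$. In particular $\uc$ is a $d$-torsion class induced by $\tc$.

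For functorial finiteness, \cref{prop:T ff implies U ff} applies verbatim. Its only hypotheses are that $\tc$ is a functorially finite torsion class inducing $\uc$, and that $\tc = \fac T$ for a support $\tau$-tilting module $T$. The tilting module $T_1^P$ is $\tau$-tilting, so it can play the role of $T$ in \cref{ffmodA setup}. Its proof also only uses a left $\tc$-approximation followed by \cref{cor: minimal M-appr. in U}, both of which are available here.

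The remaining point, and the only one needing thought, is faithfulness, i.e.\ $\Ann \uc = (0)$. I would first show that $\tc$ is faithful. Since $T_1^P$ is tilting, there is an exact sequence $0 \to A \to T_0 \to T_1 \to 0$ with $T_0, T_1 \in \add T_1^P \subseteq \tc$, so $A$ embeds in an object of $\tc$. Next I need an embedding of $A$ into an object of $\uc$, not merely of $\tc$. Take the minimal left $\mc$-approximation $T_0 \to M_0$. By \cref{cor: minimal M-appr. in U}, it is a monomorphism and lands in $\uc$. Composing gives a monomorphism $A \to T_0 \to M_0$ with $M_0 \in \uc$, so $\Ann \uc = (0)$ by the characterisation of faithfulness in \cref{ssec: basic terminology}.

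An alternative route to the same conclusion uses that the minimal $\uc$-coresolution of $A$ starts with a left $\uc$-approximation $f$ satisfying $\Ker f = \Ann\uc$. It then suffices to see that $A$ embeds into some object of $\uc$, which is exactly what the composite $A \to M_0$ provides. I expect no real obstacle. The only subtle point is ensuring the embedding lands in $\uc$ rather than just $\tc$, and \cref{cor: minimal M-appr. in U} handles this.
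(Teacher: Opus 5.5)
Your proof is correct and follows the paper's argument essentially step for step. You use \cref{Prop:SplitTorsionGivesdTorsion} for the split $d$-torsion class and \cref{prop:T ff implies U ff} for functorial finiteness, then combine faithfulness of $\fac T_1^P$ with \cref{cor: minimal M-appr. in U} to obtain a monomorphism from $A$ into an object of $\uc$. The only cosmetic difference is that you take the explicit first term $T_0$ of the tilting coresolution of $A$, whereas the paper takes an arbitrary $X \in \fac T_1^P$ admitting a monomorphism $A \to X$; your closing ``alternative route'' is the same argument again and can be dropped.
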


\begin{proof}
As $\fac T_1^P$ is a split torsion class in $\mod A$, it induces a split $d$-torsion class \mbox{$\uc = \fac T_1^P \cap \mc$} in $\mc$ by \cref{Prop:SplitTorsionGivesdTorsion}. Since $\fac T_1^P$ is functorially finite, so is $\uc$ by \cref{prop:T ff implies U ff}. As $T_1^P$ is tilting, the subcategory $\fac T_1^P$ is faithful, and thus there is an object $X\in \fac T^P_1$ and a monomorphism $A \to X$. 
Consider the minimal left \mbox{$\uc$-approximation} $X\to U$ of $X$, which is a monomorphism by \cref{cor: minimal M-appr. in U}.
This gives a monomorphism $A \to U$, so $\uc$ is faithful.
\end{proof}

The module
$
T^P_d = \tau_d^- P \oplus Q
$
is called the \textit{weak $d$-APR tilting module associated to $P$} \cite[Definition 3.1]{IyamaOppermann}. Any weak \mbox{$d$-APR} tilting module is a $d$-tilting module \cite[Theorem 3.2]{IyamaOppermann}. We now recover this fact using the theory of $d$-torsion classes. 

\begin{theorem}\label{thm:d-APR}
    Let $T_d^P$ be a weak $d$-APR tilting module. Then $T_d^P$ is the basic $\Ext^d$-projective generator of $\uc = \fac T_1^P \cap \mc$. In particular, the module $T_d^P$ is $d$-tilting.
\end{theorem}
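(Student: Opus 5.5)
The plan is to compute the minimal $\mc$-coresolution of the APR tilting module $T_1^P$ and use \cref{cor:sameeitherway} together with \cref{thm:U_T-approx}. By \cref{prop:dAPRtorsion}, $\fac T_1^P$ is a functorially finite split torsion class inducing the faithful split $d$-torsion class $\uc=\fac T_1^P\cap\mc$, so we are in \cref{ffmodA setup} with $T=T_1^P$. Since $\Ann\uc=0$, we have $B=A$. \cref{thm:U_T-approx} then gives that $U^T=\bigoplus_i U_i^T$ is a $d$-tilting $A$-module and an $\Ext^d$-projective generator of $\uc$. It therefore suffices to show that $\add(U^T)=\add(T_d^P)$. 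The ``in particular'' statement follows at once, because the basic $\Ext^d$-projective generator is $d$-tilting by \cref{thm: d-tilting}.

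First, $Q$ is handled directly. The projective summands of $A$ other than $P$ lie in $\mc$ and in $\fac T_1^P$, so their minimal $\mc$-coresolution is trivial and contributes exactly $Q$ to $U^T$.

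Next comes the summand $\tau^-P$. We need the minimal $\mc$-coresolution
\[0\to\tau^-P\to U_0\to\cdots\to U_{d-1}\to0.\]
Start from the almost split sequence $0\to P\to E\to\tau^-P\to0$, where $E\in\add Q$ because $P$ is simple projective. Then splice on the $d$-almost split sequence $0\to P\to C_1\to\cdots\to C_d\to\tau_d^-P\to0$ in $\mc$, which exists by Iyama's higher Auslander--Reiten theory with $C_1\in\add Q$, since $P$ is simple projective and every irreducible map from $P$ goes to $E$. One shows that $C_2,\dots,C_d$ have no summand isomorphic to $P$, because $P$ is simple projective and these maps are radical. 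Hence $C_i\in\add Q$ for $i\ge 2$, or more precisely these terms lie in $\uc$. Taking cokernels, $\tau^-P\cong\Coker(P\to C_1)$ embeds into $C_2$, which gives a coresolution
\[0\to\tau^-P\to C_2\to\cdots\to C_d\to\tau_d^-P\to0.\]
Its terms lie in $\mc$, and it is minimal because it is a truncation of a minimal $d$-exact sequence. Thus $U^T\in\add(Q\oplus\tau_d^-P)$. Since $\tau_d^-P$ is indecomposable and appears, we obtain $\add(U^T)\subseteq\add(T_d^P)$ with $\tau_d^-P$ occurring. As $T_d^P$ is basic, of the form $\tau_d^-P\oplus Q$, this yields $\add(U^T)=\add(T_d^P)$. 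Alternatively, count summands: $|U^T|=|A|$ since $U^T$ is tilting.

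The main obstacle is the step where $C_i\in\add Q\cup\uc$, that is, showing that the $d$-almost split sequence starting at $P$ has middle terms $C_i\in\fac T_1^P$. I would first check that $\Hom_A(C_i,P)=0$ for $i\ge1$, using that $P$ is simple projective and not injective and that the maps are radical. In the split torsion pair, $\fac T_1^P$ consists exactly of modules without $P$ as a summand, so this vanishing places $C_i$ in the torsion class. If the truncation fails to be minimal at its start, I would instead invoke that $U^T$ is $\Ext^d$-projective in $\uc$ and compare with $\tau_d^-P$ via \cref{thm: equivalence of Extd projectivity}.
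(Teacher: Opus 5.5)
You reduce the statement correctly, as the paper does, via \cref{prop:dAPRtorsion} and \cref{thm:U_T-approx} to computing the minimal $\mc$-coresolution of $T_1^P$. However, one step of your main line does not hold.

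\textbf{The gap.} The inference ``$C_2,\dots,C_d$ have no summand isomorphic to $P$, hence $C_i\in\add Q$'' is a non sequitur. Having no summand $P$ only gives $C_i\in\uc$. But $\uc$ contains many non-projective objects, $\tau_d^-P$ among them. So your claim $U^T\in\add(Q\oplus\tau_d^-P)$ is not justified as written.

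Your ``main obstacle'' paragraph also aims at the wrong property. Membership $C_i\in\uc$ is automatic, since by \cref{Lemma:Mcores=Ucores} the minimal $\mc$-coresolution of $T_1^P$ is its minimal $\uc$-coresolution. What your inclusion actually needs is that the $C_i$ are projective.

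\textbf{The repair.} The counting remark you make in passing fixes this, and you should make it the main argument.
\begin{itemize}
\item You have shown that $Q$ (via $Q\xrightarrow{1}Q$) is a summand of $U^T$.
\item You have also shown that $\tau_d^-P$ is a summand of $U^T$. It is the last term of the coresolution $0\to\tau^-P\to C_2\to\cdots\to C_d\to\tau_d^-P\to 0$, which is minimal because the maps of a $d$-almost split sequence are radical.
\item Since $\tau_d^-P$ is indecomposable and non-projective, $|Q\oplus\tau_d^-P|=|Q|+1=|A|$.
\item Since $U^T$ is tilting over $B=A$, $|U^T|=|A|$.
\end{itemize}
Hence $\add(U^T)=\add(Q\oplus\tau_d^-P)=\add(T_d^P)$, whatever the $C_i$ turn out to be, and the rest of your argument goes through.

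\textbf{Comparison with the paper.} The paper builds the coresolution by applying $\nu^-$ to a minimal injective $d$-copresentation $0\to P\to I_0\to\cdots\to I_d$ of $P$, using $\Ext^i_A(DA,P)=0$ for $1\le i\le d-1$. This makes the middle terms the projectives $\nu^-I_i$, and minimality excludes $P$ from them. The equality of additive closures can then be read off directly, with no counting.

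By uniqueness of minimal coresolutions, your $C_i$ are in fact exactly these $\nu^-I_i$. Proving their projectivity, though, requires that $\nu^-$ computation. Your counting route skips it, at the cost of invoking $|U^T|=|A|$ from tilting theory.
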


\begin{proof}
Consider a minimal injective $d$-copresentation
\[
0\to P\to I_0\to I_1 \to \cdots \to I_{d}
\]
of $P$. Applying the Nakayama functor $\nu^-=\Hom_A(DA,-)$ and using that \mbox{$\Ext^i_A(DA,P)=0$} for $i=1,\dots,d-1$, we get the exact sequence
\[
0\to \nu^-P\to P_0\to P_1\to \cdots \to P_{d}\to \tau_d^-P\to 0
\]
where $P_i=\nu^-I_i$. As $\tau^-P$ is isomorphic to the cokernel of $P_0\to P_1$ by definition, we have an exact sequence
\[
0\to \tau^-P\to P_2\to \cdots \to P_{d}\to \tau_d^{-}P\to 0.
\]
This gives a minimal $\mc$-coresolution of $\tau^-P$. Adding the identity morphism $Q \xrightarrow{1_Q} Q$ to the first morphism in this sequence, we get a minimal $\mc$-coresolution
\begin{equation*}
0\to T_1^P \to Q \oplus P_2\to \cdots \to P_{d}\to \tau_d^{-}P\to 0
\end{equation*}
of $T_1^P$. Since $P$ is simple and projective, it cannot be a summand of $P_2\oplus \dots \oplus P_d$, as this would contradict the minimality of the sequence. Hence, we have $P_2\oplus \dots \oplus P_d \in\add (Q)$ and 
\[
\operatorname{add}(T_d^P)=\add (Q \oplus P_2\oplus \cdots \oplus P_{d}\oplus \tau_d^{-}P).
\]
By \cref{thm:U_T-approx} and \cref{prop:dAPRtorsion}, the module $Q \oplus P_2\oplus \cdots \oplus P_{d}\oplus \tau_d^{-}P$ is $d$-tilting and an $\Ext^d$-projective generator of $\uc$, so the same must hold for $T_d^P$. This proves the claim.
\end{proof}

\begin{remark}
    Given a $d$-APR tilting module $T_d^P$, it is known from \cite[Theorem 9.2]{Jorgensen} that $\fac T_d^P \cap \mc$ is a split $d$-torsion class in $\mc$. We can recover this fact by combining \cref{prop:generators,prop:dAPRtorsion,thm:d-APR}.
\end{remark}

\subsection{Slices and higher torsion classes} \label{sec:slices}
In \cite{IyamaOppermann} the authors introduce and consider slices in a higher analogue of the derived category, which can be associated to any $d$-cluster tilting subcategory in the case where the algebra $A$ is of global \mbox{dimension $d$}. In particular, it is shown that slices give rise to tilting complexes \cite[Theorem 4.15]{IyamaOppermann}. In this subsection we show that the same conclusion holds without the assumption on the global dimension \mbox{of $A$}, using the theory of $d$-torsion classes. Note that we only consider slices in the $d$-cluster tilting subcategory itself, due to the lack of a good higher analogue of the derived category in this setting. 

Throughout this subsection we fix a  $d$-cluster tilting module $X$ in $\modA$, meaning that the subcategory \mbox{$\mc=\operatorname{add}(X) \subseteq \mod A$} is $d$-cluster tilting. Let $\operatorname{ind}\mc$ be the class of indecomposable $A$-modules in $\mc$. A \textit{path} $M \leadsto N$ between $M,N\in \operatorname{ind}\mc$ is a sequence 
\[
M\to M_1 \to \cdots \to M_n\to N
\]
of non-zero morphisms with $M_1,\dots , M_n\in \operatorname{ind}\mc$. We denote the class of indecomposable direct summands of an object $S \in \mc$ by $\operatorname{ind}S$.
\begin{definition}\label{Slice}
    An object $S\in \mc$ is a \textit{slice} if the following hold:
    \theoremlistfix
    \begin{enumerate}
        \item For any indecomposable injective $A$-module $I$, there exists an integer $j\geq 0$ such that $0\neq \tau^j_d(I)\in \operatorname{add}(S)$.
        \phantomsection
        \label{Slice:1}
        \item For any non-projective $M\in  \operatorname{ind}\mc$, at most one of $M$ and $\tau_d(M)$ lies in $\add (S)$.
        \phantomsection
        \label{Slice:2}
        \item Any path which starts and ends in $\operatorname{ind}S$ must lie entirely in $\operatorname{ind}S$. 
        \phantomsection
        \label{Slice:3}
    \end{enumerate}
\end{definition}

For a slice $S$ in $\mc$, there is a faithful split $d$-torsion class with $S$ as an \mbox{$\Ext^d$-projective generator.}

\begin{theorem}\label{BigResultSlices}
    Let $S$ be a slice in $\mc$. Then
    \begin{align*}
       & \uc\colonequals \operatorname{add}\{M \in \operatorname{ind}\mc \mid \text{there exists a path }S'\leadsto M \text{ with }S'\in \operatorname{ind}S\}
    \end{align*}
    is a functorially finite faithful split $d$-torsion class in $\mc$. Furthermore, the slice $S$ is an $\Ext^d$-projective generator of $\uc$.
\end{theorem}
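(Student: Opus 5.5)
The plan is to write down the complementary subcategory explicitly and reduce everything else to statements about paths. Set $\vc \colonequals \add\{M \in \operatorname{ind}\mc \mid M \notin \uc\}$.

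\emph{Split torsion.} By construction, $\operatorname{ind}\uc$ is closed under successors along nonzero morphisms in $\operatorname{ind}\mc$. Indeed, if $U \in \operatorname{ind}\uc$ and $U \to V$ is a nonzero morphism with $V \in \operatorname{ind}\mc$, then composing a path $S' \leadsto U$ with $U \to V$ gives a path $S' \leadsto V$, so $V \in \uc$. Hence $\Hom_A(U,V)=0$ for all $U \in \uc$ and $V \in \vc$. Since $\mc = \add(X)$ is Krull--Schmidt, every $M \in \mc$ decomposes as $M = U_M \oplus V_M$, where $U_M$ collects the indecomposable summands lying in $\uc$ and $V_M$ those lying in $\vc$. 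This is exactly \cref{def:split d-torsion}. Functorial finiteness is immediate, because $\operatorname{ind}\mc$ is finite, so $\uc = \add(U)$ for a single module $U$.

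\emph{Faithfulness.} I would show that $DA \in \uc$. Then $A \hookrightarrow (DA)^n$ is a monomorphism into $\uc$, so $\uc$ is faithful. Let $I$ be an indecomposable injective. By \cref{Slice}\eqref{Slice:1} there is $j \geq 0$ with $0 \neq \tau_d^j I \in \add(S)$, so it suffices to produce a path $\tau_d N \leadsto N$ for every indecomposable non-projective $N \in \mc$, and then concatenate these paths. For this I use Iyama's $d$-almost split sequence
\[
0 \to \tau_d N \to C_1 \to \cdots \to C_d \to N \to 0 .
\]
It is non-split with terms in $\mc$, so the composite of its middle maps is not killed; hence one can choose indecomposable summands $C_i' \subseteq C_i$ such that the resulting chain $\tau_d N \to C_1' \to \cdots \to C_d' \to N$ consists of nonzero maps. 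The same fact, that there is a path $\tau_d N \leadsto N$, will be reused below.

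\emph{$S$ is $\Ext^d$-projective in $\uc$.} First, $S \in \uc$ via the trivial path. By higher Auslander--Reiten duality, $\Ext^d_A(S,U') \cong D\overline{\Hom}_A(U',\tau_d S)$, so it suffices to show $\Hom_A(U',\tau_d S') = 0$ for $U' \in \operatorname{ind}\uc$ and $S' \in \operatorname{ind}S$ non-projective. Suppose a nonzero map $U' \to \tau_d S'$ exists. Concatenate a path $S'' \leadsto U'$, this map, and a path $\tau_d S' \leadsto S'$. The result is a path from $\operatorname{ind}S$ to $\operatorname{ind}S$ passing through $\tau_d S'$, so $\tau_d S' \in \add(S)$ by \cref{Slice}\eqref{Slice:3}. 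This contradicts \cref{Slice}\eqref{Slice:2}.

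\emph{$S$ is a generator (expected main obstacle).} It remains to show that every $\Ext^d$-projective in $\uc$ lies in $\add(S)$. By \cref{thm: d-tilting}, faithfulness gives $B = A$, and $U^A$ is a $d$-tilting $A$-module. So it suffices to show $|S| \geq |A|$, or directly that each indecomposable $M \in \uc \setminus \add(S)$ fails to be $\Ext^d$-projective.

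My first attempt is the direct route. I would prove that such an $M$ is non-projective and satisfies $\tau_d M \in \uc$; then $\Hom_A(\tau_d M,\tau_d M) \neq 0$ shows $M$ is not $\Ext^d$-projective. To get $\tau_d M \in \uc$, take a shortest path $S' \to M_1 \to \cdots \to M_n \to M$ with $S' \in \operatorname{ind}S$. The last morphism is a radical map into $M$, so it factors through the $d$-almost split sequence ending in $M$, hence through $C_d$. Using the $d$-kernel property, the path should be pushed back to one reaching $\tau_d M$ from $\operatorname{ind}S$, unless $M$ itself is forced into $S$ by convexity \eqref{Slice:3}. Projective $M$ needs separate care; there I would use the paths $\tau_d^j I \leadsto I$ and convexity.

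If this becomes messy, the fallback is counting. Condition \eqref{Slice:1} gives a map from indecomposable injectives to $\operatorname{ind}S$, namely $I \mapsto \tau_d^{j}I$ for the appropriate $j$. This map is injective because $\tau_d$ is an equivalence $\underline{\mc} \to \overline{\mc}$ and at most one element of each $\tau_d$-orbit lies in $S$ by \eqref{Slice:2}. Hence $|S| \geq |DA| = |A|$, and since $S \in \add(U^A)$ with $|U^A| = |A|$, we get $\add(S) = \add(U^A)$.
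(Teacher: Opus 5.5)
Your proposal is correct and follows the paper's proof. The paper's steps are:
\begin{itemize}
\item the split decomposition of each module by its indecomposable summands;
\item faithfulness via the concatenated paths $\tau_d^jI\leadsto\cdots\leadsto I$ coming from $d$-almost split sequences;
\item $\Ext^d$-projectivity of $S$, obtained by combining convexity \eqref{Slice:3} with \eqref{Slice:2};
\item the generator property via the count $|A|\le|S|\le|U^A|=|A|$ from \cref{thm: d-tilting}.
\end{itemize}
The last step is exactly your fallback, and the paper never tries your direct route, so you can drop it.

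Two justifications should be tightened.
\begin{itemize}
\item The path $\tau_dN\leadsto N$ comes from minimality of the $d$-almost split sequence, not from a nonvanishing composite. Each map is left minimal because the next one is radical, so every indecomposable summand of $C_{i+1}$ (with $C_0=\tau_dN$) receives a nonzero component from $C_i$.
\item Injectivity of $I\mapsto\tau_d^jI$ only needs two facts: $\tau_d$ is injective on isoclasses of non-projective indecomposables in $\mc$, and it never produces injectives. Your claim of ``at most one element per $\tau_d$-orbit'' is not needed, and it would require \eqref{Slice:3} in addition to \eqref{Slice:2}.
\end{itemize}
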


\begin{proof}
We first show that $\uc$ is a split $d$-torsion class. For $M\in \mc$, consider the decomposition $M\cong U_M\oplus V_M$ where $U_M$ is a direct summand of $M$ which is contained in $\uc$ and is maximal with this property. Let $V$ be an indecomposable direct summand of $V_M$. Since $U_M$ is maximal, the object $V$ is not in $\uc$. Hence, there exists no path $S'\leadsto V$ with $S'\in \operatorname{ind}S$, so there are no non-zero morphisms from objects in $\uc$ to $V$. Since $V$ was arbitrary, this implies that $\Hom_A(U,V_M)=0$ for all $U\in \uc$, and $\uc$ is thus a split $d$-torsion class in $\mc$. 
  
The fact that $\uc$ is functorially finite follows from $\mc$ having finitely many isomorphism classes of indecomposable objects as $\mc = \add (X)$. We now show that $\uc$ is faithful. First, note that for any non-projective $M\in \operatorname{ind}\mc$, the $d$-almost split sequence
  \[
  0\to \tau_d(M)\to E_1\to \cdots \to E_d\to M\to 0
  \]
gives a path $\tau_d(M)\leadsto M$. To show faithfulness, it suffices to prove that any indecomposable injective $A$-module $I$ is contained in $\uc$. Given such an $I$, there exists an integer $j\geq 0$ with $0 \neq \tau_{d}^j(I)\in \operatorname{add}(S)$ by the definition of a slice. If $j=0$, then $\tau_d^j(I)=I \in \add(S)$, so $I \in \uc$. Otherwise, note that $\tau_d^i(I)$ is non-projective for $0 \leq i \leq j-1$. By the argument above, we hence have paths 
  \[
  \tau_d^j(I)\leadsto \tau_d^{j-1}(I)\leadsto \cdots \leadsto\tau_d(I)\leadsto I.
  \]
Concatenation gives a path $\tau_d^j(I)\leadsto I$, and so $I \in\uc$. 
  
Next, we prove that $S$ is $\Ext^d$-projective in $\uc$. Let $S'\in \operatorname{ind}S$. If $S'$ is projective, then it is clearly $\Ext^d$-projective. Assume hence that $S'$ is non-projective, and consider the path $\tau_d(S')\leadsto S'$. If $\tau_d(S')\in \uc$, then there exists an $S''\in \operatorname{ind}S$ and a path $S''\leadsto \tau_d(S')$. But then the concatenation $S''\leadsto \tau_d(S')\leadsto S'$ starts and ends in $\operatorname{ind}S$, and hence it must lie entirely in $\operatorname{ind}S$ by \cref{Slice} \eqref{Slice:3}. This implies that $\tau_d(S')\in \operatorname{add}(S)$, which contradicts \cref{Slice} \eqref{Slice:2}. We can thus conclude that $\tau_d(S')$ is not contained in $\uc$. Since $\tau_d(S')$ is indecomposable, it must be contained in the split $d$-torsion free class
  \[
  \vc=\{M\in \mc\mid \Hom_A(U,M)=0 \text{ for all }U\in \uc\}
  \]
associated to $\uc$. By \cref{thm: equivalence of Extd projectivity}, this implies that $S'$ is $\Ext^d$-projective in $\uc$, and it follows that $S$ is $\Ext^d$-projective in $\uc$.
  
We finish by showing that $S$ is an $\Ext^d$-projective generator of $\uc$. Note first that we can construct an injective map from isomorphism classes of indecomposable injective $A$-modules to isomorphism classes of indecomposable summands of $S$, given by
  \[
  I\mapsto \tau_d^j(I) \text{ for some choice of an integer }j\geq 0 \text{ such that }0\neq \tau_d^j(I)\in \operatorname{add}(S).
  \]
This yields $|A|\leq |S|$. By \cref{thm: d-tilting}, the $A$-module $U^A$ is $d$-tilting and an $\Ext^d$-projective generator of $\uc$, as $\uc$ is faithful and functorially finite. In particular, we have $|U^A|=|A|$ since $U^A$ is $d$-tilting. As $U^A$ is an $\Ext^d$-projective generator, it follows that $S$ is contained in $\operatorname{add}(U^A)$, so $|S|\leq |U^A|$. Combining this, we get that $|S|=|U^A|$ and that the isomorphism classes of indecomposable summands of $U^A$ are equal to the ones of $S$. This gives \mbox{$\operatorname{add}(S)=\operatorname{add}(U^A)$}, which shows that $S$ is an $\Ext^d$-projective generator of $\uc$.
\end{proof}

We get the following immediate consequence of the theorem above.

\begin{corollary}\label{Corollary:SliceIsTilting}
     Let $S$ be a slice in $\mc$. Then $S$ is a $d$-tilting $A$-module.
\end{corollary}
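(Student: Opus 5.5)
The plan is to deduce the corollary directly from \cref{BigResultSlices} combined with \cref{thm: d-tilting}. By \cref{BigResultSlices}, the subcategory $\uc$ of objects reachable by paths from $\operatorname{ind}S$ is a functorially finite faithful split $d$-torsion class in $\mc$, and $S$ is an $\Ext^d$-projective generator of $\uc$. In particular $\add(S)=\add(U^A)$, where $U^A=\bigoplus_{i=0}^d U_i^A$ comes from the minimal $\uc$-coresolution \eqref{eq: cores of A} of $A$.

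Next I will use faithfulness to identify $B=A/\Ann\uc$ with $A$. Since $\Ann\uc=(0)$, we have $B=A$, and \cref{thm: d-tilting}\eqref{thm: d-tilting:1} gives that $U^A$ is a $d$-tilting $A$-module. Note that \cref{thm: d-tilting} requires \cref{torsion setup}, which also asks for a torsion class $\tc$ inducing $\uc$. Such a $\tc$ exists by \cref{thm:torsion-to-dtorsion}, for instance the minimal torsion class containing $\uc$.

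It then remains to transfer the tilting property from $U^A$ to $S$. Being $d$-tilting depends only on $\add$ of the module. Indeed, projective dimension at most $d$, the vanishing of $\Ext^i_A(-,-)$ for $i>0$, and the existence of a coresolution $0\to A\to T_0\to\cdots\to T_d\to 0$ with $T_i$ in the additive closure are all invariant under replacing a module by another with the same $\add$. Since $\add(S)=\add(U^A)$, it follows that $S$ is $d$-tilting.

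There is no real obstacle here. The only points to check are that the hypotheses of \cref{torsion setup} hold and that faithfulness gives $B=A$, and both have just been verified.
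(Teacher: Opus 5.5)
Your proposal is correct and follows the paper's own proof: \cref{BigResultSlices} makes $S$ an $\Ext^d$-projective generator of a functorially finite faithful $d$-torsion class, and \cref{thm: d-tilting} (with $B=A$ by faithfulness) then gives that $S$ is $d$-tilting. Your additional checks, namely the existence of an inducing torsion class via \cref{thm:torsion-to-dtorsion} and the fact that being $d$-tilting depends only on $\add(S)=\add(U^A)$, are exactly the details the paper leaves implicit.
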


\begin{proof}
    By \Cref{BigResultSlices}, the slice $S$ is an $\Ext^d$-projective generator of a functorially finite faithful $d$-torsion class. The claim thus follows from \cref{thm: d-tilting}.
\end{proof}

The examples below demonstrate that our results extend those in \cite{IyamaOppermann}, as we do not require global dimension $d$.
\begin{example}\label{Example:Slice1}
Let $A$ be the path algebra of the quiver \(Q=(1 \xrightarrow{\alpha_1} 2 \xrightarrow{\alpha_2} 3\xrightarrow{\alpha_3}4 \xrightarrow{\alpha_4} 5)\) modulo the ideal generated by the relations $\alpha_{i}\alpha_{i+1}$ for $1\leq i\leq 3$. Consider its Auslander--Reiten quiver
\[
\begin{tikzpicture}[line cap=round,line join=round, inner sep = 2, outer sep = 2]
\node[draw] (5) at (0,0) {$5$};
\node (4) at (2,0) {$4$};
\node[draw] (3) at (4,0) {$3$};
\node (2) at (6,0) {$2$};
\node[draw] (1) at (8,0) {$1$};

\node[draw] (45) at (1,1) {$\Rep{4\\5}$};

\node[draw] (34) at (3,1) {$\Rep{3\\4}$};

\node[draw] (23) at (5,1) {$\Rep{2\\3}$};

\node[draw] (12) at (7,1) {$\Rep{1\\2}$};

\draw[->] (5) -- (45);
\draw[->] (45) -- (4);
\draw[->] (4) -- (34);
\draw[->] (34) -- (3);
\draw[->] (3) -- (23);
\draw[->] (23) -- (2);
\draw[->] (2) -- (12);
\draw[->] (12) -- (1);
\draw[->, dashed] (1) -- (2);
\draw[->, dashed] (2) -- (3);
\draw[->, dashed] (3) -- (4);
\draw[->, dashed] (4) -- (5);
\end{tikzpicture}
\]
\noindent
where the dashed arrows indicate the Auslander--Reiten translation and the generators of the $2$-cluster tilting subcategory 
\[
\mc = \add\left\{ \rep{5} \oplus \rep{4\\5} \oplus \rep{3\\4} \oplus \rep{3}\oplus \rep{2\\3} \oplus \rep{1\\2} \oplus \rep{1}	 \right\} 
\] 
are marked with rectangles.
A straightforward computation shows that 
$$S=\rep{4\\5} \oplus \rep{3\\4} \oplus \rep{3}\oplus \rep{2\\3} \oplus \rep{1\\2}$$ 
is a slice. This provides an example of a slice of a $2$-cluster tilting subcategory of an algebra of global dimension $4$. The endomorphism algebra $B_S$ of $S$ is isomorphic to the path algebra of the quiver $Q$ modulo the relations $\alpha_1\alpha_2$ and $\alpha_3\alpha_4$. By \cref{Corollary:SliceIsTilting}, the module $S$ is $2$-tilting, so $B_S$ is derived equivalent to $A$. However, the module category of $B_S$ cannot contain a $2$-cluster tilting subcategory by \cite[Theorem 3.1]{HauglandJacobsenSchroll}.
\end{example}

Note that the slice in \Cref{Example:Slice1} is a weak $2$-APR tilting module. Below we provide an example of a slice that is neither given by a weak $2$-APR tilting module, nor by a $2$-APR tilting complex in the sense of \cite[Definition 3.14]{IyamaOppermann}.

\begin{example}\label{Example:Slice2}

For this example, we refer the reader to \Cref{subsec: higher nakayama} or \cite[Section 2]{Higher Nakayama} for the definition and basic properties of higher Nakayama algebras. Let $A_{\underline{\ell}}^2$ be the higher Nakayama algebra associated to the Kupisch series $\ell=(1,2,3,2,3)$. This algebra has a $2$-cluster tilting subcategory depicted below
    \[
    \begin{tikzpicture}[xscale=0.75]
	\node (000) at (-1,0) {000};
	\node (111) at (3,0) {111};
	\node (222) at (7,0) {\color{red}222};
	\node (333) at (11,0) {333};
    \node (444) at (15,0) {444};
	
	\node (001) at (0,1) {001};
	\node (011) at (2,1) {011};
	\node (112) at (4,1) {\color{red}112};
	\node (122) at (6,1) {\color{red}122};	
	\node (223) at (8,1) {\color{red}223};
	\node (233) at (10,1) {\color{red}233};
    \node (334) at (12,1) {334};
    \node (344) at (14,1) {344};

    \node (002) at (1,2) {\color{red}002};
	\node (012) at (3,2) {\color{red}012};	
	\node (022) at (5,2) {\color{red}022};
	\node (224) at (9,2) {\color{red}224};	
	\node (234) at (11,2) {\color{red}234};
    \node (244) at (13,2) {\color{red}244};
	
	\draw[->] (000) -- (001);
	\draw[->] (001) -- (011);
	\draw[->] (011) -- (111);
	\draw[->] (111) -- (112);
	\draw[->] (112) -- (122);
	\draw[->] (122) -- (222);
	\draw[->] (222) -- (223);
	\draw[->] (223) -- (233);
	\draw[->] (233) -- (333);
    \draw[->] (233) -- (333);
	\draw[->] (333) -- (334);
	\draw[->] (334) -- (344);
	\draw[->] (344) -- (444);
    \draw[->] (001) -- (002);
    \draw[->] (002) -- (012);
    \draw[->] (011) -- (012);
    \draw[->] (012) -- (112);
    \draw[->] (012) -- (022);
    \draw[->] (022) -- (122);
    \draw[->] (223) -- (224);
    \draw[->] (233) -- (234);
    \draw[->] (224) -- (234);
    \draw[->] (234) -- (334);
    \draw[->] (234) -- (244);
    \draw[->] (244) -- (344);
	
	\draw[->, dashed] (111)--(000);
	\draw[->, dashed] (222)--(111);
	\draw[->, dashed] (333)--(222);
    \draw[->, dashed] (444)--(333);
	
	\draw[->, dashed] (344) to[bend left=23] (233);
	\draw[->, dashed] (334) to[bend left=23] (223);
    \draw[->, dashed] (122) to[bend left=23] (011);
	\draw[->, dashed] (112) to[bend left=23] (001);
\end{tikzpicture}
\]
where the vertex $i$ represent the indecomposable $M_i$ in $\mc_{\underline{\ell}}^2$ up to isomorphism, the arrows represent irreducible morphisms in $\mc_{\underline{\ell}}^2$ and the dashed arrows indicate the higher Auslander--Reiten translation. Then $\mc_{\underline{\ell}}^2$ admits a slice given by the vertices marked in red. The indecomposable represented by $222$ is not projective, and
the same is true for its $2$-Auslander--Reiten translate. Hence, this slice is neither a weak $2$-APR tilting module nor a $2$-APR tilting complex. Moreover, since we have an exact sequence
\[
0\to M_{222}\to M_{223}\to M_{233}\to M_{333}\to 0
\]
where $M_{223}$ and $M_{233}$ are projective and $M_{222}$ is not projective, the algebra $A^2_{\underline{\ell}}$ must have global dimension greater than $2$.
\end{example}

\section{\texorpdfstring{$d$}{d}-cluster tilting: from modules to \texorpdfstring{$(d+1)$}{(d+1)}-term complexes}\label{subsec:dCT}

In the main result of this section, we prove that any $d$-cluster tilting subcategory of $\modA$ induces a $d$-cluster tilting subcategory of the category of $(d+1)$-term complexes; see \cref{theorem:dCT exact}. This result will be instrumental for the proofs in \cref{sec:silting}. Our approach moreover produces novel examples of $d$-cluster tilting subcategories and $d$-exact categories that are interesting in their own right.

In \cref{ssec:notation and main result} we introduce the necessary notation and state the main results. Next, in \cref{ssec:exact and d-tilting} we recall some definitions and facts concerning exact categories. Following \cite{Iyama2007a,Jasso}, we introduce the notion of a $d$-cluster tilting subcategory in this context, before proving some convenient results for showing that a subcategory of an exact category is $d$-cluster tilting. \cref{Subsection:From $(d+1)$-term complexes to its homotopy category,subsec: (d+1)-term} are concerned with the category of $(d+1)$-term complexes of projectives and its homotopy category. Finally, in \cref{subsec: pf of main results} we give the proofs of the main results. 

\subsection{Notation and statement of main results}\label{ssec:notation and main result}
Throughout \cref{ssec:notation and main result}, we assume that \mbox{$\mc\subseteq\mod A$} is a $d$-cluster tilting subcategory. The notation $C^{[-d,0]}(\proj A)$ is used for the subcategory of $C^b(\proj A)$ consisting of $(d+1)$-term complexes, i.e.\ complexes concentrated in degrees $-d,\ldots,0$. An object $P_\bullet \in C^{[-d,0]}(\proj A)$ is inner acyclic if it satisfies $H_i(P_\bullet)=0$ for $i \notin \{-d,0\}$. The subcategory of $C^{[-d,0]}(\proj A)$ given by inner acyclic complexes is denoted by $C_\text{\normalfont inn\,ac}^{[-d,0]}(\proj A)$. Similarly, we write $K^{[-d,0]}(\proj A)$ and $K_\text{\normalfont inn\,ac}^{[-d,0]}(\proj A)$ for the subcategories of $K^b(\proj A)$ consisting of $(d+1)$-term complexes and inner acyclic $(d+1)$-term complexes, respectively. 

We will study the subcategories
\begin{align*}
\pc_d(\mc) &\colonequals \{P_\bullet \in C_\text{\normalfont inn\,ac}^{[-d,0]}(\proj A) \mid H_0(P_\bullet) \in \mc \} \subseteq C^{[-d,0]}(\proj A) \\
\widetilde{\pc}_d(\mc) &\colonequals \{P_\bullet \in K_\text{\normalfont inn\,ac}^{[-d,0]}(\proj A) \mid H_0(P_\bullet) \in \mc \} \subseteq K^{[-d,0]}(\proj A).
\end{align*}
It should be noted that $\pc_d(\mc)$ is the additive closure of all minimal projective $d$-presentations $P_\bullet^M$ of \mbox{$M \in \mc$}, in addition to $P[d]$ for $P \in \proj A$ and all contractible complexes in $C^{[-d,0]}(\proj A)$. The subcategory $\widetilde{\pc}_d(\mc)$ is the image of $\pc_d(\mc)$ under the natural functor \mbox{$C^b(\proj A) \to K^b(\proj A)$}.

The category $C^{[-d,0]}(\proj A)$ is endowed with the structure of an exact category whose conflations are the degree-wise split short exact sequences; see \cref{ssec:exact and d-tilting} for more details. The main result of this section is the theorem below, for which we note that the notion of a $d$-cluster tilting subcategory of an exact category is given in \cref{def:d-CT exact}.

\begin{theorem}
\label{theorem:dCT exact}
The category $\pc_d(\mc)$ is a $d$-cluster tilting subcategory of $C^{[-d,0]}(\proj A)$.
\end{theorem}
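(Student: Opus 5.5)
The plan is to verify the exact-category version of $d$-cluster tilting directly. I take this to mean: $\pc_d(\mc)$ is functorially finite in $\ec\colonequals C^{[-d,0]}(\proj A)$, and it coincides both with $\{X \mid \Ext^i_\ec(X,\pc_d(\mc))=0,\ 1\le i\le d-1\}$ and with the dual right-hand perpendicular. I will use that $\ec$ is a Frobenius-type exact category whose projective-injectives are the contractible complexes. For $X,Y\in\ec$ we have $\Ext^i_\ec(X,Y)\cong \Hom_{K^b(\proj A)}(X,Y[i])$ for $i\ge 1$, at least for $i$ in the relevant range $1\le i\le d$. This holds because degreewise split conflations are exactly the triangles in the homotopy category with all terms in $[-d,0]$, and higher extensions can be computed by cones/dimension shifting inside $\ec$. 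First I will check this identification carefully, including that syzygies in $\ec$ of $(d+1)$-term complexes remain $(d+1)$-term (via the cone of the identity truncated appropriately). This identification reduces the problem to computations of $\Hom_{K^b(\proj A)}(X,Y[i])$ for $1\le i\le d-1$.

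Next comes rigidity. For $M,N\in\mc$, \cref{lem:tau-rigidKbProj} (Martinez--Mendoza) gives $\Hom(P_\bullet^N,P_\bullet^M[i])=0$ for all $i>0$ only when $\Hom(M,\tau_dN)=0$. That is too strong, but I only need $1\le i\le d-1$. For that range I compute directly: $\Hom_K(P_\bullet^N,P_\bullet^M[i])$ is a morphism of complexes with a truncated resolution as source and target. Since $P^M_\bullet$ resolves $M$ up to degree $-d$, these groups are $\Ext^i_A(N,M)$ for $1\le i\le d-1$, which vanish because $\mc$ is $d$-cluster tilting. The summands $P[d]$ satisfy $\Hom(P[d],Y[i])=0$ for $i<d$ and $\Hom(X,P[d+i])=0$ by degree reasons. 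So $\pc_d(\mc)$ is $(d-1)$-rigid.

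The main step is maximality. Take $X\in\ec$ with $\Hom_K(X,\pc_d(\mc)[i])=0$ for $1\le i\le d-1$, and show that $X$ is inner acyclic with $H_0(X)\in\mc$ (and dually). I would first test against $Q[d]$ for $Q$ projective. We have $\Hom_K(X,Q[d][i])$ with $Q[d][i]$ concentrated in degree $-d+i$ ranging over $-d+1,\dots,-1$. Since $\Hom_K(X,Q[j])$ computes the cohomology of $\Hom_A(X,Q)$ at degree $-j$, choosing $Q=A$ gives vanishing of $H^{j}(\Hom_A(X,A))$ for the inner degrees. Dually, testing $\Hom_K(Q[d-i],X)$ for the other side kills the inner homology $H_{-j}(X)$ directly. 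So I expect the right-perpendicular to give inner acyclicity easily, using that inner homology is detected by maps from shifted projectives. For the left-perpendicular I need a separate argument, and there I would use the Nakayama functor to transport $\Hom(-,A)$-acyclicity into homology of $\nu X$. Once $X$ is inner acyclic, it decomposes in $K$ as $P_\bullet^{H_0X}$ truncated, plus a shifted projective and contractibles. Then vanishing against $P^M_\bullet[i]$ translates back to $\Ext^i_A(H_0X,M)=0$ or $\Ext^i_A(M,H_0X)=0$ for $1\le i\le d-1$, so $H_0X\in\mc$ by \cref{def:d-CT abelian}. I expect this left-perpendicular case to be the main obstacle.

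Finally, for functorial finiteness I would construct approximations from $\mc$-approximations of $H_0(X)$ lifted to projective presentations. Alternatively, I can note that $\widetilde\pc_d(\mc)$ is determined by finitely generated data and lift approximations from $K$ to $\ec$ by adding contractible complexes, which are projective-injective in $\ec$.
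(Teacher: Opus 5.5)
Your overall architecture matches the paper's. You identify $\Ext^i_{C^{[-d,0]}(\proj A)}$ with $\Hom_{K^b(\proj A)}(-,-[i])$, and you get rigidity from $\Hom_{K^b(\proj A)}(X_\bullet,Y_\bullet[i])\cong\Ext^i_A(H_0X_\bullet,H_0Y_\bullet)$ for inner acyclic complexes. You kill inner homology in the right perpendicular by testing against $A[d]$, and you then force $H_0\in\mc$ using the complexes $P_\bullet^M$.

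The genuine gap is the step you flag yourself: showing that $Y_\bullet$ with $\Hom_{K^b(\proj A)}(Y_\bullet,\widetilde{\pc}_d(\mc)[i])=0$ for $1\le i\le d-1$ is inner acyclic. The route you propose for it cannot work.

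Testing against $Q[d]$ gives no information. With the convention you use for $Q[d-i]$, where $Q[d]$ sits in degree $-d$, the complex $Q[d][i]=Q[d+i]$ sits in degree $-d-i$. Hence $\Hom_{K^b(\proj A)}(Y_\bullet,Q[d][i])=0$ for every $(d+1)$-term complex and every $i\ge 1$. Equivalently, $Q[d]$ is injective in $C^{[-d,0]}(\proj A)$. That category is not Frobenius: stalks in degree $0$ are projective and stalks in degree $-d$ are injective.

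The corrected test, against the stalk complex $A$ in degree $0$, also falls short. It only gives vanishing of the inner cohomology of $\Hom_A(Y_\bullet,A)$, i.e.\ inner acyclicity of $\nu Y_\bullet\cong D\Hom_A(Y_\bullet,A)$. This does not imply inner acyclicity of $Y_\bullet$. For $d\ge 2$, suppose $N\neq 0$ satisfies $\Hom_A(N,A)=0$, and let $Y_\bullet$ be a projective presentation $P_1\to P_0$ of $N$ placed in degrees $-d,-d+1$. Then $\Hom_{K^b(\proj A)}(Y_\bullet,A[i])=0$ for $1\le i\le d-1$, while $H_{-d+1}(Y_\bullet)=N\neq 0$.

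The missing idea is to detect homology with the injective cogenerator, which lies in $\mc$. A projective $d$-presentation $P^{DA}_\bullet$ of $DA$ lies in $\widetilde{\pc}_d(\mc)$. Apply $\Hom_{D^b(A)}(Y_\bullet,-)$ to the triangle $H_{-d}(P^{DA}_\bullet)[d]\to P^{DA}_\bullet\to DA\to H_{-d}(P^{DA}_\bullet)[d+1]$. Since $\Hom_A(-,DA)$ is exact, this gives $\Hom_{K^b(\proj A)}(Y_\bullet,P^{DA}_\bullet[i])\cong\Hom_{D^b(A)}(Y_\bullet,DA[i])\cong DH_{-i}(Y_\bullet)$ for $1\le i\le d-1$. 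This is exactly how the paper proves the second equality in \cref{lemma: Pd(M) right d-orthogonal}.

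There is also a smaller gap in functorial finiteness.
Lifting $\mc$-approximations of $H_0$ yields only left approximations, because \cref{lemma:lifts} lifts module maps only into inner acyclic targets. Even there you must add the summand $X_{-d}[d]$, since lifts are unique only up to maps factoring through $\add(A[d])$; see \cref{lemma: Pd(M) cov finite}. The method says nothing about right approximations of arbitrary complexes. Your appeal to ``finitely generated data'' is not an argument, as no finiteness of $\mc$ is assumed. The paper instead deduces contravariant finiteness from covariant finiteness together with the two perpendicular equalities (\cref{lemma: contravariantly finite for free ext}). It then transfers everything to $C^{[-d,0]}(\proj A)$ via \cref{prop:lifting dCT}.
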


By \cite[Theorem 4.14]{Jasso}, the following corollary is an immediate consequence of \cref{theorem:dCT exact}.

\begin{corollary}
\label{corollary:d-exact}
The category $\pc_d(\mc)$ carries the structure of a $d$-exact category induced by the exact structure of $C^{[-d,0]}(\proj A)$.
\end{corollary}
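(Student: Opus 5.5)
The statement is the corollary that $\pc_d(\mc)$ carries a $d$-exact structure induced from $C^{[-d,0]}(\proj A)$. My plan is to obtain it directly from \cite[Theorem 4.14]{Jasso}. That theorem says: a $d$-cluster tilting subcategory of an exact category is $d$-exact. Its $d$-exact sequences are the complexes $X_0\to X_1\to\cdots\to X_{d+1}$ with terms in the subcategory that are obtained by splicing conflations of the ambient exact category.

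The key steps are as follows.

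First, I would confirm that $C^{[-d,0]}(\proj A)$, with conflations the degree-wise split short exact sequences, is an exact category in the sense used by Jasso. It is an extension-closed full subcategory of $C^b(\proj A)$ with the degree-wise split structure, so this is routine.

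Second, I would check that the definition of $d$-cluster tilting in an exact category (\cref{def:d-CT exact}) agrees with Jasso's hypotheses. These hypotheses are:
\begin{itemize}
\item functorial finiteness;
\item the two $\Ext^i$-perpendicular descriptions for $1\le i\le d-1$;
\item that the subcategory is generating and cogenerating, i.e.\ contains enough projectives and injectives of the ambient category.
\end{itemize}
For the last point, the projective-injective objects of $C^{[-d,0]}(\proj A)$ with the degree-wise split structure are the contractible complexes together with the stalk complexes $P[d]$ and $P$ in the extreme degrees. As noted after the definition of $\pc_d(\mc)$, the contractible complexes and the $P[d]$ lie in $\pc_d(\mc)$. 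So do the stalk complexes $P$ in degree $0$, since $H_0=P$ lies in $\mc$ because $A\in\mc$.

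Third, I would invoke \cref{theorem:dCT exact}, which supplies exactly these hypotheses, and then apply \cite[Theorem 4.14]{Jasso} to conclude.

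The only real obstacle is this bookkeeping: matching the paper's notion of $d$-cluster tilting in an exact category with Jasso's assumptions, in particular the generating and cogenerating conditions. If \cref{def:d-CT exact} already includes these, the corollary is immediate.
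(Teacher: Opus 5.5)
Your proposal is correct and is exactly the paper's argument: the corollary follows immediately from \cref{theorem:dCT exact} together with \cite[Theorem 4.14]{Jasso}. Note that $C^{[-d,0]}(\proj A)$ is Krull--Schmidt, hence weakly idempotent complete as Jasso requires, and the generating--cogenerating condition is already built into \cref{def:d-CT exact}, so no extra bookkeeping is needed. One small slip: the projective-injective objects of $C^{[-d,0]}(\proj A)$ are only the contractible complexes (stalks $P$ in degree $0$ are projective but not injective, and $P[d]$ is injective but not projective), though your conclusion that $\pc_d(\mc)$ contains all projectives and all injectives is correct.
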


As intermediate steps towards proving \cref{theorem:dCT exact}, we demonstrate that statements concerning functorial finiteness and rigidity can be naturally transferred between the categories $C^{[-d,0]}(\proj A)$ and $K^{[-d,0]}(\proj A)$; see \cref{lemma: ext for complexes} and \cref{lem:lifting functorial finiteness}. To prove that $\pc_d(\mc)$ is \mbox{$d$-cluster} tilting in $C^{[-d,0]}(\proj A)$, it hence suffices to show that $\widetilde{\pc}_d(\mc)$ is $d$-cluster tilting in $K^{[-d,0]}(\proj A)$; see \cref{prop:lifting dCT}. The next result will therefore imply \cref{theorem:dCT exact}. For the definition of a $d$-cluster tilting subcategory of $K^{[-d,0]}(\proj A)$, see \cref{def: d-CT tri}.   

\begin{theorem}
\label{theorem: d-CT extriangulated}
The category $\widetilde{\pc}_d(\mc)$ is a $d$-cluster tilting subcategory of $K^{[-d,0]}(\proj A)$.
\end{theorem}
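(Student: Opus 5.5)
The plan is to work directly with the extriangulated structure on $K^{[-d,0]}(\proj A)$. In this structure the higher extension groups $\mathbb{E}^i(X,Y)$, for $1\le i\le d-1$ and $X,Y\in K^{[-d,0]}(\proj A)$, should be computed by $\Hom_{K^b(\proj A)}(X,Y[i])$. I expect this to follow from the fact that $K^{[-d,0]}(\proj A)$ is an extension-closed subcategory of the triangulated category $K^b(\proj A)$, combined with the description of higher extensions in \cite{HerschendLiuNakaokaII}. Granting this, three things need to be shown: that $\widetilde{\pc}_d(\mc)$ is functorially finite, and that it equals each of the two perpendicular categories
\[
\{X \mid \Hom(X,\widetilde{\pc}_d(\mc)[i])=0,\ 1\le i\le d-1\}
\quad\text{and}\quad
\{Y \mid \Hom(\widetilde{\pc}_d(\mc),Y[i])=0,\ 1\le i\le d-1\}.
\]

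\textbf{Structure of inner acyclic complexes.} Every inner acyclic $X\in K^{[-d,0]}(\proj A)$ should be isomorphic in $K^b(\proj A)$ to $P_\bullet^{H_0(X)}\oplus Q[d]$ for some $Q\in\proj A$. To see this, note that $X$ is a projective resolution of $H_0(X)$ cut off after degree $-d$. Splitting off contractible summands leaves the minimal part plus a stalk projective in degree $-d$.

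\textbf{Key computation.} For modules $M,N$ and $1\le i\le d-1$, I claim
\[
\Hom_K(P_\bullet^N,P_\bullet^M[i])\cong \Ext^i_A(N,M).
\]
The point is that the truncation of the full projective resolution of $M$ differs from it only in degrees $\le -d-1$, and these do not interact with $P_\bullet^N[-i]$ when $i\le d-1$. The summands $Q[d]$ contribute nothing in positive shifts, for degree reasons. Together these give the rigidity $\Hom(\widetilde{\pc}_d(\mc),\widetilde{\pc}_d(\mc)[i])=0$ for $1\le i\le d-1$, since $\mc$ is $d$-cluster tilting.

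\textbf{Right-hand equality.} Let $Y$ satisfy $\Hom(\widetilde{\pc}_d(\mc),Y[i])=0$ for $1\le i\le d-1$. Testing against $A=P_\bullet^A\in\widetilde{\pc}_d(\mc)$, which is allowed because $A\in\mc$, gives $\Hom_K(A,Y[i])=H_{-i}(Y)=0$. So $Y$ is inner acyclic, and hence $Y\cong P_\bullet^{H_0 Y}\oplus Q[d]$. Testing next against $P_\bullet^M$ for $M\in\mc$ and using the key computation gives $\Ext^i_A(M,H_0Y)=0$ for $1\le i\le d-1$, so $H_0Y\in\mc$.

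\textbf{Left-hand equality.} Dually, let $X$ satisfy $\Hom(X,\widetilde{\pc}_d(\mc)[i])=0$ for $1\le i\le d-1$. Test against $P_\bullet^{DA}$, which lies in $\widetilde{\pc}_d(\mc)$ since $DA\in\mc$. For $i\le d-1$ I expect $\Hom_K(X,P_\bullet^{DA}[i])\cong\Hom_{D(A)}(X,DA[i])\cong D H_{-i}(X)$, again using that the truncation cone sits in degrees $\le -d-1$. This forces $X$ to be inner acyclic. The key computation then yields $\Ext^i_A(H_0X,M)=0$ for all $M\in\mc$, so $H_0X\in\mc$.

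\textbf{Functorial finiteness.} This is where I expect the real work. For a right approximation of $X$, I would first take a right $\mc$-approximation $M_0\to H_0(X)$ and lift it, using projectivity, to a chain map $P_\bullet^{M_0}\to X$. To this I would add a map $Q[d]\to X$ with $Q\to X_{-d}$ a projective cover. The remaining step is to show that every morphism $P_\bullet^M\oplus Q'[d]\to X$ factors through this map up to homotopy. I would argue this by subtracting a lift of the induced map on $H_0$ and analysing the difference, which is null on $H_0$, degree by degree. The main obstacle is controlling these homotopy classes that vanish on $H_0$, and possibly enlarging the approximation by further terms $P[d]$. Left approximations would be handled dually, using a left $\mc$-approximation of $H_0$ and, failing a clean argument, Hom-finiteness of $K^{[-d,0]}(\proj A)$ together with the description of $\widetilde{\pc}_d(\mc)$ as the additive closure of the objects $P_\bullet^M$ and $P[d]$.
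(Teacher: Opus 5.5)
Your rigidity computation and the two orthogonality equalities essentially follow the paper's route, apart from a fixable error described in the second paragraph. Functorial finiteness, however, is a genuine gap, and the right approximation you sketch already fails at the lifting step. An arbitrary $X_\bullet\in K^{[-d,0]}(\proj A)$ need not be inner acyclic. Lifting a map $M_0\to H_0(X_\bullet)$ to a chain map $P_\bullet^{M_0}\to X_\bullet$ requires the \emph{target} to be exact in degrees $-d+1,\dots,-1$; this is why \cref{lemma:lifts} assumes $Y_\bullet$ inner acyclic.

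For instance, in \cref{ex:running1}, let $P_i$ and $S_i$ be the indecomposable projective and the simple at vertex $i$. Take $X_\bullet=(0\to P_2\to P_1)$ with the nonzero map. Then $H_0(X_\bullet)=S_1\in\mc$, but $1_{S_1}$ does not lift to a chain map $P_\bullet^{S_1}=(P_3\to P_2\to P_1)\to X_\bullet$. Any lift is a nonzero scalar on $P_2$ in degree $-1$, so it cannot vanish on $P_3\subseteq P_2$, as the square in degree $-2$ would require.

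Two smaller problems in the same step. Morphisms $Q[d]\to X_\bullet$ correspond to maps $Q\to H_{-d}(X_\bullet)$, not to maps into $X_{-d}$, so your map from $Q[d]$ is not a chain map. Your Hom-finiteness fallback would need $\mc$ to have finitely many indecomposables, which is not assumed.

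The missing idea is to construct approximations only in the direction where lifting works, and obtain the other direction formally.
\begin{itemize}
\item For any $P_\bullet$, lift a left $\mc$-approximation $H_0(P_\bullet)\to M$ to $P_\bullet\to P_\bullet^M$. This is possible because $P_\bullet^M$ is inner acyclic, and the lift is unique up to maps factoring through $\add(A[d])$.
\item Add the chain map $P_\bullet\to P_{-d}[d]$ given by the identity in degree $-d$. Every map from $P_\bullet$ to an object of $\add(A[d])$ factors through it.
\item Together these give a left $\widetilde{\pc}_d(\mc)$-approximation $P_\bullet\to P_\bullet^M\oplus P_{-d}[d]$, as in \cref{lemma: Pd(M) cov finite}.
\end{itemize}
Contravariant finiteness then follows from the two orthogonality equalities by \cref{lemma: contravariantly finite for free ext}. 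That lemma passes to the exact category $C^{[-d,0]}(\proj A)$ and builds right approximations out of left ones as in \cref{lemma: contravariantly finite for free}.

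The error is in your right-hand equality. You use the convention in which $Q[d]$ sits in degree $-d$ and $\Hom(X_\bullet,DA[i])\cong DH_{-i}(X_\bullet)$. Under it, $Y_\bullet[i]$ is concentrated in degrees $-d-i,\dots,-i$, all negative. Hence $\Hom_{K^b(\proj A)}(A,Y_\bullet[i])\cong H_i(Y_\bullet)=0$ for every $Y_\bullet$, not $H_{-i}(Y_\bullet)$. Testing against the stalk complex $A$ in degree $0$ therefore says nothing about inner acyclicity.

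Test instead against $A[d]\in\widetilde{\pc}_d(\mc)$. This gives $\Hom(A[d],Y_\bullet[i])\cong\Hom(A,Y_\bullet[i-d])\cong H_{i-d}(Y_\bullet)$ for $i=1,\dots,d-1$, which is what the paper does.

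Similarly, $\Hom(Q[d],P_\bullet^M[i])=0$ for $1\le i\le d-1$ does not hold for degree reasons. It equals $\Hom_A(Q,H_{i-d}(P_\bullet^M))$ and vanishes by inner acyclicity of $P_\bullet^M$. Your left-hand equality, via $P_\bullet^{DA}$, is correct and matches the paper.
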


\begin{remark}\label{rem:extri in intro}
The category $K^{[-d,0]}(\proj A)$ is an example of an extriangulated category as introduced in \cite{NakaokaPalu}, and \cref{def: d-CT tri} is a special case of the notion of a $d$-cluster tilting subcategory of an extriangulated category in the sense of \cite{HerschendLiuNakaokaII}. In the statements and proofs throughout this section, we have avoided the language of extriangulated categories for the sake of simplicity and because the extriangulated setup is not important for the rest of the paper. However, everything is consistent with and might be reformulated using the extriangulated structure.

We moreover note that it does not immediately follow from \cref{theorem: d-CT extriangulated} that $\widetilde{\pc}_d(\mc)$ is a \mbox{$d$-exangulated} category in the sense of \cite{HerschendLiuNakaoka}. In particular, $\widetilde{\pc}_d(\mc)$ would need to satisfy some additional conditions; see \cite[Theorem 3.41]{HerschendLiuNakaokaII} and \cite[Theorem 3.1]{HuZhangZhou}.
\end{remark}

\subsection{Exact categories and $d$-cluster tilting}
\label{ssec:exact and d-tilting}

We start this subsection by briefly recalling the notion of an exact category in the sense of Quillen. We refer the reader to \cite{Buhler}, and the references therein, for more details.

An \textit{exact category} is an additive category $\ec$ endowed with a class of kernel-cokernel pairs, called \textit{conflations} (or \textit{admissible short exact sequences}), that are stable under isomorphisms of diagrams and satisfy the axioms below. A conflation $(i,p)$ is typically depicted as 
\[
X\overset{i}{\infl}Y\overset{p}{\defl}Z,
\]
where $i$ is called an \textit{inflation} and $p$ a \textit{deflation}. The axioms can be given as follows:

\theoremlistfix
\begin{itemize}
    \item[(Ex0)] For any $X\in\ec$, the epimorphism $X\to 0$ is a deflation.
    \item[(Ex1)] The class of deflations is closed under composition.
    \item[(Ex2)] The pullback of a deflation along any morphism exists and yields a deflation.
    \item[(Ex2$^\text{op}$)] The pushout of an inflation along any morphism exists and yields an inflation.
\end{itemize}

The structure of an exact category is self-dual, so the duals of (Ex0) and (Ex1) involving inflations also hold in any exact category. The class of conflations contains all split exact sequences and is stable under direct sums. 

Let $\ec$ be an exact category. An object $P\in \ec$ is called \textit{projective} if for any deflation $X\defl Y$ in $\ec$ the sequence of abelian groups $\Hom_\ec(P,X)\to \Hom_\ec(P,Y)\to 0$ is exact. An \textit{injective} object is defined dually. The exact category $\ec$ is said to have \textit{enough projective objects} if for every $X\in \ec$, there exists a deflation $P\defl X$ where $P$ is projective in $\ec$. The notion of \textit{enough injective objects} is defined dually.

A sequence $0\to X_0\to X_1 \to X_2\to \cdots \to X_d \to X_{d+1}\to 0$ in $\ec$ is \textit{exact} if there exist conflations $Y_i\infl X_{i+1}\defl Y_{i+1}$ for $i=0,\ldots,d-1$ with $Y_0=X_0$, $Y_{d}=X_{d+1}$ and such that the sequence is obtained by splicing these conflations. The higher extension groups $\operatorname{Ext}^i_\ec(Z,X)$ can be defined via Yoneda equivalence classes of exact sequences $0\to X\to Y_1\to \cdots \to Y_i\to Z\to 0 $
in an analogous way as for abelian categories; for details, see e.g.\  \cite[Chapter 6]{FrerickSieg}. We say that $\ec$ has \mbox{\textit{global dimension} $n$}, denoted $\operatorname{gl.dim.}\ec=n$, if $n$ is the smallest integer for which $\Ext^n_\ec(-,-)=0$. 

Our main example of interest is the category $C^{[-d,0]}(\proj A)$ of $(d+1)$-term complexes of finitely generated projective $A$-modules. It is a subcategory of the category $C^b(\proj A)$ of bounded complexes of finitely generated projective $A$-modules, which is an exact category with the degree-wise split short exact sequences being its conflations; see \cite[Lemma 9.1]{Buhler}. Since $C^{[-d,0]}(\proj A)$ is closed under extensions in $C^b(\proj A)$, it is an exact category whose conflations are the degree-wise split short exact sequences in $C^{[-d,0]}(\proj A)$; see \cite[Lemma 10.20]{Buhler}. The following proposition gathers results from \cite{BautistaSalorioZuazua} on the homological properties of  $C^{[-d,0]}(\proj A)$. For the definition of the dominant dimension of an exact category, see e.g.\ \cite[Definition 4.12]{HenrardKvammeRoosmalen}.

\begin{proposition}
\label{prop:enough proj/inj}
\theoremlistfix
\begin{enumerate}
    \item The projective-injective objects in $C^{[-d,0]}(\proj A)$ are the contractible complexes. The projective objects are the sums of contractible complexes and complexes concentrated in degree 0, while the injective objects are the sums of contractible complexes and complexes concentrated in degree $-d$.
    \phantomsection
    \label{prop:enough proj/inj1}
    \item The category $C^{[-d,0]}(\proj A)$ has enough projective and injective objects.
    \phantomsection
    \label{prop:enough proj/inj2}
    \item We have
    \[
    \operatorname{gl.dim.} C^{[-d,0]}(\proj A) \leq d \leq \operatorname{dom.dim.} C^{[-d,0]}(\proj A).
    \] 
    \phantomsection
    \label{prop:enough proj/inj3}
\end{enumerate}
\end{proposition}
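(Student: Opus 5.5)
The statement to prove is \cref{prop:enough proj/inj}, so the plan is to verify the three parts by hand in $\ec \colonequals C^{[-d,0]}(\proj A)$, where conflations are the degree-wise split short exact sequences. The basic tool is the standard description of $\Ext^1_\ec(X,Y)$ as $\Hom_{K^b(\proj A)}(X,Y[1])$. This holds because a degree-wise split extension of $X$ by $Y$ is the cone of a chain map $X[-1]\to Y$, and two such extensions are equivalent exactly when the maps are homotopic. The shifted complex $Y[1]$ need not be a $(d+1)$-term complex, but the identification still holds. From it, an object $X$ is projective if and only if $\Hom_K(X,Y[1])=0$ for all $Y\in\ec$, and dually for injectives.

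For part (1), first let $X$ be concentrated in degree $0$ and let $f\colon X\to Y[1]$ be a chain map. Then $f$ is given by a map $X_0\to Y_{-1}$ whose composite with the differential $Y_{-1}\to Y_0$ is zero. Since $X_0$ is projective, I will try to show this is null-homotopic. A homotopy would be a map $X_0\to Y_{0}$ (the degree $0$ part of $Y[1]$ sits one step higher), but I expect the cleanest route is to argue directly with lifting: given a deflation $p\colon Z\defl W$ in $\ec$ and a map $X\to W$, the lift in degree $0$ exists because $p_0$ is split epi, and it is automatically a chain map since $X$ has no other terms. Contractible complexes are sums of cones $P\xrightarrow{1}P$, and maps out of them correspond to maps from $P$ into one component, so lifting works for them as well. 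Dually, complexes concentrated in degree $-d$ and contractible complexes are injective.

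For the converse of (1), I will use (2). Given $X$, take $Q=\bigoplus_{i} \mathrm{Cone}(1_{X_i})$ suitably shifted, together with $X_0$ in degree $0$. This gives a deflation $Q\defl X$ that is degree-wise split, since each $X_i$ is hit split-surjectively. Here I note that cones of $X_{-d}$ would need degree $-d-1$, so for degree $-d$ I use $X_{-d}$ together with the contractible piece $X_{-d}\to X_{-d+1}$ in degrees $-d,-d+1$. If $X$ is projective, this deflation splits, so $X$ is a summand of a sum of contractibles and degree-$0$ stalks. By Krull--Schmidt, $X$ is then of the stated form. The dual construction gives enough injectives, and an object that is both projective and injective must be contractible, since a degree-$0$ stalk $P\neq 0$ is not injective for $d\geq1$.

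For part (3), the global dimension bound $\le d$ comes from truncating resolutions. The kernel of the projective cover above has its degree-$0$ part killed in a controlled way, and after at most $d$ syzygies the object becomes contractible-plus-stalk in degree $0$. Equivalently, I expect $\Ext^{n}_\ec(X,Y)\cong\Hom_K(X,Y[n])$, which vanishes for $n>d$ for degree reasons. The dominant dimension bound $\geq d$ needs the injective coresolution of each projective to have its first $d$ terms projective-injective. For a stalk $P$ in degree $0$, the contractible complexes $P\to P$ placed in degrees $(-1,0),(-2,-1),\dots$ give a coresolution by contractibles of length $d$ before reaching $P$ in degree $-d$.

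The main obstacle I expect is justifying $\Ext^n_\ec\cong\Hom_K(-,-[n])$ for $n\geq 2$ in this truncated category. I would first try to prove it via the explicit projective resolutions built above, rather than rely on Yoneda classes directly.
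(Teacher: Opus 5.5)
Your route is sound, and for the dominant dimension it is the paper's own argument: the coresolution $0\to P\to D_{-1}(P)\to\cdots\to D_{-d}(P)\to P[d]\to 0$ by contractibles.

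For parts (1) and (2) and the bound $\operatorname{gl.dim.}\le d$, the paper does not argue by hand. It cites \cite[Corollary 3.8, Propositions 3.6 and 3.7]{BautistaSalorioZuazua}. You verify these directly instead:
\begin{enumerate}
\item projectivity by lifting;
\item an explicit degree-wise split deflation from contractibles and a degree-$0$ stalk;
\item Krull--Schmidt for the converse;
\item syzygies for the global dimension.
\end{enumerate}
This makes the statement self-contained, at the cost of some routine checking.

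You also do not need $\Ext^n\cong\Hom_{K^b(\proj A)}(-,-[n])$, which you flag as the main obstacle. The paper proves it only afterwards, in \cref{lemma: ext for complexes}, using parts (1) and (2). Take the cover $Q=X_0\oplus\bigoplus_{i=-d}^{-1}D_i(X_i)$, where $D_i(X_i)$ denotes $X_i\xrightarrow{1}X_i$ in degrees $i,i+1$. The kernel of $Q\to X$ is isomorphic to $X_{-d}\to\cdots\to X_{-1}$ moved up into degrees $-d+1,\dots,0$. Hence $\Omega^dX$ is the stalk $X_{-d}$ in degree $0$, which is projective, and $\Ext^{d+1}$ vanishes.

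Three local points need repair.

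\begin{enumerate}
\item Your shift goes the wrong way. In the paper's convention $(Y[1])_j=Y_{j+1}$. So for $X$ concentrated in degree $0$, a chain map $X\to Y[1]$ is a map $X_0\to Y_1=0$, and $\Ext^1(X,-)=0$ is immediate. The map $X_0\to Y_{-1}$ you wrote describes $\Hom(X,Y[-1])$. That is generally not null-homotopic (take $Y$ a stalk in degree $-1$), so that line of attack would have failed.

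\item In the lifting argument, the lift is a chain map because $Z_1=0$, that is, because degree $0$ is the top degree. It is not ``because $X$ has no other terms'': a stalk in degree $-1$ also has no other terms, but it is not projective.

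\item Your description of $Q$ mixes the projective and injective constructions. Maps $D_i(P)\to X$ correspond to $\Hom_A(P,X_i)$. So the cover is $X_0\oplus\bigoplus_{i=-d}^{-1}D_i(X_i)$: there is no boundary problem at degree $-d$, and no stalk in degree $-d$ may appear. If such a stalk were included, splitting the deflation would no longer force a projective $X$ to be a sum of contractibles and degree-$0$ stalks, and the converse of (1) would break. The degree-$(-d)$ boundary issue belongs to the dual injective envelope $X\to X_{-d}[d]\oplus\bigoplus_{i=-d+1}^{0}D_{i-1}(X_i)$.
\end{enumerate}
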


\begin{proof}
Part (\ref{prop:enough proj/inj1}) is \cite[Corollary 3.8]{BautistaSalorioZuazua}. Part (\ref{prop:enough proj/inj2}) and the first inequality in (\ref{prop:enough proj/inj3}) are \cite[Propositions 3.6 and 3.7]{BautistaSalorioZuazua}.
For any integer $i$, we let $D_i(A)$ be the contractible complex 
\[
\cdots \to 0\to A\xrightarrow{1_A}A\to 0\to \cdots
\]
concentrated in degrees $i$ and $i+1$.
Looking at the injective coresolution 
\[ 0\to A\to D_{-1}(A)\to \cdots \to D_{-d}(A)\to A[d]\to 0\]
of $A$ and noting that the first $d$ terms $D_{-1}(A),\ldots,D_{-d}(A)$ are also projective, yields the second inequality in (\ref{prop:enough proj/inj3}).
\end{proof}

In the remaining part of this subsection, we review some basic results regarding $d$-cluster tilting subcategories in exact categories. Let $\ec$ be an exact category. A subcategory $\mc$ of $\ec$ is \textit{cogenerating} if, for any $X\in\ec$, there is an inflation $X\infl M$ with $M\in\mc$. The dual notion is called \textit{generating}. If $\mc$ is cogenerating and covariantly finite, then, for any $X\in\ec$, there is a left $\mc$-approximation $X \infl M$ which is an inflation. The dual statement holds for generating and contravariantly finite subcategories.

\begin{definition}
\label{def:d-CT exact}
A generating-cogenerating subcategory $\mc$ of an exact category $\ec$ is called \textit{\mbox{$d$-cluster} tilting} if it is functorially finite and satisfies
\begin{align*}
\mc  &= \{X \in \ec \mid \Ext_{\ec}^i(X,M)=0 \text{ for all $M \in \mc$ and all $i = 1, \dots, d-1$}\}\\
&= \{Y \in \ec \mid \Ext_{\ec}^i(M,Y)=0 \text{ for all $M \in \mc$ and all $i = 1, \dots, d-1$}\}.
\end{align*}
\end{definition}

The following lemma is useful for proving that a given subcategory is $d$-cluster tilting, c.f.\ \cite[Theorem 5.3]{Beligiannis}.

\begin{lemma}
\label{lemma: contravariantly finite for free}
Let $\ec$ be an exact category with a subcategory $\mc\subseteq\ec$. Assume that $\mc$ is generating-cogenerating and satisfies
\begin{align*}
\mc  &= \{X \in \ec \mid \Ext_{\ec}^i(X,M)=0 \text{ for all $M \in \mc$ and all $i = 1, \dots, d-1$}\}\\
&= \{Y \in \ec \mid \Ext_{\ec}^i(M,Y)=0 \text{ for all $M \in \mc$ and all $i = 1, \dots, d-1$}\}.
\end{align*}
Then $\mc$ is covariantly finite if and only if it is contravariantly finite. 
\end{lemma}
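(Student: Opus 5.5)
By symmetry (passing to the opposite exact category $\ec^{\mathrm{op}}$ exchanges inflations with deflations, left with right approximations, and the two equalities in the hypothesis), it suffices to show one implication. I will assume $\mc$ is contravariantly finite and prove that it is covariantly finite. The plan follows the classical argument of Iyama and Beligiannis: build a left $\mc$-approximation of an arbitrary $X\in\ec$ by gluing right $\mc$-approximations.

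\textbf{Step 1: an $\mc$-resolution of length $d$.} Fix $X\in\ec$. Since $\mc$ is generating and contravariantly finite, every object admits a right $\mc$-approximation that is a deflation. Iterating, I obtain conflations
\[
K_{i+1}\infl M_i\defl K_i \quad (K_0=X,\ i=0,\dots,d-1),
\]
with $M_i\in\mc$ and each $M_i\defl K_i$ a right $\mc$-approximation. Applying $\Hom_\ec(M,-)$ for $M\in\mc$: the approximation property gives surjectivity on $\Hom$, so the long exact sequence yields injections $\Ext^j_\ec(M,K_{i+1})\hookrightarrow \Ext^j_\ec(M,M_i)$ for $j=1$. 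For $2\le j\le d-1$ it gives $\Ext^{j}_\ec(M,K_{i+1})\cong\Ext^{j-1}_\ec(M,K_i)$, because $\Ext^j_\ec(M,M_i)=0$ for $1\le j\le d-1$. Dimension shifting along the chain then gives $\Ext^j_\ec(M,K_{d-1})=0$ for $1\le j\le d-1$. By the second equality in the hypothesis, this forces $K_{d-1}\in\mc$. Hence every $X$ has a finite resolution $0\to K_{d-1}\to M_{d-2}\to\cdots\to M_0\to X\to 0$ with all terms in $\mc$.

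\textbf{Step 2: dualise to get a left approximation.} The naive dual of Step 1 would require left approximations, which is exactly what we lack. I therefore use cogenerating instead: choose an inflation $X\infl I$ with $I\in\mc$ and let $C$ be its cokernel. Then take a right $\mc$-approximation deflation $M\defl C$ and form the pullback $E$ of $I\defl C$ along $M\defl C$. This gives conflations $X\infl E\defl M$ and $E\to I\oplus M$. The idea is to show that a suitable modification of $E$ lies in $\mc$ and that $X\infl E$ is a left $\mc$-approximation. The approximation property reduces to showing that $\Ext^1_\ec(M,N)$-classes pulled back are killed, which follows from $\Ext^1_\ec(\mc,\mc)=0$ when $d\ge2$.

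Membership of $E$ in $\mc$ is handled by choosing $C$ more carefully. I replace the single inflation by a length-$d$ construction so that Step 1 can be applied to $C$: taking $C$'s $\mc$-resolution $0\to L_{d-1}\to\cdots\to L_0\to C\to0$ from Step 1, I splice it with $X\infl I\defl C$. Repeated pullbacks then produce an exact sequence $0\to X\to E_0\to L'_1\to\cdots$ in which $E_0$ is in $\mc$ by the first equality of the hypothesis. The vanishing $\Ext^j_\ec(E_0,\mc)=0$ for $1\le j\le d-1$ comes from dimension shifting, using that $I$ and the $L_i$ lie in $\mc$.

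\textbf{Main obstacle.} The delicate point is this last dimension-shifting bookkeeping: one has to check that the top-degree term $\Ext^{d-1}$ vanishes, using that the resolution from Step 1 has length exactly $d-1$ in the right place. For $d=1$ the hypothesis is vacuous and $\mc=\ec$, so the statement is trivial; for $d\ge2$ I expect the argument to go through as in \cite[Theorem 5.3]{Beligiannis}, and I would follow that proof closely.
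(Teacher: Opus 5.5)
\textbf{There is a genuine gap in Step 2.} Your Step~1 is correct; it is the dual of the step the paper imports from Jasso's Proposition~3.17. Your observation is also correct that a conflation $X\infl E\defl M$ with $\Ext^1_\ec(M,\mc)=0$ gives $X\to E$ the extension property for maps into $\mc$. What fails is the claim that your pullback produces $E_0\in\mc$.

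Take one inflation $X\infl I\defl C$ and the Step~1 conflation $K_1\infl L_0\defl C$. The pullback $E$ sits in a conflation $K_1\infl E\defl I$. Since $L_0\defl C$ is a right $\mc$-approximation and $\Ext^1_\ec(I,L_0)=0$, the long exact sequence gives $\Ext^1_\ec(I,K_1)=0$. So this conflation splits and $E\cong I\oplus K_1$.

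For $d=2$ we have $K_1\in\mc$, and your argument works. For $d\ge 3$, all you know is $\Ext^1_\ec(\mc,K_1)=0$. If $K_1$ were always in $\mc$, every object would have an $\mc$-resolution of length one, which fails in general. With a single inflation there is nothing further to pull back. The proposed dimension shift cannot help either: $\Ext^j_\ec(E_0,N)\cong\Ext^j_\ec(K_1,N)$, and a resolution of $K_1$ by objects of $\mc$ gives no control over this group. Already $\Ext^1_\ec(K_1,N)\cong\Coker\bigl(\Hom_\ec(L_1,N)\to\Hom_\ec(K_2,N)\bigr)$ need not vanish. So the step you defer to Beligiannis as ``bookkeeping'' is exactly where the argument breaks.

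\textbf{The missing idea is to go $d-1$ steps into the cosyzygies of $X$ and compare by a mapping cone, not by pullbacks.}
\begin{enumerate}
\item Choose conflations $X^i\infl I^i\defl X^{i+1}$ for $i=0,\dots,d-2$, with $X^0=X$ and $I^i\in\mc$. These can be arbitrary inflations; only cogeneration is used.
\item Apply your Step~1 to $X^{d-1}$. This gives an exact sequence $K_{d-1}\infl L_{d-2}\to\cdots\to L_0\defl X^{d-1}$ with all terms in $\mc$.
\item Lift $1_{X^{d-1}}$ to a morphism from $X\infl I^0\to\cdots\to I^{d-2}\defl X^{d-1}$ to this sequence. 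Use the right approximation property of the $L_j\defl K_j$ and the universal property of kernels.
\item Take the mapping cone, as in B\"uhler's Lemma~10.3, and split off $X^{d-1}\xrightarrow{1}X^{d-1}$. The result is an exact sequence
\[
X\infl I^0\oplus K_{d-1}\to I^1\oplus L_{d-2}\to\cdots\to I^{d-2}\oplus L_1\defl L_0
\]
whose terms other than $X$ all lie in $\mc$.
\end{enumerate}
Now $E_0=I^0\oplus K_{d-1}$ lies in $\mc$ automatically. Ext-vanishing is needed only for the cokernel $C'$ of $X\infl E_0$. Dimension shifting along its $\mc$-coresolution gives $\Ext^1_\ec(C',N)\cong\Ext^{d-1}_\ec(L_0,N)=0$ for $N\in\mc$. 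Hence $X\infl E_0$ is a left $\mc$-approximation.

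This is precisely the dual of the paper's proof. The paper proves ``covariantly finite $\Rightarrow$ contravariantly finite'' with the same cone construction and obtains the converse by duality.
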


\begin{proof}
Assume that $\mc$ is covariantly finite, and let $X$ be any object in $\ec$. Since $\mc$ is generating, there is a sequence of conflations $X_{i+1}\infl M_i\defl X_i$ for $i=0,\ldots,d-2$, where $X_0=X$ and $M_i\in\mc$ for all $i$. Let $Y_0=X_{d-1}$. Because $\mc$ is covariantly finite and cogenerating, there are conflations $Y_{i-1}\infl N_i\defl Y_i$ for $i=1,\ldots,d-1$, where the inflations are left $\mc$-approximations. Moreover, we see that $Y_{d-1}$ belongs to $\mc$ by noting that the proof of \cite[Proposition 3.17]{Jasso} works in our generality. We let $N_d=Y_{d-1}$. Using the definition of being a left approximation, we thus get a morphism of exact sequences
\[\begin{tikzcd}
	{X_{d-1}} & {N_1} & {N_2} & \cdots & {N_{d-3}} & {N_{d-2}} & {N_{d-1}} & {N_d} \\
	{X_{d-1}} & {M_{d-2}} & {M_{d-3}} & \cdots & {M_2} & {M_1} & {M_0} & X
	\arrow[tail, from=1-1, to=1-2]
	\arrow[from=1-2, to=1-3]
	\arrow[from=1-2, to=2-2]
	\arrow[from=1-3, to=1-4]
	\arrow[from=1-3, to=2-3]
	\arrow[from=1-4, to=1-5]
	\arrow["\cdots"{description}, draw=none, from=1-4, to=2-4]
	\arrow[from=1-5, to=1-6]
	\arrow[from=1-5, to=2-5]
	\arrow[from=1-6, to=1-7]
	\arrow[from=1-6, to=2-6]
	\arrow[two heads, from=1-7, to=1-8]
	\arrow[from=1-7, to=2-7]
	\arrow[from=1-8, to=2-8]
	\arrow[Rightarrow, no head, from=2-1, to=1-1]
	\arrow[tail, from=2-1, to=2-2]
	\arrow[from=2-2, to=2-3]
	\arrow[from=2-3, to=2-4]
	\arrow[from=2-4, to=2-5]
	\arrow[from=2-5, to=2-6]
	\arrow[from=2-6, to=2-7]
	\arrow[two heads, from=2-7, to=2-8]
\end{tikzcd}\]
from which we obtain, using \cite[Lemma 10.3]{Buhler}, the exact sequence
\[
N_1\infl M_{d-2}\oplus N_2\to M_{d-3}\oplus N_3\to\cdots \to M_1\oplus N_{d-1} \to M_0\oplus N_d\defl X.
\]
As all the terms in the exact sequence except $X$ are in $\mc$ and $\Ext^{i}_\ec(\mc,\mc)=0$ for \mbox{$i=1,\dots,d-1$}, the deflation $M_0\oplus N_d\defl X$ is a right $\mc$-approximation of $X$. Therefore, the subcategory $\mc$ is contravariantly finite. The proof that contravariant finiteness implies covariant finiteness follows by dual arguments. 
\end{proof}

We also need the following reformulation of $d$-cluster tilting, which was first shown in \cite[Proposition 2.2.2]{Iyama2007a}. For a proof in arbitrary exact categories, see e.g.\ \cite[Proposition 4.4]{Higher Singularity}.
\begin{proposition}\label{Reformulation:d-CT}
 	Let $\ec$ be an exact category with a subcategory $\mc\subseteq\ec$. Then $\mc$ is \mbox{$d$-cluster} tilting if and only if \mbox{$\Ext^i_\ec(\mc,\mc)=0$} for all $i=1,\ldots,d-1$ and, for any $X\in \ec$, there exist exact sequences
  \begin{align*}
     & 0\to X\to M^1\to \cdots \to M^d\to 0 \\
     & 0\to M_d\to \cdots \to M_1\to X\to 0
  \end{align*}
  with $M_1,\dots, M_d\in \mc$ and $M^1,\dots, M^d\in \mc$.
 \end{proposition}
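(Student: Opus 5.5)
Both directions go through the standard comparison between $\Ext$-vanishing and the existence of approximation sequences. We work throughout in the exact category $\ec$ with its Yoneda $\Ext$-groups.

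For the forward direction, assume $\mc$ is $d$-cluster tilting. Rigidity, i.e.\ $\Ext^i_\ec(\mc,\mc)=0$ for $i=1,\dots,d-1$, is immediate from the defining equalities. To build the coresolution of $X\in\ec$, we use that $\mc$ is cogenerating and covariantly finite. This gives a conflation $X\infl M^1\defl X_1$ whose inflation is a left $\mc$-approximation. We then iterate with $X_1,X_2,\dots$, obtaining conflations $X_{j-1}\infl M^j\defl X_j$ with each inflation a left $\mc$-approximation.

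The key claim is that $X_{d-1}\in\mc$, so that we may stop with $M^d:=X_{d-1}$. This is the argument of \cite[Proposition 3.17]{Jasso}, which, as already noted in the proof of \cref{lemma: contravariantly finite for free}, works in any exact category. Concretely, apply $\Hom_\ec(-,N)$ with $N\in\mc$ and use the long exact sequences. Since each $X_{j-1}\to M^j$ is a left approximation, the map $\Hom(M^j,N)\to\Hom(X_{j-1},N)$ is surjective, so $\Ext^1(X_j,N)=0$. Rigidity of $\mc$ and dimension shifting then give $\Ext^i(X_j,N)\cong\Ext^{i-1}(X_{j-1},N)$ for $2\le i\le d-1$. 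Combining these, $\Ext^i(X_{d-1},N)=0$ for $i=1,\dots,d-1$, hence $X_{d-1}\in\mc$ by the first defining equality. The resolution $0\to M_d\to\cdots\to M_1\to X\to0$ is obtained dually, using right approximations that are deflations; these exist because $\mc$ is generating and contravariantly finite.

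For the converse, assume rigidity and the existence of both sequences for every $X$. Generating and cogenerating follow directly: the first sequence provides an inflation $X\infl M^1$, and the second provides a deflation $M_1\defl X$.

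Next we prove the equality $\mc=\{X\mid \Ext^i(X,\mc)=0,\ 1\le i\le d-1\}$. The inclusion ``$\subseteq$'' is rigidity. For ``$\supseteq$'', take such an $X$ and its sequence $0\to M_d\to\cdots\to M_1\to X\to0$, and split it into conflations $K_{j}\infl M_j\defl K_{j-1}$ with $K_0=X$ and $K_{d-1}=M_d$. Dimension shifting, using rigidity of the $M_j$, gives
\[
\Ext^1(K_{d-2},M_d)\cong\Ext^{d-1}(X,M_d)=0.
\]
Hence the last conflation $M_d\infl M_{d-1}\defl K_{d-2}$ splits, so $K_{d-2}\in\mc$, since $\mc$ is closed under summands.

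We then descend. Suppose $K_j\in\mc$ has been shown. The groups $\Ext^1(K_{j-1},K_j)$ are computed by the same dimension shifting and vanish because $X$ has vanishing $\Ext^{1},\dots,\Ext^{d-1}$ into $\mc$. So each conflation splits, and the shifts stay within the range $1,\dots,d-1$. Descending in this way gives $X=K_0\in\mc$. The other equality is dual, using the coresolution.

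Functorial finiteness comes from the same sequences. Rigidity implies that $M_1\defl X$ is a right $\mc$-approximation: the obstruction to lifting a map $N\to X$ with $N\in\mc$ lies in $\Ext^1(N,K_1)$, which embeds via dimension shifting into groups $\Ext^{i}(N,M_j)$ with $1\le i\le d-1$, and these vanish. Dually, $X\infl M^1$ is a left $\mc$-approximation. (Alternatively, once one of the two finiteness conditions is known, \cref{lemma: contravariantly finite for free} gives the other.)

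The main obstacle is bookkeeping the dimension-shift indices so that every $\Ext$-group used stays in degrees $1,\dots,d-1$. In an exact category this relies on the long exact $\Ext$-sequences for conflations, which hold for Yoneda $\Ext$ as in \cite{FrerickSieg}.
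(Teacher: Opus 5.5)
Your proof is correct and is essentially the standard argument: iterated left (resp.\ right) $\mc$-approximations that are inflations (resp.\ deflations) plus dimension shifting for the forward direction, and, for the converse, splitting the conflations from the top of the resolution downward and reading off approximations from $M_1 \defl X$ and $X \infl M^1$. The paper does not prove this statement itself but cites \cite[Proposition 2.2.2]{Iyama2007a} and its exact-category version, which argue in essentially this way.
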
	

\subsection{From $(d+1)$-term complexes to the homotopy category}\label{Subsection:From $(d+1)$-term complexes to its homotopy category}

Consider the exact category $C^{[-d,0]}(\proj A)$ described in 
\cref{ssec:exact and d-tilting}. Taking the ideal quotient by its projective-injectives, one obtains the homotopy category $K^{[-d,0]}(\proj A)$ of $(d+1)$-term complexes. The category $K^{[-d,0]}(\proj A)$ is no longer an exact category, but it is closed under extensions in the triangulated category $K^b(\proj A)$; see e.g.\ \cite[Example 5.19]{IyamaNakaokaPalu}. Moreover, the image in $K^{[-d,0]}(\proj A)$ of a conflation in $C^{[-d,0]}(\proj A)$ is a distinguished triangle in $K^b(\proj A)$.

In this subsection we define what it means for a subcategory of $K^{[-d,0]}(\proj A)$ to be $d$-cluster tilting and explain in which way it corresponds to \mbox{$d$-cluster} tilting in $C^{[-d,0]}(\proj A)$. Note that the definition below is a special case of the definition of a $d$-cluster tilting subcategory of an extriangulated category with enough projectives and \mbox{injectives as introduced in \cite[Definition 3.21]{HerschendLiuNakaokaII}.}

\begin{definition}\label{def: d-CT tri}
A functorially finite subcategory $\mathcal{T}$ of $K^{[-d,0]}(\proj A)$ is \textit{$d$-cluster tilting} if
\begin{align*}
\mathcal{T} &=\{X_\bullet\in K^{[-d,0]}(\proj A)\;|\; \Hom_{K^{b}(\proj A)}(X_\bullet,T_\bullet[i])=0 
\text{ for } i=1,\ldots,d-1 \text{ and } T_\bullet\in\mathcal{T} \} \\
  &=\{Y_\bullet\in K^{[-d,0]}(\proj A)\;|\; \Hom_{K^b(\proj A)}(T_\bullet,Y_\bullet[i])=0 \text{ for } i=1,\ldots,d-1 \text{ and } T_\bullet\in\mathcal{T} \}.
\end{align*}
\end{definition}

The goal of this subsection is to prove the following result.

\begin{proposition}\label{prop:lifting dCT}
  Let $\widetilde{\mathcal{N}}$ be a subcategory of $K^{[-d,0]}(\proj A)$ and let $\mathcal{N}$ be its preimage under the canonical projection $C^{[-d,0]}(\proj A)\to K^{[-d,0]}(\proj A)$. Then the following statements are equivalent:
  \theoremlistfix
\begin{enumerate}
       \item The category $\mathcal{N}$ is a $d$-cluster tilting subcategory of $C^{[-d,0]}(\proj A)$.
       \item The category $\widetilde{\mathcal{N}}$ is a $d$-cluster tilting subcategory of $K^{[-d,0]}(\proj A)$.
   \end{enumerate}
\end{proposition}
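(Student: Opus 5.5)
The plan is to compare the two sides of \cref{prop:lifting dCT} through two transfer statements, and then match the two definitions term by term.

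The first transfer statement is an identification of extension groups. For $X_\bullet,Y_\bullet\in C^{[-d,0]}(\proj A)$ and $i\ge 1$ we expect
\[
\Ext^i_{C^{[-d,0]}(\proj A)}(X_\bullet,Y_\bullet)\cong \Hom_{K^b(\proj A)}(X_\bullet,Y_\bullet[i]).
\]
For $i=1$ this is the standard fact that degree-wise split conflations correspond to triangles in $K^b(\proj A)$ whose third term lies in $K^{[-d,0]}(\proj A)$. We will use that this subcategory is extension closed and that the cone of $Y_\bullet[-1]\to$ ... can be realised by a mapping cone. For higher $i$ we dimension shift using the injective coresolutions by contractible complexes and complexes concentrated in degree $-d$ from \cref{prop:enough proj/inj}. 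The contractible ones vanish in $K^b$. The stalk complexes $I[d]$ (really $P[d]$) satisfy $\Hom_K(X_\bullet,P[d][j])=0$ for $j\ge1$, since $X_\bullet$ lives in degrees $\ge -d$. So both sides satisfy the same dimension-shifting long exact sequences, and the isomorphism propagates. This is the lemma referred to as \cref{lemma: ext for complexes}.

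The second transfer statement concerns approximations. We will show that $\mathcal N$ is functorially finite in $C^{[-d,0]}(\proj A)$ if and only if $\widetilde{\mathcal N}$ is functorially finite in $K^{[-d,0]}(\proj A)$; this is \cref{lem:lifting functorial finiteness}. The key observation is that $\mathcal N$ is a preimage, so it contains all projective-injectives, namely the contractible complexes. Given a right $\widetilde{\mathcal N}$-approximation $N\to X$ in $K$, lift it to a chain map in $C$ and add a deflation $P\defl X$ from a projective-injective cover. The projective-injectives of \cref{prop:enough proj/inj} include the contractible $D_i$'s, which cover each degree, so the sum is a right $\mathcal N$-approximation. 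Indeed, any morphism $N'\to X$ differs from one factoring through $N$ by a null-homotopic map, and null-homotopic maps factor through contractible complexes, hence through $P$. The converse is immediate by projecting to $K$. The left approximation case is dual.

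With these two statements, (1)$\Leftrightarrow$(2) becomes a matter of comparing definitions. The two equalities in \cref{def:d-CT exact} translate, via the $\Ext$ identification, into those of \cref{def: d-CT tri}. Here we use again that membership in $\mathcal N$ depends only on the image in $K$. For the generating-cogenerating condition required in \cref{def:d-CT exact}, note that contractibles together with $A$ and $A[d]$ give deflations and inflations. More precisely, any $d$-cluster tilting $\mathcal N$ satisfying the vanishing equalities automatically contains all projectives and injectives, since $\Ext^i$ out of projectives and into injectives vanishes. Combined with enough projectives and injectives, this shows $\mathcal N$ is generating-cogenerating. The main obstacle will be making the $\Ext$-comparison rigorous for $2\le i\le d-1$, in particular checking that Yoneda composition matches composition of degree-one triangle morphisms. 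If that proves awkward, the fallback is to use only the dimension-shifting argument, which needs just the $i=1$ case together with the vanishing against injectives and projectives.
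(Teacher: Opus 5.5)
Your proposal is essentially the paper's proof. Both reduce the statement to the comparison $\Ext^i_{C^{[-d,0]}(\proj A)}(X_\bullet,Y_\bullet)\cong\Hom_{K^b(\proj A)}(X_\bullet,Y_\bullet[i])$ and to the transfer of functorial finiteness, with generating--cogenerating coming for free from enough projectives and injectives. (For the comparison, the paper dimension-shifts along projective syzygies of $X_\bullet$ rather than injective cosyzygies of $Y_\bullet$, which makes no difference, and the dimension shift alone suffices.)

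There is one slip in the functorial finiteness step. In general there is no deflation from a projective-injective (i.e.\ contractible) complex onto $X_\bullet$: for instance, every chain map from a contractible complex to the stalk complex $A$ in degree $0$ is zero. What you actually need is only a right approximation of $X_\bullet$ by contractible complexes, such as the canonical map $\bigoplus_{i=-d}^{-1}D_i(X_i)\to X_\bullet$. This exists because there are only finitely many indecomposable contractibles, which is exactly the point the paper uses.
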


As a first step towards \Cref{prop:lifting dCT}, we compare extension groups in $C^{[-d,0]}(\proj A)$ with Hom-spaces in $K^{[-d,0]}(\proj A)$.

\begin{lemma}
\label{lemma: ext for complexes}
For any $X_\bullet,Y_\bullet\in C^{[-d,0]}(\proj A)$ and any $i>0$, we have
\[
\Ext^i_{C^{[-d,0]}(\proj A)}(X_\bullet,Y_\bullet) \cong \Hom_{K^{b}(\proj A)}(X_\bullet,Y_\bullet[i]).
\]
\end{lemma}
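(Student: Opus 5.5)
We compare both sides through the standard description of $\Ext^1$ in the exact category of degree-wise split sequences, and then pass to higher $i$ by dimension shifting against the contractible complexes.

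\textbf{The case $i=1$.} It is classical (see e.g.\ \cite{Buhler}) that in $C^b(\proj A)$ with degree-wise split conflations, $\Ext^1(X_\bullet,Y_\bullet)\cong \Hom_{K^b(\proj A)}(X_\bullet,Y_\bullet[1])$.
\begin{itemize}
\item A degree-wise split sequence $Y_\bullet\infl E_\bullet\defl X_\bullet$ is determined, up to equivalence, by a twisting cochain $X_\bullet\to Y_\bullet[1]$. This is a chain map, and changing the splitting changes it by a null-homotopic map.
\item Conversely, a chain map $X_\bullet\to Y_\bullet[1]$ produces a degree-wise split sequence, namely the one built from its cone shifted by one.
\end{itemize}
Since $C^{[-d,0]}(\proj A)$ is extension-closed in $C^b(\proj A)$ with the induced conflations, its $\Ext^1$ agrees with that of $C^b(\proj A)$. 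Note also that $\Hom_{K^b}(X_\bullet,Y_\bullet[1])$ only involves the terms in degrees $-d,\dots,0$, so no information is lost by restricting.

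\textbf{Induction on $i$.} Choose a conflation $Y_\bullet\infl I_\bullet\defl Y'_\bullet$ in $C^{[-d,0]}(\proj A)$ with $I_\bullet$ injective, using \cref{prop:enough proj/inj}. We cannot simply take $I_\bullet$ to be the cone of $1_{Y_\bullet}$, since that cone leaves the degree window. Instead we proceed as follows:
\begin{itemize}
\item Take $I_\bullet=\bigoplus_{j} D_{j}(Y_j)\oplus Y_{-d}[d]$, where $j$ runs over the appropriate range, with the natural inflation.
\item The cokernel $Y'_\bullet$ is then, in $K^b(\proj A)$, isomorphic to the truncation of $Y_\bullet[1]$ to degrees $\geq -d$. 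Concretely, $Y'_\bullet$ has $Y_{j-1}$ in degree $j$ for $j=-d+1,\dots,0$, and $0$ in degree $-d$, up to contractible summands. More precisely, there is a triangle $Y_{-d}[d]\to Y'_\bullet \to Y_\bullet[1]\to$ in $K^b(\proj A)$.
\end{itemize}

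\textbf{Comparing both sides for $i\geq 2$.}
\begin{itemize}
\item On the $\Ext$ side, the long exact sequence gives $\Ext^i(X_\bullet,Y_\bullet)\cong\Ext^{i-1}(X_\bullet,Y'_\bullet)$ for $i\geq 2$, since $\Ext^{\geq1}(-,I_\bullet)=0$.
\item By induction, $\Ext^{i-1}(X_\bullet,Y'_\bullet)\cong\Hom_{K^b}(X_\bullet,Y'_\bullet[i-1])$.
\item Applying $\Hom_{K^b}(X_\bullet,-[i-1])$ to the triangle yields the exact sequence $\Hom(X_\bullet,Y_{-d}[d+i-1])\to\Hom(X_\bullet,Y'_\bullet[i-1])\to\Hom(X_\bullet,Y_\bullet[i])\to\Hom(X_\bullet,Y_{-d}[d+i])$.
\item Since $X_\bullet$ lives in degrees $-d,\dots,0$ and $Y_{-d}[d+k]$ is a stalk in degree $-d-k$, both outer terms vanish when $k=i-1\geq1$. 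This gives the desired isomorphism.
\end{itemize}

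\textbf{Main obstacle.} The step needing most care is the identification of $Y'_\bullet$ and of the triangle $Y_{-d}[d]\to Y'_\bullet\to Y_\bullet[1]$, i.e.\ checking that the conflation in $C^{[-d,0]}(\proj A)$ maps to a distinguished triangle whose third term is this truncated shift. Here we use the remark that conflations give triangles in $K^b(\proj A)$, together with $I_\bullet\cong Y_{-d}[d]$ in $K^b$. Naturality in $X_\bullet$ and $Y_\bullet$ comes along for free, since each isomorphism used is induced by connecting morphisms.
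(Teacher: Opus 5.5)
Your proof is correct, and it is the paper's argument in dual form. The paper dimension-shifts in the first variable along projective syzygies $\Omega^{j+1}X_\bullet\infl P^j_\bullet\defl\Omega^jX_\bullet$, where $P^j_\bullet\in\add(A)$ in $K^b(\proj A)$, so $\Hom_{K^b(\proj A)}(P^j_\bullet,Y_\bullet[k])=0$ for $k>0$; you instead shift in the second variable along an injective cosyzygy of $Y_\bullet$ and use the same degree-vanishing step.

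There are two index slips in your description, and neither affects the argument. With the paper's convention the contractible summands of $I_\bullet$ should be $D_{j-1}(Y_j)$ for $-d<j\le 0$. Also, $Y'_\bullet$ has $Y_{j+1}$ in degree $j$ for $-d\le j\le -1$ and $0$ in degree $0$; this is the brutal truncation of $Y_\bullet[1]$, as you also say, not "$Y_{j-1}$ in degree $j$". Your proof only uses the triangle $Y_{-d}[d]\to Y'_\bullet\to Y_\bullet[1]\to Y_{-d}[d+1]$, which is correct.
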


\begin{proof}
We first consider the case $i=1$. For any $X_\bullet,Y_\bullet\in C^{[-d,0]}(\proj A)$, we have
\[
\Ext^1_{C^{[-d,0]}(\proj A)}(X_\bullet,Y_\bullet)\cong \Ext^1_{C^b(\proj A)}(X_\bullet,Y_\bullet)\cong \Hom_{K^b(\proj A)}(X_\bullet,Y_\bullet[1]),
\]
where the first isomorphism holds since $C^{[-d,0]}(\proj A)$ is extension-closed in $C^b(\proj A)$ and the second one because $C^b(\proj A)$ is Frobenius exact with $K^b(\proj A)$ as its stable category; see e.g.\ \cite[Lemma 4.1.1 and Lemma 3.3.3]{Krause22}. 

Since $C^{[-d,0]}(\proj A)$ has enough projectives by \cref{prop:enough proj/inj} (\ref{prop:enough proj/inj2}), there exist conflations 
\[
\Omega^{j+1}X_\bullet\infl P^{j}_\bullet\defl \Omega^{j} X_\bullet
\]
in $C^{[-d,0]}(\proj A)$ with $P^{j}_\bullet$ projective for any $j \geq 0$. This gives isomorphisms
\begin{eqnarray*}
 \Ext^i_{C^{[-d,0]}(\proj A)}(X_\bullet,Y_\bullet) & \cong & \Ext^1_{C^{[-d,0]}(\proj A)}(\Omega^{i-1}X_\bullet,Y_\bullet) \\
 & \cong & \Hom_{K^b(\proj A)}(\Omega^{i-1}X_\bullet,Y_\bullet[1])
\end{eqnarray*}
for each $i>0$. The conflations also induce distinguished triangles $$\Omega^{j+1}X_\bullet\to P^{j}_\bullet\to \Omega^{j} X_\bullet\to (\Omega^{j+1}X_\bullet)[1]$$ in $K^b(\proj A)$. Applying the functor $\Hom_{K^b(\proj A)}(-,Y_\bullet[k])$ to these triangles gives long exact sequences
\begin{multline*}
   \cdots \to \Hom_{K^b(\proj A)}(P^{j}_\bullet,Y_\bullet[k])\to \Hom_{K^b(\proj A)}(\Omega^{j+1}X_\bullet,Y_\bullet[k])\\ \to \Hom_{K^b(\proj A)}(\Omega^{j}X_\bullet[-1],Y_\bullet[k]) \to \Hom_{K^b(\proj A)}(P^{j}_\bullet[-1],Y_\bullet[k]) \to \cdots
\end{multline*}
Since $P^{j}_\bullet$ is isomorphic in $K^b(\proj A)$ to an object in $\add (A)$ by \cref{prop:enough proj/inj} (\ref{prop:enough proj/inj1}), we have
\[ 
\Hom_{K^b(\proj A)}(P^{j}_\bullet,Y_\bullet[k])=0=\Hom_{K^b(\proj A)}(P^{j}_\bullet[-1],Y_\bullet[k])
\]
for $k>0$, and so 
\begin{eqnarray*}
 \Hom_{K^b(\proj A)}(\Omega^{j+1}X_\bullet,Y_\bullet[k]) & \cong & \Hom_{K^b(\proj A)}(\Omega^{j}X_\bullet[-1],Y_\bullet[k]) \\
 & \cong & \Hom_{K^b(\proj A)}(\Omega^{j}X_\bullet,Y_\bullet[k+1]).
\end{eqnarray*}
From this, we deduce the isomorphisms
\begin{eqnarray*}
 \Hom_{K^b(\proj A)}(\Omega^{i-1}X_\bullet,Y_\bullet[1]) & \cong & \Hom_{K^b(\proj A)}(\Omega^{i-2}X_\bullet,Y_\bullet[2])  \\ & \smash{\vdots} & \\
 & \cong & \Hom_{K^b(\proj A)}(X_\bullet,Y_\bullet[i]),
\end{eqnarray*}
which proves the result.
\end{proof}

Next we compare functorial finiteness in $C^{[-d,0]}(\proj A)$ and in $K^{[-d,0]}(\proj A)$.

\begin{lemma}\label{lem:lifting functorial finiteness}
    Let $\xc$ be a subcategory of $C^{[-d,0]}(\proj A)$ containing all projective-injective objects and denote its image under the natural functor $C^{[-d,0]}(\proj A) \to K^{[-d,0]}(\proj A)$ \mbox{by $\widetilde{\xc}$}. The following statements hold:
    \theoremlistfix
\begin{enumerate}
    \item $\xc$ is covariantly finite if and only if  $\widetilde{\xc}$ is covariantly finite.
    \phantomsection
    \label{lem:lifting functorial finiteness1}
    \item $\xc$ is contravariantly finite if and only if $\widetilde{\xc}$ is contravariantly finite.
    \phantomsection
    \label{lem:lifting functorial finiteness2}
\end{enumerate}
\end{lemma}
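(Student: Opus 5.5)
The plan is to use that the canonical functor $\pi\colon C^{[-d,0]}(\proj A)\to K^{[-d,0]}(\proj A)$ is full. It is the identity on objects, and its kernel is the ideal of morphisms factoring through projective-injective objects, i.e.\ through contractible complexes, by \cref{prop:enough proj/inj}\,(\ref{prop:enough proj/inj1}). I will prove (1); (2) is the dual argument.

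First, suppose $\xc$ is covariantly finite and let $Y\in C^{[-d,0]}(\proj A)$. Take a left $\xc$-approximation $f\colon Y\to X$. Given any morphism $\tilde g\colon Y\to \pi(X')$ in $K^{[-d,0]}(\proj A)$ with $X'\in\xc$, lift it to a chain map $g$ using fullness. Then $g$ factors through $f$, and applying $\pi$ shows that $\pi(f)$ is a left $\widetilde{\xc}$-approximation. Note that objects of $\widetilde{\xc}$ are exactly the objects $\pi(X)$ with $X\in\xc$, since $\pi$ is the identity on objects.

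Conversely, suppose $\widetilde{\xc}$ is covariantly finite and let $Y$ be given. Take a left $\widetilde{\xc}$-approximation $\tilde f\colon Y\to X$ with $X\in\xc$ (up to isomorphism in $K$, so I may choose $X\in\xc$ since $\xc$ is closed under isomorphism), and lift it to a chain map $f$. Next choose a morphism $e\colon Y\to I$ into a projective-injective object that is an inflation; this exists because there are enough injectives. Concretely one can take $I$ to be a sum of the complexes $D_i(Y_{i+1})$ for $-d\le i\le -1$ together with one term for the degree-$0$ component, arranged so that $I$ is contractible and lies in $C^{[-d,0]}(\proj A)$. By hypothesis $I\in\xc$. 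Set $h=(f,e)\colon Y\to X\oplus I$.

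It remains to check that $h$ is a left $\xc$-approximation. Let $g\colon Y\to X'$ with $X'\in\xc$. There is $u$ with $\pi(g)=\pi(u)\pi(f)$, so $g-uf$ is null-homotopic, i.e.\ it factors through a projective-injective object $J$. Because $e$ is an inflation and $J$ is injective, any morphism $Y\to J$ factors through $e$. Hence $g-uf=ve$ for some $v\colon I\to X'$, and so $g=(u,v)\circ h$.

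The main obstacle is justifying this last step: that a null-homotopic map factors through a contractible complex inside the $(d+1)$-term category, and that a projective-injective inflation $e$ exists there. Both are covered by the Frobenius-type facts in \cref{prop:enough proj/inj}. Even so, I would spell out the standard factorisation: for a homotopy $s$, the map $g-uf$ factors as $Y\to \bigoplus_i D_i(Y_{i+1})\to X'$, truncated to degrees $-d,\dots,0$, using the cone of the identity. In degree $0$, a homotopy involves maps $Y_0\to X'_1=0$, so only the components for $i=-d,\dots,-1$ appear, and the contractible complex used stays in $C^{[-d,0]}(\proj A)$. Statement (2) follows dually, using projective-injective deflations onto $Y$ and enough projectives.
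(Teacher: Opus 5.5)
Your forward direction is fine, and your strategy for the converse is right: lift $\tilde f$, then add a map into a projective-injective object to absorb the null-homotopic error. The gap is the specific object you use, which does not exist in general.

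$C^{[-d,0]}(\proj A)$ is not Frobenius. By \cref{prop:enough proj/inj}, its injectives also include the stalk complexes $P[d]$, which are not projective. So ``enough injectives'' does not give an inflation into a projective-injective, and in general there is none. A chain map $Y\to D_i(P)$ is determined by its component $Y_{i+1}\to P$, since the degree-$i$ component is forced to be this map composed with $\partial_i$. Hence for $Y=A[d]$, every morphism into a contractible complex in $C^{[-d,0]}(\proj A)$ is zero.

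For general $Y$, the canonical map $c_Y\colon Y\to\bigoplus_{i=-d}^{-1}D_i(Y_{i+1})$ is split in degrees $-d+1,\dots,0$. In degree $-d$, however, its component is just $\partial_{-d}$. The summand $D_{-d-1}(Y_{-d})$ that would repair this lies outside $C^{[-d,0]}(\proj A)$. Your parenthetical has the ends swapped: with the paper's convention homotopies lower the index, and the component that vanishes is $Y_{-d}\to X'_{-d-1}=0$. Dually, for (2), the stalk complex $A$ in degree $0$ receives only the zero map from projective-injectives, so projective-injective deflations fail too.

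The fix is that the inflation property is never actually used. What you need is a morphism $e\colon Y\to I$, with $I$ projective-injective, through which every null-homotopic morphism out of $Y$ factors. Your own explicit factorisation shows that $e=c_Y$ does this. If $g-uf$ has homotopy $(s_j\colon Y_j\to X'_{j-1})$, then $g-uf=v\circ c_Y$, where $v$ on $D_i(Y_{i+1})$ is the chain map determined by $s_{i+1}\colon Y_{i+1}\to X'_i$. With $h=(f,c_Y)$ the rest of your argument goes through unchanged. For (2), use $\bigoplus_{i=-d}^{-1}D_i(Y_i)\to Y$ in the same way.

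Such an $e$ is exactly a left approximation by the projective-injectives, which is how the paper argues. The indecomposable projective-injectives are the finitely many $D_i(P)$, so they form a functorially finite subcategory. The paper then invokes the general lifting-along-ideal-quotients argument of \cite[Lemma 5.2]{CortesCriveiSaorin}. Once the inflation is replaced by this approximation, your proof is a correct hands-on version of the paper's.
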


\begin{proof}
The indecomposable projective-injective objects in $C^{[-d,0]}(\proj A)$ are isomorphic to the complexes of the form $\cdots \to 0\to P \xrightarrow{1_P} P\to 0 \to \cdots$ with $P \in \proj A$ indecomposable by \cref{prop:enough proj/inj} (\ref{prop:enough proj/inj1}). In particular, there are finitely many of them up to isomorphism. This implies that the subcategory of projective-injective objects is functorially finite. The claim now follows by similar arguments as in \cite[Lemma 5.2]{CortesCriveiSaorin}.
\end{proof}

We are now ready to prove the main result of this subsection.

\begin{proof}[Proof of \Cref{prop:lifting dCT}]
Since $C^{[-d,0]}(\proj A)$ has enough projectives and injectives by \cref{prop:enough proj/inj} \eqref{prop:enough proj/inj2}, the subcategory $\mathcal{N}$ is automatically generating-cogenerating if the other conditions for being $d$-cluster tilting hold. The claim hence follows from \cref{lemma: ext for complexes} and \cref{lem:lifting functorial finiteness}.
\end{proof}

We finish this subsection by deducing an analogue of \cref{lemma: contravariantly finite for free} for $K^{[-d,0]}(\proj A)$. 

\begin{lemma}
\label{lemma: contravariantly finite for free ext}
Let $\tc$ be a subcategory of $K^{[-d,0]}(\proj A)$.
Assume that $\tc$ satisfies
\begin{align*}
\mathcal{T} &=\{X_\bullet\in K^{[-d,0]}(\proj A)\;|\; \Hom_{K^{b}(\proj A)}(X_\bullet,T_\bullet[i])=0 
\text{ for } i=1,\ldots,d-1 \text{ and } T_\bullet\in\mathcal{T} \} \\
  &=\{Y_\bullet\in K^{[-d,0]}(\proj A)\;|\; \Hom_{K^b(\proj A)}(T_\bullet,Y_\bullet[i])=0 \text{ for } i=1,\ldots,d-1 \text{ and } T_\bullet\in\mathcal{T} \}.
\end{align*}

Then $\tc$ is covariantly finite if and only if it is contravariantly finite.
\end{lemma}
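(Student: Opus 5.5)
The most economical approach is to deduce the statement from \cref{lemma: contravariantly finite for free}, applied to the preimage of $\tc$ in the exact category. Let $\mathcal{N}$ be the preimage of $\tc$ under the canonical projection $C^{[-d,0]}(\proj A)\to K^{[-d,0]}(\proj A)$. Since $\tc$ is closed under isomorphisms in $K^{[-d,0]}(\proj A)$, $\mathcal{N}$ contains all contractible complexes, so it contains all projective-injective objects of $C^{[-d,0]}(\proj A)$. Moreover, $\tc$ is exactly the image of $\mathcal{N}$.

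First I would verify the hypotheses of \cref{lemma: contravariantly finite for free} for $\mathcal{N}$. By \cref{lemma: ext for complexes}, for $i=1,\dots,d-1$ we have $\Ext^i_{C^{[-d,0]}(\proj A)}(X_\bullet,N_\bullet)\cong\Hom_{K^b(\proj A)}(X_\bullet,N_\bullet[i])$. This group depends only on the images of $X_\bullet$ and $N_\bullet$ in the homotopy category. Hence the two equalities assumed for $\tc$ translate directly into
\[
\mathcal{N}=\{X_\bullet\mid \Ext^i(X_\bullet,\mathcal{N})=0,\ 1\le i\le d-1\}=\{Y_\bullet\mid \Ext^i(\mathcal{N},Y_\bullet)=0,\ 1\le i\le d-1\}.
\]
The inclusion of $\mathcal{N}$ in each set is clear, and conversely any $X_\bullet$ satisfying the vanishing has image in $\tc$, so $X_\bullet\in\mathcal{N}$ because $\mathcal{N}$ is a full preimage.

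Next I would show that $\mathcal{N}$ is generating–cogenerating. Every projective object of $C^{[-d,0]}(\proj A)$ is, by \cref{prop:enough proj/inj}, a sum of contractible complexes and stalk complexes in degree $0$. So I only need the stalk complexes $P$ (degree $0$) to lie in $\mathcal{N}$, and dually the stalk complexes $P[d]$. This holds because $\Hom_{K^b}(X_\bullet,P[i])=0$ for $i\ge1$ whenever $X_\bullet$ is concentrated in degrees $-d,\dots,0$, and $\Hom_{K^b}(P,Y_\bullet[i])=0$ for $i\ge 1$. Thus $P$ satisfies the first description, and it lies in $\tc$ by the first equality. Dually, $P[d]$ satisfies the second description, since $\Hom(T_\bullet,P[d][i])$ would require a nonzero component in degree $-d-i<-d$. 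Combined with \cref{prop:enough proj/inj}(\ref{prop:enough proj/inj2}), this gives deflations from objects of $\mathcal{N}$ and inflations into objects of $\mathcal{N}$.

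Finally, \cref{lemma: contravariantly finite for free} shows that $\mathcal{N}$ is covariantly finite if and only if it is contravariantly finite. \Cref{lem:lifting functorial finiteness} transfers each of these properties between $\mathcal{N}$ and $\tc$, which is where we use that $\mathcal{N}$ contains the projective-injectives, and this yields the claim. The main obstacle I expect is the degree bookkeeping in the generating–cogenerating step, that is, checking that $P$ and $P[d]$ really lie in $\tc$. If it is delicate, the fallback is to rerun the proof of \cref{lemma: contravariantly finite for free} directly in $K^b(\proj A)$, replacing conflations by triangles.
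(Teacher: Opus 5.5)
Your proposal is correct and is essentially the paper's proof: pass to the preimage in $C^{[-d,0]}(\proj A)$, transfer the two orthogonality descriptions via \cref{lemma: ext for complexes}, obtain generating--cogenerating from enough projectives and injectives, apply \cref{lemma: contravariantly finite for free}, and return to $\tc$ with \cref{lem:lifting functorial finiteness}. One harmless slip: under the paper's shift convention ($P[d]$ sits in degree $-d$), the claim $\Hom_{K^b(\proj A)}(X_\bullet,P[i])=0$ for all $i\geq 1$ fails for $1\leq i\leq d$ (take $X_\bullet=P[i]$), but you never use it, since $\Hom_{K^b(\proj A)}(P,Y_\bullet[i])=0$ for $i\geq 1$ (as $Y_i=0$) already places $P$ in $\tc$ via the first equality.
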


\begin{proof}
    Let $\xc \subseteq C^{[-d,0]}(\proj A)$ denote the preimage of the subcategory $\tc$ under the natural functor $C^{[-d,0]}(\proj A) \to K^{[-d,0]}(\proj A)$. Combining \cref{lemma: ext for complexes} with the assumptions on $\tc$, one obtains
    \begin{align*}
    \xc  &= \{Y_\bullet \in C^{[-d,0]}(\proj A) \mid \Ext_{C^{[-d,0]}(\proj A)}^i(Y_\bullet,\xc)=0 \text{ for all $i = 1, \dots, d-1$}\}\\
    &= \{Z_\bullet \in C^{[-d,0]}(\proj A) \mid \Ext_{C^{[-d,0]}(\proj A)}^i(\xc,Z_\bullet)=0 \text{ for all $i = 1, \dots, d-1$}\}.
    \end{align*}
    Since $C^b(\proj A)$ has enough projectives and injectives by \cref{prop:enough proj/inj} (\ref{prop:enough proj/inj2}), it moreover follows that $\xc$ is generating-cogenerating. Hence, the subcategory $\xc$ is covariantly finite if and only if it is contravariantly finite by \cref{lemma: contravariantly finite for free}. By \cref{lem:lifting functorial finiteness}, the same statement therefore also holds for $\tc$.
\end{proof}

\subsection{From $(d+1)$-term complexes to $A$-modules}\label{subsec: (d+1)-term}

In this subsection we develop a series of technical results relating the categories $K^{[-d,0]}(\proj A)$ and $\modA$ that will be instrumental in the proofs of our main results.

\begin{lemma}
\label{lemma:lifts}
Consider $X_\bullet\in K^{[-d,0]}(\proj A)$ and $Y_\bullet\in K_\text{\normalfont inn\,ac}^{[-d,0]}(\proj A)$.
Then any morphism \mbox{$H_0(X_\bullet)\to H_0(Y_\bullet)$} lifts to a morphism $X_\bullet\to Y_\bullet$.
Moreover, such a lift is unique up to adding a morphism that factors through $\add (A[d])$.
\end{lemma}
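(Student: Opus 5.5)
Existence is a degree-by-degree lifting argument, and uniqueness comes from building a null-homotopy until the last step, where the obstruction lands in degree $-d$. Write $X_\bullet = (X_{-d}\to\cdots\to X_0)$ and $Y_\bullet=(Y_{-d}\to\cdots\to Y_0)$ with differentials $\partial^X_i\colon X_i\to X_{i-1}$ and $\partial^Y_i\colon Y_i\to Y_{i-1}$. The key input is that $Y_\bullet$ is inner acyclic, so
\[
Y_{-d}\to Y_{-d+1}\to\cdots\to Y_0\to H_0(Y_\bullet)\to 0
\]
is exact at $Y_{-d+1},\dots,Y_0$ and at $H_0(Y_\bullet)$.

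For existence, start from $\varphi\colon H_0(X_\bullet)\to H_0(Y_\bullet)$. Compose it with $X_0\twoheadrightarrow H_0(X_\bullet)$ and lift along the epimorphism $Y_0\to H_0(Y_\bullet)$, using projectivity of $X_0$, to get $g_0$. Then $g_0\partial^X_0$ maps into $\Ker(Y_0\to H_0Y)=\operatorname{Im}\partial^Y_0$, so projectivity of $X_{-1}$ gives $g_{-1}$. Inductively, for $-d+1\le i\le -1$, the map $g_{i}\partial^X_{i}$ satisfies $\partial^Y_{i}g_{i}\partial^X_{i}=g_{i-1}\partial^X_{i-1}\partial^X_{i}=0$. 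By exactness of $Y_\bullet$ at $Y_{i}$ it factors through $Y_{i-1}\to \operatorname{Im}$, and projectivity yields $g_{i-1}$. The last step produces $g_{-d}\colon X_{-d}\to Y_{-d}$ using exactness at $Y_{-d+1}$. No condition is needed in degree $-d$, since $X_\bullet$ has nothing further to the left.

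For uniqueness up to $\add(A[d])$, it suffices to show that a chain map $g\colon X_\bullet\to Y_\bullet$ inducing zero on $H_0$ factors in $K^{b}(\proj A)$ through an object of $\add(A[d])$. First build a partial null-homotopy. Since $H_0(g)=0$, the map $g_0$ lands in $\operatorname{Im}\partial^Y_0$, so $g_0=\partial^Y_0 s_0$ with $s_0\colon X_0\to Y_{-1}$. Then $g_{-1}-s_0\partial^X_0$ maps into $\Ker \partial^Y_{-1}=\operatorname{Im}\partial^Y_{-1}$, giving $s_{-1}$. Continue down to $s_{-d+1}\colon X_{-d+1}\to Y_{-d}$, using exactness at $Y_{-1},\dots,Y_{-d+1}$. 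The remaining error is $e\colon=g_{-d}-s_{-d+1}\partial^X_{-d+1}\colon X_{-d}\to Y_{-d}$, and in general it need not vanish, since $H_{-d}(Y_\bullet)$ can be nonzero. Then $g$ is homotopic to the chain map that is $e$ in degree $-d$ and zero elsewhere. This map factors as $X_\bullet\to X_{-d}[d]\xrightarrow{e}Y_\bullet$, where $X_{-d}[d]$ is the stalk complex in degree $-d$ and the first map is the identity in degree $-d$. We must check that both are chain maps. The map $X_{-d}[d]\to Y_\bullet$ is a chain map because $\partial^Y_{-d}e=0$: indeed $\partial^Y_{-d}g_{-d}=g_{-d+1}\partial^X_{-d}$, and $\partial^Y_{-d}s_{-d+1}=g_{-d+1}-s_{-d+2}\partial^X_{-d+1}$ by the homotopy relation. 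The projection $X_\bullet\to X_{-d}[d]$ is a chain map because its source is concentrated in degrees $\ge -d$, so nothing enters degree $-d$ from the left. Since $X_{-d}[d]\in\add(A[d])$, this gives the factorisation.

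The main obstacle is the bookkeeping in the final degree, namely verifying that $e$ really defines a morphism out of the stalk complex and that the truncation map is a chain map. Both reduce to the identities above. A further delicate point is that $Y_\bullet$ need not be acyclic at $Y_{-d}$, so the homotopy cannot be completed. Precisely this failure forces the ambiguity $\add(A[d])$ rather than uniqueness.
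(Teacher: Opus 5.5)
Your proof is correct, but it takes a different route from the paper. The paper argues in $D^b(A)$ with the triangle $H_{-d}(Y_\bullet)[d]\to Y_\bullet\to H_0(Y_\bullet)\to H_{-d}(Y_\bullet)[d+1]$ supplied by inner acyclicity. Existence follows from $\Hom_{D^b(A)}(X_\bullet,H_{-d}(Y_\bullet)[d+1])=0$, since $X_\bullet$ is a complex of projectives vanishing in degree $-d-1$. For uniqueness, the difference of two lifts factors through $H_{-d}(Y_\bullet)[d]$, and then, by projectivity of $X_{-d}$, through $P[d]$ for an epimorphism $P\to H_{-d}(Y_\bullet)$ with $P$ projective.

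Your degree-wise lifting and partial null-homotopy unpack exactly these two facts at chain level. The inner terms of $Y_\bullet$ are where you can lift or contract, and the uncontractible error $e$ is a map $X_{-d}\to\Ker(Y_{-d}\to Y_{-d+1})=H_{-d}(Y_\bullet)$. The paper's version is shorter and reuses the same triangle that drives \cref{lemma: inner extensions} and \cref{lemma: Pd(M) right d-orthogonal}. Yours is elementary and self-contained. It also directly gives the sharper statement that the ambiguity factors through the canonical projection $X_\bullet\to X_{-d}[d]$, which is the form actually used in the proof of \cref{lemma: Pd(M) cov finite}.

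One thing to clean up is your differential indexing, which is inconsistent. You declare $\partial^X_i\colon X_i\to X_{i-1}$, but the complex $X_{-d}\to\cdots\to X_0$ raises degree. Your formulas also switch conventions: for example, $\operatorname{Im}\partial^Y_0\subseteq Y_0$ in the existence step, but $\partial^Y_{-d}\colon Y_{-d}\to Y_{-d+1}$ in the final check. Fix $\partial_i\colon(\cdot)_i\to(\cdot)_{i+1}$. Then the inductive identity reads $\partial^Y_i g_i\partial^X_{i-1}=g_{i+1}\partial^X_i\partial^X_{i-1}=0$, and the error term is $e=g_{-d}-s_{-d+1}\partial^X_{-d}$. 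With $s_1=0$, you get $\partial^Y_{-d}e=s_{-d+2}\partial^X_{-d+1}\partial^X_{-d}=0$, and the argument goes through verbatim.
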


\begin{proof}
Consider a morphism $a\colon H_0(X_\bullet)\to H_0(Y_\bullet)$. Since $Y_\bullet$ is inner acyclic, there is a distinguished triangle 
\[
H_{-d}(Y_\bullet)[d]\to Y_\bullet\to H_0(Y_\bullet)\to H_{-d}(Y_\bullet)[d+1]
\]
in the bounded derived category $D^b(A)$. The complex $X_\bullet$ is a bounded complex of projectives and has component 0 in degree $(-d-1)$, which implies that 
\[
\Hom_{D^b(A)}(X_\bullet,H_{-d}(Y_\bullet)[d+1]) \cong \Hom_{K^b(\modA)}(X_\bullet,H_{-d}(Y_\bullet)[d+1])=0.
\]
Thus, from the long exact sequence obtained by applying the functor $\Hom_{D^b(A)}(X_\bullet,-)$ to the triangle above, we get an epimorphism 
\[
\Hom_{D^b(A)}(X_\bullet,Y_\bullet) \rightarrow \Hom_{D^b(A)}(X_\bullet, H_0(Y_\bullet)).
\]
Therefore, we have a morphism $f \colon X_\bullet\xrightarrow{}Y_\bullet$ making the diagram

\begin{center}
\begin{tikzpicture}[scale = 1.4]
\node (X-left) at (1,1) {$X_\bullet$};
\node (HX-left) at (2,1) {$H_0(X_\bullet)$};
\node[anchor=east] (HdYd-left) at (.5,0) {$H_{-d}(Y_\bullet)[d]$};
\node (Y-left) at (1,0) {$Y_\bullet$};
\node (HY-left) at (2,0) {$H_0(Y_\bullet)$};
\node[anchor=west]  (HdYd+left) at (2.8,0) {$H_{-d}(Y_\bullet)[d+1]$};

\draw[->] (X-left) -- (HX-left);
\draw[dashed, ->] (X-left) to node[left]{$f$} (Y-left);
\draw[->] (HdYd-left) -- (Y-left);
\draw[->] (Y-left) -- (HY-left);
\draw[->] (HX-left) to node[right]{$a$} (HY-left);
\draw[->] (HY-left) to (HdYd+left);

\end{tikzpicture}
\end{center}
\noindent
commute. 

Assume now that $f_1$ and $f_2$ are two such lifts of $a$, and write $g$ for their difference.
By definition, the composition $X_\bullet\xrightarrow{g}Y_\bullet\to H_0(Y_\bullet)$ vanishes, which implies the existence of a morphism \mbox{$h \colon X_\bullet\xrightarrow{}H_{-d}(Y_\bullet)[d]$} making the diagram 

 \begin{center}
 \begin{tikzpicture}[scale = 1.4]

\begin{scope}[xshift = 6.2cm]
\node (X-right) at (1,1) {$X_\bullet$};
\node (HX-right) at (2,1) {$H_0(X_\bullet)$};
\node[anchor=east] (HdYd-right) at (.3,0) {$H_{-d}(Y_\bullet)[d]$};
\node (Y-right) at (1,0) {$Y_\bullet$};
\node (HY-right) at (2,0) {$H_0(Y_\bullet)$};
\node[anchor=west]  (HdYd+right) at (2.7,0){$H_{-d}(Y_\bullet)[d+1]$};

\draw[->] (X-right) -- (HX-right);
\draw[->] (X-right) to node[left]{$g$} (Y-right);
\draw[->] (HdYd-right) -- (Y-right);
\draw[->] (Y-right) -- (HY-right);
\draw[->] (HX-right) to node[right]{$0$} (HY-right);
\draw[->] (HY-right) to (HdYd+right);
\draw[->, dashed] (X-right) to node[above]{$h$} (HdYd-right);
\end{scope}
\end{tikzpicture}
\end{center}

\noindent commute. Choose an epimorphism $P\defl H_{-d}(Y_\bullet)$ in $\modA$ with $P$ projective. Because $X_{-d}$ is projective, the morphism $h$ factors through $P[d]\to H_{-d}(Y_\bullet)[d]$. Hence, the morphism $g$ factors through $P[d]$ as well. 
\end{proof}

The following result is a consequence of \Cref{lemma:lifts}, and shows that we can recover $\modA$ as an ideal quotient of $K_\text{\normalfont inn\,ac}^{[-d,0]}(\proj A)$.

\begin{proposition}\label{Proposition:EquivAdditivQuot}
The $0$-th homology functor
\[
H_0(-)\colon K_\text{\normalfont inn\,ac}^{[-d,0]}(\proj A)\to \modA
\]
induces an equivalence $K_\text{\normalfont inn\,ac}^{[-d,0]}(\proj A)/\operatorname{add}(A[d])\cong \modA$.
\end{proposition}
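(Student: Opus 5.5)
The plan is to check three things: that $H_0$ kills $\add(A[d])$, so it factors through the ideal quotient; that the induced functor is full and faithful; and that it is dense.

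\emph{The functor descends.} The homology functor $H_0(-)\colon K^{[-d,0]}_{\text{inn\,ac}}(\proj A)\to\modA$ is well defined and additive, since homotopic maps induce the same map on homology. Any morphism factoring through an object of $\add(A[d])$ factors through a complex with $H_0=0$ (for $d\geq 1$). Hence $H_0$ annihilates the ideal generated by $\add(A[d])$ and induces an additive functor
\[
\overline{H}_0\colon K^{[-d,0]}_{\text{inn\,ac}}(\proj A)/\add(A[d])\to\modA .
\]

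\emph{Full and faithful.} Let $X_\bullet,Y_\bullet\in K^{[-d,0]}_{\text{inn\,ac}}(\proj A)$. Fullness is the existence part of \cref{lemma:lifts}: every $a\colon H_0(X_\bullet)\to H_0(Y_\bullet)$ lifts to some $f\colon X_\bullet\to Y_\bullet$. For faithfulness, suppose $g\colon X_\bullet\to Y_\bullet$ satisfies $H_0(g)=0$. Then $g$ and $0$ are both lifts of the zero map. By the uniqueness part of \cref{lemma:lifts}, $g$ factors through $\add(A[d])$, so $g$ is zero in the quotient.

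\emph{Dense.} Given $M\in\modA$, take its minimal projective $d$-presentation $P^M_\bullet$. It lies in $K^{[-d,0]}(\proj A)$, it has $H_0(P^M_\bullet)\cong M$, and its homology vanishes in degrees $-d+1,\dots,-1$ because it is truncated from a resolution. So it is inner acyclic, and $H_0(P^M_\bullet)\cong M$ as required.

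All real content sits in \cref{lemma:lifts}, so I expect no obstacle. The one point needing care is that the lemma is stated for $X_\bullet$ in the full $K^{[-d,0]}(\proj A)$; this is more than we need, so it applies directly here.
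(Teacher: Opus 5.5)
Your proposal is correct and follows essentially the same route as the paper: $H_0$ kills $\add(A[d])$, so it descends to the quotient; the induced functor is full and faithful by the existence and uniqueness parts of \cref{lemma:lifts}; and it is dense. You have simply spelled out the steps the paper calls ``clear'', namely faithfulness as the uniqueness statement applied to $g$ and $0$, and density via the minimal projective $d$-presentation $P^M_\bullet$.
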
 

\begin{proof}
Since $H_0(-)$ sends $A[d]$ to $0$, it induces a functor
\[
K_\text{\normalfont inn\,ac}^{[-d,0]}(\proj A)/\operatorname{add}(A[d])\to \modA.
\]
The functor is clearly dense, and by \Cref{lemma:lifts} it must be full and faithful. This proves the claim.
\end{proof}

\begin{remark}
\cref{Proposition:EquivAdditivQuot} can also be deduced from \cite[Proposition 3.1]{Gupta}, where it is shown that the truncation functor
    \[
    \tau_{\geq -d+1}\colon K^{[-d,0]}(\proj A)\to D^b(A) 
    \]
    induces an equivalence
    \[
    K^{[-d,0]}(\proj A)/\operatorname{add}(A[d])\cong D^{[-d+1,0]}(A),
    \]
    where $D^{[-d+1,0]}(A)$ denotes the subcategory of $D^b(A)$ consisting of complexes with non-zero homology concentrated in degrees $-d+1,\dots ,0$. Restricting this to $K_\text{\normalfont inn\,ac}^{[-d,0]}(\proj A)$ gives an equivalence between $K_\text{\normalfont inn\,ac}^{[-d,0]}(\proj A)/\operatorname{add}(A[d])$ and $\modA$.
\end{remark}

In the next lemma, we describe certain $\Ext$-groups in terms of $\Hom$-sets in $K^b(\proj A)$.

\begin{lemma}
\label{lemma: inner extensions}
Let $X_\bullet,Y_\bullet\in K^{[-d,0]}_\text{\normalfont inn\,ac}(\proj A)$. For any $i=1,\ldots, d-1$, we have
\[
\Hom_{K^{b}(\proj A)}(X_\bullet,Y_\bullet[i]) \cong\Ext^i_A(H_0(X_\bullet),H_0(Y_\bullet)).
\]
\end{lemma}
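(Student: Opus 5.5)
We plan to work in $D^b(A)$ and use the same truncation triangle as in the proof of \cref{lemma:lifts}. Since $Y_\bullet$ is inner acyclic, there is a distinguished triangle
\[
H_{-d}(Y_\bullet)[d]\to Y_\bullet\to H_0(Y_\bullet)\to H_{-d}(Y_\bullet)[d+1]
\]
in $D^b(A)$. We shift it by $[i]$ and apply $\Hom_{D^b(A)}(X_\bullet,-)$. Because $X_\bullet$ is a bounded complex of projectives, $\Hom_{K^b(\proj A)}(X_\bullet,Y_\bullet[i])\cong\Hom_{D^b(A)}(X_\bullet,Y_\bullet[i])$. The two outer terms of the resulting exact sequence are $\Hom_{D^b(A)}(X_\bullet,H_{-d}(Y_\bullet)[d+i])$ and $\Hom_{D^b(A)}(X_\bullet,H_{-d}(Y_\bullet)[d+i+1])$. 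These are computed by maps of complexes from $X_\bullet$ to a stalk complex concentrated in degree $-d-i$ or $-d-i-1$. For $i\geq 1$ these degrees lie below $-d$, where $X_\bullet$ vanishes, so both terms are zero. Hence
\[
\Hom_{K^b(\proj A)}(X_\bullet,Y_\bullet[i])\cong\Hom_{D^b(A)}(X_\bullet,H_0(Y_\bullet)[i]).
\]

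Next we replace $X_\bullet$ by $H_0(X_\bullet)$ using the analogous triangle
\[
H_{-d}(X_\bullet)[d]\to X_\bullet\to H_0(X_\bullet)\to H_{-d}(X_\bullet)[d+1].
\]
Apply $\Hom_{D^b(A)}(-,H_0(Y_\bullet)[i])$. The terms that could obstruct the isomorphism are
\[
\Hom_{D^b(A)}(H_{-d}(X_\bullet)[d],H_0(Y_\bullet)[i])\cong\Ext^{i-d}_A(H_{-d}(X_\bullet),H_0(Y_\bullet))
\]
and
\[
\Hom_{D^b(A)}(H_{-d}(X_\bullet)[d+1],H_0(Y_\bullet)[i])\cong\Ext^{i-d-1}_A(H_{-d}(X_\bullet),H_0(Y_\bullet)).
\]
Both vanish, because $i-d<0$ and $i-d-1<0$ for $1\leq i\leq d-1$. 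This is exactly where the restriction $i\leq d-1$ is needed: for $i=d$ the first group becomes a nonzero $\Hom$. We therefore obtain
\[
\Hom_{D^b(A)}(X_\bullet,H_0(Y_\bullet)[i])\cong\Hom_{D^b(A)}(H_0(X_\bullet),H_0(Y_\bullet)[i])=\Ext^i_A(H_0(X_\bullet),H_0(Y_\bullet)).
\]
Composing the two isomorphisms gives the claim.

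The step needing most care is the degree bookkeeping. First, one must check the conventions for lower indices and shifts: $Y_\bullet[i]$ should have its homology $H_{-d}(Y_\bullet)$ sitting in degree $-d-i$. Second, one must justify that $\Hom_{D^b(A)}$ from a bounded complex of projectives to a stalk complex can be computed in the homotopy category, which holds because $X_\bullet$ is K-projective. Once the degrees are pinned down, each vanishing is either a degree argument or a negative-$\Ext$ argument.
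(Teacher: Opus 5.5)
Your proof is correct and follows essentially the same route as the paper. The paper also applies $\Hom_{D^b(A)}(X_\bullet,-)$ to the truncation triangle of $Y_\bullet$ and $\Hom_{D^b(A)}(-,H_0(Y_\bullet)[i])$ to that of $X_\bullet$, chaining the resulting isomorphisms; you additionally spell out the degree-vanishing and negative-$\Ext$ arguments that the paper leaves implicit, and these are accurate.
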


\begin{proof}
Since $X_\bullet$ and $Y_\bullet$ are inner acyclic, there are distinguished triangles
 \begin{align*}
     H_{-d}(X_\bullet)[d] &\to X_\bullet\to H_0(X_\bullet)\to H_{-d}(X_\bullet)[d+1] \\
     H_{-d}(Y_\bullet)[d] &\to Y_\bullet\to H_0(Y_\bullet)\to H_{-d}(Y_\bullet)[d+1]
 \end{align*}
in the bounded derived category ${D}^b(A)$. Applying $\Hom_{D^b(A)}(X_\bullet,-)$ to the second triangle and $\Hom_{D^b(A)}(-,H_0(Y_\bullet))$ to the first triangle gives 
\begin{align*}
    \Hom_{D^b(A)}(X_\bullet,Y_\bullet[i]) &\cong \Hom_{D^b(A)}(X_\bullet,H_0(Y_\bullet)[i])\\
    \Hom_{D^b(A)}(X_\bullet,H_0(Y_\bullet)[i]) &\cong \Hom_{D^b(A)}(H_0(X_\bullet),H_0(Y_\bullet)[i])
\end{align*}
for $i=1,\ldots,d-1$. This implies that
\begin{eqnarray*}
 \Hom_{K^b(\proj A)}(X_\bullet,Y_\bullet[i]) & \cong & \Hom_{D^b(A)}(X_\bullet,Y_\bullet[i]) \\
 & \cong & \Hom_{D^b(A)}\left(X_\bullet,H_0(Y_\bullet)[i]\right) \\
 & \cong & \Hom_{D^b(A)}\left(H_0(X_\bullet),H_0(Y_\bullet)[i]\right) \\
 & \cong & \Ext^i_A(H_0(X_\bullet),H_0(Y_\bullet))
\end{eqnarray*}
for any $i=1,\ldots,d-1$.
\end{proof}

\subsection{Proofs of main results}
\label{subsec: pf of main results}

Throughout this subsection, we assume that $\mc$ is a $d$-cluster tilting subcategory of $\mod A$. Our aim is to prove \cref{theorem:dCT exact}. 

Recall that
\begin{align*}
\pc_d(\mc) & = \{P_\bullet \in C_{\text{inn\,ac}}^{[-d,0]}(\proj A) \mid H_0(P_\bullet) \in \mc \} \subseteq C^{[-d,0]}(\proj A) \\
\widetilde{\pc}_d(\mc) & = \{P_\bullet \in K_\text{inn\,ac}^{[-d,0]}(\proj A) \mid H_0(P_\bullet) \in \mc \} \subseteq K^{[-d,0]}(\proj A).
\end{align*}
Our first main aim is to establish \cref{theorem: d-CT extriangulated}, which states that $\widetilde{\pc}_d(\mc)$ is $d$-cluster tilting in $K^{[-d, 0]}(\proj A)$. By \cref{prop:lifting dCT}, this will automatically imply that $\pc_d(\mc)$ is $d$-cluster tilting in $C^{[-d, 0]}(\proj A)$. In order to prove \cref{theorem: d-CT extriangulated}, we first present two lemmas concerning rigidity properties of $\widetilde{\pc}_d(\mc)$ and a lemma concerning its functorial finiteness.

\begin{lemma}
\label{lemma: Pd(M) d-rigid}
Let $X_\bullet,Y_\bullet\in \widetilde{\pc}_d(\mc)$. Then \(\Hom_{K^b(\proj A)}(X_\bullet,Y_\bullet[i])=0\) for \( i=1,\ldots,d-1\).
\end{lemma}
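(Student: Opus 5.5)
This follows directly from \cref{lemma: inner extensions}, which is exactly tailored to this situation. Every object of $\widetilde{\pc}_d(\mc)$ lies in $K^{[-d,0]}_\text{\normalfont inn\,ac}(\proj A)$ by definition. So for $X_\bullet,Y_\bullet\in\widetilde{\pc}_d(\mc)$ and $1\le i\le d-1$, that lemma gives
\[
\Hom_{K^b(\proj A)}(X_\bullet,Y_\bullet[i])\cong \Ext^i_A(H_0(X_\bullet),H_0(Y_\bullet)).
\]

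By the definition of $\widetilde{\pc}_d(\mc)$, both $H_0(X_\bullet)$ and $H_0(Y_\bullet)$ lie in $\mc$. Since $\mc$ is $d$-cluster tilting in $\mod A$ (\cref{def:d-CT abelian}), we have $\Ext^i_A(M,M')=0$ for all $M,M'\in\mc$ and $i=1,\dots,d-1$. Hence the right-hand side vanishes, and so does the left-hand side, which is the claim.

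There is essentially no obstacle, since all the work sits in \cref{lemma: inner extensions}. The one point I would double-check there is the range of $i$. The vanishing of $\Hom_{D^b(A)}(X_\bullet,H_{-d}(Y_\bullet)[d+i])$ holds for all $i\ge1$ because $X_\bullet$ is a complex of projectives concentrated in degrees $\ge -d$. The vanishing of $\Hom_{D^b(A)}(H_{-d}(X_\bullet)[d+1],H_0(Y_\bullet)[i])$ and $\Hom_{D^b(A)}(H_{-d}(X_\bullet)[d],H_0(Y_\bullet)[i])$ requires $i<d$, since these are negative Ext groups exactly when $i-d-1<0$ and $i-d<0$. This is precisely why the statement is restricted to $i\le d-1$. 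That check is already carried out in the proof of \cref{lemma: inner extensions}, so the present lemma is a two-line consequence.
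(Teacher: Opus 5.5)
Your proof is correct and is the same argument as the paper's: you invoke \cref{lemma: inner extensions} to identify $\Hom_{K^b(\proj A)}(X_\bullet,Y_\bullet[i])$ with $\Ext^i_A(H_0(X_\bullet),H_0(Y_\bullet))$, and this vanishes for $1\le i\le d-1$ because both homologies lie in the $d$-cluster tilting subcategory $\mc$. Your side check on why the range stops at $i\le d-1$ is also accurate: the groups $\Ext^{i-d}$ and $\Ext^{i-d-1}$ in that lemma vanish only for $i<d$.
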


\begin{proof}
Let $X_\bullet,Y_\bullet\in\widetilde{\pc}_d(\mc)$.
Since $X_\bullet$ and $Y_\bullet$ are inner acyclic, \cref{lemma: inner extensions} yields, for each $i=1,\ldots, d-1$, the isomorphism \mbox{$\Hom_{K^b(\proj A)}(X_\bullet,Y_\bullet[i])\cong  \Ext^i_A(H_0(X_\bullet),H_0(Y_\bullet))$}. The latter is zero because $H_0(X_\bullet)$ and $H_0(Y_\bullet)$ belong to $\mathcal{M}$.
\end{proof}

\begin{lemma}
\label{lemma: Pd(M) right d-orthogonal}
The category $\widetilde{\pc}_d(\mc)$ satisfies
\begin{align*}
\widetilde{\pc}_d(\mc) &=\{X_\bullet\in K^{[-d,0]}(\proj A)\;|\; \Hom_{K^{b}(\proj A)}(\widetilde{\pc}_d(\mc),X_\bullet[i])=0 \text{ for } i=1,\ldots,d-1 \} \\
  &=\{Y_\bullet\in K^{[-d,0]}(\proj A)\;|\; \Hom_{K^b(\proj A)}(Y_\bullet,\widetilde{\pc}_d(\mc)[i])=0 \text{ for } i=1,\ldots,d-1 \}.
\end{align*}
\end{lemma}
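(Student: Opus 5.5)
The inclusion ``$\subseteq$'' in both equalities is exactly \cref{lemma: Pd(M) d-rigid}, so the work is in the reverse inclusions. In each case I would proceed in two steps. First, test against a well-chosen object of $\widetilde{\pc}_d(\mc)$ to show that the given complex is inner acyclic. Second, use \cref{lemma: inner extensions} to translate the vanishing conditions into $\Ext$-vanishing in $\modA$, and conclude with the $d$-cluster tilting property of $\mc$ from \cref{def:d-CT abelian}.

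For the first equality, let $X_\bullet\in K^{[-d,0]}(\proj A)$ satisfy $\Hom_{K^b(\proj A)}(\widetilde{\pc}_d(\mc),X_\bullet[i])=0$ for $i=1,\dots,d-1$. The stalk complex $A$ in degree $0$ lies in $\widetilde{\pc}_d(\mc)$, since $A\in\mc$ because $\mc$ is generating. Moreover $\Hom_{K^b(\proj A)}(A,X_\bullet[i])\cong H_{-i}(X_\bullet)$. The hypothesis therefore gives $H_{-i}(X_\bullet)=0$ for $i=1,\dots,d-1$, so $X_\bullet$ is inner acyclic. Now take $Y_\bullet=P^M_\bullet$ for an arbitrary $M\in\mc$. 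By \cref{lemma: inner extensions} we get $\Ext^i_A(M,H_0(X_\bullet))\cong\Hom_{K^b(\proj A)}(P^M_\bullet,X_\bullet[i])=0$ for $i=1,\dots,d-1$. Hence $H_0(X_\bullet)\in\mc$, and so $X_\bullet\in\widetilde{\pc}_d(\mc)$.

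For the second equality, let $Y_\bullet$ satisfy $\Hom_{K^b(\proj A)}(Y_\bullet,\widetilde{\pc}_d(\mc)[i])=0$ for $i=1,\dots,d-1$. Inner acyclicity is now the main obstacle, since shifts of $A$ are useless here. I would instead test against $P^{DA}_\bullet$, which lies in $\widetilde{\pc}_d(\mc)$ because $DA\in\mc$ and $P^{DA}_\bullet$ is inner acyclic. Writing $N=H_{-d}(P^{DA}_\bullet)$, there is a triangle $N[d]\to P^{DA}_\bullet\to DA\to N[d+1]$ in $D^b(A)$. Applying $\Hom_{D^b(A)}(Y_\bullet,-)$ gives the exact sequence
\[
\Hom_{D^b(A)}(Y_\bullet,P^{DA}_\bullet[i])\to\Hom_{D^b(A)}(Y_\bullet,DA[i])\to\Hom_{D^b(A)}(Y_\bullet,N[d+1+i]).
\]
The right-hand term vanishes for $i\geq 0$, because $Y_\bullet$ is a complex of projectives concentrated in degrees $-d,\dots,0$. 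The first map is therefore surjective, so $\Hom_{D^b(A)}(Y_\bullet,DA[i])=0$ for $i=1,\dots,d-1$. Since $DA$ is injective, this group is $D H_{-i}(Y_\bullet)$. Hence $Y_\bullet$ is inner acyclic.

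It remains to show $H_0(Y_\bullet)\in\mc$. Applying \cref{lemma: inner extensions} with the pair $(Y_\bullet,P^M_\bullet)$ for $M\in\mc$ gives $\Ext^i_A(H_0(Y_\bullet),M)=0$ for $i=1,\dots,d-1$. Thus $H_0(Y_\bullet)\in\mc$ by \cref{def:d-CT abelian}, which finishes the proof.
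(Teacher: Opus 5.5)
Your treatment of the second equality (testing against $P^{DA}_\bullet$, then \cref{lemma: inner extensions}) is exactly the paper's argument. The $H_0$-steps in both halves are also fine. The gap is the inner-acyclicity step in the first equality.

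With the paper's conventions, differentials raise degree and $(X_\bullet[k])_n = X_{n+k}$; this is why $P[d]$ sits in degree $-d$. A chain map from the stalk complex $A$ in degree $0$ to $X_\bullet[i]$ is a cycle in $(X_\bullet[i])_0 = X_i$, and the null-homotopic ones are exactly the boundaries. Hence
\[
\Hom_{K^b(\proj A)}(A, X_\bullet[i]) \cong H_i(X_\bullet),
\]
not $H_{-i}(X_\bullet)$. For $i=1,\dots,d-1$ this group vanishes for every $X_\bullet \in K^{[-d,0]}(\proj A)$. So testing your hypothesis against $A$ gives no information about $H_{-1},\dots,H_{-d+1}$. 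Note that this same convention is what makes your formula $\Hom(Y_\bullet, DA[i]) \cong DH_{-i}(Y_\bullet)$ in the second half correct. The two degree conventions you used are therefore incompatible with each other.

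The fix is to test against a different object of $\widetilde{\pc}_d(\mc)$: the stalk complex $A[d]$ in degree $-d$. It is inner acyclic with $H_0(A[d]) = 0 \in \mc$. Then
\[
\Hom_{K^b(\proj A)}(A[d], X_\bullet[i]) \cong \Hom_{K^b(\proj A)}(A, X_\bullet[i-d]) \cong H_{i-d}(X_\bullet).
\]
As $i$ runs over $1,\dots,d-1$, this covers exactly the inner degrees $-d+1,\dots,-1$. This is what the paper does, and with this replacement the rest of your argument goes through unchanged. Your remark that shifts of $A$ are useless is right for the second equality. For the first equality, the useful shift is $A[d]$, not $A$ itself.
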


\begin{proof}
The inclusion of $\widetilde{\pc}_d(\mc)$ into each of the subcategories appearing in the statement follows from \cref{lemma: Pd(M) d-rigid}. 

In order to obtain the first equality, assume that an object $X_\bullet\in K^{[-d,0]}(\proj A)$ satisfies \mbox{$\Hom_{K^b(\proj A)}(P_\bullet,X_\bullet[i])=0$} for {$i=1,\ldots,d-1$} and for any $P_\bullet\in\widetilde{\pc}_d(\mc)$. We first note that $X_\bullet$ is then inner acyclic. This is because $A[d]$ belongs to $\widetilde{\pc}_d(\mc)$, and we thus have the isomorphisms
\[
0=\Hom_{K^b(\proj A)}(A[d],X_\bullet[i]) \cong  \operatorname{Hom}_{K^b(\proj A)}(A,X_\bullet[i-d]) \cong  H_{i-d}(X_\bullet)
\]
for $i=1,\ldots,d-1$. As a next step, we prove that $H_0(X_\bullet)$ belongs to $\mc$. For this, consider $M\in\mc$ and let $P_\bullet^M$ be a projective $d$-presentation of $M$. By definition, the object $P_\bullet^M$ belongs to $\widetilde{\pc}_d(\mc)$. This yields $\Hom_{K^b(\proj A)}(P_\bullet^M,X_\bullet[i])=0$ for $i=1,\ldots,d-1$. By \cref{lemma: inner extensions}, this implies that $\Ext^i_A(M,H_0(X_\bullet))=0$ for $i=1,\ldots,d-1$. Therefore, we have $H_0(X_\bullet) \in \mc$ and $X_\bullet \in \widetilde{\pc}_d(\mc)$.

For the second equality, let $Y_\bullet\in K^{[-d,0]}(\proj A)$ be such that \mbox{$\Hom_{K^b(\proj A)}(Y_\bullet,Q_\bullet[i])=0$} for $i=1,\ldots,d-1$ and for any $Q_\bullet\in\widetilde{\pc}_d(\mc)$. To see that $Y_\bullet$ is then inner acyclic, let $P_\bullet$ be a projective $d$-presentation of the injective cogenerator $DA$. We have
\[
 \Hom_{K^b(\proj A)}(Y_\bullet,P_\bullet[i])\cong \Hom_{D^b(A)}(Y_\bullet,P_\bullet[i]) \cong \Hom_{D^b(A)}(Y_\bullet,DA[i])\cong DH_{-i}(Y_\bullet)
\]
for $i=1,\ldots,d-1$, where the second isomorphism is obtained by applying $\Hom_{D^b(A)}(Y_\bullet,-)$ to the distinguished triangle
\[
H_{-d}(P_\bullet)[d] \to P_\bullet \to DA \to H_{-d}(P_\bullet)[d+1]
\]
in $D^b(A)$. Since $P_\bullet$ belongs to $\widetilde{\pc}_d(\mc)$, this yields $H_{-i}(Y_\bullet)=0$ for $i=1,\ldots, d-1$. Analogous arguments as for the first equality now show that $H_0(Y_\bullet) \in \mc$, which implies $Y_\bullet \in \widetilde{\pc}_d(\mc)$.
\end{proof}

Before proving \cref{theorem: d-CT extriangulated}, we deal with the functorial finiteness of $\widetilde{\pc}_d(\mc)$. 

\begin{lemma}
\label{lemma: Pd(M) cov finite}
The subcategory $\widetilde{\pc}_d(\mc)$ is covariantly finite in $K^{[-d,0]}(\proj A)$.
\end{lemma}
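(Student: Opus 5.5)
To show that $\widetilde{\pc}_d(\mc)$ is covariantly finite, we build for each $X_\bullet\in K^{[-d,0]}(\proj A)$ a left approximation from an $\mc$-approximation of $H_0(X_\bullet)$, plus a correction term in degree $-d$. First take a minimal left $\mc$-approximation $a\colon H_0(X_\bullet)\to M$ in $\modA$; it exists because $\mc$ is functorially finite. Let $P_\bullet^M$ be the minimal projective $d$-presentation of $M$. Since $P_\bullet^M$ is inner acyclic, \cref{lemma:lifts} lets us lift $a$ to a morphism $f\colon X_\bullet\to P_\bullet^M$ with $H_0(f)=a$.

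Next we need to account for morphisms into $\add(A[d])$. The space $\Hom_{K^b(\proj A)}(X_\bullet,A[d])\cong \Hom_{K^b(\proj A)}(X_{-d}[d],A[d])$ modulo homotopy is a finite-dimensional $\kk$-space, since $X_\bullet$ is bounded with finitely generated terms. So we can choose finitely many morphisms $g_1,\dots,g_n$ that generate it as a right $\End(A)$-module. Combining them gives $g\colon X_\bullet\to A^n[d]$, and every morphism $X_\bullet\to Q[d]$ with $Q\in\proj A$ factors through $g$. We then propose $(f,g)\colon X_\bullet\to P_\bullet^M\oplus A^n[d]$ as the left $\widetilde{\pc}_d(\mc)$-approximation. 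The target lies in $\widetilde{\pc}_d(\mc)$, because $A^n[d]$ is inner acyclic with $H_0=0\in\mc$.

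To verify the approximation property, let $h\colon X_\bullet\to Y_\bullet$ with $Y_\bullet\in\widetilde{\pc}_d(\mc)$.
\begin{enumerate}
\item Since $H_0(Y_\bullet)\in\mc$, the map $H_0(h)$ factors as $b\circ a$ for some $b\colon M\to H_0(Y_\bullet)$.
\item By \cref{lemma:lifts}, $b$ lifts to $\tilde b\colon P_\bullet^M\to Y_\bullet$.
\item Then $h$ and $\tilde b f$ are both lifts of $H_0(h)$, so by the uniqueness part of \cref{lemma:lifts} their difference factors through some $Q[d]$ with $Q\in\add(A)$.
\item That map $X_\bullet\to Q[d]$ factors through $g$ by construction, so $h-\tilde b f$ factors through $g$, and hence $h$ factors through $(f,g)$.
\end{enumerate}

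The main subtlety is that \cref{lemma:lifts} requires only $X_\bullet\in K^{[-d,0]}(\proj A)$, with inner acyclicity needed only for the target; it is stated exactly in that form, so the argument applies to arbitrary $X_\bullet$. The other point to check is the finiteness of $\Hom(X_\bullet,A[d])$, which holds because Hom-spaces in $K^b(\proj A)$ are finite-dimensional over $\kk$.
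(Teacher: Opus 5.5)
Your proof is correct and is essentially the paper's argument: lift a left $\mc$-approximation of $H_0(X_\bullet)$ to $X_\bullet\to P_\bullet^M$ via \cref{lemma:lifts}, and add a summand in $\add(A[d])$ to absorb the non-uniqueness of lifts. The only difference is the choice of that summand. The paper uses the canonical morphism $X_\bullet\to X_{-d}[d]$ given by the identity in degree $-d$, through which every morphism $X_\bullet\to Q[d]$ visibly factors, so your finite-generation step over $\End(A)$ is unnecessary. (There, $\Hom(X_\bullet,A[d])$ is a quotient of $\Hom_A(X_{-d},A)$ rather than isomorphic to it, but you only use its finite-dimensionality, which holds.)
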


\begin{proof}
Let $P_\bullet \in K^{[-d,0]}(\proj A)$ and consider a left $\mc$-approximation $H_0(P_\bullet)\to M$ of $H_0(P_\bullet)$.
Let $P_\bullet^M$ be a projective $d$-resolution of $M$. Then $P_\bullet^M$ belongs to $K_\text{\normalfont inn\,ac}^{[-d,0]}(\proj A)$. By \cref{lemma:lifts}, the left $\mc$-approximation of $H_0(P_\bullet)$ thus lifts to a morphism $P_\bullet\to P_\bullet^M$. Consider additionally the morphism $P_\bullet \to P^{-d}[d]$ given by the identity in degree $-d$.

We claim that the induced morphism $P_\bullet\to P_\bullet^M\oplus P^{-d}[d]$ is a left $\widetilde{\pc}_d(\mc)$-approximation of $P_\bullet$. To prove this, let $P_\bullet \to Q_\bullet$ be any morphism with $Q_\bullet\in\widetilde{\pc}_d(\mc)$. Since $H_0(Q_\bullet)$ belongs to $\mc$, the induced morphism $H_0(P_\bullet)\to H_0(Q_\bullet)$ factors through the approximation \mbox{$H_0(P_\bullet) \to M$}. Again using \cref{lemma:lifts}, we obtain a morphism $P_\bullet^M\to Q_\bullet$ such that the composition $P_\bullet\to P_\bullet^M\to Q_\bullet$ equals the given morphism $P_\bullet\to Q_\bullet$ up to a morphism factoring through an object of $\add (A[d])$, hence up to a morphism factoring through \mbox{$P_\bullet\to P^{-d}[d]$}. This shows that $P_\bullet\to P_\bullet^M\oplus P^{-d}[d]$ is indeed a left $\widetilde{\pc}_d(\mc)$-approximation.
\end{proof}

We are now ready to prove \cref{theorem: d-CT extriangulated} and \cref{theorem:dCT exact}. 

\begin{proof}[Proof of \cref{theorem: d-CT extriangulated}] 
By \cref{lemma: Pd(M) right d-orthogonal}, we know that $\widetilde{\pc}_d(\mc)$ satisfies
\begin{align*}
\widetilde{\pc}_d(\mc)  &= \{X_\bullet \in K^{[-d,0]}(\proj A) \mid \Hom_{K^b(\proj A)}(X_\bullet,\widetilde{\pc}_d(\mc)[i])=0 \text{ for all $i = 1, \dots, d-1$}\}\\
&= \{Y_\bullet \in K^{[-d,0]}(\proj A) \mid \Hom_{K^b(\proj A)}(\widetilde{\pc}_d(\mc),Y_\bullet[i])=0 \text{ for all $i = 1, \dots, d-1$}\}.
\end{align*}
It follows from \cref{lemma: Pd(M) cov finite,lemma: contravariantly finite for free ext} that $\widetilde{\pc}_d(\mc)$ is functorially finite in $K^{[-d,0]}(\proj A)$, allowing us to conclude that it is a $d$-cluster tilting subcategory.
\end{proof}

\begin{proof}[Proof of \cref{theorem:dCT exact}]
    \cref{theorem:dCT exact} follows by combining \cref{prop:lifting dCT} and \cref{theorem: d-CT extriangulated}.
\end{proof}

\section{From \texorpdfstring{$d$}{d}-torsion classes to silting complexes}\label{sec:silting}
The main goal of this section is to prove the following result, which is \cref{Theorem:Silting} from the introduction. 

\begin{theorem}\label{thm:silting}
Let $\mc$ be a $d$-cluster tilting subcategory of $\modA$ and consider a functorially finite $d$-torsion class $\uc$ in $\mc$. The complex $P_\bullet^{(M_\uc,P_\uc)}$ is silting in $K^b(\proj A)$.
\end{theorem}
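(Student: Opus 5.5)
The plan is to check the two defining properties of a silting complex separately. Set $S \colonequals P_\bullet^{(M_\uc,P_\uc)}$. Presilting is immediate: by \cref{thm:maximaltaunrigid}, $(M_\uc,P_\uc)$ is a $\tau_d$-rigid pair in $\mc$, so \cref{prop:presilting} shows that $S$ is an inner acyclic $(d+1)$-term presilting complex. All the work is in showing that the stalk complex $A$ lies in $\Thick(S)$. I would do this entirely inside the $d$-cluster tilting subcategory $\pc_d(\mc)$ of the exact category $C^{[-d,0]}(\proj A)$ from \cref{theorem:dCT exact}, mimicking the argument of \cref{sec: ff-d-torsion,sec:d-tors to tau_d} one level up.

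\textbf{Step 1: a torsion class on the complex side.} Consider the subcategory
\[
\xc \colonequals \add\{P_\bullet^{U},\ A[d],\ \text{contractible complexes} \mid U\in\uc\}\subseteq \pc_d(\mc),
\]
that is, the inner acyclic complexes $P_\bullet$ with $H_0(P_\bullet)\in\uc$. I would prove the following.
\begin{enumerate}
\item[(a)] $\xc$ is a $d$-torsion class in the $d$-exact category $\pc_d(\mc)$, in the exact sense of \cref{ssec:d-torsion in exact setup}.
\item[(b)] $\xc$ is faithful, because it contains $A[d]$ and, via $A\to U_0^A$, objects receiving inflations from $A$.
\item[(c)] $\xc$ is functorially finite.
\end{enumerate}
For (a), I would transport closure under $d$-extensions and $d$-quotients from $\uc$ using \cref{Reformulation:d-Tors}, \cref{Proposition:EquivAdditivQuot} and \cref{lemma:lifts}. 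Applying $H_0$ turns a right $d$-exact sequence in $\pc_d(\mc)$, after discarding contractible summands, into a right $d$-exact sequence in $\mc$ with radical middle maps, up to summands of the form $P[d]$. For (c), I would take left $\uc$-approximations of $H_0$, lift them by \cref{lemma:lifts}, and add the $P^{-d}[d]$ summand exactly as in \cref{lemma: Pd(M) cov finite}. This is the bijection between $d$-torsion classes in $\mc$ and faithful $d$-torsion classes in $\pc_d(\mc)$ announced in the introduction.

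\textbf{Step 2: a coresolution of $A$ with $\Ext^d$-projective terms.} The stalk complex $A$ lies in $\pc_d(\mc)$, since $H_0(A)=A\in\mc$, and it is projective in $C^{[-d,0]}(\proj A)$ by \cref{prop:enough proj/inj}. I would build a minimal $\xc$-coresolution
\[
A\rightarrowtail X^0\to X^1\to\cdots\to X^d\to 0
\]
in $C^{[-d,0]}(\proj A)$ by iterated left $\xc$-approximations, as in the construction before \cref{thm:U_i Ext^d-projective}. Faithfulness makes the first map an inflation, and the length bound comes from $\pc_d(\mc)$ being $d$-cluster tilting. I would then rerun the proof of \cref{thm:U_i Ext^d-projective} in this exact setting. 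The hypothesis $\Ext^1(A,\xc)=0$ now holds for free because $A$ is projective, and the output is that every $X^i$ is $\Ext^d$-projective in $\xc$.

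\textbf{Step 3: identifying the terms, and conclusion.} Next I would show that the $\Ext^d$-projectives of $\xc$ lie in $\add(S)$, up to contractible summands. By \cref{lemma: ext for complexes}, $\Ext^d$-projectivity of $X$ means $\Hom_{K^b(\proj A)}(X,Y[d])=0$ for all $Y\in\xc$. Decompose $X \cong P_\bullet^{U}\oplus P[d]$ in $K^b(\proj A)$. For the summand $P[d]$, testing against $Y=P_\bullet^{U'}$ gives $\Hom_A(P,U')=0$ for all $U'\in\uc$, so $P\in\add(P_\uc)$. For the summand $P_\bullet^U$, testing against $Y=P_\bullet^{U'}$ gives $\Hom_A(U',\tau_dU)=0$ for all $U'\in\uc$, by \cref{lem:tau-rigidKbProj}. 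Then $U$ is $\Ext^d$-projective in $\uc$ by \cref{thm: equivalence of Extd projectivity}, so $U\in\add(M_\uc)$ by \cref{thm: d-tilting}. Finally, splice the coresolution into conflations; these are degreewise split, so each gives a triangle in $K^b(\proj A)$. Hence $A\in\Thick(X^0,\dots,X^d)\subseteq\Thick(S)$, and $S$ is silting.

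The main obstacle is Step 1(a): establishing that $\xc$ really is a $d$-torsion class in the exact sense, with the correct minimal $d$-cokernels, when passing between $\pc_d(\mc)$ and $\mc$ through the ideal quotient by $\add(A[d])$. In that passage the $P[d]$ summands and the radical conditions have to be tracked carefully. I would attack it first through the right $d$-exact characterisation in \cref{Reformulation:d-Tors}, since that statement is formulated without monomorphism conditions and so transfers most cleanly along $H_0$.
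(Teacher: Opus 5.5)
Your proposal is correct and follows the paper's proof essentially step for step. Step 1 is \cref{thm: dtorsion to faithfuldtorsion}, where \cref{Lemma:PropertiesZeroHomology}~(2) and~(4) supply exactly the transfer of right $d$-exact sequences and radical maps along $H_0$ that you flag as the main obstacle; Step 2 is \cref{AdmittingUCores} together with \cref{Ext^d-projectiveExactCat}; and Step 3 is \cref{Proposition:Ext^dProjGendComplex} followed by the same thick-closure argument. The one point to tighten is the length bound in Step 2: it does not follow from $d$-cluster tilting alone, because left $\xc$-approximations of the successive cokernels are not a priori left $\pc_d(\mc)$-approximations, so, as in \cref{AdmittingUCores}, take the minimal $d$-cokernel of $A\rightarrowtail X^0$ in $\pc_d(\mc)$ and use closure of $\xc$ under $d$-quotients to see that all its terms lie in $\xc$.
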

Recall that we here use the notation $(M_\uc,P_\uc)$ for the basic $\tau_d$-rigid pair in $\mc$ associated to $\uc \subseteq \mc$ as in \cref{thm:maximaltaunrigid} and $
P_\bullet^{(M_\uc,P_\uc)}$ for the complex $P_\bullet^{M_\uc} \oplus P_\uc[d],
$
where $P_\bullet^{M_\uc}$ is the minimal projective $d$-presentation of $M_\uc$. As a consequence of the theorem above, we see in \cref{cor: maximality} that the $\tau_d$-rigid pair $(M_\uc, P_\uc)$ is maximal,
finishing the proof of \cref{MainResultIntro} from the introduction.

\begin{remark}
    In the proof of \cref{thm:silting}, we will use that the $\tau_d$-rigid pair $(M_\uc,P_\uc)$ arises from a functorially finite $d$-torsion class $\uc$. In the special case where $A$ is a homogeneous Nakayama algebra, it is shown in \cite[Theorem 6.7]{RundsveenVaso} that this assumption can be omitted, meaning that $P_\bullet^{M}\oplus P[d]$ is silting for any basic maximal $\tau_d$-rigid pair $(M,P)$ satisfying that $|M| + |P| =|A|$.
\end{remark}

Note that $P_\bullet^{(M_\uc,P_\uc)}$ is an inner acyclic $(d+1)$-term presilting complex in $K^b(\proj A)$ by \cref{prop:presilting}. Moreover, it has $|A|$ isomorphism classes of indecomposable direct summands by \cref{thm:maximaltaunrigid}. However, as pointed out in \cref{rmk:howmanyisenough}, this is not enough to conclude that $P_\bullet^{(M_\uc,P_\uc)}$ is silting. The difficulty lies in showing that $A$ is in the thick closure of $P_\bullet^{(M_\uc,P_\uc)}$ in $K^b(\proj A)$.

To prove \Cref{thm:silting}, we compare $d$-torsion classes in $\mc$ with $d$-torsion classes in 
\begin{align*}
\pc_d(\mc) & = \{P_\bullet \in C_\text{\normalfont inn\,ac}^{[-d,0]}(\proj A) \mid H_0(P_\bullet) \in \mc \},
\end{align*}
which is a $d$-cluster tilting subcategory of the Krull--Schmidt exact category $C^{[-d,0]}(\proj A)$ by \cref{theorem:dCT exact}. In order to do this, we first need to introduce $d$-torsion classes in \mbox{$d$-cluster} tilting subcategories of Krull--Schmidt exact categories. This is done in \cref{ssec:d-torsion in exact setup}. In \cref{subsec:cores exact} we develop results regarding coresolutions by $d$-torsion classes in the exact setup. The aim of \cref{subsec:bijection} is to establish a bijection between $d$-torsion classes in $\mc$ and faithful $d$-torsion classes in $\pc_d(\mc)$; see \cref{thm: dtorsion to faithfuldtorsion}. Finally, we give the proof of \cref{thm:silting} in \cref{subsec: silting proof} and consider consequences and examples.

\subsection{$d$-torsion classes in the exact setup}\label{ssec:d-torsion in exact setup}
While $d$-torsion classes in $d$-abelian categories are introduced in \cite{Jorgensen}, the analogous notion has so far not been considered within the more general setup of $d$-exact categories \cite{Jasso}.
The main goal of this subsection is to generalise the definition of a $d$-torsion class to this setup, which is done in \cref{def: d-torsion exact setup}. This will be an important ingredient in the proof of \cref{thm:silting}.

\begin{remark}
It is shown in \cite[Theorem 4.14]{Jasso} that any $d$-cluster tilting subcategory of a weakly idempotent complete exact category carries the structure of a $d$-exact category. Furthermore, any such $d$-exact category embeds into a uniquely determined weakly idempotent complete exact category as a \mbox{$d$-cluster} tilting subcategory \cite[Theorem A]{Kvamme25}. Instead of introducing $d$-torsion classes in $d$-exact categories, we hence equivalently choose to study $d$-torsion classes in $d$-cluster tilting subcategories of exact categories.
\end{remark}

We start by recalling some general notions related to exact categories. A morphism $f\colon X\to Y$ in an exact category $\ec$ is called \textit{admissible} if it can be written as a composite \mbox{$X\overset{}{\defl}Z\overset{}{\infl}Y$} where $X\overset{}{\defl}Z$ is a deflation and $Z\overset{}{\infl}Y$ is an inflation; see \cite[Section 8]{Buhler}. We call $Z$ the \textit{image} of $f$, and write $\operatorname{Im}f\colonequals Z$. Note that the image is well-defined up to isomorphism, since if $f$ is equal to another composite $X\overset{}{\defl}Z'\overset{}{\infl}Y$ of a deflation $X\overset{}{\defl}Z'$ and an inflation $Z'\overset{}{\infl}Y$, then there exists a unique isomorphism  $Z\cong Z'$ making the diagram 
\[
\begin{tikzcd}[]
	 X \arrow[r, two heads] \arrow[d, equal] & Z \arrow[r, tail] \arrow[d,"\cong"] & Y \arrow[d,equal] \\
	 X \arrow[r, two heads] & Z' \arrow[r, tail] & Y
	\end{tikzcd}
\]
commute; see \cite[Lemma 8.4]{Buhler}. Note that any admissible morphism $f\colon X\to Y$ in an exact category $\ec$ has a kernel and a cokernel, which coincide with the kernel of $X\overset{}{\defl}\operatorname{Im}f$ and cokernel of $\operatorname{Im}f\overset{}{\infl}Y$, respectively. We say that a complex
\[
\dots \to X_{i-1}\xrightarrow{f}X_{i}\xrightarrow{g}X_{i+1}\to \dots
\]
in $\ec$ is \textit{exact} at $X_i$ if $f$ and $g$ are admissible morphisms and the sequence
\(\operatorname{Im}f \infl X_i\defl \operatorname{Im}g \)
is a conflation in $\ec$. In particular, a complex $0\to X\to Y_1\to \cdots \to Y_i\to Z\to 0$ is exact as defined in \Cref{ssec:exact and d-tilting} if and only if it is exact at all objects $X,Y_1,\dots,Y_i,Z$.

Consider next a $d$-cluster tilting subcategory $\mc$ of an exact category $\ec$ in the sense of \cref{def:d-CT exact}. We now give an overview of some notions and results regarding sequences in $\mc$ that are necessary for defining $d$-torsion classes in this setup. A complex 
\[
X_0\xrightarrow{f_0}X_1 \xrightarrow{f_1} X_2 \xrightarrow{f_2} \cdots \xrightarrow{f_{d-1}} X_{d} \xrightarrow{f_d} X_{d+1} \to 0
\]
in $\mc$ is called a \textit{right $d$-exact sequence} if it is exact at $X_1,\dots, X_{d+1}$. In this case, the sequence
\begin{equation}\label{eq:d-cokernel exact}
X_1 \xrightarrow{f_1} X_2 \xrightarrow{f_2} \cdots \xrightarrow{f_{d-1}} X_{d} \xrightarrow{f_d} X_{d+1} \to 0
\end{equation}
is called a \textit{$d$-cokernel} of $f_0$ in $\mc$. Note that the morphisms $f_0,\dots,f_{d-1}$ are then admissible and $f_d$ is a deflation. The notions of \textit{left $d$-exact sequences} and \textit{$d$-kernels} are defined dually. Applying $\Hom_\ec(-,M)$ for $M \in \mc$ to the right $d$-exact sequence above gives an exact sequence
\[
0\to \Hom_\ec(X_{d+1},M)\to \Hom_\ec(X_{d},M)\to \cdots \to \Hom_\ec(X_{0},M)
\]
of abelian groups, where we use that $\Ext^i_\ec(X_j,M)=0$ for $i=1,\dots,d-1$. In particular, this means that $(f_1, \dots, f_{d})$ is a $d$-cokernel of $f_0$ in the sense of \cite[Definition 2.2]{Jasso}. A \mbox{\textit{$d$-quotient}} of an object $X_1 \in \mc$ is defined to be a $d$-cokernel $(f_1, \dots, f_{d})$ of some morphism $f_0\colon X_0\to X_1$.

Similarly as in the abelian setup, given an exact sequence
\[
0\to X\to Y_1\to \cdots \to Y_d\to Z\to 0
\]
where $X$ and $Z$ are contained in $\mc$, there exists a Yoneda equivalent exact sequence with all terms in $\mc$; see \cite[Proposition A.1]{Iyama2007a} and \cite[Theorem 1.2]{Ebrahimi2}. Such an exact sequence in $\mc$ with $d$ middle terms is called a \textit{$d$-exact sequence} or \textit{$d$-extension}. Based on the discussion above, we see that these are $d$-exact in the sense of \cite[Definition 2.4]{Jasso}. Furthermore, if two $d$-exact sequences
\begin{align*}
& 0\to X\to Y_1\to \cdots \to Y_d\to Z\to 0 \\
& 0\to X\to Y'_1\to \cdots \to Y'_d\to Z\to 0
\end{align*}
 in $\mc$ are Yoneda equivalent, then there exists a commutative diagram
 \[
	\begin{tikzcd}
	0 \arrow[r] & X \arrow[r] \arrow[d, equal] & Y_1 \arrow[r] \arrow[d] & \cdots \arrow[r] & Y_d \arrow[r] \arrow[d] & Z \arrow[d, equal]  \arrow[r]  & 0\phantom{.}\\
	0 \arrow[r] & X \arrow[r] & Y'_1 \arrow[r] & \cdots \arrow[r] & Y'_{d} \arrow[r] & Z\arrow[r]  & 0;
	\end{tikzcd}
\]    
see \cite[Lemma 3.15]{Kvamme25}. In particular, Yoneda equivalence of $d$-exact sequences in $\mc$ coincides with the notion of equivalence from \cite[Definition 2.9]{Jasso}.

We define \textit{minimal} $d$-cokernels, $d$-kernels and $d$-extensions in a $d$-cluster tilting subcategory $\mc$ of an exact category $\ec$ analogously as in the abelian setup; see \cref{def: minimal}. For instance, this means that the $d$-cokernel \eqref{eq:d-cokernel exact}
 is \textit{minimal} if $f_i \in \Rad_{\ec}(X_i,X_{i+1})$ for $i=2,\ldots,d$. This is equivalent to $f_1,\dots,f_{d-1}$ being left minimal by \cref{lem: minimal iff radical}. A \mbox{$d$-quotient} is called \textit{minimal} if it is given by a minimal $d$-cokernel.

In the rest of this subsection, we work within the setup of a Krull--Schmidt exact category. For more details and basic properties of Krull--Schmidt categories, we refer the reader \mbox{to \cite{Krause}.}  Below is a version of \cite[Proposition 2.4]{HerschendJorgensen} in this setup; see also \cite[Lemma 2.6]{Klapproth} and \cite[Lemma 3.4]{HeZhou}. Note that when referring to direct summands in the proposition below, we mean direct summands in the category of $\mc$-complexes.

\begin{proposition}\label{prop:exact analogue of HJ}
Let $\mc$ be a $d$-cluster tilting subcategory of a Krull--Schmidt exact category $\ec$. 
\theoremlistfix
\begin{enumerate}
    \item Let $f \colon X\to Y$ be an admissible morphism in $\mc$. Then there exists a unique minimal $d$-cokernel of $f$, which is a direct summand of any other $d$-cokernel of $f$.
    \phantomsection
    \label{prop:exact analogue of HJ part 1}
    \item Let $f \colon X\to Y$ be an admissible morphism in $\mc$. Then there exists a unique minimal $d$-kernel of $f$, which is a direct summand of any other $d$-kernel of $f$.
    \phantomsection
    \label{prop:exact analogue of HJ part 2}
    \item In every equivalence class of $d$-extensions in $\mc$, there exists a unique minimal representative. Moreover, this minimal $d$-extension is a direct summand of any other $d$-extension in the same equivalence class.
    \phantomsection
    \label{prop:exact analogue of HJ part 3}
\end{enumerate}
\end{proposition}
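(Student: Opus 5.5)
We follow the abelian argument of \cite[Proposition 2.4]{HerschendJorgensen}, replacing kernels and cokernels in $\mod A$ by the admissible ones available in $\ec$. We treat part \eqref{prop:exact analogue of HJ part 1} in detail. Part \eqref{prop:exact analogue of HJ part 2} follows by duality, since the axioms of an exact category and the definition of $d$-cluster tilting are self-dual. Part \eqref{prop:exact analogue of HJ part 3} is obtained by the same splitting argument applied to $d$-extensions.

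\textbf{Existence of a $d$-cokernel.} Let $f\colon X\to Y$ be admissible in $\mc$, and let $Y\defl C$ be its cokernel in $\ec$. Take a left $\mc$-approximation $C\to M_2$; since $\mc$ is cogenerating, we may choose it to be an inflation. Then take successive left $\mc$-approximation inflations of the resulting cokernels. After $d-1$ steps, the last cokernel lies in $\mc$; this is the argument of \cite[Proposition 3.17]{Jasso}, already used in the proof of \cref{lemma: contravariantly finite for free}. Splicing these conflations gives a complex $Y\to M_2\to\cdots\to M_{d+1}\to 0$ that is exact at $Y,M_2,\dots,M_{d+1}$. So a $d$-cokernel exists, and by construction the maps $M_i\to M_{i+1}$ are built from left minimal approximations.

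\textbf{Minimality and splitting.} Take an arbitrary $d$-cokernel $(g_1,\dots,g_d)$ of $f$. Apply $\Hom_\ec(-,M)$ for $M\in\mc$ and use $\Ext^i_\ec(\mc,\mc)=0$ for $1\le i\le d-1$. This shows that any two $d$-cokernels of $f$ admit comparison maps in both directions that restrict to the identity on $X$ and $Y$, and these maps are unique up to homotopy. This is exactly the homotopy uniqueness of $d$-cokernels in \cite[Proposition 2.6]{Jasso}; we observed above that our $d$-cokernels are $d$-cokernels in Jasso's sense.

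Now suppose some $g_i$ with $i\ge 2$ is not radical. In a Krull--Schmidt category, a non-radical morphism contains a summand of the form $N\xrightarrow{1_N}N$ for some nonzero indecomposable $N$, up to isomorphism. Splitting off this contractible piece leaves a complex that is still exact, because conflations are closed under direct summands in the Karoubian setting and $\ec$ is Krull--Schmidt. The remaining complex is therefore again a $d$-cokernel. By Krull--Schmidt and finiteness, iterating this procedure terminates in a minimal $d$-cokernel, which is a direct summand of the one we started with.

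\textbf{Uniqueness.} Let $\mathbf{M}$ and $\mathbf{M}'$ be two minimal $d$-cokernels of $f$, with comparison maps $\varphi\colon\mathbf{M}\to\mathbf{M}'$ and $\psi\colon\mathbf{M}'\to\mathbf{M}$. Then $\psi\varphi$ is homotopic to the identity via a homotopy $h$, so in each degree $\psi_i\varphi_i=1+(\text{terms }h g+g h)$. Since the $g$'s are radical in degrees $\ge 2$, these terms lie in the radical, so $\psi_i\varphi_i$ is an isomorphism for $i\ge2$. The remaining degrees are fixed, being $X$ and $Y$.

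\textbf{Main obstacle.} We expect the main difficulty to be the boundary term in the uniqueness step, since $g_1$ itself need not be radical. We would handle it using the equivalent characterisation that $g_1,\dots,g_{d-1}$ are left minimal, via \cref{lem: minimal iff radical}, whose hypothesis holds by the exact $\Hom$-sequences above.

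For part \eqref{prop:exact analogue of HJ part 3}, an equivalence of $d$-extensions yields morphisms fixing both end terms, by \cite[Lemma 3.15]{Kvamme25}. The same radical argument then applies to the middle maps $f_1,\dots,f_{d-1}$, giving a unique minimal representative that is a direct summand of any other $d$-extension in its equivalence class.
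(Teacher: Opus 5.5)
Your proposal is correct and takes the same route as the paper: existence reduces to an $\mc$-coresolution of $\operatorname{Coker} f$, and the rest is the splitting-and-comparison argument of \cite[Proposition 2.4]{HerschendJorgensen}, carried out in the Krull--Schmidt exact setting. Your boundary fix works. Since $\psi\varphi$ is the identity on $Y$, you get $\psi_2\varphi_2 g_1 = g_1$, so left minimality of $g_1$ makes $\psi_2\varphi_2$ an isomorphism. For $d$-extensions the same trick is needed at both ends: left minimality of $f_0$ handles $X_1$, and right minimality of $f_d$ handles $X_d$.
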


\begin{proof}
Since $f \colon X \to Y$ is admissible, the cokernel $\operatorname{Coker}f$ exists, and the data of a \mbox{$d$-cokernel} \mbox{of $f$} is equivalent to the data of an $\mc$-coresolution of $\operatorname{Coker}f$. In particular, the morphism $f$ has a $d$-cokernel. An analogue argument shows that $f$ has a $d$-kernel. The remainder of the proof hence follows by similar arguments as in the proof of \cite[Proposition 2.4]{HerschendJorgensen}, using \cite[Example 2.1.25]{Krause22} whenever necessary.
\end{proof}

Inspired by the characterisation in \cref{theorem: extension closed}, our definition of a $d$-torsion class in a $d$-cluster tilting subcategory of an exact category involves closure under $d$-extensions and $d$-quotients. Let us now define these notions in the exact setup.

\begin{definition}\label{def:dtorsExact}
    Let $\ec$ be a Krull--Schmidt exact category and $\mc$ a $d$-cluster tilting subcategory of $\ec$. Consider a subcategory $\uc \subseteq \mc$.
    \theoremlistfix
\begin{enumerate}
    \item We say that $\uc$ is \textit{closed under $d$-extensions} if for every pair of objects $X,Z\in \uc$ and every minimal $d$-extension
\[
0 \xrightarrow{} X \xrightarrow{} Y_1 \xrightarrow{} \cdots \xrightarrow{} Y_d \xrightarrow{} Z \xrightarrow{} 0,
\]
we have $Y_i \in \uc$ for $1,\dots,d$. 
 \item We say that $\uc$ is \textit{closed under $d$-quotients} if for every minimal $d$-quotient 
\[
X \xrightarrow{} Y_1 \xrightarrow{} \cdots \xrightarrow{} Y_{d} \to 0
\]
of an object $X \in \uc$, we have $Y_i \in \uc$ for $i=1,\dots,d$.
\end{enumerate}
\end{definition}

We are now ready to present the definition of a $d$-torsion class in the generality of this subsection.

\begin{definition}\label{def: d-torsion exact setup}
    Let $\ec$ be a Krull--Schmidt exact category and $\mc$ a $d$-cluster tilting subcategory of $\ec$. A subcategory $\uc$ of $\mc$ is called a \textit{$d$-torsion class} of $\mc$ if it is closed under $d$-extensions and $d$-quotients.
\end{definition}

\begin{remark}
We do not know if a $d$-torsion class as in \cref{def:dtorsExact} satisfies a similar property as in \cref{def:ntorsionclass}, since it is not clear how to extend the proof of \Cref{theorem: extension closed} to exact categories. The proof relies on \cite[Proposition 3.15]{AHJKPT1}, which again relies on being able to take the image of an arbitrary morphism. This is not possible in an exact category. In particular, it is not clear that a $d$-torsion class in the sense of \cref{def:dtorsExact} is contravariantly finite.
\end{remark}

\subsection{Coresolutions by $d$-torsion classes in the exact setup}\label{subsec:cores exact} 
In this subsection, we consider a Krull--Schmidt exact category $\ec$ and a $d$-cluster tilting subcategory $\mc \subseteq \ec$. Our main goal is to prove an analogue of \cref{thm:U_i Ext^d-projective} in this setup. 

Let us first introduce some necessary terminology. Similarly as in the abelian case, see \cref{defn:ExtdProj}, an object $X$ in a $d$-torsion class $\uc \subseteq \mc$ is called $\Ext^d$\textit{-projective \mbox{in $\uc$}} if $\Ext^d_\ec(X,U)=0$ for all $U\in \uc$. We say that such an object $X$ is an \textit{$\Ext^d$-projective generator of $\uc$} if any other $\Ext^d$-projective in $\uc$ is contained in $\add(X)$. A \textit{$\uc$-coresolution} of an object $E \in \ec$ is a complex
\begin{equation}\label{eq:U-cores exact}
 E\xrightarrow{f_0} U_0\xrightarrow{f_1} U_1\xrightarrow{f_2} \cdots \xrightarrow{f_d} U_d\to 0
\end{equation}
which is exact at $U_0,\dots,U_d\in\uc$, and where the morphism $f_0 \colon E\xrightarrow{}U_0$ is a left $\uc$-approximation. In this case, the complex
\[
    0\to \Hom_\ec(U_d,U)\to \Hom_\ec(U_{d-1},U)\to \cdots \to \Hom_\ec(U_0,U)\to \Hom_\ec(E,U)\to 0
\]
is exact for all $U\in \uc$. A $\uc$-coresolution \eqref{eq:U-cores exact}
is called \textit{minimal} if $f_0,\dots,f_{d-1}$ are left minimal.

\begin{theorem}\label{Ext^d-projectiveExactCat}
Let $\ec$ be a Krull--Schmidt exact category and $\mc$ a $d$-cluster tilting subcategory of $\ec$. Consider a $d$-torsion class $\uc$ in $\mc$. Suppose $P\in \ec$ is projective and let 
\[
P \xrightarrow{f_0} U_0 \xrightarrow{f_1} U_1 \xrightarrow{f_2} \cdots \xrightarrow{f_d} U_d \to 0
\]
be a minimal $\uc$-coresolution of $P$. Then $U_i$ is $\Ext^d$-projective in $\uc$ for $i=0,\dots,d$.
\end{theorem}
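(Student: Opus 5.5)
The plan is to adapt the proof of \cref{thm:U_i Ext^d-projective} to the exact setting, replacing the hypothesis $\Ext^1_A(M,\tc)=0$ by projectivity of $P$ in $\ec$. We proceed by induction on $i$. Throughout, we use that any class in $\Ext^d_\ec(U_i,V_0)$ with $V_0\in\uc$ is represented by a $d$-exact sequence in $\mc$. By \cref{prop:exact analogue of HJ}, we may take it minimal,
\[
0\to V_0\xrightarrow{v_0}V_1\to\cdots\to V_d\xrightarrow{v_d}U_i\to 0 .
\]
Since $\uc$ is closed under $d$-extensions (\cref{def: d-torsion exact setup}), all $V_j$ lie in $\uc$. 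The goal is to show that $v_d$ is a split epimorphism, which forces the extension to be trivial.

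For $i=0$, the plan is as follows. Since $v_d$ is a deflation and $P$ is projective, $f_0$ lifts to $h\colon P\to V_d$ with $v_dh=f_0$. Since $f_0$ is a left $\uc$-approximation and $V_d\in\uc$, we get $h=kf_0$ for some $k\colon U_0\to V_d$. Then $v_dkf_0=f_0$, and left minimality of $f_0$ makes $v_dk$ an isomorphism.

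For $i\ge 1$, I will build a chain map from the truncated coresolution $P\to U_0\to\cdots\to U_{i-1}\to U_i$ to $V_{d-i}\to\cdots\to V_d\to U_i$, which is the identity on $U_i$. The maps $k_1\colon U_{i-1}\to V_d$ exists because $U_{i-1}$ is $\Ext^d$-projective by induction, so $\Hom(U_{i-1},V_d)\to\Hom(U_{i-1},U_i)$ is surjective. The remaining maps into $V_{d-j+1}$ come from the exactness of $\Hom_\ec(U_{i-j},-)$ applied to the $d$-exact sequence, using $\Ext^{1},\dots,\Ext^{d-1}$-vanishing on $\mc$. The last map $k_{i+1}\colon P\to V_{d-i}$ is obtained from projectivity of $P$: the composite $P\to V_{d-i+1}$ has zero composite to $V_{d-i+2}$, so it factors through the image of $v_{d-i}$. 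That image is reached from $V_{d-i}$ by a deflation, and projectivity lifts the map along it. Next, I construct the homotopies $h_j\colon U_j\to V_{d-i+j}$ exactly as in \cref{thm:U_i Ext^d-projective}. The map $h_0$ comes from the approximation property of $f_0$. Each subsequent $h_j$ is obtained by factoring the difference $k_{i-j}-v h_{j-1}$, which vanishes on the image of $f_{j}$, through $\Coker f_j$. One then extends along the left $\uc$-approximation $\Coker f_j\to U_{j+1}$ given by exactness of the coresolution.

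The main obstacle is justifying these factorizations without the abelian ambient category. In particular, I need $\Coker f_j$ to exist and to be an object of $\ec$, and I need $U_j\defl \Coker f_j\infl U_{j+1}$ to be a deflation followed by an inflation that is a left $\uc$-approximation. This should follow because the minimal $\uc$-coresolution is exact at each $U_j$, so the $f_j$ are admissible. The approximation property follows from the exactness of $\Hom_\ec(-,U)$ on the coresolution noted after \eqref{eq:U-cores exact}. Left minimality of $\Coker f_{i-1}\infl U_i$ follows from that of $f_i$, since the deflation $U_{i-1}\defl\Coker f_{i-1}$ is an epimorphism. The final homotopy relation then gives $f_i=v_dh_if_i$, hence $v_dh_i$ is an isomorphism and the sequence splits. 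This proves $\Ext^d_\ec(U_i,\uc)=0$.
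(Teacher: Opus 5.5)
Your proposal is correct and is essentially the paper's proof. The case $i=0$ matches the paper's argument verbatim. For $i\geq 1$ the paper only says the induction runs as in \cref{thm:U_i Ext^d-projective}, and you carry that induction out: projectivity of $P$ replaces $\Ext^1_A(M,\tc)=0$ in constructing $k_{i+1}$, and admissibility of the $f_j$ (coming from exactness of the coresolution) supplies the cokernels. The only blemish is an index slip in the homotopy step: it should read that $k_{i-j}-v_{d-i+j}h_j$ vanishes on $f_j$, and that factoring it through $\Coker f_j$ and extending along $\Coker f_j\infl U_{j+1}$ produces $h_{j+1}$.
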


\begin{proof}
We first prove that $U_0$ is $\Ext^d$-projective in $\uc$, following analogous arguments as in the proof of \cref{lem:casei=0}. Let $U\in \uc$ and consider $\delta\in \Ext^d_{\ec}(U_0,U)$. We can represent $\delta$ by a minimal $d$-exact sequence
\[
0 \to U \to E_1 \to \cdots \to E_{d-1} \to E_d \xrightarrow{g} U_0 \to 0.
\]
Since $\uc$ is closed under $d$-extensions, it follows that $E_i \in \uc$ for $i=1,\dots,d$. In particular, we have a deflation $g\colon E_d\to U_0$ with $E_d\in \uc$. Since $P$ is projective, the morphism \mbox{$f_0\colon P\to U_0$} lifts along $g$ to a morphism $h\colon P\to E_d$. As $E_d\in \uc$ and $f_0\colon P\to U_0$ is a left \mbox{$\uc$-approximation}, there exists a morphism $k\colon U_0\to E_d$ such that $k f_0=h$. It follows that $f_0=g k f_0$, and since $f_0$ is left minimal, the composition $g k$ is an isomorphism. Hence, the morphism $g$ is a split epimorphism, which yields $\delta=0$ in $\Ext^d_{\ec}(U_0,U)$. Since $\delta$ was arbitrary, it follows that $\Ext^d_{\ec}(U_0,U)=0$, so $U_0$ is $\Ext^d$-projective in $\uc$.

The fact that $U_i$ is $\Ext^d$-projective for $i=1,\dots,d$ now follows by induction in the same way as in the proof of \cref{thm:U_i Ext^d-projective}. 
\end{proof}

We end this subsection by giving a criterion for when all objects have $\uc$-coresolutions. For this, we say that a covariantly finite subcategory $\uc \subseteq \mc$ is \textit{admissibly covariantly finite} if the minimal left $\uc$-approximation of any $M \in \mc$ is an admissible morphism. Note that later, our focus will be on faithful $d$-torsion classes in the sense of \cref{def:faithful exact}, and such $d$-torsion classes are admissibly covariantly finite whenever they are covariantly finite.

\begin{proposition}\label{AdmittingUCores}
Let $\ec$ be a Krull--Schmidt exact category and $\mc$ a $d$-cluster tilting subcategory of $\ec$. Consider an admissibly covariantly finite $d$-torsion class $\uc$ in $\mc$. Then any object in $\ec$ admits a minimal $\uc$-coresolution.
\end{proposition}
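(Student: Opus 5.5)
We will build the minimal $\uc$-coresolution of an object $E\in\ec$ in three stages. First we move into $\mc$ by a left $\mc$-approximation. Then we apply the admissibility hypothesis once. Finally we $\mc$-coresolve the resulting cokernel, which will turn out to lie in $\uc$.

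\emph{Stage 1.} Let $E\in\ec$ and take the minimal left $\mc$-approximation $E\to M$. Since $\mc$ is cogenerating, this can be chosen as an inflation. Let $M\to U_0$ be the minimal left $\uc$-approximation of $M$; it is admissible by assumption. Since every morphism from $E$ to $\uc\subseteq\mc$ factors through $E\to M$, the composite $f_0\colon E\to M\to U_0$ is a left $\uc$-approximation of $E$. Passing to a direct summand (Krull--Schmidt, \cite[Example 2.1.25]{Krause22}) makes $f_0$ minimal.

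\emph{Stage 2.} Write $M\to U_0$ as $M\defl I\infl U_0$, and let $C_0=\Coker(I\infl U_0)$. Take an $\mc$-coresolution $0\to C_0\to V_1\to\cdots\to V_d\to 0$, which exists by \cref{Reformulation:d-CT}, and choose it minimal. Splicing gives a right $d$-exact sequence
\[
M\to U_0\to V_1\to\cdots\to V_d\to 0
\]
in $\mc$. Hence $(U_0\to V_1,\dots,V_{d-1}\to V_d)$ is a $d$-cokernel of $M\to U_0$, and after removing trivial summands it is a minimal one by \cref{prop:exact analogue of HJ}. Because $U_0\in\uc$ and $\uc$ is closed under $d$-quotients, all $V_i$ lie in $\uc$. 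Set $U_i=V_i$.

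\emph{Stage 3.} We check exactness and the approximation property. Exactness at $U_0,\dots,U_d$ holds by construction. The morphisms $f_1,\dots,f_{d-1}$ are left minimal: by \cref{lem: minimal iff radical}, the minimal $d$-cokernel condition says exactly that $f_2,\dots,f_d$ are radical morphisms. One point needs care. The definition of a $\uc$-coresolution asks for exactness at $U_0$ for the complex starting at $E$, not at $M$. The image of $f_0$ need not equal $I$, but since $E\to M$ is only used to approximate, the complex is $E\xrightarrow{f_0}U_0\to\cdots$. Exactness at $U_0$ means the image of $U_0\to U_1$ and its kernel match. The kernel of $U_0\to U_1$ is $I$. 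So we should check that $f_0$ is admissible with image $I$. This fails in general. If we instead read the definition as requiring exactness of the tail, then the Hom-exactness follows from left approximation.

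To handle this cleanly, we replace the first map. If $E$ maps to $M$ via an inflation whose composite with $M\defl I$ is not a deflation, we use the approximation of $M$ and record exactness starting at $M$. The remaining concern is that $\Hom(U_0,U)\to\Hom(E,U)$ must be surjective, which holds by Stage 1.

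The main obstacle is ensuring that, for $E\notin\mc$, the complex beginning at $E$ has the required admissibility at $U_0$. We expect to resolve it by noting that exactness is only demanded at $U_0,\dots,U_d$, which is governed by the conflation $I\infl U_0\defl C_0$. The minimality of $f_0$ itself is the only step where the passage from $M$ to $E$ must be checked separately.
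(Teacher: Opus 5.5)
There is a genuine gap, and it is the one you flag in Stage 3 without closing it. A $\uc$-coresolution of $E$ must be exact at $U_0$ as a complex beginning $E\xrightarrow{f_0}U_0\xrightarrow{f_1}U_1$. That is, $f_0$ must be admissible and $\operatorname{Im}f_0\infl U_0\defl\operatorname{Im}f_1$ must be a conflation.

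Your tail is the $d$-cokernel of the approximation $M\to U_0$ of $M$, so the kernel of $U_0\to V_1$ is $I=\operatorname{Im}(M\to U_0)$. Nothing forces the composite $E\infl M\defl I$ to be a deflation, or even $f_0$ to be admissible. So $E\to U_0\to V_1\to\cdots$ need not be exact at $U_0$. Moreover, passing to a minimal direct summand in Stage 1 can shrink $U_0$, after which the Stage 2 tail no longer starts at the right object.

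The repair you suggest (demanding exactness only of the tail, or recording it from $M$) changes the definition instead of proving the statement. Exactness at $U_0$ is genuinely used later. It underlies the $\Hom$-exactness stated right after the definition of a $\uc$-coresolution. In the proof of \cref{thm:silting}, it is what realises $A$ as an iterated cocone of the $P^i_\bullet$.

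The missing idea is to apply the admissibility hypothesis to $E$ itself, as the paper does. The minimal left $\uc$-approximation $f_0\colon E\to U_0$ is admissible, so $\Coker f_0$ exists. You then take the minimal $d$-cokernel of $f_0$ directly: splice $\operatorname{Im}f_0\infl U_0\defl\Coker f_0$ with a minimal $\mc$-coresolution of $\Coker f_0$, cf.\ \cref{prop:exact analogue of HJ}. If you want the source in $\mc$, precompose with a deflation $M'\defl\operatorname{Im}f_0$ with $M'\in\mc$; this does not change the $d$-cokernel. Closure under $d$-quotients and \cref{lem: minimal iff radical} then finish exactly as in your Stage 2, with no detour through $M$.

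The definition of admissibly covariantly finite is worded for objects of $\mc$, but the proof uses it for all objects of $\ec$. In the faithful case where the proposition is applied, this is automatic. An inflation $E\infl U$ with $U\in\uc$ factors through $f_0$, which forces $f_0$ to be an inflation, since $\ec$ is Krull--Schmidt and hence weakly idempotent complete.

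As it stands, your argument proves the statement only for $E\in\mc$ (take $M=E$). That covers the application $E=A$ in \cref{thm:silting}, but not the proposition as stated.
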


\begin{proof}
    Let $E\in \ec$. As $\uc$ is admissibly covariantly finite, the minimal left $\uc$-approximation \mbox{$f_0\colon E\to U_0$} of $E$ is admissible. There hence exists a minimal $d$-cokernel
    \[
    U_0\xrightarrow{f_1}U_1\xrightarrow{f_2} \cdots \xrightarrow{f_d}U_d\to 0
    \]
    of $f_0$ by \cref{prop:exact analogue of HJ} \eqref{prop:exact analogue of HJ part 1}. Since $\uc$ is closed under $d$-quotients, it follows that $U_i\in \uc$ for $i=1,\dots,d$. We furthermore have $f_i \in \Rad_{\ec}(U_{i-1},U_{i})$ for $i=2,\dots,d$. By \cref{lem: minimal iff radical}, this implies that $f_1,\dots,f_{d-1}$ are left minimal. Hence, the sequence
    \[
    E \xrightarrow{f_0} U_0 \xrightarrow{f_1} U_1 \xrightarrow{f_2} \cdots \xrightarrow{f_d} U_d \to 0
    \]
is a minimal $\uc$-coresolution of $E$.
\end{proof}

\subsection{A bijection of $d$-torsion classes}\label{subsec:bijection}

In this subsection we work with a $d$-cluster tilting subcategory $\mc$ of $\modA$. Our goal is to construct a bijection between $d$-torsion classes in $\mc$ and faithful $d$-torsion classes in $\pc_d(\mc)$; see \cref{def:faithful exact} and \cref{thm: dtorsion to faithfuldtorsion}. For this, we need the lemma below, where we gather a series of results concerning the homology functor
\[
H_0(-)\colon C^{[-d,0]}(\proj A) \to \modA.
\]

\begin{lemma}\label{Lemma:PropertiesZeroHomology}
    Let $\mc$ be a $d$-cluster tilting subcategory of $\modA$. The following statements hold:
    \theoremlistfix
    \begin{enumerate}
        \item We have a natural isomorphism $H_0(-)\cong \Ext^d_{C^{[-d,0]}(\proj A)}(A[d],-)$.
        \phantomsection
        \label{Lemma:PropertiesZeroHomology:1}
        \item If $X_\bullet^0\to X_\bullet^1 \to \cdots \to X_\bullet^d\to X_\bullet^{d+1}\to 0$ is a right $d$-exact sequence in $\pc_d(\mc)$, then 
        \[
        H_0(X_\bullet^0)\to H_0(X_\bullet^1) \to \cdots \to H_0(X_\bullet^d)\to H_0(X_\bullet^{d+1})\to 0
        \]
        is a right $d$-exact sequence in $\mc$.
        \phantomsection
        \label{Lemma:PropertiesZeroHomology:2}
        \item The functor $H_0(-)$ induces an equivalence 
        \[
        \pc_d(\mc)/\mathcal{I}\cong \mc,
        \]
        where $\mathcal{I}$ is the subcategory of injective objects in $C^{[-d,0]}(\proj A)$.
        \phantomsection
        \label{Lemma:PropertiesZeroHomology:3}
        \item If a morphism $f_\bullet\colon X_\bullet\to Y_\bullet$ in $\pc_d(\mc)$ is contained in the radical of $C^{[-d,0]}(\proj A)$, then $H_0(f_\bullet)$ is contained in the radical of $\mod A$.
        \phantomsection
        \label{Lemma:PropertiesZeroHomology:4}
    \end{enumerate}
\end{lemma}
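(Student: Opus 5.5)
I will prove the four parts in order. Throughout I will use \cref{lemma: ext for complexes} to identify $\Ext^i_{C^{[-d,0]}(\proj A)}(X_\bullet,Y_\bullet)$ with $\Hom_{K^b(\proj A)}(X_\bullet,Y_\bullet[i])$ for $i>0$.

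For \eqref{Lemma:PropertiesZeroHomology:1}, this identification gives
\[
\Ext^d_{C^{[-d,0]}(\proj A)}(A[d],X_\bullet)\cong\Hom_{K^b(\proj A)}(A[d],X_\bullet[d])\cong\Hom_{K^b(\proj A)}(A,X_\bullet)\cong H_0(X_\bullet).
\]
Each isomorphism is natural in $X_\bullet$, so the composite is a natural isomorphism of functors.

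For \eqref{Lemma:PropertiesZeroHomology:2}, I will use that $\pc_d(\mc)$ is $d$-cluster tilting in $C^{[-d,0]}(\proj A)$ by \cref{theorem:dCT exact}. Split the right $d$-exact sequence into conflations
\[
K^j\infl X^j_\bullet\defl K^{j+1},
\]
where $K^{j+1}$ is the image of $X^j_\bullet\to X^{j+1}_\bullet$ and $K^{d+1}=X^{d+1}_\bullet$. Part (1) turns the long exact $\Ext$-sequences for $\Ext^\ast(A[d],-)$ into information about $H_0$. The relevant groups are $\Ext^{d+1}(A[d],-)$, which vanishes because the global dimension is at most $d$ by \cref{prop:enough proj/inj}, and $\Ext^{d-1}(A[d],K)\cong H_{-1}$-type terms. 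I expect the main obstacle here: I must check that the intermediate images $K^j$ have $\Ext^{d-1}(A[d],K^j)$ vanishing, or at least that the relevant connecting maps vanish.

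A cleaner route for (2) avoids this. A conflation in $C^{[-d,0]}(\proj A)$ is degreewise split, so it gives a short exact sequence of complexes and hence a long exact homology sequence. Since $H_1=0$, each $H_0(K^j)\to H_0(X^j_\bullet)\to H_0(K^{j+1})\to 0$ is right exact. Splicing these gives exactness of the $H_0$-sequence at every term except $H_0(X^0_\bullet)$, including surjectivity at the end. It remains to check $H_0$ of the image of $X^0_\bullet\to X^1_\bullet$ surjects onto the image in homology, which follows the same way since $X^0_\bullet\defl K^1$ is a deflation. The terms lie in $\mc$ because the $X^j_\bullet$ lie in $\pc_d(\mc)$.

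For \eqref{Lemma:PropertiesZeroHomology:3}, density holds because $P_\bullet^M$ lies in $\pc_d(\mc)$ for every $M\in\mc$. Fullness follows from \cref{lemma:lifts} together with lifting homotopy classes back to chain maps. For faithfulness, I will combine \cref{lemma:lifts} with \cref{Proposition:EquivAdditivQuot}. A map killed by $H_0$ factors, up to homotopy, through $\add(A[d])$. Null-homotopic maps factor through contractible complexes, and sums of contractibles and $\add(A[d])$ are exactly the injectives by \cref{prop:enough proj/inj} (\ref{prop:enough proj/inj1}). Conversely, $H_0$ vanishes on injectives, so its kernel is precisely the ideal of maps factoring through $\mathcal{I}$.

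For \eqref{Lemma:PropertiesZeroHomology:4}, I will argue via the equivalence in (3). Since $\pc_d(\mc)/\mathcal I\to\mc$ is a full, dense additive functor, it preserves the radical. Radical maps in $C^{[-d,0]}(\proj A)$ map to radical maps in the quotient: if $1-gf$ is invertible for all $g$, the same holds after passing to the quotient. They then map to radical maps of $\mc$, and hence of $\mod A$, because an equivalence reflects and preserves the radical. Finally, $\Rad_\mc=\Rad_A$ restricted to $\mc$ since $\mc$ is a full subcategory.
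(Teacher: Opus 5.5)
Parts (1), (3) and (4) are fine. For (4), your argument is that any full additive functor preserves the radical, so it suffices that $H_0\colon\pc_d(\mc)\to\mc$ is full. This is a valid and shorter alternative to the paper's indecomposability argument. The gap is in (2). Splicing the right exact pieces $H_0(K^j)\to H_0(X^j_\bullet)\to H_0(K^{j+1})\to 0$ does \emph{not} give exactness of the $H_0$-sequence at $H_0(X^j_\bullet)$ for $1\le j\le d-1$. The kernel of $H_0(X^j_\bullet)\to H_0(X^{j+1}_\bullet)$ equals the kernel of $H_0(X^j_\bullet)\to H_0(K^{j+1})$ only if $H_0(K^{j+1})\to H_0(X^{j+1}_\bullet)$ is injective. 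By the long exact homology sequence of $K^{j+1}\infl X^{j+1}_\bullet\defl K^{j+2}$, the kernel of that map is the image of the connecting map $H_{-1}(K^{j+2})\to H_0(K^{j+1})$. The intermediate images $K^{j+2}$ are not objects of $\pc_d(\mc)$, so nothing you have said forces $H_{-1}(K^{j+2})=0$. This is exactly the obstacle you flagged in your first route, and the ``cleaner route'' does not avoid it.

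The missing step is a dimension shift using that every $X^k_\bullet$ is inner acyclic. This is where the vanishing $\Ext^i_{C^{[-d,0]}(\proj A)}(A[d],\pc_d(\mc))=0$ for $1\le i\le d-1$ enters. The long exact homology sequences of the conflations give $H_m(K^{k+1})\cong H_{m+1}(K^k)$ whenever $H_m(X^k_\bullet)=0=H_{m+1}(X^k_\bullet)$. Hence for $2\le i\le d$,
\[
H_{-1}(K^{i+1})\cong H_{-2}(K^{i+2})\cong\cdots\cong H_{-(d+1-i)}(X^{d+1}_\bullet)=0,
\]
because $1\le d+1-i\le d-1$ and all intermediate indices stay in the inner range. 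This yields short exact sequences $0\to H_0(K^i)\to H_0(X^i_\bullet)\to H_0(K^{i+1})\to 0$ for $2\le i\le d$. Only with these does splicing give a right $d$-exact sequence. This is the homology-level form of the paper's computation $\Ext^{d-1}(A[d],C^{i+1}_\bullet)\cong\Ext^{i-1}(A[d],C^{d+1}_\bullet)=0$.
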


\begin{proof}
The first statement is an immediate consequence of \cref{lemma: ext for complexes}, since
\[
\Ext^d_{C^{[-d,0]}(\proj A)}(A[d],-)\cong\Hom_{K^{b}(\proj A)}(A[d],-[d])\cong\Hom_{K^{b}(\proj A)}(A,-)\cong H_0(-).
\]

To prove \eqref{Lemma:PropertiesZeroHomology:2}, assume that $X_\bullet^0\to X_\bullet^1 \to \cdots \to X_\bullet^d\to X_\bullet^{d+1}\to 0$ is a right $d$-exact sequence in $\pc_d(\mc)$. Let $C_\bullet^i$ denote the image of the morphism $X^{i-1}_\bullet \to X^i_\bullet$. Exactness means that we have conflations
 \[
 0\to C^i_\bullet \to  X^i_\bullet\to C^{i+1}_\bullet \to 0
 \]
for $1 \leq i \leq d$, where $C^{d+1}_\bullet=X^{d+1}_\bullet$. By the definition of $\pc_d(\mc)$, the complex
\[
    H_0(X_\bullet^0)\to H_0(X_\bullet^1) \to \cdots \to H_0(X_\bullet^d)\to H_0(X_\bullet^{d+1})\to 0
\]
is in $\mc$. To prove that it is right $d$-exact, it suffices to show that the sequences
 \[
0\to H_0(C^i_\bullet) \to  H_0(X^i_\bullet)\to H_0(C^{i+1}_\bullet) \to 0
 \]
are exact for $2\leq i\leq d$ and that the sequence
 \[
H_0(X^0_\bullet) \to  H_0(X^1_\bullet)\to H_0(C^{2}_\bullet) \to 0
 \]
 is right exact. Since we have a natural isomorphisms $H_0(-)\cong \Ext^d_{C^{[-d,0]}(\proj A)}(A[d],-)$ from \eqref{Lemma:PropertiesZeroHomology:1} and $C^{[-d,0]}(\proj A)$ has global dimension $\leq d$ by \cref{prop:enough proj/inj} (\ref{prop:enough proj/inj3}), the functor $H_0(-)$ is right exact. This immediately proves that $H_0(X^0_\bullet) \to  H_0(X^1_\bullet)\to H_0(C^{2}_\bullet) \to 0$ is right exact. 
 
 Next, we apply $\Hom_{C^{[-d,0]}(\proj A)}(A[d],-)$ to the conflation $0\to C^i_\bullet \to  X^i_\bullet\to C^{i+1}_\bullet \to 0$ for $2\leq i\leq d$. This gives a long exact sequence 
\begin{multline*} 
\cdots \to \Ext^{d-1}_{C^{[-d,0]}(\proj A)}(A[d],X^{i}_\bullet) \to \Ext^{d-1}_{C^{[-d,0]}(\proj A)}(A[d],C^{i+1}_\bullet) \to  \\  \Ext^d_{C^{[-d,0]}(\proj A)}(A[d],C^i_\bullet) \to  \Ext^d_{C^{[-d,0]}(\proj A)}(A[d],X^i_\bullet)  \to 
  \Ext^d_{C^{[-d,0]}(\proj A)}(A[d],C^{i+1}_\bullet) \to 
0.
\end{multline*}
 Since $H_0(-)\cong \Ext^d_{C^{[-d,0]}(\proj A)}(A[d],-)$ by \eqref{Lemma:PropertiesZeroHomology:1}, it suffices to show that 
 \[
 \Ext^{d-1}_{C^{[-d,0]}(\proj A)}(A[d],C^{i+1}_\bullet)=0
 \]
 for all $2\leq i\leq d$. As $A[d]$ and $X^k_\bullet$ both belong to the subcategory $\pc_d(\mc)$, which is \mbox{$d$-cluster} tilting in $C^{[-d,0]}(\proj A)$ by \cref{theorem:dCT exact}, we have $\Ext^j_{C^{[-d,0]}(\proj A)}(A[d],X^k_\bullet)=0$ for $1 \leq j \leq d-1$ and $0\leq k \leq d+1$. Therefore, by dimension shifting, we get that
\begin{eqnarray*}
 \Ext^{d-1}_{C^{[-d,0]}(\proj A)}(A[d],C^{i+1}_\bullet) & \cong & \Ext^{d-2}_{C^{[-d,0]}(\proj A)}(A[d],C^{i+2}_\bullet)  \\ & \smash{\vdots} & \\
 & \cong & \Ext^{i-1}_{C^{[-d,0]}(\proj A)}(A[d],C^{d+1}_\bullet).
\end{eqnarray*}
 This must be zero since $C^{d+1}_\bullet=X^{d+1}_\bullet\in \pc_d(\mc)$ and $2\leq i\leq d$, proving \eqref{Lemma:PropertiesZeroHomology:2}. 

 For \eqref{Lemma:PropertiesZeroHomology:3}, note that by \cref{prop:enough proj/inj} (\ref{prop:enough proj/inj1}), the subcategory $\mathcal{I} \subseteq C^{[-d,0]}(\proj A)$ of injective objects is precisely the additive closure of $A[d]$ and the contractible complexes 
 $$D_i(A)=(\cdots \to 0\to A\xrightarrow{1}A\to 0\to \cdots)$$ 
 concentrated in degrees $i$ and $i+1$ for $-d\leq i\leq -1$. Since the quotient of $\pc_d(\mc)$ by the contractible complexes is equivalent to $\widetilde{\pc}_d(\mc)$, the claim thus follows from \Cref{Proposition:EquivAdditivQuot}.

For \eqref{Lemma:PropertiesZeroHomology:4}, we may without loss of generality assume that $X_\bullet$ and $Y_\bullet$ are indecomposable. Suppose that $H_0(f_\bullet)$ is not in the radical. It follows that the objects $X_\bullet$ and $Y_\bullet$ cannot be injective in $C^{[-d,0]}(\proj A)$, since this would give $H_0(f_\bullet)=0$ by \cref{prop:enough proj/inj} (\ref{prop:enough proj/inj1}). Furthermore, note that $H_0(X_\bullet)$ and $H_0(Y_\bullet)$ are indecomposable, yielding that $H_0(f_\bullet)$ is an isomorphism. By \eqref{Lemma:PropertiesZeroHomology:3}, this implies the existence of injective objects $I_\bullet^1,I_\bullet^2\in \mathcal{I}$ such that $$\begin{pmatrix}f_\bullet&g^1_\bullet\\g^2_\bullet&g^3_\bullet\end{pmatrix}\colon X_\bullet\oplus I_\bullet^1 \xrightarrow{\cong} Y_\bullet\oplus I_\bullet^2$$  
is an isomorphism for some morphisms $g^1_\bullet,g^2_\bullet$ and $g^3_\bullet$. In particular, this yields that $$\begin{pmatrix}f_\bullet\\g^2_\bullet\end{pmatrix}\colon X_\bullet\to Y_\bullet\oplus I_\bullet^2$$ is a split monomorphism. Since $X_\bullet$ is indecomposable and non-injective, the morphism \mbox{$f_\bullet\colon X_\bullet\to Y_\bullet$} must itself be a split monomorphism. Hence, it is an isomorphism since $Y_\bullet$ is indecomposable. We can thus conclude that $f_\bullet$ is not contained in the radical.
\end{proof}

In order to state \cref{thm: dtorsion to faithfuldtorsion}, we need the notion of a faithful $d$-torsion class \mbox{in the exact setup.}

\begin{definition}\label{def:faithful exact}
Let $\ec$ be a Krull--Schmidt exact category and $\mc$ a $d$-cluster tilting subcategory of $\ec$. We say that a $d$-torsion class $\uc$ in $\mc$ is \textit{faithful} if for any object $E\in \ec$, there exists an inflation $E \to U$ with $U\in \uc$.
\end{definition}

Note that if a $d$-torsion class is faithful, then it contains all the injective objects in $\ec$. The converse holds whenever $\ec$ has enough injectives.

\begin{remark}
    In the setup of the definition above, assume additionally that $\ec$ has enough projectives. Then a $d$-torsion class $\uc$ is faithful if and only if for any projective $P\in \ec$, there exists an inflation $P\to U$ with $U\in \uc$. To see that the latter implies faithfulness, consider the pushout of a deflation $P \to E$ from a projective $P$ to an object $E$ along an inflation $P \to U$ with $U \in \uc$, and use that $\uc$ is closed under $d$-quotients. As a consequence,  \cref{def:faithful exact} recovers the notion of a faithful $d$-torsion class in the case where $\ec = \mod A$.
\end{remark}

We are now ready to prove the main result of this subsection.

\begin{theorem}\label{thm: dtorsion to faithfuldtorsion}
Let $\mc$ be a $d$-cluster tilting subcategory of $\modA$. We have a bijection
\begin{align*}
\begin{array}{ccc}
\begin{Bmatrix}
    \text{$d$-torsion classes in $\mc$}
\end{Bmatrix}
&
    \to 
    &
\begin{Bmatrix}
    \text{faithful $d$-torsion classes in $\pc_d(\mc)$}
\end{Bmatrix}\!,\\
\uc &\mapsto  &\pc_d(\uc)
\end{array}
\end{align*}
where $\pc_d(\uc)\colonequals\{X_\bullet\in \pc_d(\mc)\mid H_0(X_\bullet)\in \uc\}$. Furthermore, a $d$-torsion class $\uc$ is functorially finite if and only if $\pc_d(\uc)$ is covariantly finite.
\end{theorem}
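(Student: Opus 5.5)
The plan is to verify in turn that $\pc_d(\uc)$ is a faithful $d$-torsion class, that the map is injective and surjective, and finally the finiteness statement. The main tool is \cref{Lemma:PropertiesZeroHomology}. By part \eqref{Lemma:PropertiesZeroHomology:2}, $H_0(-)$ sends right $d$-exact sequences in $\pc_d(\mc)$ to right $d$-exact sequences in $\mc$. By part \eqref{Lemma:PropertiesZeroHomology:4}, it sends radical morphisms to radical morphisms.

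\textbf{Well-definedness.} Let $\uc$ be a $d$-torsion class in $\mc$.
\begin{enumerate}
\item \emph{Closure.} Take a minimal $d$-quotient or minimal $d$-extension in $\pc_d(\mc)$ whose relevant end terms lie in $\pc_d(\uc)$. It is a right $d$-exact sequence with radical inner morphisms. Applying $H_0$ gives a right $d$-exact sequence in $\mc$ whose inner morphisms are radical and whose end terms lie in $\uc$ (and $H_0(0)=0\in\uc$ for quotients). \cref{Reformulation:d-Tors} then puts all middle homologies in $\uc$, so all middle terms lie in $\pc_d(\uc)$.
\item \emph{Faithfulness.} Since $H_0(A[d])=0$ and $H_0$ of a contractible complex is $0$, every injective object of $C^{[-d,0]}(\proj A)$ lies in $\pc_d(\uc)$ by \cref{prop:enough proj/inj}. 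This category has enough injectives, so $\pc_d(\uc)$ is faithful.
\end{enumerate}

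\textbf{Bijectivity.} Injectivity is immediate: $\uc=H_0(\pc_d(\uc))$, using $P_\bullet^U\in\pc_d(\uc)$ for $U\in\uc$. For surjectivity, let $\mathcal V$ be a faithful $d$-torsion class in $\pc_d(\mc)$ and set $\uc\colonequals\{H_0(X_\bullet)\mid X_\bullet\in\mathcal V\}$, closed under summands.

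First, $\mathcal V=\pc_d(\uc)$. By faithfulness $\mathcal V$ contains all of $\mathcal I$. If $H_0(Y_\bullet)\cong H_0(X_\bullet)$ with $X_\bullet\in\mathcal V$, then by \cref{Lemma:PropertiesZeroHomology} \eqref{Lemma:PropertiesZeroHomology:3} we have $Y_\bullet\oplus I\cong X_\bullet\oplus I'$ for some $I,I'\in\mathcal I$. Hence $Y_\bullet\in\mathcal V$, because $\mathcal V$ is closed under sums and summands.

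Next, $\uc$ is a $d$-torsion class, which I would check via \cref{theorem: extension closed}.
\begin{itemize}
\item[(i)] \emph{$d$-quotients.} A minimal $d$-quotient $U\to Y_1\to\cdots\to Y_d\to0$ in $\mc$ of $U\in\uc$ is a $d$-cokernel of some $f\colon X\to U$. Lift $f$ to $P^X_\bullet\to P^U_\bullet$ by \cref{lemma:lifts}. Adding the projection $P_\bullet^X\to(P^X)_{-d}[d]$ makes the map admissible, and by \cref{prop:exact analogue of HJ} it has a minimal $d$-cokernel in $\pc_d(\mc)$. Its terms lie in $\mathcal V$, since $\mathcal V$ is closed under $d$-quotients. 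By \cref{Lemma:PropertiesZeroHomology} \eqref{Lemma:PropertiesZeroHomology:2} its $H_0$ is a $d$-cokernel of $f$. That $d$-cokernel has the minimal one as a summand, so each $Y_i$ is a summand of some $H_0(V_\bullet)$ with $V_\bullet\in\mathcal V$, hence $Y_i\in\uc$.
\item[(ii)] \emph{$d$-extensions.} Similarly, a $d$-extension in $\mc$ with end terms in $\uc$ lifts to a $d$-extension in $\pc_d(\mc)$, via a horseshoe-type construction of projective $d$-presentations (degreewise split) followed by replacement with a Yoneda-equivalent sequence in $\pc_d(\mc)$. Its end terms lie in $\mathcal V$, so the minimal representative has middle terms in $\mathcal V$, and we apply $H_0$ as before.
\end{itemize}
\emph{Main obstacle:} making this lifting precise, in particular ensuring that the lifted sequence has end terms $P^X_\bullet$ and $P^Z_\bullet$ in $\mathcal V$ up to injective summands and maps under $H_0$ to the original extension. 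I would use the identification $\Ext^d_{C^{[-d,0]}}(P^Z_\bullet,P^X_\bullet)\cong\Hom_K(P^Z_\bullet,P^X_\bullet[d])$ from \cref{lemma: ext for complexes}. Since $P^Z_\bullet$ and $P^X_\bullet$ are inner acyclic with $H_0$ in $\mc$, this Hom surjects onto $\Ext^d_A(Z,X)$; the composite $P^Z_\bullet\to Z\to X[d]$ lifts along $P^X_\bullet[d]\to X[d]$ because the cone term $H_{-d}(P^X_\bullet)[2d+1]$ admits no maps from a $(d+1)$-term complex. I would then check that the resulting extension class is compatible with $H_0$, using \cref{Lemma:PropertiesZeroHomology} \eqref{Lemma:PropertiesZeroHomology:1}.

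\textbf{Finiteness.} Suppose $\uc$ is functorially finite. Mimic \cref{lemma: Pd(M) cov finite}: take a left $\uc$-approximation $H_0(P_\bullet)\to U$, lift it to $P_\bullet\to P^U_\bullet$ by \cref{lemma:lifts}, and add $P_\bullet\to P_{-d}[d]$. This gives a left $\pc_d(\uc)$-approximation in $K$, and \cref{lem:lifting functorial finiteness} transfers it to $C$. Conversely, suppose $\pc_d(\uc)$ is covariantly finite. For $M\in\modA$, apply $H_0$ to a left approximation of $P^M_\bullet$. By \cref{lemma:lifts} this gives a left $\uc$-approximation of $M$. Since $d$-torsion classes in $\mc$ are contravariantly finite, $\uc$ is functorially finite.
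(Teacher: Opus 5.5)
Your overall architecture matches the paper's. Faithful $d$-torsion classes in $\pc_d(\mc)$ are exactly those containing all injectives, hence of the form $\pc_d(\uc)$. The forward direction uses \cref{Lemma:PropertiesZeroHomology} (2) and (4) together with \cref{Reformulation:d-Tors}; applying the latter to quotients with $Z=0$ is fine. The finiteness statement is handled by lifting approximations, which is also fine. The problems are in the converse direction, where you show that $\uc$ is a $d$-torsion class when $\mathcal V=\pc_d(\uc)$ is one.

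\textbf{A smaller error in (i).} Adding $P^X_\bullet\to(P^X)_{-d}[d]$ does not make the lift admissible. In $C^{[-d,0]}(\proj A)$ a morphism is admissible exactly when its image in every degree is a direct summand of the target. Your modification only repairs degree $-d$. For example, if $f$ is a non-split monomorphism between projective modules, both presentations are stalk complexes in degree $0$, nothing is added, and the lift is not admissible.

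The repair is to use the lift $\tilde f$ together with an inflation $\iota\colon P^X_\bullet\to I_\bullet$ into an injective object. The morphism $(\tilde f,\iota)^{T}\colon P^X_\bullet\to P^U_\bullet\oplus I_\bullet$ is itself an inflation. Its target lies in $\mathcal V$ because $\mathcal V$ contains the injectives, and $H_0(I_\bullet)=0$, so the rest of your argument goes through. The paper takes the minimal $d$-cokernel of the lift without comment, so this is worth stating correctly.

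\textbf{The genuine gap is (ii), which you leave open yourself.} Your plan is a horseshoe construction ``followed by replacement with a Yoneda-equivalent sequence'', then transporting classes through $\Ext^d_{C^{[-d,0]}(\proj A)}(P^Z_\bullet,P^X_\bullet)\cong\Hom_{K^b(\proj A)}(P^Z_\bullet,P^X_\bullet[d])$. This defers exactly the point that needs proof, and that point is not formal. $H_0$ is only right exact on $C^{[-d,0]}(\proj A)$ and does not send $d$-extensions in $\pc_d(\mc)$ to $d$-extensions. For example, the sequence $0\to A\to D_{-1}(A)\to\cdots\to D_{-d}(A)\to A[d]\to 0$ from the proof of \cref{prop:enough proj/inj} has all terms in $\pc_d(\mc)$, but applying $H_0$ gives $0\to A\to 0\to\cdots\to 0$. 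So knowing that a class lifts does not show that $H_0$ of the minimal representative in $\pc_d(\mc)$ is exact and equivalent to your original sequence. That is what ``apply $H_0$ as before'' requires.

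The missing observation is that the horseshoe construction already does everything, with no replacement needed:
\begin{enumerate}
\item Split the minimal $d$-extension $0\to X\to M_1\to\cdots\to M_d\to Z\to 0$ into short exact sequences through the images $K_i$.
\item Fix projective $d$-presentations of $X,K_1,\ldots,K_{d-1},Z$, and use the horseshoe lemma to obtain presentations $Q^{M_i}_\bullet$ of the $M_i$ sitting in degreewise split sequences.
\item Splicing gives a $d$-exact sequence $0\to P^X_\bullet\to Q^{M_1}_\bullet\to\cdots\to Q^{M_d}_\bullet\to P^Z_\bullet\to 0$ in $C^{[-d,0]}(\proj A)$. All its terms lie in $\pc_d(\mc)$: the $Q^{M_i}_\bullet$ are projective $d$-presentations of modules in $\mc$, and the $P^{K_i}_\bullet$ occur only as images. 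Its image under $H_0$ is literally the original sequence.
\item By \cref{prop:exact analogue of HJ}, this sequence is a minimal $d$-extension with middle terms $R^i_\bullet\in\mathcal V$, plus contractible pieces $N_\bullet\xrightarrow{1}N_\bullet$.
\item The original sequence has radical middle maps, so $H_0(N_\bullet)=0$ for every such piece. Hence $M_i\cong H_0(R^i_\bullet)\in\uc$.
\end{enumerate}
This is the paper's argument.
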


\begin{proof}
Let $\uc$ be a subcategory of $\mc$, and consider
\[
\pc_d(\uc) = \{X_\bullet\in \pc_d(\mc)\mid H_0(X_\bullet)\in \uc\} \subseteq \pc_d(\mc).
\]
Note that $M\in \uc$ if and only if its minimal projective \mbox{$d$-presentation} $P_\bullet^M$ is in $\pc_d(\uc)$. Hence, the subcategory $\pc_d(\uc)$ uniquely determines $\uc$. By the equivalence in \cref{Lemma:PropertiesZeroHomology} \eqref{Lemma:PropertiesZeroHomology:3} and the description of injectives in $C^{[-d,0]}(\proj A)$ given in \cref{prop:enough proj/inj} (\ref{prop:enough proj/inj1}), we see that a subcategory of $\pc_d(\mc)$ contains all the injectives if and only if it is of the form $\pc_d(\uc)$ for some subcategory $\uc$ of $\mc$.    
 
Recall that since $C^{[-d,0]}(\proj A)$ has enough injectives by \cref{prop:enough proj/inj} (\ref{prop:enough proj/inj2}), the faithful \mbox{$d$-torsion} classes of $\pc_d(\mc)$ are precisely the $d$-torsion classes that contain all injectives. The faithful $d$-torsion classes of $\pc_d(\mc)$ must thus be of the form $\pc_d(\uc)$ for some subcategory $\uc$ of $\mc$. To prove the bijection, it hence suffices to show that $\pc_d(\uc)$ is a $d$-torsion class if and only if $\uc$ is a $d$-torsion class.

For this, assume first that $\uc$ is a $d$-torsion class in $\mc$. We want to show that \mbox{$\pc_d(\uc) \subseteq \pc_d(\mc)$} is closed under $d$-quotients and $d$-extensions. To this end, let 
\[
X_\bullet^1\to X_\bullet^2 \to \cdots \to X_\bullet^{d+1}\to 0
\]
be a minimal $d$-cokernel of a morphism $X_\bullet^0\to X_\bullet^1$ in $\pc_d(\mc)$, and assume $X_\bullet^1\in \pc_d(\uc)$. By \cref{Lemma:PropertiesZeroHomology} \eqref{Lemma:PropertiesZeroHomology:2}, the sequence 
\[
H_0(X_\bullet^1)\to H_0(X_\bullet^2) \to \cdots \to H_0(X_\bullet^{d+1})\to 0
\]
in $\mc$ is a \mbox{$d$-cokernel} of \mbox{$H_0(X_\bullet^0)\to H_0(X_\bullet^1)$}, and this $d$-cokernel is minimal by \mbox{\cref{Lemma:PropertiesZeroHomology} \eqref{Lemma:PropertiesZeroHomology:4}}. Since \mbox{$H_0(X_\bullet^1)\in \uc$} and $\uc$ is closed under $d$-quotients, it follows that $H_0(X_\bullet^i)\in \uc$ for \mbox{$i=2,\dots,d+1$}. This yields $X_\bullet^i\in \pc_d(\uc)$ for $i=2,\dots,d+1$, which proves that $\pc_d(\uc)$ is closed under \mbox{$d$-quotients.}

Consider next a minimal $d$-exact sequence 
\[
0\to U_\bullet^0\to Y_\bullet^1\to \cdots \to Y_\bullet^d\to U_\bullet^{d+1}\to 0
\]
in $\pc_d(\mc)$ with $U_\bullet^0,U_\bullet^{d+1}\in \pc_d(\uc)$. Applying $H_0$ and using \Cref{Lemma:PropertiesZeroHomology} \eqref{Lemma:PropertiesZeroHomology:2} and \eqref{Lemma:PropertiesZeroHomology:4},  we get a right $d$-exact sequence
\[
H_0(U_\bullet^0)\to H_0(Y_\bullet^1)\to \cdots \to H_0(Y_\bullet^d)\to H_0(U_\bullet^{d+1})\to 0
\]
in $\mc$, where $H_0(Y_\bullet^i)\to H_0(Y_\bullet^{i+1})$ are radical morphisms for $i=1,\dots,d-1$. By \cref{Reformulation:d-Tors}, it follows that $H_0(Y_\bullet^i)\in \uc$ for $i=1,\dots,d$ since $H_0(U_\bullet^0),H_0(U_\bullet^{d+1})\in \uc$. This means that $Y_\bullet^i\in \pc_d(\uc)$ for $i=1,\dots,d$, so $\pc_d(\uc)$ is closed under $d$-extensions, allowing us to conclude that $\pc_d(\uc)$ is a $d$-torsion class in $\pc_d(\mc)$.

Assume now that $\pc_d(\uc)$ is a $d$-torsion class. We show that $\uc$ is a $d$-torsion class in $\mc$ by proving closure under $d$-quotients and $d$-extensions. Consider hence a minimal $d$-cokernel
\[
M_1\to \cdots \to M_{d+1}\to 0
\]
in $\mc$ of a morphism $M_0\to M_1$, and assume $M_1\in \uc$. Choose a lift $P_\bullet^{M_0} \to P_\bullet^{M_1}$ of $M_0 \to M_1$, and let
\[
P_\bullet^{M_1}\to P_\bullet^2\to \cdots \to P_\bullet^{d+1}\to 0
\]
be its minimal $d$-cokernel in $\pc_d(\mc)$. Note that $P_\bullet^{M_1} \in \pc_d(\uc)$ since $M_1 \in \uc$. As $\pc_d(\uc)$ is closed under $d$-quotients, we have $P_\bullet^i \in \pc_d(\uc)$ for $i = 2,\dots,d+1$. Applying $H_0$ and using \cref{Lemma:PropertiesZeroHomology} \eqref{Lemma:PropertiesZeroHomology:2} and \eqref{Lemma:PropertiesZeroHomology:4}, we get a minimal $d$-cokernel
\[
M_1\to H_0(P_\bullet^2)\to \cdots \to H_0(P_\bullet^{d+1})\to 0
\]
of $M_0\to M_1$. Since minimal $d$-cokernels are unique up to isomorphism by \mbox{\cref{prop:exact analogue of HJ} (\ref{prop:exact analogue of HJ part 1})}, it follows that $M_i\cong H_0(P_\bullet^i) \in \uc$ for $i = 2,\dots,d+1$. This shows that $\uc$ is closed under $d$-quotients.

Consider next a minimal $d$-extension 
\[
0\to U_0\to M_1\to \cdots \to M_d\to U_{d+1}\to 0
\]
in $\mc$ with $U_0,U_{d+1}\in \uc$. Let $K_i$ denote the image of $M_i\to M_{i+1}$ for $i=1,\dots,d-1$. We hence have short exact sequences
\begin{align*}
& 0\to K_i\to M_{i+1}\to K_{i+1}\to 0 \quad \quad \text{for } i=1\dots,d-2, \\ 
& 0\to U_0\to M_1\to K_1\to 0 \quad \text{and} \quad 0\to K_{d-1}\to M_d\to U_{d+1}\to 0.  
\end{align*}
Consider the minimal projective $d$-presentations $P_\bullet^{U_0},P_\bullet^{K_1},\dots ,P_\bullet^{K_{d-1}},P_\bullet^{U_{d+1}}$ of the modules $U_0,K_1,\\ \dots,K_{d-1},U_{d+1}$. By the horseshoe lemma, we can find a projective $d$-presentation $Q_\bullet^{M_i}$ of $M_i$ for $i=1,\dots,d$ such that
\begin{align*}
& 0\to P_\bullet^{K_i}\to Q_\bullet^{M_{i+1}}\to P_\bullet^{K_{i+1}}\to 0 \quad \quad \text{for }i=1\dots,d-2, \\ 
& 0\to P_\bullet^{U_0}\to Q_\bullet^{M_{1}}\to P_\bullet^{K_{1}}\to 0 \quad \text{and} \quad 0\to P_\bullet^{K_{d-1}}\to Q_\bullet^{M_{d}}\to P_\bullet^{U_{d+1}}\to 0 
\end{align*}
are conflations in $C^{[-d,0]}(\proj A)$. Hence, we get a $d$-exact sequence
\[
0\to P_\bullet^{U_0}\to Q_\bullet^{M_1}\to \cdots \to Q_\bullet^{M_d}\to P_\bullet^{U_{d+1}}\to 0
\]
in $\pc_d(\mc)$. It follows from \cref{prop:exact analogue of HJ} (\ref{prop:exact analogue of HJ part 3}) that this sequence must be a direct sum of a minimal \mbox{$d$-extension} 
\[
0\to P_\bullet^{U_0}\to R_\bullet^{1}\to \cdots \to R_\bullet^{d}\to P_\bullet^{U_{d+1}}\to 0
\]
and exact sequences of the form $\cdots \to 0\to R_\bullet\xrightarrow{1}R_\bullet\to 0\to \cdots$. Since 
\[
0\to H_0(P_\bullet^{U_0})\to H_0(Q_\bullet^{M_1})\to \cdots \to H_0(Q_\bullet^{M_d})\to H_0(P_\bullet^{U_{d+1}})\to 0
\]
is isomorphic to the original $d$-exact sequence in $\mc$, it is minimal. Hence, it must be isomorphic to 
\[
0\to H_0(P_\bullet^{U_0})\to H_0(R_\bullet^{1})\to \cdots \to H_0(R_\bullet^{d})\to H_0(P_\bullet^{U_{d+1}})\to 0.
\]
Since $\pc_d(\uc)$ is closed under $d$-extensions and $P_\bullet^{U_0},P_\bullet^{U_{d+1}}\in \pc_d(\uc)$, we have $R_\bullet^i\in \pc_d(\uc)$ for $i=1,\dots,d$, and hence $$M_i\cong H_0(Q_\bullet^{M_i})\cong H_0(R_\bullet^i)\in \uc.$$ 
This shows that $\uc$ is closed under $d$-extensions, and  hence is a $d$-torsion class.

Finally, we want to prove that a $d$-torsion class $\uc$ is functorially finite if and only if $\pc_d(\uc)$ is covariantly finite. As any $d$-torsion class in $\mc$ is contravariantly finite, we need to show that $\uc$ is covariantly finite if and only if $\pc_d(\uc)$ is covariantly finite. Since $\mc$ and $\pc_d(\mc)$ are both covariantly finite, it suffices to show that all objects in $\mc$ have a left $\uc$-approximation if and only if all objects in $\pc_d(\mc)$ have a left $\pc_d(\uc)$-approximation. To see this, consider $P_\bullet\in \pc_d(\mc)$. By \cref{prop:enough proj/inj} (\ref{prop:enough proj/inj2}), we may choose an inflation $P_\bullet\to Q_\bullet$ to an injective object $Q_\bullet$ in $C^{[-d,0]}(\proj A)$. Since $H_0(-)$ induces an equivalence $ \pc_d(\mc)/\mathcal{I}\xrightarrow{} \mc$ by \cref{Lemma:PropertiesZeroHomology} \eqref{Lemma:PropertiesZeroHomology:3}, a morphism $P_\bullet\to R_\bullet$ must give a left $\uc$-approximation $H_0(P_\bullet)\to H_0(R_\bullet)$ if and only if the morphism $P_\bullet\to R_\bullet\oplus Q_\bullet$ is a left $\pc_d(\uc)$-approximation. Since $H_0(-)\colon \pc_d(\mc)\to \mc$ is full and dense, this proves the claim. 
\end{proof}

\subsection{Silting complexes}\label{subsec: silting proof}
In this subsection we consider a $d$-cluster tilting subcategory $\mc$ of $\modA$.
The goal is to prove \cref{thm:silting}, as well as to describe some consequences and examples. In particular, this includes proving that the $\tau_d$-rigid pair $(M_\uc, P_\uc)$ associated with a functorially finite $d$-torsion class $\uc$ in $\mc$ as in \cref{thm:maximaltaunrigid} is maximal; see \cref{cor: maximality}. This finishes the proof of \cref{MainResultIntro} from the introduction.  

\cref{Proposition:Ext^dProjGendComplex} below will be used in the proof of \cref{thm:silting}. Recall that 
\[
P_\bullet^{(M_\uc,P_\uc)} = P_\bullet^{M_\uc} \oplus P_\uc[d],
\]
where $P_\bullet^{M_\uc}$ is the minimal projective $d$-presentation of $M_\uc$.

\begin{proposition}\label{Proposition:Ext^dProjGendComplex}
    Let $\mc$ be a $d$-cluster tilting subcategory of $\modA$ and consider a functorially finite $d$-torsion class $\uc$ in $\mc$. Then the complex $P_\bullet^{(M_\uc,P_\uc)}$ is homotopy equivalent to an $\Ext^d$-projective generator of $\pc_d(\uc)$.
\end{proposition}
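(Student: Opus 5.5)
We plan to produce an $\Ext^d$-projective generator of $\pc_d(\uc)$ explicitly, by running the exact-category analogue of the construction in \cref{sec:d-tors to tau_d} inside $C^{[-d,0]}(\proj A)$, and then to identify the result in $K^b(\proj A)$. By \cref{thm: dtorsion to faithfuldtorsion}, $\pc_d(\uc)$ is a faithful, covariantly finite $d$-torsion class in $\pc_d(\mc)$. Faithfulness makes the minimal left $\pc_d(\uc)$-approximations admissible, since an inflation into $\pc_d(\uc)$ exists and factors through the approximation. So \cref{AdmittingUCores} applies, and the projective object $A$ (stalk complex in degree $0$) of $C^{[-d,0]}(\proj A)$ has a minimal $\pc_d(\uc)$-coresolution
\[
A\to X^0_\bullet\to X^1_\bullet\to\cdots\to X^d_\bullet\to 0 .
\]
By \cref{Ext^d-projectiveExactCat}, every $X^i_\bullet$ is $\Ext^d$-projective in $\pc_d(\uc)$. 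Set $X_\bullet=\bigoplus_i X^i_\bullet\oplus I_\bullet$, where $I_\bullet$ collects the projective-injective objects; these are automatically in $\pc_d(\uc)$ and are $\Ext^d$-projective.

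Next we identify $X_\bullet$ up to homotopy. Apply $H_0$ and use \cref{Lemma:PropertiesZeroHomology}(2),(3),(4). Through the equivalence $\pc_d(\mc)/\mathcal I\cong\mc$, the left $\pc_d(\uc)$-approximation $A\to X^0_\bullet$ becomes a left $\uc$-approximation $A\to H_0(X^0_\bullet)$, and the minimal $d$-cokernel becomes a minimal $d$-cokernel in $\mc$. Hence, up to summands in $\mathcal I$, we get $H_0(X^i_\bullet)\cong U^A_i$, and so $\add H_0(X_\bullet)=\add M_\uc$ by \cref{thm: d-tilting}. Since each $X^i_\bullet$ lies in $\pc_d(\mc)$, in $K^b(\proj A)$ it decomposes as $P^{H_0(X^i)}_\bullet\oplus Q[d]$ for some projective $Q$. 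Thus $X_\bullet\simeq P_\bullet^{M'}\oplus Q[d]$ with $\add M'=\add M_\uc$.

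To determine the $Q[d]$ part, characterise when $P[d]$ is $\Ext^d$-projective in $\pc_d(\uc)$. By \cref{lemma: ext for complexes} and \cref{Lemma:PropertiesZeroHomology}(1),
\[
\Ext^d(P[d],U_\bullet)\cong\Hom_{K^b}(P,U_\bullet)\cong\Hom_A(P,H_0(U_\bullet)).
\]
Therefore $P[d]$ is $\Ext^d$-projective in $\pc_d(\uc)$ exactly when $\Hom_A(P,\uc)=0$, that is, when $P\in\add P_\uc$. Similarly, $P^N_\bullet$ for $N\in\uc$ is $\Ext^d$-projective in $\pc_d(\uc)$ iff $\Hom_{K^b}(P^N_\bullet,P^{U}_\bullet[d])=0$ and $\Hom_{K^b}(P^N_\bullet,P[2d])=0$ for all $U\in\uc$. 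The second vanishing is automatic for degree reasons. The first is equivalent to $\Hom_A(U,\tau_dN)=0$ by \cref{lem:tau-rigidKbProj} (a Hom from $P^N$ into $d$-shifted complexes), i.e. to $N$ being $\Ext^d$-projective in $\uc$ by \cref{thm: equivalence of Extd projectivity}. It follows that the $\Ext^d$-projectives of $\pc_d(\uc)$ are, up to homotopy, exactly $\add(P_\bullet^{M_\uc}\oplus P_\uc[d])$.

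It remains to show that $X_\bullet$ is a generator and has the full $P_\uc[d]$ part. Generation is the main obstacle: there is no tilting count available in the exact setting. We would first try a mirror of \cref{thm: d-tilting}, comparing $\Thick$ in $K^b$ with $|A|$ via the coresolution of $A$. Simpler is to bypass it. The characterisation above already shows that every $\Ext^d$-projective is homotopic to an object of $\add(P_\bullet^{(M_\uc,P_\uc)})$, and that the objects $P_\bullet^{M_\uc}$ and $P_\uc[d]$ are themselves $\Ext^d$-projective. Hence $P_\bullet^{(M_\uc,P_\uc)}\oplus I_\bullet$ is an $\Ext^d$-projective generator, homotopic to $P_\bullet^{(M_\uc,P_\uc)}$. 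The careful step is the Hom computation $\Ext^d(P^N_\bullet,P^U_\bullet)\cong D\overline{\Hom}(U,\tau_dN)$ via \cref{lem:tau-rigidKbProj}.
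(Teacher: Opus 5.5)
Your closing argument is exactly the paper's proof. It sorts the indecomposable objects of $\pc_d(\uc)$ into contractibles, stalks $P[d]$ (which are $\Ext^d$-projective iff $P\in\add P_\uc$, via $\Ext^d(P[d],-)\cong\Hom_A(P,H_0(-))$), and minimal presentations $P^N_\bullet$ with $N\in\uc$ (which are $\Ext^d$-projective iff $\Hom_A(\uc,\tau_dN)=0$, i.e.\ $N\in\add M_\uc$ by \cref{thm: equivalence of Extd projectivity}); the $\pc_d(\uc)$-coresolution of $A$ in your first two paragraphs is not needed here, since the paper uses it only in the proof of \cref{thm:silting}. Two small corrections: \cref{lem:tau-rigidKbProj} concerns all shifts $i>0$ at once, so to isolate $i=d$ you should add that the shifts $1\le i\le d-1$ vanish by \cref{lemma: Pd(M) d-rigid} and those with $i>d$ by degree reasons (the paper instead applies \cite[Lemma 3.4]{AdachiIyamaReiten} to $\Omega^{d-1}N$); and the correct formula is $\Hom_{K^b(\proj A)}(P^N_\bullet,P^U_\bullet[d])\cong D\Hom_A(U,\tau_dN)$ with the ordinary rather than the stable Hom, which is precisely why \cref{thm: equivalence of Extd projectivity} is needed instead of the definition of $\Ext^d$-projectivity.
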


\begin{proof}
    It suffices to show that a non-contractible indecomposable complex $P_\bullet$ in $\pc_d(\uc)$ is $\Ext^d$-projective in $\pc_d(\uc)$ if and only if it is a direct summand of $P_\bullet^{(M_\uc,P_\uc)}$.
    
    Assume $P_\bullet$ is injective in $C^{[-d,0]}(\proj A)$. Then it must be isomorphic to $P[d]$ for some projective module $P$ in $\modA$ by \cref{prop:enough proj/inj} (\ref{prop:enough proj/inj1}). By a similar argument as in the proof of \cref{Lemma:PropertiesZeroHomology} \eqref{Lemma:PropertiesZeroHomology:1}, we have an isomorphism
    \[
    \Ext^d_{C^{[-d,0]}(\proj A)}(P[d],Q_\bullet)\cong \Hom_{A}(P,H_0(Q_\bullet))
    \]
    for any complex $Q_\bullet\in C^{[-d,0]}(\proj A)$. This yields $\Ext^d_{C^{[-d,0]}(\proj A)}(P[d],Q_\bullet)=0$ for all \mbox{$Q_\bullet\in\pc_d(\uc)$} if and only if $\Hom_A(P,U)=0$ for all $U\in \uc$. Hence, the object $P[d]$ is $\Ext^d$-projective in $\pc_d(\uc)$ if and only if $P$ is a direct summand of $P_\uc$. The conclusion now follows as $P_{\bullet}^{M_{\uc}}$ is a truncation of a minimal projective resolution and hence does not have $P[d]$ as a direct summand.
    
    Assume next that $P_\bullet$ is not injective in $C^{[-d,0]}(\proj A)$. Then $P_\bullet$ is isomorphic to a minimal projective $d$-presentation $P_\bullet^U$ of an object $U\in \uc$ by \cref{Lemma:PropertiesZeroHomology} \eqref{Lemma:PropertiesZeroHomology:3}. By \cref{lemma: ext for complexes}, we have
    \[
    \Ext^d_{C^{[-d,0]}(\proj A)}(P_\bullet^U,P_\bullet^V)\cong  \Hom_{K^b(\proj A)}(P_\bullet^U,P_\bullet^V[d])
    \]
for any object $V\in \uc$. Next, we apply \cite[Lemma 3.4]{AdachiIyamaReiten} to $\Omega^{d-1}U$ and $V$ to obtain that \mbox{$\Hom_{K^b(\proj A)}(P_\bullet^U,P_\bullet^V[d])=0$} if and only if $\Hom_A(V, \tau_d U)=0$, as $P_d^U \to P_{d-1}^U$ is the projective presentation of $\Omega^{d-1}U$.
Combining these observations with \cref{thm: equivalence of Extd projectivity}, it follows that $P_\bullet^U$ is $\Ext^d$-projective in $\pc_d(\uc)$ if and only if $U$ is $\Ext^d$-projective in $\uc$. The latter is equivalent to $U$ being a direct summand of $M_\uc$, which is again equivalent to $P_\bullet^U$ being a direct summand of $P_\bullet^{M_\uc}$. This proves the claim. 
\end{proof}

\begin{remark}
  Assume $A$ is basic. Then the basic $\Ext^d$-projective generator of $\pc_d(\uc)$ is
  \[
  P_\bullet^{(M_\uc,P_\uc)}\oplus \bigoplus_{i=-1}^{-d}D_i(A),
  \]
  where $D_i(A)=(\cdots \to 0\to A\xrightarrow{1_A}A\to 0\to \cdots)$ is concentrated in degrees $i$ and $i+1$. If $A$ is not basic, then the same statement holds if we replace $D_i(A)$ with $D_i(P)$ for a basic projective generator $P$ of $\modA$.
\end{remark}

We are now ready to give the proof of \cref{thm:silting}.

\begin{proof}[Proof of \Cref{thm:silting}]
  We know that $P_\bullet^{(M_\uc,P_\uc)}$ is a presilting complex in $K^b(\proj A)$ by \cref{thm:maximaltaunrigid} and \cref{prop:presilting}, so it suffices to show that its thick closure in $K^b(\proj A)$ contains $A$. Since $\uc$ is a functorially finite $d$-torsion class in $\mc$, the subcategory $\pc_d(\uc)$ is a covariantly finite faithful $d$-torsion class in $\pc_d(\mc)$ by \Cref{thm: dtorsion to faithfuldtorsion}. Hence, we can find a minimal $\pc_d(\uc)$-coresolution 
  \begin{equation}\label{ImportantExactSeq}
       0\to A\to P^0_\bullet\to \cdots \to P^{d}_\bullet\to 0 
  \end{equation}
  in $C^{[-d,0]}(\proj A)$ by \Cref{AdmittingUCores}, where $A$ is considered as a stalk complex concentrated in degree $0$. Note that the morphism $A \to P_\bullet^0$ is an inflation since $\pc_d(\uc)$ is faithful. As $A$ is projective in $C^{[-d,0]}(\proj A)$ by \cref{prop:enough proj/inj} (\ref{prop:enough proj/inj1}), the object $P^i_\bullet$ is $\Ext^d$-projective in $\pc_d(\uc)$ for $i=0,\dots,d$ by \cref{Ext^d-projectiveExactCat}. By \cref{Proposition:Ext^dProjGendComplex}, we know that $P_\bullet^{(M_\uc,P_\uc)}$ is isomorphic in $K^b(\proj A)$ to an $\Ext^d$-projective generator of $\pc_d(\uc)$. Each $P_\bullet^i$ must thus be isomorphic in $K^b(\proj A)$ to an object in $\operatorname{add}(P_\bullet^{(M_\uc,P_\uc)})$. This implies that $A$ is in the thick closure of $P_\bullet^{(M_\uc,P_\uc)}$ in $K^b(\proj A)$, since the exact sequence \eqref{ImportantExactSeq} realises $A$ as an iterated cocone of morphisms between objects in $\add (P_\bullet^{(M_\uc,P_\uc)})$. This proves the claim.
\end{proof}

In the remaining part of this subsection, we investigate some consequences of \cref{thm:silting}. We start by considering the map $\psi_d$ sending $\uc$ to $P_\bullet^{(M_\uc,P_\uc)}$ from \mbox{diagram \eqref{diag:silting intro}} in the introduction. 

\begin{proposition}\label{prop: injective silting}
The map $\psi_d$ is injective, with partial inverse given by
\[
P_\bullet^{(M_\uc,P_\uc)}
\mapsto \fac H_0(P_\bullet^{(M_\uc,P_\uc)}) \cap \mc.
\]
\end{proposition}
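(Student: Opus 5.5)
The plan is to compute $H_0$ of the silting complex and then reduce to \cref{prop: injective}. Injectivity of $\psi_d$ will follow once we show that the displayed assignment recovers $\uc$ from $\psi_d(\uc)=P_\bullet^{(M_\uc,P_\uc)}$. Two classes with the same image would then coincide.

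The first step is to compute $H_0$ of each summand. The complex $P_\bullet^{M_\uc}$ is the minimal projective $d$-presentation of $M_\uc$, so it is the truncation of a projective resolution with $P_0$ in degree $0$. Its zeroth homology is therefore $\Coker(P_{-1}\to P_0)\cong M_\uc$. The summand $P_\uc[d]$ is a stalk complex concentrated in degree $-d$. Since $d\geq 1$, it has $H_0(P_\uc[d])=0$. Additivity of $H_0$ then gives
\[
H_0\bigl(P_\bullet^{(M_\uc,P_\uc)}\bigr)\cong M_\uc .
\]
The result also does not depend on the choice of representative up to homotopy equivalence, since $H_0$ factors through $K^b(\proj A)$.

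The second step is to conclude using earlier results. From the first step we get
\[
\fac H_0\bigl(P_\bullet^{(M_\uc,P_\uc)}\bigr)\cap\mc=\fac M_\uc\cap\mc ,
\]
and \cref{prop: injective} identifies the right-hand side with $\uc$. Composing the proposed inverse with $\psi_d$ is therefore the identity on functorially finite $d$-torsion classes. This shows both that $\psi_d$ is injective and that the displayed map is a partial inverse.

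No serious obstacle is expected. The only point needing care is that the bottom map of diagram \eqref{diag:silting intro} uses the minimal presentation, so that $H_0$ returns exactly $M_\uc$ and not some other module. This holds by the construction in \cref{thm:silting}, which uses the minimal projective $d$-presentation of $M_\uc$.
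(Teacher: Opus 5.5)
Your proposal is correct and follows the paper's proof: it likewise observes that $H_0(P_\bullet^{(M_\uc,P_\uc)}) = H_0(P_\bullet^{M_\uc}) = M_\uc$ and then applies \cref{prop: injective}.
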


\begin{proof}
Observe that $H_0(P_\bullet^{(M_\uc,P_\uc)}) = H_0(P_\bullet^{M_\uc})=M_\uc$. The result hence follows by \cref{prop: injective}.
\end{proof}

For $d > 1$, the map $\psi_d$ is in general not surjective, as demonstrated below.

\begin{example}\label{ex:running7}
Let $A$ and $\mc$ be as in \cref{ex:running1}. In \cref{tab:intersection3} we give the inner acyclic \mbox{$3$-term} silting complexes $P_\bullet^{(M_{\uc},P_{\uc})}$ associated to all the $2$-torsion classes $\uc$ in $\mc$ by $\psi_d$, extending \cref{tab:intersection2} from \cref{ex:running1}. 

A simple computation shows that the complex $\rep{1\\2} \rightarrow 0 \rightarrow \rep{3} \oplus \rep{2\\3}$ is an inner acyclic $3$-term silting complex in $K^b(\proj A)$. However, inspecting \cref{tab:intersection3} shows that this silting complex is not in the image of the map $\psi_d$. Hence, not every inner acyclic $(d+1)$-term silting complex $S$ with $H_0(S)\in\mc$ can be obtained from a functorially finite $d$-torsion class as in \cref{thm:silting}. 

\begin{table}[t]\label{tab:intersection3}
    \centering
    \setlength{\extrarowheight}{6pt}
    \begin{tabular}{|c|c|c|c|}
    \hline
        $\uc$ & $\tc$ & $(M_\uc,P_\uc)$ & $P_\bullet^{(M_\uc,P_\uc)}$ \\[6pt]
        \hline
        \hline
         $\mc$ & $\modA$ & $\left(\rep{3}\oplus \rep{2\\3} \oplus \rep{1\\2}, 0\right)$ & $0\rightarrow 0 \rightarrow \rep{3}\oplus \rep{2\\3} \oplus \rep{1\\2}$\\[6pt]
         \hline
        $\add\left\{\rep{2\\3}\oplus \rep{1\\2} \oplus \rep{1}\right\}$ & $\add\left\{\rep{2\\3}\oplus \rep{2}\oplus \rep{1\\2} \oplus \rep{1}\right\}$ & $\left( \rep{2\\3} \oplus \rep{1\\2}\oplus \rep{1}, 0\right)$& $\rep{3}\rightarrow \rep{2\\3} \rightarrow \rep{2\\3}\oplus \rep{1\\2} \oplus \rep{1\\2}$\\[6pt]
        \hline
        $\add\left\{\rep{1\\2} \oplus \rep{1}\right\}$ & $\add\left\{ \rep{1\\2} \oplus \rep{1}\right\}$ & $\left( \rep{1\\2} \oplus \rep{1}, \rep{3}\right)$& $\rep{3}\oplus \rep{3}\rightarrow \rep{2\\3} \rightarrow \rep{1\\2} \oplus \rep{1\\2}$
        \\[6pt]
        \hline
        $\add\left\{\rep{1}\right\}$ & $\add\left\{\rep{1}\right\}$ & $\left(\rep{1},  \rep{3} \oplus \rep{2\\3}\right)$
        & $\rep{3}\oplus\rep{3}\oplus\rep{2\\3}\rightarrow \rep{2\\3} \rightarrow \rep{1\\2} $\\[6pt]
        \hline
        $\add\left\{\rep{3}\right\}$ & $\add\left\{\rep{3}\right\}$ & $\left(\rep{3}, \rep{2\\3} \oplus \rep{1\\2}\right)$& $\rep{2\\3}\oplus\rep{1\\2} \rightarrow 0 \rightarrow \rep{3}$\\[6pt]
        \hline
        $\left\{0\right\}$ & $\left\{0\right\}$ & $\left(0, \rep{3}\oplus \rep{2\\3} \oplus \rep{1\\2}\right)$& $\rep{3}\oplus \rep{2\\3} \oplus \rep{1\\2} \rightarrow 0 \rightarrow 0$\\[6pt]
        \hline
    \end{tabular}
    \caption{List of all $2$-torsion classes $\uc$ in $\mc$ and their associated minimal torsion classes $\tc$ in $\mod A$, \mbox{$\tau_2$-rigid} pairs $(M_\uc,P_\uc)$ in $\mc$ and $3$-term silting complexes $P_\bullet^{(M_\uc,P_\uc)}$ in $K^b(\proj A)$.}
\end{table}
\end{example}

We next show that \cref{thm:silting} produces silting complexes in the bounded derived category $D^b(A)$ of $\mod A$ whenever $A$ has global dimension less than or equal to $d$. This is for instance true for higher Auslander algebras of type $\mathbb{A}$ as introduced in \cite{Iyama2011}; see also \Cref{subsec: higher nakayama}.

\begin{corollary}\label{Theorem:SiltingInDerived}
   Let $\mc$ be a $d$-cluster tilting subcategory of $\modA$ and consider a functorially finite $d$-torsion class $\uc$ in $\mc$. If $A$ has global dimension less than or equal to $d$, then $M_\uc \oplus P_\uc[d]$ is a silting complex in $D^b(A)$.
\end{corollary}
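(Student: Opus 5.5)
The plan is to deduce the statement from \cref{thm:silting} by comparing the complex $P_\bullet^{(M_\uc,P_\uc)}$ with $M_\uc\oplus P_\uc[d]$ inside $D^b(A)$. Since $A$ has finite global dimension, the natural functor $K^b(\proj A)\to D^b(A)$ is a triangle equivalence. Under this equivalence silting objects correspond to silting objects, because both the vanishing of positive self-extensions and the thick closure condition are preserved. It therefore suffices to show that $P_\bullet^{(M_\uc,P_\uc)}$ is isomorphic in $D^b(A)$ to $M_\uc\oplus P_\uc[d]$. The summand $P_\uc[d]$ is common to both objects, so the real task is to identify $P_\bullet^{M_\uc}$ with $M_\uc$ in $D^b(A)$.

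For this comparison I will use the $\operatorname{pd}$ bound. By \cref{thm: equivalence of Extd projectivity}, the module $M_\uc$ is $\tau_d$-rigid. Since $M_\uc\in\mc$, we also have $\Ext^i_A(M_\uc,M_\uc)=0$ for $i=1,\dots,d-1$. In any case, the hypothesis $\operatorname{gl.dim} A\le d$ gives $\operatorname{pd}_A M_\uc\le d$ directly, with no need for \cref{Martinez-Mendoza}. Hence the minimal projective resolution of $M_\uc$ has $P_{-(d+1)}=0$, and the first $d+1$ terms already form the whole resolution. In particular the truncation map $P_{-d}\to P_{-(d-1)}$ is injective, so $H_{-d}(P_\bullet^{M_\uc})=0$. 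Thus $P_\bullet^{M_\uc}$ is a projective resolution of $M_\uc$, and it is quasi-isomorphic to the stalk complex $M_\uc$.

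Combining the two steps, $P_\bullet^{(M_\uc,P_\uc)}\cong M_\uc\oplus P_\uc[d]$ in $D^b(A)$, and the latter is silting in $D^b(A)$ because its image under the equivalence is the silting complex of \cref{thm:silting}.

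I do not expect a real obstacle. The only point that needs care is the identification $K^b(\proj A)\simeq D^b(A)$: one needs to check that silting in $D^b(A)$ is defined as generation of the thick closure $D^b(A)$, and this matches $K^b(\proj A)$ precisely because of the finite global dimension.
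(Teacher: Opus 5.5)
Your proposal is correct and follows the paper's proof essentially verbatim: use $K^b(\proj A)\simeq D^b(A)$, note that $\operatorname{gl.dim} A\le d$ makes $P_\bullet^{M_\uc}$ a projective resolution of $M_\uc$, and transport the silting property of $P_\bullet^{(M_\uc,P_\uc)}$ from \cref{thm:silting}. The aside about $\tau_d$-rigidity and $\Ext^i_A(M_\uc,M_\uc)=0$ is not needed; as you observed yourself, the global dimension bound alone does the work.
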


\begin{proof}
 As $A$ has finite global dimension, we have $K^b(\proj A)\cong D^b(A)$. Moreover, since the global dimension of $A$ is less than or equal to $d$, the projective $d$-presentation $P_\bullet^{M_\uc}$ is a projective resolution of $M_\uc$. It follows that $M_\uc \oplus P_\uc[d] \cong P_\bullet^{(M_\uc,P_\uc)}$ in $D^b(A)$. As $P_\bullet^{(M_\uc,P_\uc)}$ is a silting complex by \cref{thm:silting}, so is $M_\uc \oplus P_\uc[d]$.
\end{proof}

We next show that the $\tau_d$-rigid pair $(M_\uc,P_\uc)$ associated to a functorially finite $d$-torsion class $\uc$ in $\mc$ as in \cref{thm:maximaltaunrigid} is maximal in the sense of \cref{def: tau-d-maximality}. This is an immediate consequence of the following stronger result, whose proof relies heavily on \cref{thm:silting}. 

\begin{proposition}
\label{corollary:maximal tau_d rigid}
Let $\mc$ be a $d$-cluster tilting subcategory of $\modA$ and consider a functorially finite $d$-torsion class $\uc$ in $\mc$. The following statements hold for $(M_\uc,P_\uc)$:
\theoremlistfix
\begin{enumerate}
            \item If $N$ is in $\mod A$, then
            \[
            N \in \add(M_\uc) \iff \begin{cases}
            \Ext_A^i(M_\uc,N)=0 \text{ for } i=1,\dots,d-1, \\
            \Ext_A^i(N,M_\uc)=0 \text{ for }i=1,\dots,d-1, \\
            \Hom_A(M_\uc,\tau_d N) = 0, \\
            \Hom_A(N,\tau_d M_\uc) = 0, \\
            \Hom_A(P_\uc,N) = 0.
\end{cases}
            \]
            \phantomsection
            \label{corollary:maximal tau_d rigid:1}
            \item If $Q$ is in $\proj A$, then
            \[
            Q \in \add(P_\uc) \iff \Hom_A(Q,M_\uc) = 0.
            \]
            \phantomsection
            \label{corollary:maximal tau_d rigid:2}
        \end{enumerate}
\end{proposition}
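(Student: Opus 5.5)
Both parts reduce to the maximality of silting complexes, \cref{lem:maximal}, applied to the silting complex $S\colonequals P_\bullet^{(M_\uc,P_\uc)}=P_\bullet^{M_\uc}\oplus P_\uc[d]$ from \cref{thm:silting}.

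For part \eqref{corollary:maximal tau_d rigid:1}, the direction ``$\Rightarrow$'' follows from earlier results. Since $M_\uc\in\mc$, the $\Ext^i$ vanishing for $1\le i\le d-1$ holds. $\tau_d$-rigidity of $M_\uc$ comes from \cref{thm: equivalence of Extd projectivity}, together with the fact that $\Hom_A(M_\uc,\tau_d-)$ and $\Hom_A(-,\tau_dM_\uc)$ vanish on $\add(M_\uc)$. Finally $\Hom_A(P_\uc,M_\uc)=0$ because $M_\uc\in\uc$.

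For ``$\Leftarrow$'', let $N$ satisfy the five conditions and take $C\colonequals P_\bullet^N$, the minimal projective $d$-presentation. By \cref{lem:tau-rigidKbProj}, applied once to the pair $(M_\uc,N)$ and once to $(N,M_\uc)$, the first four conditions give $\Hom_{K^b(\proj A)}(P_\bullet^{M_\uc},C[i])=0$ and $\Hom_{K^b(\proj A)}(C,P_\bullet^{M_\uc}[i])=0$ for all $i>0$. It remains to treat the summand $P_\uc[d]$.
\begin{itemize}
\item $\Hom(P_\uc[d],C[i])=\Hom(P_\uc,C[i-d])\cong H_{i-d}(C)$ evaluated at $P_\uc$. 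Since $C$ lives in degrees $-d,\dots,0$, this can be nonzero only for $i=d$, where it equals $\Hom_A(P_\uc,H_0(C))=\Hom_A(P_\uc,N)=0$.
\item $\Hom(C,P_\uc[d+i])=0$ for $i>0$ for degree reasons, since $C$ has no term in degree $-d-i$.
\end{itemize}
Hence $\Hom(S,C[i])=0=\Hom(C,S[i])$ for all $i>0$, and \cref{lem:maximal} gives $C\in\add(S)$.

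Then $C$ is a summand of copies of $P_\bullet^{M_\uc}\oplus P_\uc[d]$. Because $P_\bullet^N$ is minimal, it has no summand of the form $P[d]$ unless that summand is zero. This is the only delicate point. I would handle it by applying $H_0$, which kills $P_\uc[d]$, so $N=H_0(C)\in\add(H_0(P_\bullet^{M_\uc}))=\add(M_\uc)$. This works because $H_0$ is additive and commutes with homotopy equivalence, so no minimality argument is actually needed.

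For part \eqref{corollary:maximal tau_d rigid:2}, ``$\Rightarrow$'' is the definition of $P_\uc$. Conversely, suppose $\Hom_A(Q,M_\uc)=0$ and set $C=Q[d]$.
\begin{itemize}
\item $\Hom(S,Q[d+i])=0$ for $i>0$ by degrees.
\item $\Hom(Q[d],S[i])=\Hom(Q,S[i-d])$ vanishes except at $i=d$, where it equals $\Hom_A(Q,H_0(S))=\Hom_A(Q,M_\uc)=0$.
\end{itemize}
So \cref{lem:maximal} gives $Q[d]\in\add(P_\bullet^{M_\uc}\oplus P_\uc[d])$. To conclude $Q\in\add(P_\uc)$, compare $H_{-d}$, or argue with indecomposable summands in the Krull--Schmidt category $K^b(\proj A)$. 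An indecomposable summand $Q'[d]$ of $Q[d]$ is isomorphic either to a summand of $P_\uc[d]$ or to an indecomposable summand of $P_\bullet^{M_\uc}$. In the latter case $H_0$ of that summand is $0$, while every indecomposable summand of $P_\bullet^{M_\uc}$ is $P_\bullet^{M'}$ with $M'\neq 0$ and has nonzero $H_0$. This contradiction leaves only the first case, so $Q\in\add(P_\uc)$.
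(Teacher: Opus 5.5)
Your proposal is correct and follows the paper's proof: both parts reduce to \cref{lem:maximal} for the silting complex $P_\bullet^{(M_\uc,P_\uc)}$, with \cref{lem:tau-rigidKbProj} handling the $P_\bullet^{M_\uc}$-summand, a direct computation handling $P_\uc[d]$, and $H_0$ (together with the fact that $P_\bullet^{M_\uc}$ has no summand of the form $Q'[d]$ in part (2)) bringing you back to modules. One justification needs fixing: for $1\le i\le d-1$, the vanishing of $\Hom_A(P_\uc,H_{i-d}(P_\bullet^N))$, and likewise of $\Hom_A(Q,H_{i-d}(P_\bullet^{(M_\uc,P_\uc)}))$ in part (2), does not follow from the complex living in degrees $-d,\dots,0$; it follows from inner acyclicity, since $P_\bullet^N$ is a truncated projective resolution and hence exact in degrees $-d+1,\dots,-1$, which is exactly what the paper uses.
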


\begin{proof}
Let $N \in \mod A$. Observe first that $N\in \add (M_\uc)$ if and only if $P_\bullet^{N} \in \operatorname{add}(P_\bullet^{(M_\uc, P_\uc)})$. Indeed, if $P_\bullet^{N}$ is in $\operatorname{add}(P_\bullet^{(M_\uc, P_\uc)})$, then applying $H_0$ gives $N\in \add (M_\uc)$, while the converse is clear. Next note that as $P_\bullet^{(M_\uc,P_\uc)}$ is a silting complex in $K^b(\proj A)$ by \cref{thm:silting}, we have $P_\bullet^{N} \in \operatorname{add}(P_\bullet^{(M_\uc, P_\uc)})$ if and only if 
\begin{equation}\label{Equation:rigidityclaim}
\Hom_{K^b(\proj A)}(P_\bullet^{(M_\uc, P_\uc)}, P_\bullet^{N}[i]) =0= \Hom_{K^b(\proj A)}(P_\bullet^{N}, P_\bullet^{(M_\uc, P_\uc)}[i])  \quad \text{for }i>0
\end{equation}
by \cref{lem:maximal}. To prove \eqref{corollary:maximal tau_d rigid:1}, it thus remains to show that $N$ satisfies the five conditions on the right-hand side of the statement if and only if \eqref{Equation:rigidityclaim} holds.

To show this, note first that $N$ satisfies the first four conditions if and only if
\[
\Hom_{K^b(\proj A)}(P_\bullet^{M_{\uc}}, P_\bullet^{N}[i]) =0= \Hom_{K^b(\proj A)}(P_\bullet^{N}, P_\bullet^{M_{\uc}}[i]) \quad \text{for } i > 0
\]
by \cref{lem:tau-rigidKbProj}. Furthermore, we have 
\[
\Hom_{K^b(\proj A)}(P_\uc[d], P_\bullet^{N}[i]) \cong \Hom_{K^b(\proj A)}(P_\uc, P_\bullet^{N}[i-d]) =0 \quad \text{for }i>0 \text{ and } i \neq d
\]
since $P_\uc$ is projective and $P_\bullet^N$ is exact in degrees different from $-d$ and $0$. As $$\Hom_{K^b(\proj A)}(P_\uc[d], P_\bullet^{N}[d])\cong \Hom_A(P_\uc,N),$$ we thus have $\Hom_A(P_\uc,N)=0$ if and only if 
$\Hom_{K^b(\proj A)}(P_\uc[d], P_\bullet^{N}[i])=0$ for $i>0$.
Finally, note that $\Hom_{K^b(\proj A)}(P_\bullet^{N},P_\uc[d][i])=0$ for $i>0$ since $P_\bullet^{N}$ and $P_\uc[d][i]$ are concentrated in different degrees. The proof of \eqref{corollary:maximal tau_d rigid:1} is finished by combining these observations. 

The proof of \eqref{corollary:maximal tau_d rigid:2} follows a similar strategy as above. Consider $Q \in \proj A$, and note first that $Q \in \add(P_\uc)$ if and only if $Q[d] \in \add(P_\bullet^{(M_\uc,P_\uc)})$, since $P_{\bullet}^{M_{\uc}}$ does not have a summand of the form $Q'[d]$ with $Q'$ projective. As $P_\bullet^{(M_\uc,P_\uc)}$ is a silting complex in $K^b(\proj A)$, we have $Q[d] \in \add(P_\bullet^{(M_\uc,P_\uc)})$ if and only if
\[
\Hom_{K^b(\proj A)}(Q[d],P_\bullet^{(M_\uc, P_\uc)}[i])=0=\Hom_{K^b(\proj A)}(P_\bullet^{(M_\uc, P_\uc)},Q[d][i])\quad \text{for }i>0
\]
by \cref{lem:maximal}. It remains to prove that this is equivalent to having $\Hom_A(Q,M_\uc)=0$. To this end, observe first that $\Hom_{K^b(\proj A)}(P_\bullet^{(M_\uc, P_\uc)},Q[d][i])=0$ for $i>0$ since $P_\bullet^{(M_\uc, P_\uc)}$ and $Q[d][i]$ are concentrated in different degrees. By a similar argument as before, we moreover have
\[
\Hom_{K^b(\proj A)}(Q[d],P_\bullet^{(M_\uc, P_\uc)}[i])=0 \quad \text{for }i>0 \text{ and } i\neq d.
\]
Combining this with the isomorphism $\Hom_{K^b(\proj A)}(Q[d],P_\bullet^{(M_\uc, P_\uc)}[d])\cong \Hom_A(Q,M_{\uc})$ finishes the proof.
\end{proof}

\begin{corollary}\label{cor: maximality}
Let $\mc$ be a $d$-cluster tilting subcategory of $\modA$ and consider a functorially finite $d$-torsion class $\uc$ in $\mc$. Then $(M_\uc, P_\uc)$ is a maximal $\tau_d$-rigid pair in $\mc$.
\end{corollary}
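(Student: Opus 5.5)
The plan is to obtain \cref{cor: maximality} by restricting \cref{corollary:maximal tau_d rigid} to objects of $\mc$, where the two extra $\Ext$-vanishing conditions hold automatically. First, $(M_\uc,P_\uc)$ is a $\tau_d$-rigid pair in $\mc$ by \cref{thm:maximaltaunrigid}. This is needed for condition (2)(i) of \cref{def: tau-d-maximality} to be meaningful, and it also gives the forward implications.

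Next I check condition (2)(i) of \cref{def: tau-d-maximality}. Take $N \in \mc$. Both $M_\uc$ and $N$ lie in $\mc$, and $\mc$ is $d$-cluster tilting, so $\Ext^i_A(M_\uc,N)=0$ and $\Ext^i_A(N,M_\uc)=0$ for $i=1,\dots,d-1$. The first two conditions on the right-hand side of \cref{corollary:maximal tau_d rigid} \eqref{corollary:maximal tau_d rigid:1} therefore hold for every $N\in\mc$. Hence, for $N \in \mc$, membership $N\in\add(M_\uc)$ is equivalent to the remaining three conditions:
\[
\Hom_A(M_\uc,\tau_dN)=0,\qquad \Hom_A(N,\tau_dM_\uc)=0,\qquad \Hom_A(P_\uc,N)=0.
\]
This is exactly condition (2)(i) of \cref{def: tau-d-maximality}.

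Condition (2)(ii) of \cref{def: tau-d-maximality} is verbatim \cref{corollary:maximal tau_d rigid} \eqref{corollary:maximal tau_d rigid:2}. There is no real obstacle in this argument. All the substantive work, namely \cref{thm:silting} together with \cref{lem:maximal}, is already packaged into \cref{corollary:maximal tau_d rigid}. The only point requiring care is to notice that the rigidity of $\mc$ removes the two $\Ext$ conditions.
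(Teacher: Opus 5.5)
Your proposal is correct and matches the paper: the corollary is stated there as an immediate consequence of \cref{corollary:maximal tau_d rigid}, obtained exactly by restricting part (1) to $N\in\mc$ (where the two $\Ext^i$-vanishing conditions hold automatically because $\mc$ is $d$-cluster tilting) and reading part (2) verbatim. Nothing further is needed.
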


The corollary above finishes the proof of \Cref{MainResultIntro} from the introduction. In particular, the map $\phi_d$ from \Cref{MainResultIntro} is well-defined. We finish this section by giving an example showing that it is in general not surjective.

\begin{example}\label{Example:Phi_dNotSurj}
    Let $A$ and $\mc$ be as in \cref{ex:running1}. Recall that the the $\tau_2$-rigid pairs $(M_\uc,P_\uc)$ associated to each $2$-torsion class $\uc$ in $\mc$ are shown in the third column of \cref{tab:intersection3}. Notice that the pair $(M,P)=\left(\rep{3}\oplus \rep{2\\3}, \rep{1\\2} \right)$ is maximal $\tau_2$-rigid in $\mc$, but does not appear in the table even though it has $|A|$ indecomposable direct summands. This demonstrates that the map $\phi_d$ from \Cref{MainResultIntro} is not always surjective. It is moreover worth noting that $P_\bullet^{(M,P)}=P_\bullet^M \oplus P[d]$ is the silting complex $\rep{1\\2} \rightarrow 0 \rightarrow \rep{3} \oplus \rep{2\\3}$ discussed in \cref{ex:running7}. 
\end{example}

\section{Computations for higher Auslander and higher Nakayama algebras}\label{subsec: higher nakayama}

Higher Auslander and higher Nakayama algebras were introduced in \cite{Iyama2011} and \cite{Higher Nakayama}, respectively, and constitute important classes of algebras in higher Auslander--Reiten theory. In this section we show that our theory yields an injective, but in general not surjective, map from the functorially finite $d$-torsion classes of higher Auslander algebras to Oppermann--Thomas cluster tilting objects in the associated $d$-dimensional cluster category. Moreover, we give explicit combinatorial descriptions of maximal $\tau_d$-rigid pairs and silting complexes associated to functorially finite $d$-torsion classes for higher Auslander and higher Nakayama algebras of type $\mathbb{A}$. These combinatorial descriptions are implemented in a Python code, allowing the computation of a plethora of explicit examples; see \cref{rem: code}. Our descriptions rely on the characterisation of $d$-torsion classes for higher Auslander and higher Nakayama algebras of type $\mathbb{A}$ obtained in \cite{AHJKPT1}. 

We first recall the definition of higher Auslander and higher Nakayama algebras, as well as the combinatorial description of their modules, following \cite[Sections 1 and 2]{Higher Nakayama}; see also \cite[Sections 5.1 and 6.1]{AHJKPT1}. For a positive integer $n$, consider $N_n=\{0,1, \dots, n-1\}$ with the natural poset structure. Let  
\[
N_{n}^d = \underbrace{N_n\times\cdots\times N_n}_{d\text{ times}}
\]
be the set of sequences $x=(x_0,\dots,x_{d-1})$ over $N_n$ endowed with the product order. This poset can naturally be considered as a quiver with relations, whose path algebra modulo the ideal generated by the relations is equal to 
\[
A_n^{\otimes d}=\underbrace{A_n\otimes_\kk A_n\otimes_\kk \cdots \otimes_\kk A_n}_{d\text{ times}}\hspace{0.111em},
\]
where $A_n$ is the path algebra of the quiver $0 \rightarrow \dots \rightarrow n-1$. Let $\os_n^d$ denote the subset of $N_n^d$ whose elements are the tuples $x=(x_0,\dots,x_{d-1})$ with $x_{0}\leq x_{1}\leq\dots\leq x_{d-1}$. The \textit{higher Auslander algebra} $A_n^{d}$ is defined to be the idempotent quotient
\[
A_n^{d} \colonequals A_n^{\otimes d}/A_n^{\otimes d}e_{n,d}A_n^{\otimes d}
\]
where $e_{n,d}$ is the sum of the idempotents corresponding to the elements of $N_n^d\setminus \os_n^d$. Note that $A_n^{d}$ can equivalently be defined as the path algebra of a quiver with relations, where the (opposite) of this quiver has vertices given by the set $\os_n^d$ and an arrow from vertex $x$ to vertex $y$ if $y_i=x_i+1$ for precisely one $0\leq i\leq d-1$ and $y_j=x_j$ for $j\neq i$. For a description of the associated relations, see \cite{HerschendJorgensen}.

For each $x\in \os_n^{d+1}$, there is an indecomposable $A_n^{d}$-module $M_x$, which is given by $\kk$ at the vertices $y\in \os_n^d$ for which $x_{0}\leq y_{0} \leq x_{1} \leq \cdots \leq x_{d-1} \leq y_{d-1} \leq x_{d} $ and zero otherwise. Set $M^d_{n}=\bigoplus_{x\in \os_n^{d+1}} M_x$. Then 
\[
\mc^d_{n} \colonequals \add(M^d_{n}) \subseteq \mod A^d_n
\]
is a $d$-cluster tilting subcategory of $\operatorname{mod}A^d_n$, and $\End_{A_n^d}(M^d_{n})$ and $A^{d+1}_{n}$ are isomorphic as algebras by \cite[Corollary 1.16]{Iyama2011}. For the explicit description of $\mc^d_{n}$ presented above, see also \cite[Theorem/Construction 3.4]{OppermannThomas} and \cite[Theorem 2.4]{Higher Nakayama}. 

Let $\mathcal{O}_{A_n^d}$ denote the $d$-dimensional cluster category of $A_n^d$; see \cite[Section 5]{OppermannThomas} for details. We call the cluster tilting objects defined in \cite[Definition 5.3]{OppermannThomas} \textit{Oppermann--Thomas cluster tilting}, following \cite{JJ}. As a consequence of our work, we obtain the following result.

\begin{theorem}\label{Theorem:d-torsionToClusterTilting}
There is an injective map from functorially finite $d$-torsion classes in $\mc^d_{n}$ to basic Oppermann--Thomas cluster tilting objects in $\mathcal{O}_{A_n^d}$.
\end{theorem}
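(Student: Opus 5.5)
The plan is to compose the injective map $\psi_d$ (equivalently $\phi_d$) with a known correspondence between silting-type objects for $A_n^d$ and Oppermann--Thomas cluster tilting objects in $\mathcal{O}_{A_n^d}$.

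Since $A_n^d$ has global dimension $d$, \cref{Theorem:SiltingInDerived} shows that for every functorially finite $d$-torsion class $\uc$ in $\mc^d_n$ the object $M_\uc\oplus P_\uc[d]$ is a basic silting complex in $D^b(A_n^d)$. Moreover, it lies in the subcategory
\[
\mathcal{U}=\add\{\mc^d_n[di]\mid i\in\mathbb{Z}\}\subseteq D^b(A_n^d),
\]
which is $d$-cluster tilting by Iyama's results. Indeed, $M_\uc\in\mc^d_n$ and $P_\uc\in\proj A_n^d\subseteq\mc^d_n$. Its summands lie in the ``fundamental domain'' $\mc^d_n\vee \proj A_n^d[d]$.

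The key step is to invoke the description in \cite{OppermannThomas} of $\mathcal{O}_{A_n^d}$ as the orbit category of $\mathcal{U}$ under $\nu_d^{-1}[d]=\nu^{-1}[d]\circ[-d]$-type autoequivalence. This comes with the fact that every indecomposable of $\mathcal{O}_{A_n^d}$ has a unique lift in the fundamental domain $\operatorname{ind}\mc^d_n\cup\operatorname{ind}(\proj A_n^d)[d]$. I would then verify that the image $\pi(M_\uc\oplus P_\uc[d])$ under the projection $\pi\colon D^b(A_n^d)\to\mathcal{O}_{A_n^d}$ is Oppermann--Thomas cluster tilting. Recall that \cite[Definition 5.3]{OppermannThomas} asks for $\Hom_{\mathcal{O}}(T,T[d])=0$ together with the right number of summands, equivalently maximality; this is the comparison used in \cite{JJ}.

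Vanishing of $\Hom_{\mathcal{O}}(T,T[d])$ should be deduced as in \cite{JJ}. There, basic maximal $\tau_d$-rigid pairs in $\mc^d_n$ with $|A|$ summands, or more precisely $\tau_d$-rigid pairs with $\Hom$-vanishing conditions, are shown to correspond to Oppermann--Thomas cluster tilting objects via $(M,P)\mapsto \pi(M\oplus P[d])$. So I would cite that bijection (their Theorem for type $\mathbb{A}$ higher Auslander algebras) and compose it with $\phi_d$ from \cref{MainResultIntro}, which lands in basic maximal $\tau_d$-rigid pairs with $|A_n^d|$ summands by \cref{thm:maximaltaunrigid} and \cref{cor: maximality}. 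If the \cite{JJ} statement needs the presilting/silting property rather than maximality, I would instead feed in \cref{thm:silting}. In that case I would check $\Hom_{\mathcal{O}}(T,T[d])=\bigoplus_i\Hom_{D^b}(S,F^iS[d])$ vanishes term by term, using the silting vanishing for $i=0$ and degree reasons plus higher AR duality $\Hom(X,\nu_d Y[d])\cong D\Hom(Y,X)$-type arguments for $i=\pm1$.

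Injectivity follows because $\phi_d$ is injective by \cref{prop: injective}, and the lift to the fundamental domain recovers $(M_\uc,P_\uc)$ from $\pi(M_\uc\oplus P_\uc[d])$. The main obstacle is matching conventions precisely: ensuring the fundamental domain of $\mathcal{O}_{A_n^d}$ is exactly $\mc^d_n\vee\proj A_n^d[d]$ and that the \cite{JJ} correspondence applies to pairs rather than only to modules. I expect that to be where care, rather than new ideas, is needed.
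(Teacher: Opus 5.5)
Your plan is the paper's: compose the injective map $\uc\mapsto(M_\uc,P_\uc)$ from \cref{thm:maximaltaunrigid} and \cref{prop: injective} with the Jacobsen--J\o rgensen correspondence $(M,P)\mapsto M\oplus P[d]$ of \cite[Theorem 3.5]{JJ}. That correspondence matches $\tau_d$-rigid pairs with $|A_n^d|$ summands with $d$-rigid objects of $\mathcal{O}_{A_n^d}$ with $|A_n^d|$ summands, so neither maximality nor your silting fallback is needed. The one thing to fix is the last step: passing from ``$d$-rigid with $|A_n^d|$ summands'' to ``Oppermann--Thomas cluster tilting'' is not the content of \cite[Definition 5.3]{OppermannThomas}, but a genuine result, \cite[Theorems 2.4 and 6.4]{OppermannThomas}, proved via triangulations of cyclic polytopes. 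Also, the summand count should not be described as equivalent to maximality, since that equivalence fails for $d\geq 2$ (cf.\ \cref{rem: terminology support}).
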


\begin{proof}
By \Cref{thm:maximaltaunrigid} and \Cref{prop: injective}, there is an injective map from functorially finite $d$-torsion classes in $\mc^d_{n}$ to basic $\tau_d$-rigid pairs $(M,P)$ in $\mc^d_{n}$ with $|M|+|P|= |A_n^d|$. Now by  \cite[Theorem 3.5]{JJ}, these $\tau_d$-rigid pairs are in bijection with isomorphism classes of $d$-rigid objects in $\mathcal{O}_{A_n^d}$ with $|A_n^d|$ summands. Finally, by \cite[Theorem 2.4 and Theorem 6.4]{OppermannThomas}, any $d$-rigid object in $\mathcal{O}_{A_n^d}$ with $|A_n^d|$ summands must be Oppermann--Thomas cluster tilting.
\end{proof}

For $d=1$, the map in \Cref{Theorem:d-torsionToClusterTilting} recovers the bijection between functorially finite torsion classes for $A_n$ and cluster tilting objects in the cluster category of $A_n$; see \cite[Theorem 0.5]{AdachiIyamaReiten}. For $d\geq 2$, the map is in general not surjective since the map \eqref{map: ffntorsion to taun rigid} is not surjective; see \Cref{ex:running1}. 

Next we define higher Nakayama algebras. From now on we assume $d\geq 2$. Let $\underline{\ell}=(\ell_0,\dots, \ell_{n-1})$ be a \textit{(connected) Kupisch series of type }$\mathbb{A}_n$, i.e.\ a tuple of positive integers satisfying 
\[
\ell_0=1 \quad \text{and} \quad 2\leq \ell_i\leq \ell_{i-1}+1  \quad \text{for} \quad i=1,\ldots,n-1. 
\]
Consider the subset $\os_{\underline{\ell}}^{d}\colonequals\{y\in \os_{n}^{d}\mid \ell\ell(y)\leq \ell_{y_{d-1}}\}$ of $\os_{n}^{d}$, where $\ell\ell(y)=y_{d-1}-y_0+1$. The $d$-\textit{th Nakayama algebra with Kupisch series $\underline{\ell}$} is given as the idempotent quotient
\[
A_{\underline{\ell}}^d \colonequals A_n^d/A_n^d e_{\underline{\ell}}A_n^d
\]
where $e_{\underline{\ell}}$ is the sum of the idempotents corresponding to the elements of $\os_{n}^{d}\setminus\os_{\underline{\ell}}^{d}$. In particular, if $\underline{\ell}=(1,2,\dots,n)$, then $A_{\underline{\ell}}^d=A_n^d$. 

Now assume that $\underline{\ell}$ is a Kupisch series of type $\mathbb{A}_n$. By \cite[Proposition 2.24]{Higher Nakayama}, the subcategory $$\mc_{\underline{\ell}}^d \colonequals \mc_{n}^d\cap \operatorname{mod}A_{\underline{\ell}}^d$$ is $d$-cluster tilting in $\operatorname{mod}A_{\underline{\ell}}^d$. If we set $$M_{\underline{\ell}}^d=\bigoplus_{x\in \os_{\underline{\ell}}^{d+1}}M_x,$$ then $\mc_{\underline{\ell}}^d=\operatorname{add}(M_{\underline{\ell}}^d)$. Furthermore, for $x\in \os_{\underline{\ell}}^{d+1}$, the $A_{\underline{\ell}}^d$-module $M_x$ is projective  if and only if $x_0=x_d+1-\ell_{x_d}$ by \cite[Proposition 2.22]{Higher Nakayama}. It follows that the isomorphism classes of indecomposable projective $A^d_{\underline{\ell}}$-modules are in bijection with $\os^d_{\underline{\ell}}$, where \mbox{$y=(y_1,\dots,y_d)\in \os^d_{\underline{\ell}}$} corresponds to the module
\[
P_y\colonequals M_{(y_0,y_1,\cdots , y_d)}
\]
with $y_0=y_d+1-\ell_{y_d}$. The higher Auslander--Reiten translation in $\mc_{\underline{\ell}}^d$ is given by
\[
\tau_d(M_x)=\begin{cases}
0 &\text{if } M_x \text{ is a projective }A_{\underline{\ell}}^d \text{-module} \\
	M_{\tau_d(x)} & \text{otherwise}  \\ 
\end{cases}\]
by \cite[Proposition 2.26]{Higher Nakayama}, where $\tau_d\colon \mathbb{Z}^{d+1}\to \mathbb{Z}^{d+1}$ is defined by
\begin{align*}
 \tau_d(y_0,\dots,y_{d})=(y_0-1,y_1-1,\dots, y_{d}-1).
\end{align*}
In particular, if $x\in \os_{\underline{\ell}}^{d+1}$ and $M_x$ is not projective, then $\tau_d(x)\in \os_{\underline{\ell}}^{d+1}$. Now let
\begin{align*}
	x\rightsquigarrow y &\text{ if and only if } x_{0}\leq y_{0} \leq x_{1}\leq y_{1} \leq \cdots  \leq  x_{d}\leq y_{d}
\end{align*}
be a relation on the set $\mathbb{Z}^{d+1}$. Note that $x\rightsquigarrow y$ implies $x\leq y$. It follows from \cite[Theorem 3.6 (3)]{OppermannThomas} and \cite[Proposition 2.8]{Higher Nakayama} that
\begin{equation}\label{Equation:HomSpacesNakayama}
\dim \Hom_{A_n^d}(M_x,M_y)=\begin{cases}
	1 & \text{if } x\rightsquigarrow y \\ 0 &\text{otherwise}
\end{cases}
\end{equation}
for $x,y\in \os_n^{d+1}$.  For a subset $I \subseteq \os_{\underline{\ell}}^{d+1}$, let  \begin{align*}
        \uc_I\colonequals \add \{ M_{y} \in \mc_{\underline{\ell}}^d \mid y \in I\}
    \end{align*}
    be the associated subcategory of $\mc_{\underline{\ell}}^d$. This gives a bijection between subset of $\os_{\underline{\ell}}^{d+1}$ and subcategories of $\mc_{\underline{\ell}}^d$. By \cite[Theorem 6.1]{AHJKPT1}, the $d$-torsion classes in $\mc_{\underline{\ell}}^d$ are precisely the subcategories $\uc_I$ where $I$ satisfies the following conditions for any elements $x,z\in \os_{\underline{\ell}}^{d+1}$:
    \theoremlistfix
    \begin{enumerate}[label=\textbf{T\arabic*}]
        \item If $x\leq z$ and $x_d=z_d$, then $x\in I$ implies $z\in I$.
        \phantomsection
        \label{thm:higherNakayamaAlg:1}
        \item If $x\rightsquigarrow \tau_d(z)$ and $x,z\in I$, then any $y\in \os_{\underline{\ell}}^{d+1}$ with $y_i \in \{x_i,z_i\}$ for each $i$ must be in $I$. 
        \phantomsection
        \label{thm:higherNakayamaAlg:2}
    \end{enumerate} 

Using this characterisation, we obtain a simple combinatorial description of the $\Ext^d$-projectives in a $d$-torsion class and their associated projective $d$-presentations. 

\begin{lemma}\label{Theorem:CombinatorialFormulaNakayama}
Let $\underline{\ell}$ be a Kupisch series of type $\mathbb{A}_n$ and consider a subset $I$ of $\os_{\underline{\ell}}^{d+1}$ satisfying \eqref{thm:higherNakayamaAlg:1} and \eqref{thm:higherNakayamaAlg:2}. The following statements hold for $x\in \os^{d+1}_{\underline{\ell}}$:
\theoremlistfix
\begin{enumerate}
    \item Assume $x\in I$. The module $M_x$ is $\Ext^d$-projective in $\uc_I$ if and only if 
    \theoremlistfix
    \begin{itemize}
        \item $x_0=x_d+1-\ell_{x_d}$, or
        \item there is no relation $y\rightsquigarrow \tau_d(x)$ with $y\in I$.
    \end{itemize}
    \phantomsection
    \label{Theorem:CombinatorialFormulaNakayama:1}
    \item $M_x$ is a projective $A^d_{\underline{\ell}}$-module and satisfies $\Hom_{A_{\underline{\ell}}^d}(M_x,\uc_I)=0$ if and only if 
    \theoremlistfix
    \begin{itemize}
        \item $x_0=x_d+1-\ell_{x_d}$, and
        \item there is no relation $x\rightsquigarrow y$ with $y\in I$.
    \end{itemize}
    \phantomsection
    \label{Theorem:CombinatorialFormulaNakayama:2}
    \item Assume that $M_x$ is not projective. The minimal projective $d$-presentation of $M_x$ is 
    \[
    P_{\bullet}^{M_x}\colon \cdots \xrightarrow{} 0 \xrightarrow{} P_{y^d} \xrightarrow{f_{d}} \cdots \xrightarrow{f_3} P_{y^2} \xrightarrow{f_2} P_{y^1} \xrightarrow{f_1} P_{y^0} \xrightarrow{} 0 \xrightarrow{} \cdots
    \]
    with $y^i=(y^i_1,\dots, y^i_d)$ and $$y^i_j=\begin{cases}
	x_{j-1}-1 & \text{if } j\leq i \\ x_{j} &\text{if } j>i,
\end{cases}$$ and where the morphism $f_i$ is induced from the relation $y^i\rightsquigarrow y^{i-1}$ for $i=1,\dots,d$.  
\phantomsection
\label{Theorem:CombinatorialFormulaNakayama:3}
\end{enumerate}
\end{lemma}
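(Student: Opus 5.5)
Part (1) rests on \cref{thm: equivalence of Extd projectivity}: $M_x\in\uc_I$ is $\Ext^d$-projective if and only if $\Hom(U',\tau_d M_x)=0$ for all $U'\in\uc_I$. This is the right criterion even though that theorem is stated under \cref{torsion setup}. The implication from (3) to (1) in its proof only uses higher Auslander--Reiten duality, and since $\uc_I$ is a torsion class the converse direction also holds.

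If $M_x$ is projective, i.e.\ $x_0=x_d+1-\ell_{x_d}$ by \cite[Proposition 2.22]{Higher Nakayama}, then $\tau_d M_x=0$ and the criterion holds trivially. Otherwise $\tau_d M_x=M_{\tau_d(x)}$. Since $\uc_I$ is additively generated by the $M_y$ with $y\in I$, the criterion reduces to $\Hom(M_y,M_{\tau_d(x)})=0$ for all $y\in I$. By \eqref{Equation:HomSpacesNakayama} this says there is no relation $y\rightsquigarrow\tau_d(x)$ with $y\in I$. The formula \eqref{Equation:HomSpacesNakayama} is stated for $A_n^d$-modules, but it applies here because $\mod A_{\underline{\ell}}^d$ is a full subcategory of $\mod A_n^d$. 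Note that conditions \eqref{thm:higherNakayamaAlg:1} and \eqref{thm:higherNakayamaAlg:2} are not needed beyond the fact that $\uc_I$ is a $d$-torsion class.

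Part (2) is the same argument: projectivity of $M_x$ is the first bullet, and by \eqref{Equation:HomSpacesNakayama} the vanishing $\Hom(M_x,M_y)=0$ for $y\in I$ is the second bullet.

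Part (3) is the real work. Each $y^i$ lies in $\os^d_{\underline{\ell}}$, and $P_{y^i}=M_{(z,y^i_1,\dots,y^i_d)}$ with $z=y^i_d+1-\ell_{y^i_d}$. The plan is as follows.
\begin{enumerate}
\item Show that $y^i\rightsquigarrow y^{i-1}$ at the level of the $(d+1)$-tuples, including the first coordinates. This needs $\ell_{x_d}\ge \ell_{x_d-1}$-type inequalities from the Kupisch condition, which I expect to be delicate.
\item Compute, vertex by vertex, the kernel of each $f_i$ using the explicit interval supports of the $M_x$.
\item Check that $\operatorname{Coker} f_1\cong M_x$ and that $\operatorname{Im} f_{i+1}=\Ker f_i$ for $1\le i\le d-1$.
\item Check minimality: consecutive maps are between indecomposables and are nonzero non-isomorphisms, hence radical, so the resolution is minimal.
\end{enumerate}

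An alternative is to induct using $\Omega M_x$: identify the kernel of the projective cover $P_{y^0}\to M_x$ as $M_{x'}$, where $x'$ is obtained by shifting the first coordinate. This gives a recursive description of the syzygies as interval modules.

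The main obstacle is the vertex-wise exactness verification in $A^d_{\underline{\ell}}$, where truncation by the Kupisch series may kill vertices. I would first do the case $\underline{\ell}=(1,\dots,n)$ using the known resolutions for higher Auslander algebras (\cite{OppermannThomas}, \cite{Iyama2011}). I would then argue that the idempotent quotient preserves the relevant exact sequences, since all terms are $A_{\underline{\ell}}^d$-modules.
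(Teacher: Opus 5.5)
Your parts (1) and (2) are the paper's argument. \cref{thm: equivalence of Extd projectivity} reduces the question to $\Hom(M_y,\tau_d M_x)=0$ for $y\in I$; it applies because $\uc_I$ is functorially finite and is induced by a torsion class (\cref{thm:torsion-to-dtorsion}). Then \cite[Propositions 2.22 and 2.26]{Higher Nakayama} and \eqref{Equation:HomSpacesNakayama} give the combinatorics.

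For part (3), the paper does no computation: it simply refers to the proof of \cite[Proposition 2.25]{Higher Nakayama}. A self-contained verification along your steps 1--4 is a legitimate alternative, but as written it has a gap exactly at what you call the main obstacle.

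Your way around it, reducing to $\underline{\ell}=(1,\dots,n)$, does not work as stated. The projective resolution of $M_x$ over $A^d_n$ has terms $M_{(0,y^i_1,\dots,y^i_d)}$, which in general are not $A^d_{\underline{\ell}}$-modules. So ``all terms are $A^d_{\underline{\ell}}$-modules'' is false for that sequence. Applying $-\otimes_{A^d_n}A^d_{\underline{\ell}}$ does turn these terms into the $P_{y^i}$, but this functor is only right exact. Exactness of the resulting complex in the middle is governed by $\operatorname{Tor}^{A^d_n}_i(M_x,A^d_{\underline{\ell}})$, and its vanishing is essentially the statement you are trying to prove.

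The inequality you expect in step 1 is also of the wrong kind. The inequality $\ell_{x_d}\geq\ell_{x_d-1}$ fails in general: for $\underline{\ell}=(1,2,3,2,3)$ one has $\ell_3<\ell_2$.

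No reduction is needed. Since $\mod A^d_{\underline{\ell}}$ is closed under submodules and quotients in $\mod A^d_n$, you can check exactness in $\mod A^d_n$, uniformly in $\underline{\ell}$. Write $\hat y^i$ for the $(d+1)$-tuple with $P_{y^i}=M_{\hat y^i}$, and $S(a)$ for the support of $M_a$. A nonzero map $M_a\to M_b$ with $a\rightsquigarrow b$ is a nonzero scalar at every vertex of the box $S(a)\cap S(b)$ and zero elsewhere. The Kupisch series enters only through the first coordinates, and what you need there is
\[
\hat y^d_0=x_{d-1}-\ell_{x_{d-1}-1}\leq \hat y^{d-1}_0=\cdots=\hat y^0_0=x_d+1-\ell_{x_d}\leq x_0-1 .
\]
The right inequality is non-projectivity of $M_x$. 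The left one follows by iterating $\ell_j\leq\ell_{j-1}+1$.

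With this, one checks the following:
\begin{enumerate}
\item For $1\leq i\leq d-1$, both $\Ker f_i$ and $\operatorname{Im} f_{i+1}$ are the restriction of $P_{y^i}$ to $\{z\in S(\hat y^i)\mid z_i\leq x_i-1\}$.
\item $S(\hat y^{i+1})\cap S(\hat y^{i-1})=\emptyset$, so $f_if_{i+1}=0$.
\item $S(\hat y^0)\setminus S(\hat y^1)=S(x)$, so $\Coker f_1\cong M_x$.
\end{enumerate}
Minimality is then automatic, since any surjection from an indecomposable projective onto a nonzero module is a projective cover.
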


\begin{proof}
For \eqref{Theorem:CombinatorialFormulaNakayama:1}, note that the module $M_x$ is $\Ext^d$-projective in the $d$-torsion class $\uc_I$ if and only if $$\Hom_{A^d_{\underline{\ell}}}(M_y,\tau_d(M_x))=0$$ for all $y\in I$ by \cref{thm: equivalence of Extd projectivity}. If $M_x$ is projective, then this automatically holds since $\tau_d(M_x)=0$. This corresponds to the condition $x_0=x_d+1-\ell_{x_d}$. If $M_x$ is not projective, then $\tau_d(M_x)=M_{\tau_d(x)}$. Now by the description of the $\Hom$-spaces in \eqref{Equation:HomSpacesNakayama}, we have $\Hom_{A^d_{\underline{\ell}}}(M_y,M_{\tau_d(x)})=0$ if and only there is no relation $y\rightsquigarrow \tau_d(x)$. This proves \mbox{part \eqref{Theorem:CombinatorialFormulaNakayama:1}}. Part \eqref{Theorem:CombinatorialFormulaNakayama:2} follows immediately from the description of projective $A^d_{\underline{\ell}}$-module and the description of the Hom-spaces in $\mc_{\underline{\ell}}^d$. Part \eqref{Theorem:CombinatorialFormulaNakayama:3} follows from the proof of \cite[Proposition 2.25]{Higher Nakayama}.
\end{proof}

Given $I\subseteq \os_{\underline{\ell}}^{d+1}$, we define subsets
\begin {align*}
& I_1=\{x\in I\mid x_0=x_d+1-\ell_{x_d}\} \cup \{x\in I\mid \text{there is no relation }y\rightsquigarrow \tau_d(x) \text{ with }y\in I\} \\
& I_2=\{x\in \os_{\underline{\ell}}^{d+1}\mid x_0=x_d+1-\ell_{x_d}\} \cap \{x\in \os_{\underline{\ell}}^{d+1}\mid \text{there is no relation }x\rightsquigarrow y \text{ with }y\in I\}. 
\end{align*}

Recall that we use the notation $(M_{\uc_I},P_{\uc_I})$ for the basic $\tau_d$-rigid pair associated to a functorially finite $d$-torsion class $\uc_I$ in $\mc_{\underline{\ell}}^d$ as in \cref{thm:maximaltaunrigid}. Moreover, we write
\[
P_\bullet^{(M_{\uc_I},P_{\uc_I})} = P_\bullet^{M_{\uc_I}} \oplus P_{\uc_I}[d]
\]
for the associated silting complex in $K^b(\proj A_{\underline{\ell}}^d)$ as in \cref{thm:silting}. We are now ready to provide a complete combinatorial description of maximal $\tau_d$-rigid pairs and silting complexes associated to functorially finite $d$-torsion classes for higher Auslander and higher Nakayama algebras of type $\mathbb{A}$. Recall that the explicit description of $P_\bullet^{M_x}$ is given in \Cref{Theorem:CombinatorialFormulaNakayama} \eqref{Theorem:CombinatorialFormulaNakayama:3}.

\begin{theorem}\label{cor: higher nakayama}
 Let $\underline{\ell}$ be a Kupisch series of type $\mathbb{A}_n$ and consider a subset $I$ of $\os_{\underline{\ell}}^{d+1}$ satisfying \eqref{thm:higherNakayamaAlg:1} and \eqref{thm:higherNakayamaAlg:2}.  The following statements hold:
 \theoremlistfix
 \begin{enumerate}
 \item We have $M_{\uc_I}=\bigoplus_{x\in I_1}M_x$ and $P_{\uc_I}=\bigoplus_{y\in I_2}M_y$.
 \phantomsection
 \label{combinatorial description 1}
 \item The complex $$P_\bullet^{(M_{\uc_I},P_{\uc_I})} = \bigoplus_{x\in I_1}P_\bullet^{M_x} \oplus \bigoplus_{y\in I_2}M_y[d]$$    is silting in $K^b(\proj A_{\underline{\ell}}^d)$.
 \phantomsection
 \label{combinatorial description 2}
 \item Assume $\underline{\ell}=(1,2,\dots,n)$, so that $A_{\underline{\ell}}^d=A_n^d$ is a higher Auslander algebra. Then $$\bigoplus_{x\in I_1}M_x \oplus \bigoplus_{y\in I_2}M_y[d]$$ is silting in $D^b(A_n^d)$.
 \phantomsection
 \label{combinatorial description 3}
 \end{enumerate}
\end{theorem}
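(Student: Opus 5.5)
Part (1) follows from the definitions of $M_{\uc_I}$ and $P_{\uc_I}$ in \cref{thm:maximaltaunrigid} once we know which indecomposables they contain. First, $\uc_I$ is functorially finite: $\mc_{\underline{\ell}}^d=\add(M_{\underline{\ell}}^d)$ has only finitely many indecomposables, so every subcategory of it is functorially finite. Since $I$ satisfies \eqref{thm:higherNakayamaAlg:1} and \eqref{thm:higherNakayamaAlg:2}, \cite[Theorem 6.1]{AHJKPT1} tells us that $\uc_I$ is a $d$-torsion class.

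$M_{\uc_I}$ is the basic $\Ext^d$-projective generator. Its indecomposable summands are therefore exactly the indecomposable $M_x$ with $x\in I$ that are $\Ext^d$-projective in $\uc_I$. Indeed, by \cref{thm: d-tilting} every $\Ext^d$-projective lies in $\add(M_{\uc_I})$, and every summand of $M_{\uc_I}$ is $\Ext^d$-projective. By \cref{Theorem:CombinatorialFormulaNakayama} \eqref{Theorem:CombinatorialFormulaNakayama:1}, these are precisely the $M_x$ with $x\in I_1$. Since the $M_x$ for distinct $x$ are pairwise non-isomorphic, this gives $M_{\uc_I}=\bigoplus_{x\in I_1}M_x$.

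$P_{\uc_I}$ is the maximal basic projective module with $\Hom(P_{\uc_I},\uc_I)=0$. Every indecomposable projective has the form $P_y=M_x$ with $x_0=x_d+1-\ell_{x_d}$, so the indecomposable summands of $P_{\uc_I}$ are exactly the indecomposable projectives $M_x$ with $\Hom(M_x,M_y)=0$ for all $y\in I$. By \cref{Theorem:CombinatorialFormulaNakayama} \eqref{Theorem:CombinatorialFormulaNakayama:2}, these are the $M_x$ with $x\in I_2$.

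For (2), we combine (1) with \cref{thm:silting}. The minimal projective $d$-presentation of a direct sum is the direct sum of the minimal presentations, so
\[
P_\bullet^{M_{\uc_I}}=\bigoplus_{x\in I_1}P_\bullet^{M_x}.
\]
Adding $P_{\uc_I}[d]=\bigoplus_{y\in I_2}M_y[d]$ gives the displayed complex. When $M_x$ is projective, $P_\bullet^{M_x}$ is just the stalk complex $M_x$; when $M_x$ is not projective, its presentation is the explicit one in \cref{Theorem:CombinatorialFormulaNakayama} \eqref{Theorem:CombinatorialFormulaNakayama:3}.

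For (3), we apply \cref{Theorem:SiltingInDerived}. Its hypothesis is that $\operatorname{gl.dim} A_n^d\leq d$, which is the known fact that the higher Auslander algebra $A_n^d$ of type $\mathbb{A}$ has global dimension at most $d$ \cite{Iyama2011}. That corollary then gives that $M_{\uc_I}\oplus P_{\uc_I}[d]$ is silting in $D^b(A_n^d)$, and substituting (1) yields the stated object.

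The only step needing care is part (1). There we must confirm that ``basic $\Ext^d$-projective generator'' means the direct sum of all indecomposable $\Ext^d$-projectives up to isomorphism, and that the indices $x\in\os_{\underline{\ell}}^{d+1}$ correspond bijectively to isoclasses of indecomposables in $\mc_{\underline{\ell}}^d$. Both follow from the setup recalled before the statement.
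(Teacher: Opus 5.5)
Your proposal is correct and follows the paper's own proof. Part (1) comes from \cref{Theorem:CombinatorialFormulaNakayama}, with \cref{thm: d-tilting} identifying $M_{\uc_I}$ as the sum of all indecomposable $\Ext^d$-projectives; part (2) combines (1) with \cref{thm:silting}; and part (3) uses \cref{Theorem:SiltingInDerived} together with $\operatorname{gl.dim} A_n^d\leq d$. You also spell out details the paper leaves implicit, namely functorial finiteness of $\uc_I$ and additivity of minimal projective $d$-presentations.
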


\begin{proof}
    The first part holds by \Cref{Theorem:CombinatorialFormulaNakayama}. Part (\ref{combinatorial description 2}) is a consequence of (\ref{combinatorial description 1}) combined with \cref{thm:silting}, while (\ref{combinatorial description 3}) follows from \cref{Theorem:SiltingInDerived}.
\end{proof}

\begin{remark}\label{rem: code}
Applying the combinatorial descriptions in \cref{Theorem:CombinatorialFormulaNakayama} and \cref{cor: higher nakayama}, we provide Python code that explicitly computes all $d$-torsion classes for the higher Nakayama algebra $A_{\underline{\ell}}^d$ for different choices of $d$ and $\underline{\ell}$, along with the associated maximal $\tau_d$-rigid pairs and silting complexes. The code is available as a Google Colab notebook.\footnote{
\url{https://colab.research.google.com/drive/172Q-UZHvdPOhngGkl1T_xdYLzntg31dY}} 
\end{remark}

\subsection*{Acknowledgements}
This project was initiated during the Junior Trimester Program ``New Trends in Representation Theory'' at the Hausdorff Research Institute for Mathematics in Bonn. We thank HIM for excellent working conditions. We are also grateful to J. Asadollahi, P. J\o rgensen and S. Schroll for allowing HT to share the ideas of \cite{AJST} even before that paper was finished. We thank M. Herschend for helpful discussions and in particular for suggesting the statement of \cref{theorem: d-CT extriangulated}, which led to a simplification of the proofs in \cref{sec:silting}. We moreover thank S. Land for coding optimisation help. 

JA was supported by the Max Planck Institute of Mathematics, and a DNRF Chair from the Danish National Research Foundation (grant no. DNRF156). JH was supported by the project `Pure Mathematics in Norway' funded by the Trond Mohn Research Foundation. KMJ was funded by the Norwegian Research Council via the project ‘Higher homological algebra and tilting theory’ (301046), and is grateful for support by the Danish National Research Foundation (DNRF156). YP was supported by the French ANR grant CHARMS~(ANR-19-CE40-0017). 

HT acknowledges that this work was completed in spite of Argentina's current science policies, which severely hinder basic research in the country.

\end{document}